\numberwithin{equation}{section}
\newcommand{\CAT}{\operatorname{CAT}}
\newcommand{\DC}{\operatorname{DC}}
\newcommand{\R}{\operatorname{\mathbb{R}}}
\newcommand{\N}{\operatorname{\mathbb{N}}}
 \newcommand{\Sph}{\operatorname{\mathbb{S}}}
\theoremstyle{plain}
\newtheorem{thm}{Theorem}[section]
\newtheorem{lem}[thm]{Lemma}
\newtheorem{prop}[thm]{Proposition}
\newtheorem{cor}[thm]{Corollary}
\newtheorem{defn}[thm]{Definition}
\newtheorem{exmp}[thm]{Example}
\newtheorem{rem}[thm]{Remark}
\begin{document}

\title
[Geodesically complete spaces with an upper curvature bound]
{Geodesically complete spaces \\
with an  upper curvature bound}

\author
[A. Lytchak]{Alexander Lytchak}
\author
[K. Nagano]{Koichi Nagano}

\thanks{The first author was partially supported by the DFG grants   SFB TRR 191 and SPP 2026.
The second author was partially supported
by JSPS KAKENHI Grant Numbers 26610012, 21740036, 18740023,
and by the 2004 JSPS Postdoctoral Fellowships for Research Abroad.}

\email
[Alexander Lytchak]
{alytchak@math.uni-koeln.de}
\email
[Koichi Nagano]
{nagano@math.tsukuba.ac.jp}

\address
[Alexander Lytchak]
{\endgraf
Mathematisches Institut, Universit\"{a}t K\"oln
\endgraf
Weyertal 86-90, D-50937 K\"oln, Germany}
\address
[Koichi Nagano]
{\endgraf
Institute of Mathematics, University of Tsukuba
\endgraf
Tennodai 1-1-1, Tsukuba, Ibaraki, 305-8571, Japan}


\keywords
{Curvature bounds, rectifiability, metric measure spaces, stratification, DC-functions}
\subjclass
[2010]{53C20, 53C21, 53C23}

\begin{abstract}
We study  geometric and topological properties
of  locally compact, geodesically complete spaces with an upper curvature bound.
We control the size of  singular subsets, discuss
homotopical and measure-theoretic stratifications  and  regularity of the
metric structure on a large part.
\end{abstract}

\maketitle


\section{Introduction}
\subsection{Object of investigations}
Metric spaces with  one-sided curvature bounds were introduced  by A.D. Alexandrov in \cite{Aleksandrov}.
 After the revival of
metric geometry in the eighties, properties and applications of such spaces
have been investigated from various points of view, we refer to \cite{ballmannbook},
\cite{BH}, \cite{BBI01},  \cite{BuyaloSchr}, \cite{AKP} and the bibliography therein.
Starting with  \cite{BGP} a structure theory
of locally compact spaces with a lower curvature bound and
finite dimension,  so-called Alexandrov spaces,
was developed, see
\cite{AKP} for the huge bibliography.

 Due to the local uniqueness of geodesics in spaces with upper curvature bounds,
the derivation of basic topological and geometric properties is simpler  than in the case  of Alexandrov spaces. However,
  finer structural features  can be  much more intricate. Even a compact tree, hence a topologically $1$-dimensional
 space of non-positive curvature, can have infinite Hausdorff dimension and may not contain any kind of manifold charts, \cite{Kleiner}, \cite{Berest-path}.
 Also the global topological structure of spaces with upper curvature bounds can be much more complicated than in the case of lower curvature bounds:  for instance,  any finite-dimensional simplicial complex carries a
 metric with an upper  curvature bound    \cite{Berest}.

 Without additional assumptions it seems  impossible to detect
some general regular structures beyond  a theorem of B. Kleiner, \cite{Kleiner},
claiming that the topological dimension coincides with the maximal dimension of a   topological manifold embedded into the space.
  In order to obtain some control,  one needs an assumption,  which  provides a close relation between the local
geometry near a point and  the   geometry of its tangent cone.
Such a natural assumption is  \emph{(local) geodesic completeness}, also known as  \emph{geodesic extension property}.
  It says
that any compact  geodesic  can be extended as a local geodesic beyond its endpoints.
This condition is stable
under natural metric operations and
can often be  detected  topologically. For example, it holds
if  small
punctured neighborhoods of all points are non-contractible.
Finally, geodesic completeness plays an important  role  in geometric group
theory, see, for instance,   \cite{CM}, \cite{Genevois}.

The present paper is devoted to the description of basic measure-theoretic, homotopic and analytic properties
of such spaces, recovering analogues of most results of  \cite{BGP}, \cite{P3} and \cite{OS}. Applications to topological questions,
 geometric group theory and sphere theorems will be discussed in forthcoming papers. Results and ideas of preliminary versions of this  work which we have circulated in the last 10 years have already been used, for instance in \cite{Kap}, \cite{Linus}, \cite{Bert}, \cite{KK}.

There has  been one systematic investigation of the theory of geodesically complete spaces with upper curvature bounds by Otsu and Tanoue,  \cite{Otsu},  announced in \cite{Ots}
(and continued in \cite{N2}).
Since \cite{Otsu} has never been published and is rather difficult to read, we do not use  it. In fact, some of our theorems
provide improvements and simplifications of the main
results from \cite{Otsu}.

 The special case of  two-dimensional   topological surfaces has been intensively studied,
  see   \cite{Reshetnyak-GeomIV}. Some results from \cite{Reshetnyak-GeomIV}, definitely out  of reach in  higher dimensions, have been generalized to  two-dimensional polyhedra in \cite{BB}.

\subsection{Main results}
From now on,  we   say that  $X$ is  \emph{GCBA},
if $X$  is  a separable, locally compact,   locally geodesically complete space  with  curvature bound above.

GCBA spaces  have indeed many structural
similarities with   Alexandrov spaces, see Section \ref{sec:setting}.
Any GCBA space $X$ is locally doubling. 
For any $x$ in $X$, the tangent space  $T_xX$ and the space of directions $\Sigma _x X$ are again  GCBA.  Any compact part
of any GCBA admits a biLipschitz embedding into a Euclidean space.

The following theorem  is implicitly contained  in \cite{Otsu},  compare also \cite{Kleiner}.

\begin{thm} \label{thm0}
 Let $X$ be GCBA.  The topological dimension $\dim (X)$ of    $X$  coincides with the Hausdorff dimension.
It equals  the  supremum of   dimensions of  open subsets of $X$ homeomorphic to  Euclidean balls.
\end{thm}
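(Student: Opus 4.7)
I would prove all three quantities coincide by induction on $n := \lceil \dim_{\Haus}(X)\rceil$, showing that $X$ contains an open subset homeomorphic to an open ball in $\R^n$. The other two inequalities are easy: Szpilrajn's theorem gives $\dim(X) \le \dim_{\Haus}(X)$ for any separable metric space, and the supremum $s(X)$ of dimensions of open Euclidean-ball subsets trivially satisfies $s(X) \le \dim(X)$. Producing an open $n$-ball forces $s(X) \ge n \ge \dim_{\Haus}(X)$ and squeezes all three dimensions to the common value $n$. All three quantities being local in nature and $X$ being locally doubling, I may assume $X$ is a relatively compact GCBA open set of finite Hausdorff dimension.

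The base case $n = 0$ is immediate. For the inductive step I use the structural facts recorded in the excerpt: $T_xX$ is the Euclidean cone on $\Sigma_xX$, both are GCBA, and the Hausdorff dimension of a tangent cone is bounded by that of $X$. A standard blow-up argument combined with semicontinuity of Hausdorff dimension produces a point $x$ with $\dim_{\Haus}(T_xX) = n$, and hence $\dim_{\Haus}(\Sigma_xX) = n-1$. Applying the inductive hypothesis to the GCBA space $\Sigma_xX$ yields an open subset $V \subset \Sigma_xX$ homeomorphic to an open $(n-1)$-ball.

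Local geodesic completeness now permits each geodesic issuing from $x$ with initial direction $\xi \in V$ to be extended through $x$ to a local geodesic defined on $(-\varepsilon, \varepsilon)$, producing a bi-cone map
\[
\Phi\colon V \times (-\varepsilon, \varepsilon) \longrightarrow X, \qquad (\xi,t) \longmapsto \gamma_\xi(t),
\]
which is continuous by the continuous dependence of short geodesics on their endpoints in $\CAT(\kappa)$ spaces and, after shrinking $V$ and $\varepsilon$, injective by the local uniqueness of such geodesics. The domain of $\Phi$ is an open subset of $\R^n$.

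The main obstacle is showing that the image of $\Phi$ is open in $X$: merely finding an $(n-1)$-ball inside $\Sigma_xX$ is not sufficient, since the bi-cone could conceivably sit as a lower-dimensional ``blade'' inside a larger $T_xX$, or inside a larger $X$ of the same dimension but with many nearby points not reached by $\Phi$. I would address this either (i) by strengthening the inductive hypothesis to produce a point $x$ at which $\Sigma_xX$ is, up to a small-codimension set, a topological $(n-1)$-sphere, using the rigidity of $\CAT(\kappa)$ together with geodesic completeness to force $T_xX$ to be locally Euclidean and then transferring this to $X$ by a Gromov--Hausdorff comparison; or (ii) by invoking invariance of domain for locally compact ANRs of topological dimension at most $n$, together with Szpilrajn's bound $\dim(X) \le n$ and the fact that GCBA spaces are locally contractible. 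Either route yields the desired open $n$-ball in $X$ and closes the induction.
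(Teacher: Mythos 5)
Your high-level strategy (squeeze $s(X)\le \dim(X)\le \dim_{\Haus}(X)$ and close the loop by exhibiting an open $n$-ball) is sound, and the easy inequalities are fine. But the core of the proposal — building an open Euclidean ball by bi-coning an $(n-1)$-ball in $\Sigma_xX$ — has two gaps, and neither of your two proposed fixes closes the second one.

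First, the map $\Phi(\xi,t)=\gamma_\xi(t)$ is not well-defined for $t<0$: local geodesic completeness guarantees that a geodesic can be extended past $x$, but the extension is not unique (a direction $\xi$ may have many antipodes in $\Sigma_x$ — uniqueness is exactly the notion of a \emph{spherical} direction in Section~\ref{sec:spherical}). Any choice of antipode for each $\xi\in V$ need not depend continuously on $\xi$, so continuity of $\Phi$ is already in doubt before you reach the question of openness.

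Second, and more seriously, the "openness of the image" obstacle you flag is not resolved by either remedy. Your route (ii) — invariance of domain for locally compact ANRs of topological dimension $\le n$ — is simply false: take $X$ to be two copies of $\R^2$ glued along a line $L$ (a GCBA, $\CAT(0)$ space of topological dimension $2$). At a point $x\in L$, $\Sigma_x$ contains arcs homeomorphic to an open interval, and the corresponding bi-cone lands in one of the two planes; its image has empty interior in $X$. Your route (i) — forcing $T_xX\cong\R^n$ and "transferring by Gromov--Hausdorff comparison" — runs directly into the warning the paper gives after Theorem~\ref{thmsph}: there is \emph{no} local conicality or topological stability for GCBA spaces (Example~\ref{ex:surface}), so a Euclidean tangent cone does not by itself produce a Euclidean chart near $x$. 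There is also a quieter issue with the blow-up step: finding $x$ with $\dim_{\Haus}(T_xX)=n$ does not by itself force $T_xX\cong\R^n$ (in the glued-planes example every tangent cone has full Hausdorff dimension), and you need the latter for the induction to produce a genuine chart.

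The paper avoids all of this by replacing soft topology with the quantitative strainer machinery. It first proves (via Kleiner's theorem $\dim = $ geometric dimension, Theorem~\ref{thmfirst}, and Proposition~\ref{lem:tophaus}) that the Hausdorff and topological dimensions agree and that there exists a point with $T_x\cong\R^n$; it then gets the open $n$-ball not from the exponential map but from an $(n,\delta)$-strainer map $F:V\to\R^n$, which Lemma~\ref{lem:openstr} shows is Lipschitz \emph{and open}, and the dichotomy in Corollary~\ref{cor:dich} upgrades it to a local biLipschitz homeomorphism onto an open subset of $\R^n$. The open map property, proved by a gradient-flow/completeness argument (Lemma~\ref{lem:genfunction}), is precisely the ingredient that replaces the invariance-of-domain step you were missing, and it is not something a purely soft topological argument can supply.
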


The local  dimension might  be non-constant on $X$, as  in the case of simplicial complexes. But the local dimension
  can be understood by looking at  tangent spaces.
For $k=0,1,2,...$, we call \emph{the $k$-dimensional part of $X$}, denoted by $X^k$,  the set of all points $x\in X$ with $\dim (T_xX)= k$.
In general,  $X^k$ is neither open nor closed in $X$.  However, $X^k$ contains large  "regular subsets",  open in $X$, as shown by the next result.

\begin{thm} \label{thm:locdim}
Let $X$ be  GCBA. A point $x$ is contained in the $k$-dimensional part $X^k$ if and only if all sufficiently small balls around $x$ have dimension $k$.
The Hausdorff measure $\mathcal H^k$ is locally finite and locally positive on $X^k$.  There is a subset  $M^k$ of $X^k$, which is open in $X$, dense in $X^k$ and
locally biLipschitz equivalent to $\R^k$.   Moreover,  the complement
 $\bar X^k\setminus M^k$ of $M^k$ in the closure  $\bar X^k$ of $X^k$  has Hausdorff dimension at most $k-1$.
\end{thm}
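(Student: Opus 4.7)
The central tool, standard for GCBA spaces, is the homeomorphism $\log_x : B_r(x) \to B_r(0) \subset T_xX$ provided by unique geodesics together with local geodesic completeness, for all sufficiently small $r > 0$. Since $T_xX$ is itself GCBA, Theorem~1.1 applies to it and yields
\[
\dim B_r(x) \;=\; \dim B_r(0) \;=\; \dim T_xX
\]
for small $r$; this is part~(1). Part~(2) is then immediate: local finiteness of $\mathcal H^k$ on $X^k$ follows from local doubling combined with the biLipschitz embedding of compacta into Euclidean space, while local positivity follows from part~(1) and the equality of topological and Hausdorff dimension from Theorem~1.1.

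For (3) I would take $M^k$ to be the open subset of $X$ consisting of points that admit a neighborhood biLipschitz homeomorphic to a Euclidean $k$-ball; such a point is automatically in $X^k$, and openness is built in. Density of $M^k$ in $X^k$ is the core task. Given $x\in X^k$, Theorem~1.1 applied to $T_xX$ produces an open embedded Euclidean $k$-ball in $T_xX$; by a generic-regularity / iterated blow-up argument one obtains points $y$ arbitrarily close to $x$ at which $T_yX$ is genuinely isometric to $\R^k$. At such a flat-tangent $y$, the CAT upper bound upgrades $\log_y$ to a biLipschitz chart onto a Euclidean $k$-ball, so $y\in M^k$.

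For (4) I would proceed by induction on $k$. The set $\bar X^k\setminus X^k$ is contained in $\bigcup_{\ell<k}X^\ell$ and therefore has Hausdorff dimension at most $k-1$ by induction. Inside $X^k$, the singular points are precisely those $x$ with $\Sigma_xX\neq\Sph^{k-1}$; since $\Sigma_xX$ is GCBA of dimension at most $k-1$, the inductive stratification of $\Sigma_xX$ pulls back via $\log_x$ to a stratification of a neighborhood of $x$, and a countable covering argument gives the bound on the Hausdorff dimension of $\bar X^k\setminus M^k$.

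\textbf{Main obstacle.} The most delicate step is the density in (3): producing a genuinely flat tangent cone at some $y$ near $x$, starting only from an open subset of $T_xX$ that is merely topologically Euclidean. This passes through an iterated blow-up argument and uses the CAT geometry in full, not merely local compactness and doubling. Achieving the sharp bound $k-1$ in (4), rather than merely ``$<k$,'' is the second tricky point and demands quantitative control on the stratification of $X^k$ by the isometry type of tangent cones.
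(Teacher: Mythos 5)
The proposal starts from a false premise. You assert that, for small $r$, the map $\log_x\colon B_r(x)\to B_r(0)\subset T_xX$ is a \emph{homeomorphism}; this is not true in GCBA spaces. In fact the paper's Example~\ref{ex:surface} (Kleiner's two-dimensional example) exhibits a GCBA space and a point $x$ such that no neighborhood of $x$ is homeomorphic to a cone, so in particular $\log_x$ is not a homeomorphism onto a ball in $T_x$ for any small $r$. What is actually available (and what the paper uses) is much weaker: $\log_x$ is $2$-Lipschitz, it restricts to a homotopy equivalence on punctured balls, and by Lemma~\ref{lem:tangent} it is an ``almost isometry'' in the sense that rescaled small balls Gromov--Hausdorff converge to $T_x$. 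The correct derivation of part~(1) goes through this GH-convergence combined with semicontinuity of geometric dimension and a comparison of spaces of directions (Lemma~\ref{lem: semiproj}, Proposition~\ref{prop:mapsn}, Lemma~\ref{lem:dimconverge}, Corollary~\ref{cor:tangloc}), not through a chart.

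The same error reappears in your part~(3): even at a point $y$ with $T_yX$ \emph{isometric} to $\R^k$, the CAT bound does not upgrade $\log_y$ to a biLipschitz chart. The biLipschitz charts in the paper are $(k,\delta)$-strainer maps, and the fact that they are biLipschitz when $\dim T_y = k$ is the content of the dichotomy (Lemma~\ref{lem:dichot}, Corollary~\ref{cor:dich}), which in turn relies on Theorem~\ref{lem: +1strainer}; this is a genuine argument that your proposal replaces by an unsupported assertion. Your route to density (``iterated blow-up to find $y$ near $x$ with $T_y\cong\R^k$'') is a reasonable intuition, but the paper obtains it from the rectifiability Theorem~\ref{thmfirst} / Proposition~\ref{prop: rect}, which forces $(k,0)$-strained points to exist on any set of dimension $k$; you do not substitute an argument of comparable strength.

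Two further points. In part~(2), ``local finiteness of $\mathcal H^k$ on $X^k$ follows from local doubling combined with the biLipschitz embedding of compacta into Euclidean space'' is not a valid deduction: a biLipschitz embedding into some $\R^{n_0}$ bounds the \emph{Hausdorff dimension}, not the $\mathcal H^k$-mass, of a $k$-dimensional subset. The finiteness in the paper is a quantitative output of the strainer machinery ($\mathcal H^k(U^k)<C\cdot r_0^k$ in Corollary~\ref{cor:kreg}, coming from Theorem~\ref{lem: +1strainer}). Finally, in part~(4) the inclusion $\bar X^k\setminus X^k\subset\bigcup_{\ell<k}X^\ell$ is backwards: by Corollary~\ref{cor:tangloc} the closure $\bar X^k$ contains no points from $X^m$ with $m<k$, so $\bar X^k\setminus X^k\subset\bigcup_{m>k}X^m$. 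The paper handles $\bar X^k\setminus M^k$ not by induction on $k$ but by observing that its intersection with a tiny ball $U$ contains no $(k,\delta)$-strained points (via the stability Lemma~\ref{lem:stabled}), so it sits inside $U\setminus U_{k,\delta}$ and inherits the $\mathcal H^{k-1}$-bound from Proposition~\ref{prop: rect}.
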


We refer to Section \ref{sec:dim} for a  stronger  statement.  The open manifold $M^k$ should be thought of as the regular $k$-dimensional part of $X$. Its finer geometry is described by the following theorem. We refer to \cite{P3}, \cite{Shioya}, \cite{Ambr} and Section \ref{sec:DC} below for a discussion of  the notions of $\DC$-functions, $\DC$-manifolds and  functions of bounded variation  used in the next theorem.

\begin{thm} \label{thm:regular}
For $k=0,1,...$,  the manifold $M^k\subset X^k$ in Theorem \ref{thm:locdim} can be chosen to satisfy the following properties.
  The set $M^k$   contains the set $\mathcal R_k$ of all points  $x\in X$ with tangent
  space $T_xX$ isometric to $\R^k$.  The  Hausdorff dimension of
  $M^k\setminus \mathcal R_k$ is at most $ k-2$.
The manifold $M^k$ has a
unique $\DC$-atlas such that all convex
functions on $M^k$ are $\DC$-functions with respect to this atlas.
 The distance
in $M^k$ can locally  be  obtained from  a Riemannian metric tensor  $g$,  well-defined  and continuous on $\mathcal R_k$.
The  tensor $g$  locally is of  bounded variation  on $M^k$.
\end{thm}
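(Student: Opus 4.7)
The plan is to build $M^k$ from distance-function coordinates and to deduce all the asserted regularity from the $\DC$-calculus in such charts.

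\emph{Step 1 (Coordinate charts along $\mathcal R_k$).} Fix $x\in \mathcal R_k$, so that $T_xX\cong \R^k$. By geodesic completeness every direction in $\Sigma_xX=\Sph^{k-1}$ admits an antipode, and one can choose points $p_1,\dots,p_k$ and $q_1,\dots,q_k$ close to $x$ whose logarithms in $T_xX$ form an almost orthonormal basis with $\log_x p_i$ and $\log_x q_i$ antipodal. A standard first-variation and $\CAT(\kappa)$ comparison then shows that the map $F=(d(p_1,\cdot),\dots,d(p_k,\cdot))$ is biLipschitz onto an open subset of $\R^k$ on a whole neighborhood of $x$. This yields an open biLipschitz chart around $x$, and adjoining all such charts enlarges the manifold $M^k$ of Theorem \ref{thm:locdim} to one containing $\mathcal R_k$.

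\emph{Step 2 (Dimension bound on $M^k\setminus \mathcal R_k$).} At each $x\in M^k\setminus \mathcal R_k$ the tangent space $T_xX$ is a $k$-dimensional GCBA cone that is not isometric to $\R^k$, so $\Sigma_xX$ is $(k-1)$-dimensional GCBA and not $\Sph^{k-1}$. I would stratify $M^k$ by the dimension of the maximal Euclidean factor splitting off $T_xX$. For the top stratum one exploits the local splitting together with Theorem \ref{thm:locdim} applied to the non-Euclidean cross-section, while lower strata are handled by induction on $k$. A covering argument based on local doubling and the biLipschitz embedding into Euclidean space then gives $\dim_{\Haus}(M^k\setminus \mathcal R_k)\le k-2$.

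\emph{Step 3 ($\DC$-atlas and convex functions).} For any basepoint $p$, the function $d^2(p,\cdot)$ is locally semiconcave on a $\CAT(\kappa)$ space by a standard second-variation estimate, hence locally a sum of a concave and a quadratic function. Thus every coordinate transition between two distance charts $F,F'$ as in Step 1 is $\DC$, and the collection forms a $\DC$-atlas. For a convex function $u$ on $M^k$, the composition $u\circ F^{-1}$ is locally Lipschitz and, after adding a sufficiently positive multiple of $\sum f_i^2$, becomes convex on the Euclidean target; consequently $u$ is $\DC$ in the chart. Uniqueness of the $\DC$-structure is forced by the requirement that the coordinate functions $f_i$ themselves be $\DC$, which pins down the atlas up to $\DC$-diffeomorphism.

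\emph{Step 4 (Riemannian tensor and BV regularity).} In a distance chart $F=(f_1,\dots,f_k)$ each $f_i$ is semiconcave, so its gradient $\nabla f_i$ exists $\Haus^k$-almost everywhere and belongs to $BV_{\mathrm{loc}}$. Set $g_{ij}=\langle \nabla f_i,\nabla f_j\rangle$, using the inner product of the tangent space wherever the latter is Euclidean. This yields a $BV_{\mathrm{loc}}$ tensor on $M^k$. At points of $\mathcal R_k$ the gradients are genuine unit vectors in $T_xX=\R^k$, and continuity of tangent cones at Euclidean points, combined with the first-variation formula, gives continuity of $g$ on $\mathcal R_k$. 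Finally, the length of a Lipschitz curve in $M^k$ can be recovered via $F$ from $g$, identifying the distance induced by $g$ with the restricted distance of $X$.

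\emph{Main obstacle.} I expect the sharp codimension-two bound in Step 2 to be the most delicate point: a direct application of Theorem \ref{thm:locdim} to $\Sigma_xX$ only gives codimension one, and one must show that failure of sphericity of $\Sigma_xX$ in turn confines $x$ to a stratum of still lower dimension. A careful double induction, on $k$ and on the dimension of the Euclidean factor, together with the biLipschitz embedding from Section \ref{sec:setting} used for multi-scale coverings, is what extracts the extra codimension.
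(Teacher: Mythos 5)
Your Step 2 contains a genuine gap, and you correctly flag it as the most delicate point, but the remedy you sketch (a ``double induction on $k$ and on the dimension of the Euclidean factor'') is not the argument that actually closes the gap, and I do not see how it could. The paper's argument is much simpler and is anchored in a fact you have not invoked: on $M^k$ the space is already a topological $k$-manifold (by Theorem~\ref{thm:locdim}), so $T_xX$ is biLipschitz to $\R^k$ for every $x\in M^k$. If such a $T_x$ splits as $\R^{k-1}\times Y$, then $Y$ is a $1$-dimensional cone over a finite set, and the local homology $H_*(X,X\setminus\{x\})$ of a manifold forces that finite set to have exactly two points, i.e.\ $Y=\R$ and $T_x=\R^k$. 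Hence $M^k\cap X_{k-1,0}=\mathcal R_k$, and $M^k\setminus\mathcal R_k\subset X\setminus X_{k-1,0}$, which is countably $(k-2)$-rectifiable by Theorem~\ref{thmfirst}. Without this manifold-forces-$Y=\R$ step, an induction on the Euclidean factor gives you at best codimension one at each stage and does not accumulate to codimension two; the extra codimension does not come from iterating Theorem~\ref{thm:locdim} but from ruling out $T_x=\R^{k-1}\times Y$ with $Y\ne\R$ outright.

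Step 3 is also substantially under-argued. The assertion that $u\circ F^{-1}+C\sum f_i^2\circ F^{-1}$ is convex on the Euclidean target is precisely what needs proving, and it is not a routine consequence of semiconcavity of $d_p^2$. The paper's proof (Proposition~\ref{prop: dcisom}) builds a specific auxiliary convex function $g=\frac1k\sum g_i$ from the opposite strainer map $G$, shows it is $\alpha$-special with $\alpha=\frac1{4k^2}$ (Lemma~\ref{lem:aux1}), and then uses Lemma~\ref{lem:aux2} to pass $0$-special convex functions through $F^{-1}$. Your sketch skips this mechanism. Your uniqueness argument and Step 4 are roughly in line with the paper (continuity of $g$ on $\mathcal R_k$ via stability of angles, BV via the $\DC$-structure), though the BV claim in the paper is obtained by solving for the metric coefficients in terms of first derivatives of additional $\DC$-functions $\bar h_j$, which is not quite the same as asserting $\nabla f_i\in BV_{\mathrm{loc}}$ directly.
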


The $k$-dimensional Hausdorff measure is  the
 natural measure on the $k$-dimensional part  $X^k$ of $X$. We put these measures together and define the \emph{canonical measure
 $\mu _X$ }  of the space $X$ to be the sum of the restrictions of $\mathcal H^k $ to $X^k$,  thus
$$\mu _X :=\sum _{k=0} ^{\infty} \mathcal H^k \llcorner X^k \, .$$
 By Theorem \ref{thm:locdim} and Theorem \ref{thm:regular}, the restriction of $\mu _X$  to $M^k$ is (the Riemannian measure)   $\mathcal H ^k$, and $\mu _X$
  vanishes on the complement of the open submanifold $\bigcup _{k\geq 0} M^k$.
 The "canonicity" of $\mu _X$  is confirmed by the following two theorems.

\begin{thm} \label{thm:finitecan}
 Let $X$ be  GCBA. The canonical measure is positive and finite on any open relatively compact subset of $X$.
 \end{thm}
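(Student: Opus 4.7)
The plan is to prove positivity and local finiteness of $\mu_X$ separately; once local finiteness at every point of $X$ is established, a compactness cover of $\bar U$ yields $\mu_X(U)<\infty$. Positivity is immediate: for nonempty open $U$ pick any $x\in U$ and set $k=\dim T_x X$; then $x\in X^k$ by Theorem~\ref{thm:locdim}, and local positivity of $\mathcal H^k$ on $X^k$ (same theorem) gives $\mu_X(U)\ge \mathcal H^k(X^k\cap U)>0$.

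For local finiteness at $x\in X$, set $N=\dim T_x X$. By Theorem~\ref{thm:locdim}, sufficiently small balls $B(x,r)$ have topological dimension $N$; by monotonicity of topological dimension (equivalently Hausdorff dimension, by Theorem~\ref{thm0}) and a second application of Theorem~\ref{thm:locdim}, every $y\in B(x,r)$ satisfies $\dim T_y X\le N$, so $X^k\cap B(x,r)=\emptyset$ for $k>N$. Thus $\mu_X(B(x,r))=\sum_{k\le N}\mathcal H^k(X^k\cap B(x,r))$ is a finite sum. Moreover, since $\bar X^k\setminus M^k$ has Hausdorff dimension at most $k-1$ (Theorem~\ref{thm:locdim}), we have $\mathcal H^k(X^k\cap B(x,r))=\mathcal H^k(M^k\cap B(x,r))$, and the task reduces to bounding the $k$-dimensional Riemannian volume of $M^k\cap B(x,r)$ for each $k\le N$ and $r$ small.

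The main obstacle is the case $x\in \bar X^k\setminus M^k$ with $k<N$: regular $k$-dimensional points cluster toward a singular $x$ whose tangent cone has strictly larger dimension, and one must prevent blow-up of $\mathcal H^k(M^k\cap B(x,r))$ as $r\to 0$. The plan here is to exploit the tangent cone $T_x X$, itself GCBA of dimension $N$, and a local Bishop-type volume comparison for GCBA spaces to be developed in the body of the paper: this yields a bound $\mathcal H^k(\bar X^k\cap B(x,r))\le C(x)\,r^k$ on small scales, controlled by the geometry of $\Sigma_x X$. In the remaining (easy) cases, $x\in M^k$ gives the bound directly from the local biLipschitz equivalence of $M^k$ with $\R^k$, while $x\notin\bar X^k$ makes the intersection empty for small $r$. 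Taken together, these bounds establish local finiteness of $\mu_X$ at every point, and compactness of $\bar U$ closes the argument.
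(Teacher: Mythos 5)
Your positivity argument matches the paper exactly, and your reductions at the start of the finiteness argument are sound: for fixed $x$ with $\dim T_xX = N$, only the terms $k\le N$ contribute to $\mu_X$ on a small ball around $x$, and for each such $k$ the mass of $\mathcal H^k\llcorner X^k$ is concentrated on the regular part $M^k$. The problem is the key step you leave open, the bound on $\mathcal H^k(M^k\cap B(x,r))$ as regular $k$-points accumulate at a singular $x$ of higher tangent dimension.

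You propose to close this by a ``local Bishop-type volume comparison for GCBA spaces to be developed in the body of the paper.'' No such comparison is developed, and it would be the wrong tool. An \emph{upper} curvature bound controls thinness of triangles, which tends to bound volumes from \emph{below} (cf.\ Proposition~\ref{prop:mapsn} and Remark~\ref{rem:volmin}); it gives no a priori upper volume control -- indeed the tree example cited in the introduction shows that, without geodesic completeness, Hausdorff dimension and measure can already blow up in CAT$(0)$. The paper instead obtains the needed upper bound $\mathcal H^k(X^k\cap U)<C\cdot r_0^k$ on a tiny ball $U$ (Corollary~\ref{cor:kreg}) by observing that no point of $U^k$ is $(k+1,12\delta)$-strained, so that $U^k$ sits inside the exceptional set $E$ of Theorem~\ref{lem: +1strainer}; the bound~\eqref{eq:measurebound} there comes from the strainer/rectifiability machinery -- each fiber of a $(k,\delta)$-strainer map meets $E$ in at most $C_1$ points by the bound on $\delta$-bad sets (Proposition~\ref{prop: bad}), plus the coarea estimate~\eqref{eq:differ}. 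That is a doubling/packing argument, not a curvature comparison. So as written your proof has a genuine gap at its central step; the reductions before it, while correct, are also superfluous once Corollary~\ref{cor:kreg} is in hand, since that corollary already gives a uniform bound on every $\mathcal H^k(X^k\cap U)$ for a tiny ball $U$, with no need to fix $x$ first and localize further.
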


The second theorem tells us that the canonical measure is continuous  with respect to the Gromov--Hausdorff topology. We formulate it here  for compact spaces and refer
to Section \ref{sec:stablecan} for the general  local statement.

\begin{thm}  \label{thm:stablecan}
Let $X_l$ be a sequence of compact GCBA spaces of dimension, curvature and diameter bounded  from above and  injectivity radius bounded from below by some constants. The total measures $\mu _{X_l} (X_l)$ are bounded from above by a
 constant if and only if,  upon choosing a subsequence, $X_l$ converge to a compact GCBA  space $X$ in the Gromov--Hausdorff topology.
 In this case,    $\mu _{X_l} (X_l)$ converge to   $\mu _{X} (X)$.
\end{thm}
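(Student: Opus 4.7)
The plan is to extract Gromov--Hausdorff precompactness from a uniform lower bound on ball volumes, and then deduce the continuity of the canonical measure from stability properties of the GCBA structure.

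First, I would establish the key estimate: under the uniform bounds of the hypothesis, there exist constants $\varepsilon_0, c > 0$ such that
\[
\mu_X(B_\varepsilon(x)) \geq c\,\varepsilon
\]
for every $X$ in the class, every $x \in X$, and every $0 < \varepsilon \leq \varepsilon_0$. Geodesic completeness combined with the injectivity radius bound produces a compact geodesic of length $\varepsilon_0$ issuing from $x$; since $\mu_X$ restricted to the closure of such a geodesic dominates the one-dimensional Hausdorff measure, the bound follows by integration. Assuming $\mu_{X_l}(X_l) \leq C$, this estimate bounds the cardinality of any $\varepsilon$-separated subset of $X_l$ by $2C/(c\varepsilon)$ uniformly in $l$, so Gromov's criterion yields a GH-convergent subsequence $X_l \to X$. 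Stability of $\CAT(\kappa)$ under Gromov--Hausdorff convergence, persistence of the injectivity radius bound, and compactness of the limit imply that $X$ is compact, locally compact and $\CAT(\kappa)$; geodesic completeness of $X$ follows by approximating short geodesics in $X$ by extendable ones in $X_l$ and passing to limits of extensions. Thus $X$ is GCBA and Theorem \ref{thm:finitecan} yields $\mu_X(X) < \infty$.

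The heart of the theorem is the continuity $\mu_{X_l}(X_l) \to \mu_X(X)$. I would cover $X$ by finitely many small balls $B_{r_j}(x^j)$, fix sequences $x_l^j \to x^j$ in $X_l$, and establish
\[
\mu_{X_l}(B_{r_j}(x_l^j)) \to \mu_X(B_{r_j}(x^j))
\]
for generic radii $r_j$, followed by summation over the cover. The lower bound $\liminf \geq \mu_X(B_{r_j}(x^j))$ uses Theorem \ref{thm:regular}: on a dense open subset of $B_{r_j}(x^j)$ the canonical measure coincides with a Riemannian volume on a biLipschitz chart, and these charts pull back to $X_l$ for large $l$. The upper bound $\limsup \leq \mu_X(B_{r_j}(x^j))$ is more delicate; it must prevent mass from escaping through accumulation of lower-dimensional strata in the limit, and is addressed via the tangent-cone description together with the uniform volume lower bound of the first step applied at smaller scales.

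The main obstacle is precisely this upper bound: ruling out that positive mass of $\mu_{X_l}$ collapses onto a subset of $X$ of strictly smaller canonical measure. Addressing it requires an effective, stratum-by-stratum version of GH convergence compatible with the approximations, drawing on the full strength of Theorems \ref{thm:locdim}--\ref{thm:finitecan}. Once the convergence of total measures is established, the remaining direction of the equivalence---that the existence of a GH-convergent subsequence forces the bound on $\mu_{X_l}(X_l)$ along that subsequence---is immediate from finiteness of $\mu_X(X)$.
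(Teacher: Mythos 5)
Your precompactness step rests on a false claim. You assert that geodesic completeness plus the injectivity radius bound produce a geodesic of length $\varepsilon_0$ issuing from $x$, and that ``$\mu_X$ restricted to the closure of such a geodesic dominates the one-dimensional Hausdorff measure,'' hence $\mu_X(B_\varepsilon(x))\geq c\varepsilon$. This is not so: the canonical measure $\mu_X=\sum_k \mathcal H^k\llcorner X^k$ assigns zero mass to any geodesic whose image lies in a region where the local dimension is at least $2$, since $\mathcal H^k$ for $k\geq 2$ vanishes on $1$-rectifiable sets. So there is no domination of $\mathcal H^1$ on geodesics, and the advertised linear lower bound $\mu_X(B_\varepsilon(x))\geq c\varepsilon$ fails already for $X=\R^2$ (where $\mu_X(B_\varepsilon(x))\sim\varepsilon^2$). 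Since this estimate is the engine of your bound on $\varepsilon$-separated sets, the Gromov precompactness conclusion does not follow from it.

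The paper's actual argument for this direction is quite different and does not use a uniform pointwise lower bound on $\mu_X$ of balls. It argues by contradiction: non-precompactness, via Proposition~\ref{prop:precomp} and the uniform diameter bound, yields tiny balls $U_l$ of a fixed radius with $\mu_{U_l}(U_l)\to 0$. Then, using the $2$-Lipschitz property of $\log_x$ and Proposition~\ref{prop:mapsn} (a $1$-Lipschitz surjection $\Sigma_x\to\Sph^{k}$ when $\dim T_x=k+1$), one gets the scale-dependent bound $\mathcal H^k(B_s(x))\geq\epsilon$ whenever $\dim T_x=k$. Combined with Corollary~\ref{cor:tangloc} this forces a nearby point of strictly higher tangent-cone dimension once $\mu_{U_l}(U_l)$ is small, and iterating at most $n+1$ times (where $n$ bounds the dimension) produces a contradiction. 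The dimensional ``chaining'' is what replaces your (invalid) one-dimensional density estimate.

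Your continuity step also inherits the problem: you invoke ``the uniform volume lower bound of the first step applied at smaller scales'' for the hard upper inequality $\limsup\leq\mu_X$, but as noted that bound is false. In the paper this half of the argument (that no mass collapses onto the singular set) is carried out concretely: the singular set $T$ has $\mathcal H^k(T)=0$; one pushes forward via the finitely many $(k,\delta)$-strainer maps $G^l_j$ and uses Theorem~\ref{lem: +1strainer} to compare measures of strained sets with the Lebesgue measure of their images, which are contained in shrinking tubes around $G_j(K)$ of vanishing measure. Without an argument of this kind your proposal leaves the upper bound, which is precisely the delicate half, unproved.
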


Having described the regular parts of $X$ we turn to a stratification of the singular parts $X^k\setminus M^k$ neglected by the canonical measure.
 The following
stratification of $X$, a weak surrogate of the topological
stratification, is motivated   by the example of skeletons   of a simplicial complex.

 For a natural number $k$, we say that a point $x\in X$ is  \emph{$(k,0)$-strained} if its tangent space
$T_xX$ admits  the Euclidean space $\R ^k$  as a direct  factor. We denote by $X_{k,0}$
the set of all $(k,0)$-strained points in $X$.

\begin{thm} \label{thmfirst}
Let $X$ be GCBA and $k\in \N$.   Then $X \setminus X_{k,0}$
is a countable union of  subsets, which are  biLipschitz equivalent to some compact subsets of  $\R^{k-1}$.
\end{thm}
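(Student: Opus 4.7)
The plan is to adapt the strainer machinery of \cite{BGP} to the GCBA setting. First I would introduce $(k,\delta)$-strained points: $x$ is $(k,\delta)$-strained if there exist points $(a_i,b_i)_{i=1}^{k}$ near $x$ with $\angle a_ixb_i>\pi-\delta$ and with $|\angle a_ixa_j-\pi/2|,\,|\angle a_ixb_j-\pi/2|<\delta$ for all $i\ne j$. The set $S_k^\delta$ of such points is open in $X$, and the compactness of $\Sigma_xX$ (itself GCBA and of bounded diameter, hence compact) together with a limiting argument yields $X_{k,0}=\bigcap_{\delta>0}S_k^\delta$. Consequently $X\setminus X_{k,0}$ is the countable union of the closed sets $A_n:=X\setminus S_k^{1/n}$, and by separability it suffices to cover each $A_n\cap K$ for $K\subset X$ compact by countably many biLipschitz copies of compact subsets of $\R^{k-1}$.

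Fix $n$, set $\delta:=1/n$, and for $0\le m\le k-1$ partition $A_n=\bigsqcup_{m=0}^{k-1}A_n^m$, where $A_n^m$ is the set of $x\in A_n$ that are $(m,\delta)$-strained but not $(m+1,\delta)$-strained. For each $x\in A_n^m\cap K$ choose an $(m,\delta)$-strainer $(a_i,b_i)_{i=1}^m$ and form the $1$-Lipschitz map
\[
F\colon B_r(x)\to\R^m,\qquad F(y)=(d(y,a_1),\ldots,d(y,a_m)).
\]
By continuity, for $r>0$ small enough $(a_i,b_i)$ remains a $(m,2\delta)$-strainer at every $y\in B_r(x)$. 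The key assertion is that, after possibly shrinking $r$, the restriction $F|_{A_n^m\cap B_r(x)}$ is biLipschitz onto its image in $\R^m\subset\R^{k-1}$. Granted this, compactness delivers finitely many such charts covering $A_n^m\cap K$, and a countable compact exhaustion of $X$ completes the proof.

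The hard part, and the main obstacle, is the lower biLipschitz bound for $F$. I would prove it by a blow-up argument. If the bound fails for every $r>0$, extract sequences $y_j,y_j'\in A_n^m\cap B_r(x)$ with $y_j\to x_0\in\overline{A_n^m}$, $\epsilon_j:=d(y_j,y_j')\to 0$, and $|F(y_j)-F(y_j')|/\epsilon_j\to 0$. The property \emph{not $(m+1,\delta)$-strained} is closed, so it passes to $x_0$. Rescaling the metric near $x_0$ by $\epsilon_j^{-1}$, the pre-tangent spaces subconverge in the pointed Gromov--Hausdorff topology to $T_{x_0}X$, by the structural results for GCBA recalled in Section~\ref{sec:setting}. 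The rescaled $y_j,y_j'$ subconverge to a pair of points at distance $1$, defining a unit direction $v\in\Sigma_{x_0}X$; geodesic completeness of $T_{x_0}X$ produces the antipode $-v$. The hypothesis on the $F$-quotient forces $v$ and $-v$ to make angle exactly $\pi/2$ with every direction from $x_0$ toward the $a_i$ in the limit. Since $(a_i,b_i)$ remains a $(m,2\delta)$-strainer at $x_0$ and the new directions $v,-v$ can be realized by nearby points along geodesics emanating from $x_0$ (possible by geodesic completeness), one obtains a genuine $(m+1,\delta)$-strainer at $x_0$, contradicting the preceding observation. The delicate point is ensuring the new strainer's tolerance is $\delta$ rather than some larger constant; this requires the limit angles to equal exactly $\pi/2$ and invokes both the compactness of $\Sigma_{x_0}X$ and the geodesic completeness of $T_{x_0}X$.
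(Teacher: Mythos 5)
Your overall strategy — open up the $(k,0)$-condition to $(k,\delta)$-strained, stratify the complement by the maximal straining degree, and use the strainer distance map as a biLipschitz chart on each stratum — is the same as the paper's. But two steps as written do not survive scrutiny, and both are where the real work in the paper happens.

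\textbf{The biLipschitz claim is too strong.} You assert that $F|_{A_n^m \cap B_r(x)}$ is biLipschitz onto its image. This is false in general: a fiber $\Pi = F^{-1}(F(y))$ can meet the bad set in several distinct points (think of the singular locus of a product of a tripod with $\R^{m-1}$, or of branching sheets over a region of $\R^m$), so $F$ cannot be injective on $A_n^m\cap B_r(x)$. Your blow-up detects only \emph{infinitesimal} degeneration of $F$ ($\epsilon_j := d(y_j,y_j')\to 0$); it says nothing about pairs at bounded positive distance with $|F(y_j)-F(y_j')|\to 0$, which is precisely the failure mode when there are several sheets. What one can actually prove (as the paper does in Theorem \ref{lem: +1strainer}) is an infinitesimal lower bound of the form
\[
\liminf_{x'\to x,\ x,x'\in E}\frac{\|F(x)-F(x')\|}{d(x,x')}\ \geq\ \delta\,,
\]
together with a \emph{uniform} bound (Proposition \ref{prop: bad}) on the number of points of the bad set in any single fiber. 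From these two facts a general decomposition lemma (\cite[Lemma 3.1]{Ly-open}) produces a \emph{countable} family of compact subsets $E_j$ on which $F$ is biLipschitz; the conclusion of Theorem \ref{thmfirst} is a countable union, not a finite atlas of honest biLipschitz charts, and the extra decomposition step cannot be skipped.

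\textbf{The tolerance mismatch you flag is fatal as set up, and the blow-up does not repair it.} You use the same $\delta$ to define $(m,\delta)$-strained and not $(m+1,\delta)$-strained, then hope the limit direction $v$ you manufacture can be appended to the $(a_i,b_i)$ to produce an $(m+1,\delta)$-strainer at $x_0$. But $(a_i,b_i)$ is only an $(m,2\delta)$-strainer at $x_0$, and even if the limit angles were exactly $\pi/2$ you would only upgrade to a $(m+1,C\delta)$-strainer with $C>1$; no contradiction with ``not $(m+1,\delta)$-strained'' follows. The paper avoids this by using a cascading tolerance scheme: in Proposition \ref{prop: rect} the induction compares $U_{k,\delta/50}$ with $U_{k+1,\delta}$, and Theorem \ref{lem: +1strainer} tests extendability to a $(k+1,12\delta)$-strainer map, so the loss of constants is absorbed. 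Moreover, the paper never takes a blow-up for this estimate: from $|\angle p_i x x_l - \pi/2|<2\delta$ plus the key fact (Proposition \ref{lem: first}) that $x$ itself is a $(1,4\delta)$-strainer at all nearby $x_l$, it derives the contradiction directly at the \emph{finite} index $l$, not in a limiting tangent space. This sidesteps the basepoint problem in your blow-up as well, where if $d(y_j,x_0)/\epsilon_j\to\infty$ the rescaled points at $x_0$ escape to infinity, and rescaling at $y_j$ gives a limit that need not be $T_{x_0}X$. I recommend replacing the blow-up by the local angle estimate, introducing a descending chain of tolerances, and, crucially, inserting the finite-multiplicity and countable decomposition steps before claiming the biLipschitz charts exist.
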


In particular,  the set of not $(k,0)$-strained points is \emph{countably $(k-1)$-rectifiable}.  For similar rectifiable stratifications on different classes of metric spaces we refer, for instance, to \cite{Mondino} and the literature therein.

 \begin{rem}
Parts of  Theorems \ref{thm:locdim},  \ref{thm:regular}, \ref{thm:stablecan}, \ref{thmfirst} have   analogues   in  \cite{Otsu} and   \cite{N2}. We refer to Remarks \ref{rem:remthmfirst},   \ref{rem:remlocdim},  \ref{rem:remstablecan},
and \ref{rem:hessian} for a   comparison.
\end{rem}

\begin{rem}
 Theorem  \ref{thm:finitecan} provides an answer to a question from  \cite{Lafont}. Moreover, Theorem  \ref{thm:finitecan} implies the validity of the property  (U) from \cite{Leuzinger} on any co-compact GCBA space. Hence, it shows the validity of  the main theorem in \cite{Leuzinger}  for all such spaces, see the discussion in \cite{Lafont}.
\end{rem}

\subsection{Main tool and further results}
We are going to introduce the main tool of the paper and a more  informal description of further central results.
 The set   $X_{k,0}$  is usually not open in $X$. As in the theory of Alexandrov spaces developed in  \cite{BGP},
there is a natural way to open up the condition of being  $(k,0)$-strained.

 For any  $\delta>0$, we define
an open subset $X_{k,\delta}$ of a GCBA space $X$ which consists of \emph{$(k,\delta)$-strained} points. While the definition of being $(k,\delta)$-strained is slightly technical,
see Sections \ref{sec:spherical} and \ref{sec:strainer}, the meaning is very simple.

 A point $x\in X$ is $(k,\delta)$-strained, for a small $\delta$, if its tangent space $T_xX$ is sufficiently close to a  space which splits off a direct $\R^k$-factor, see  Proposition  \ref{lem:equiv}.
In other words, a point $x\in X$ is $(k,\delta )$-strained if and only if there  exist  $k$ points $p_1,...,p_k \in X\setminus \{ x\}$, close to $x$,
such that the following holds true. The geodesics $p_ix$ meet in $x$ pairwise at an angle close to $\pi /2$ and the possible
branching angles of the geodesics $p_ix$ at $x$ are small ("small" and "close" is expressed in terms of $\delta$).

The subsets $X_{k,\delta}$ are open in $X$ and  decrease for fixed $k$ and decreasing $\delta$. The   set $X_{k,0}$ of $(k,0)$-strained points  is the countable intersection  $X_{k,0}=\bigcap _{j=1} ^{\infty}  X_{k,\frac 1 j}$.

Each point $x\in X_{k,\delta}$ comes along with  natural maps,  so-called $(k,\delta)$-\emph{strainer maps} $F:V\to \R^k$, defined on a neighborhood $V$ of $x$.  Strainer maps are   analogues  of the  orthogonal  projection onto a face, defined in a neighborhood of
that face in a simplicial complex.
The coordinates of $F$ are distance functions to points  $p_i$ in $X \setminus \{ x\}$, for a $k$-tuple $(p_i)$ as in the above definition of $(k,\delta)$-strained points. In other words, a point $x\in X$ is $(k,\delta)$-strained if and only if there exists a $(k,\delta)$-strainer map
$F$ on a neighborhood $V$ of $x$.

The basic example of a strainer map, responsible for their abundance, is given by the following observation. For any point $p$ in a GCBA space $X$, and any $\delta >0$,  the distance function to $p$ is   a  $(1,\delta)$-strainer map on a small punctured  ball around  $p$, Proposition \ref{lem: first}.

 For  $\delta$ small enough,
any $(k,\delta)$-strainer map $F$
 is similar to a  Riemannian submersion, see  Sections  \ref{sec:strmaps}, \ref{sec:fibers}.
  In particular, $k$ is not larger than  $\dim (X)$.
The following technical result  is the base for all  further investigations on singular sets:

\begin{thm} \label{thm:technical}
Let  $F:V\to \R^k$ be a $(k,\delta)$-strainer map on a sufficiently small open subset $V$ of
 a GCBA space $X$.   Then
the set $V\setminus X_{k+1, 12 \cdot \delta}$ is  a union of a countable family of compact  subsets $K_i$
 such that
 $F:K_i\to F(K_i)$ is biLipschitz.
\end{thm}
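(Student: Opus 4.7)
The plan is to reduce the theorem to a local biLipschitz estimate and then handle the countable decomposition by elementary means. Since $V$ is $\sigma$-compact, write $V = \bigcup_n K_n$ as a countable union of compact subsets $K_n$. Because $X_{k+1,12\delta}$ is open in $V$, each $K_n \setminus X_{k+1,12\delta}$ is a compact subset of $V \setminus X_{k+1,12\delta}$, and these cover $V \setminus X_{k+1,12\delta}$. It therefore suffices to further refine each such compact piece into finitely many compact sub-pieces on which $F$ is biLipschitz with a uniform constant. The upper Lipschitz bound is automatic since the coordinates of $F$ are distance functions. The substantive content is the following local claim: at each $x_0 \in V \setminus X_{k+1,12\delta}$ there exist a neighborhood $U$ of $x_0$ and a constant $L$ such that
\[
d(y,z) \le L \, \|F(y) - F(z)\|
\]
for all $y, z \in U \cap (V \setminus X_{k+1,12\delta})$. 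Granted the claim, compactness of each $K_n \setminus X_{k+1,12\delta}$ yields a finite cover by such neighborhoods with uniform constants, producing the desired compact biLipschitz pieces $K_i$.

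I would establish the local claim by contradiction. If it fails at $x_0$, pick sequences $y_n, z_n \in V \setminus X_{k+1,12\delta}$ with $y_n, z_n \to x_0$ and $\|F(y_n) - F(z_n)\| = o(d(y_n, z_n))$. Write $\xi_n := \uparrow_{y_n}^{z_n} \in \Sigma_{y_n} X$ for the starting direction of the geodesic from $y_n$ to $z_n$. Since each coordinate of $F$ is a distance function $d(\,\cdot\,, p_i)$, the first variation formula and the displayed estimate imply that $\xi_n$ makes angle within $o(1)$ of $\pi/2$ with each strainer direction $\uparrow_{y_n}^{p_i}$. By the hypothesis on $F$, the tuple $(p_1, \ldots, p_k)$ is a $(k, \delta)$-strainer at $y_n$. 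I would then show that adjoining the point $z_n$ yields a $(k+1, 12\delta)$-strainer $(p_1, \ldots, p_k, z_n)$ at $y_n$ for sufficiently large $n$, contradicting $y_n \notin X_{k+1,12\delta}$.

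Verifying the $(k+1, 12\delta)$-strainer property for the enlarged tuple reduces to checking pairwise angles and branching conditions. The pairwise angles $\angle_{y_n}(p_i, p_j)$ are $\delta$-close to $\pi/2$ by the existing strainer hypothesis; the angles $\angle_{y_n}(z_n, p_i) = \angle_{y_n}(\xi_n, \uparrow_{y_n}^{p_i})$ are $o(1)$-close to $\pi/2$ as explained above. The branching conditions for the $p_i$ are inherited from the strainer $F$, while the corresponding condition for the new direction $\xi_n$ must be derived from the smallness of $\|F(z_n) - F(y_n)\|$ relative to $d(y_n, z_n)$ together with the $\CAT$ upper curvature comparison. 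I expect the main obstacle to lie in the bookkeeping of constants: the factor $12$ is essentially a universal margin absorbing the cumulative losses from passing between derivatives and secant estimates at finite scale and from translating pairwise almost-orthogonality into the full strainer inequalities that also involve branching. Once this bookkeeping is carried out using the strainer-map calculus developed earlier in the paper, the local claim follows and the countable decomposition is immediate.
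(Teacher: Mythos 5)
Your strategy aims to prove, at each $x_0 \in V \setminus X_{k+1,12\delta}$, a genuine local biLipschitz bound for $F$ on $E := V \setminus X_{k+1,12\delta}$ near $x_0$. This is strictly stronger than what the paper establishes and than what is actually needed. The paper (in Theorem \ref{lem: +1strainer}) only proves the pointwise infinitesimal estimate \eqref{eq:differ}: for each fixed $x\in E$ and every sequence $x_l \to x$ \emph{in} $E$, $\liminf \|F(x)-F(x_l)\|/d(x,x_l)\geq\delta$. A pointwise condition of this type does not imply a local two-sided bound: two nearby points $y_n,z_n$, both converging to $x_0$ but with neither being the fixed basepoint, can have $\|F(y_n)-F(z_n)\|/d(y_n,z_n)\to 0$ while the pointwise estimate holds at every point. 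Passing from such an infinitesimal estimate to a countable decomposition into compact biLipschitz pieces is exactly the content of the cited decomposition lemma \cite[Lemma 3.1]{Ly-open}, which is the device the paper uses and which your proposal omits.

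Independently, your argument by contradiction has a concrete gap at the point you flag. You need $(p_1,\dots,p_k,z_n)$ to be a $(k+1,12\delta)$-strainer at $y_n$, which in particular requires the direction $\xi_n=(y_n z_n)'$ to be almost spherical in $\Sigma_{y_n}$ --- this is the branching control. The almost-orthogonality you extract from the first variation formula and the $\CAT$ comparison controls the \emph{angles} $\angle p_i y_n z_n$ but says nothing about the diameter of the antipode set of $\xi_n$. The only tool in the paper that supplies branching control is Proposition \ref{lem: first}, and it applies to a \emph{fixed} point $p$: it asserts that $p$ is a $(1,\delta)$-strainer throughout a punctured neighborhood of $p$. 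In your setup both $y_n$ and $z_n$ vary, so there is no fixed candidate strainer to invoke it on. This is precisely why the paper formulates the estimate at a fixed $x\in E$ against sequences $x_l\to x$: then $x$ is the candidate $(k+1)$-st strainer, $x_l$ eventually lies in the punctured neighborhood from Proposition \ref{lem: first}, and the branching control at $x_l$ comes for free. To repair your argument you would essentially need to rederive the paper's infinitesimal estimate and then still appeal to a Lusin-type decomposition lemma, rather than a compactness covering argument, to get the countable biLipschitz pieces.
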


The biLipschitz constant of the restrictions $F_i:K_i\to F(K_i)$ and the total measure $\mathcal H^{k} (V\setminus X _{k+1, 12 \cdot \delta})$ in Theorem \ref{thm:technical} are bounded in terms of
$\delta$ and $V$, see Theorem \ref{lem: +1strainer} below.
   The theorem  allows, by a reverse induction on $k$, a  good control of the measures of  singular sets. We refer to Section \ref{sec:strata} for quantitative
   versions of the volume estimates, leading to proofs (and more precise versions) of   Theorems \ref{thmfirst}, \ref{thm:locdim}, \ref{thm:finitecan}.

 The strainer construction   is stable
under Gromov--Hausdorff limits, Section \ref{sec:strainer}.  This  provides us the basic tool for the proof of Theorems \ref{thm:finitecan} and  \ref{thm:stablecan}.

The relation to Theorem \ref{thm:locdim} is established by defining $M^k$ to be the intersection of the $k$-dimensional part $X^k$ of $X$ with $X_{k,\delta}$ for sufficiently small $\delta$.  The DC-atlas on $M^k$ in Theorem \ref{thm:regular} is provided by the $(k,\delta)$-strainer maps.
\begin{rem}
If $k =\dim (X)$ then $X_{k,\delta}$ is closely related to  sets of \emph{not $\delta'$-branch points} used in \cite{Otsu} to analyze the regular part of a GCBA space.
\end{rem}

 From the  homotopy theoretical point of view,
strainer maps have  simple local structure.
 We refer to Sections \ref{sec:strmaps}, \ref{sec:fibers}  for a more   precise  version of the following
\begin{thm} \label{thm:contractible}
Let $F:V\to \R^k$ be  a  $(k,\delta)$-strainer map with $\delta \leq \frac 1 {20 \cdot k}$.
Then,  the map $F$ is open and  for any compact subset $V'$ of $V$, there is some $\epsilon >0$ with the following property. For any $x\in V'$  and any $0<r<\epsilon$,
the open ball of radius $r$   around $x$ in the fiber  $  F^{-1} (F(x)) $
is contractible.
\end{thm}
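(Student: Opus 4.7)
The plan is to produce a continuous local section of $F$ along fibers, and then use it to push a radial geodesic contraction in $X$ into the contraction of a fiber ball. Specifically, the goal is to construct, for a small enough neighborhood $U$ of any $x_0 \in V'$, a continuous map
$$\Psi : U \times B_\rho(0) \to V, \qquad \Psi(y,0)=y,\qquad F(\Psi(y,v)) = F(y)+v,$$
where $B_\rho(0)\subset\R^k$ is a small ball. Openness of $F$ is an immediate consequence: the image of $U$ contains $F(x_0)+B_\rho(0)$.

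The construction of $\Psi$ uses the strainer structure directly. Writing $F_i=d(\cdot,p_i)$, the $(k,\delta)$-strainer condition says, after the development in Sections \ref{sec:strmaps} and \ref{sec:fibers}, that moving a point $y\in U$ a small distance $s$ along the (extended) geodesic through $y$ and $p_i$ changes $F_i$ with slope $-1$ up to an error $O(\delta)$, while changing each $F_j$, $j\ne i$, with slope $O(\delta)$; this uses the near-$\pi/2$ angles and the smallness of branching angles, both uniformly on $V$. Given a target offset $v\in\R^k$, one first moves along the $p_1$-direction by $-v_1$, then along $p_2$ by $-v_2$, and so on; the residual error $F(\text{new point})-F(y)-v$ is bounded by $Ck\delta\,|v|$ for a universal $C$. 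Iterating this correction gives a geometric series which converges to a well-defined continuous point $\Psi(y,v)$, provided $Ck\delta<1$. The choice $\delta\le \tfrac{1}{20k}$ is exactly what makes the iteration contract (with room to spare), and the resulting $\Psi$ is continuous in $(y,v)$ by construction.

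For the contractibility statement, fix $x\in V'$ and choose $\epsilon>0$ so small that, for every $y$ in the fiber ball $B_r(x)\cap F^{-1}(F(x))$ with $r<\epsilon$, the unique geodesic $\gamma_y:[0,1]\to X$ from $y$ to $x$ lies in the domain $U$ of $\Psi$, and moreover the error $|F(x)-F(\gamma_y(t))|$ lies in $B_\rho(0)$ for all $t$; this is possible since $F$ is Lipschitz and geodesics depend continuously on their endpoints in a GCBA space. Define
$$H(y,t) \;=\; \Psi\!\bigl(\gamma_y(t),\; F(x)-F(\gamma_y(t))\bigr).$$
Then $F\circ H(y,t)=F(x)$ for all $t$, so $H$ takes values in the fiber; $H(y,0)=\Psi(y,0)=y$ and $H(y,1)=\Psi(x,0)=x$. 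Continuity of $H$ in $(y,t)$ follows from local uniqueness and continuity of geodesics in CAT-spaces, together with continuity of $\Psi$. One finally checks that for $r$ small enough $H$ maps into $B_r(x)\cap F^{-1}(F(x))$ itself, using the Lipschitz control of $\Psi$; this yields a contraction of the fiber ball to $x$ inside itself.

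The main obstacle is the construction of $\Psi$, i.e., producing the continuous local section, and in particular verifying the first-order control of the strainer distance functions uniformly on $V$ that drives the contraction estimate. This is precisely the content anchored in the geometry developed in Sections \ref{sec:strmaps} and \ref{sec:fibers}; once these uniform slope estimates are in place, the numerics $Ck\delta<1$ for $\delta\le\frac{1}{20k}$ secure both openness and the fiberwise contraction, while local uniqueness of geodesics in CAT-spaces provides the required continuity throughout.
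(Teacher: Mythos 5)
Your strategy is genuinely different from the paper's: you try to build a continuous local section $\Psi$ and then directly contract the fiber ball by sliding the radial geodesic to $x$ back into the fiber, whereas the paper builds a deformation retraction of the ambient ball $B_r(x)$ onto $B_r(x)\cap \Pi$ (Theorem \ref{prop: fibertop}) and then quotes contractibility of the ambient ball. Both are natural, but your version has two unaddressed issues.

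The critical gap is at the very end: you assert that $H$ maps $B_r(x)\cap F^{-1}(F(x))$ into itself ``using the Lipschitz control of $\Psi$.'' Lipschitz control alone does not give this. If $\Psi$ is $L$-Lipschitz with $\Psi(x,0)=x$, the estimate $d(H(y,t),x)\leq L\bigl(d(\gamma_y(t),x)+|F(x)-F(\gamma_y(t))|\bigr)\leq L(1+\mathrm{Lip}(F))\,d(y,x)$ is the best that follows, and the constant $L(1+\mathrm{Lip}(F))$ is in general $>1$. So for $y\in B_r(x)$ you only land in $B_{Cr}(x)$, and the theorem demands contractibility of $B_r(x)\cap\Pi$ for \emph{every} $r<\epsilon$, inside itself. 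What actually closes this gap in the paper is property (3) of Theorem \ref{prop: fibertop}: because each correction flow moves along geodesics toward $p_i$ or $q_i$, and $x$ lies on the geodesic $p_iq_i$, the $\mathrm{CAT}(\kappa)$ comparison guarantees that no step of the flow increases the distance to $x$; hence the retraction stays within every ball $B_r(x)$. Your $\Psi$-moves are also along such geodesics, so the same argument would rescue your proof, but it must be invoked explicitly — it is geometry, not a Lipschitz bound.

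A secondary concern is the claimed continuity of $\Psi$. Your iteration ``move by $-v_i$ along the $p_i$-direction'' switches between moving toward $p_i$ and moving toward $q_i$ as $v_i$ changes sign. The paper's flow $\phi_i$ is designed so that this switch happens exactly when the flow has zero length (it stops on $\Pi_i$), which makes continuity automatic; with a fixed-step correction of size $|v_i|$, continuity at the sign flip is not ``by construction'' and would need an argument. You should either adopt the stop-on-the-level-set formulation of the flow or verify that the step size vanishes at the flip.
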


In the continuation \cite{LNtop} of the present paper,
this result will be used to prove that strainer maps are local
\emph{Hurewicz  fibrations}.
Here, we just apply the homotopic stability results of \cite{Petersen} and deduce, that
if a fiber $F^{-1} (\mathfrak t)$ is compact in $V$ then all nearby fibers are homotopy equivalent to it.
  Moreover,  this homotopical stability of fibers is preserved under Gromov--Hausdorff convergence, as will follow from   \cite{Petersen}.  We refer to Section \ref{sec:stable} for exact results and state here the following  special case (originating from the convergence of the rescaling of the given space to the tangent cone at a point).

\begin{thm} \label{thmsph}
Let $X$ be  GCBA.
For each point $x\in X$ there is some $r_x>0$ such that for all $r< r_x$ the
metric sphere $\partial B_r (x)$ of radius $r$ around $x$ is homotopy equivalent to the space of directions
$\Sigma _x X$.
\end{thm}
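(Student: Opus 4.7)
The plan is to realize both $\partial B_r(x)$ and $\Sigma_xX$ as fibers of a $(1,\delta)$-strainer map and then to invoke the Gromov--Hausdorff stability of fiber homotopy types announced in Section \ref{sec:stable}. By Proposition \ref{lem: first}, for any fixed small $\delta$, say $\delta \leq \frac{1}{20}$, the distance function $f := d(\cdot,x)$ is a $(1,\delta)$-strainer map on some small punctured ball $B_\rho(x) \setminus \{x\}$, and its fibers are exactly the metric spheres $\partial B_r(x)$, each of which is compact by local compactness of $X$.

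First I would apply Theorem \ref{thm:contractible} to $f$ to obtain contractibility of small open balls in each fiber. Combined with the compactness of the fibers, the stability result based on \cite{Petersen} quoted in the paragraph preceding the theorem implies that the homotopy type of $\partial B_r(x)$ is locally constant in $r$ on $(0,\rho)$; by connectedness of this interval, it is in fact independent of $r$ on $(0,\rho)$. Call this common homotopy type $Y$.

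To identify $Y$ with $\Sigma_xX$, I would blow up at $x$. The pointed rescalings $X_n := (X, n \cdot d, x)$ are GCBA and converge in pointed Gromov--Hausdorff topology to $(T_xX, 0)$; the unit metric sphere in $T_xX$ is $\Sigma_xX$, while the unit metric sphere in $X_n$ coincides as a set with $\partial B_{1/n}(x) \subset X$. Since being a $(1,\delta)$-strainer is a scale-invariant condition on comparison angles, the distance function to the basepoint is a $(1,\delta)$-strainer map on a uniformly large punctured ball in each $X_n$ and on all of $T_xX \setminus \{0\}$. The Gromov--Hausdorff version of the fiber-stability statement, applied at value one, then yields $\partial B_{1/n}(x) \simeq \Sigma_xX$ for all sufficiently large $n$, which combined with the preceding step gives the theorem with $r_x := 1/N$ for large $N$. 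The main technical point to verify is the uniformity of the strainer data along the blow-up sequence needed to apply the Gromov--Hausdorff fiber stability, but this follows from the scale-invariance of the angle-based strainer definition together with the compactness of $\Sigma_xX$ recorded in Section \ref{sec:setting}.
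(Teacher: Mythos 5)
Your proposal is correct and follows essentially the same route as the paper: rescale around $x$ so that the blow-ups converge to $T_xX$ (Corollary \ref{cor:converge}), view the metric spheres and $\Sigma_xX$ as fibers of the $(1,\delta)$-strainer map given by the distance to the basepoint, and invoke the Gromov--Hausdorff fiber stability of Theorem \ref{thm:fiberstab}. The only cosmetic difference is how the quantifier ``for all $r<r_x$'' is obtained: you first argue that the homotopy type of $\partial B_r(x)$ is locally constant in $r$ on a connected interval and then identify it via the blow-up, whereas the paper simply applies the blow-up argument to an arbitrary sequence $t_l \to 0$.
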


If the metric sphere in Theorem \ref{thmsph} is replaced by a punctured ball, the result
is  simpler and the extendibility of  geodesics does not need to be assumed. This has been observed by Kleiner (unpublished)
and  appeared in \cite{Linus}.

The term homotopy equivalent in Theorem \ref{thmsph}
cannot be replaced by "homeomorphic" (Example \ref{ex:surface} below), as it were the case for Alexandrov spaces,
\cite{P2}, \cite{Kap}. This example  shows that,    for general GCBA spaces,
there is no hope of obtaining
a local conicality theorem or topological stability  as in \cite{P2}.
 Moreover, while strainer maps are local fibrations they do not need to be local fiber bundles.

 In the continuation \cite{LNtop} of this paper, we  prove,  that  in some interesting cases,
 the measure-theoretical and homotopy-theoretical stratifications described above
can be upgraded, to provide control on the topological type of the spaces
 in question.
   For instance, GCBA spaces which arise as  limits of Riemannian manifolds can be very well understood, similarly to    \cite{Kap-reg}.

 \subsection{Structure of the paper} We  are going to describe   the contents and main results  of the sections of the paper.

 In the auxiliary Sections \ref{sec:prel}, \ref{sec:prelcurv}, \ref{sec:extend}  we collect preliminaries about general metric spaces,
  spaces with upper curvature bounds and  the geodesic extension property.

  In Section \ref{sec:gcba} we begin the  investigation
  of the central object of this paper and discuss all  properties of  GCBA spaces not based on the notion of strainers.  We localize all discussions by introducing the notion of a \emph{tiny ball of a GCBA space}, a relatively compact ball of a radius very small in comparison to the  curvature bound.  All later results are proven first inside tiny balls and then by  covering the whole space by tiny balls.  It is shown that tiny balls are doubling and that the bound on the doubling constant  (\emph{the capacity of the tiny ball}) is essentially equivalent to the precompactness
in the Gromov--Hausdorff topology, Propositions \ref{prop:doubl}, \ref{prop:precomp}.  We show that tiny balls
  admit almost isometric embeddings into  finite dimensional normed spaces, Proposition \ref{lem:embed}.
  We show that tangent spaces of GCBA spaces are GCBA,  Corollary \ref{cor:converge},  and describe a natural semicontinuity  of tangent spaces under convergence, Lemma \ref{lem: semiproj}.

 In Sections \ref{sec:spherical}, \ref{sec:strainer}, \ref{sec:strmaps}  we define $(k,\delta)$-strained points, $(k,\delta)$-strainer
 and $(k,\delta )$-strainer maps, Definitions \ref{def:deltasph}, \ref{def:deltaort}, \ref{def:deltastr}, \ref{defn: strainer}.
  We confirm that $(k,\delta)$-strained points are exactly the points whose  tangent space is "close" to a space with  an $\R^ k$-factor, Proposition \ref{lem:equiv}.
  We discuss the abundance  of strainers,
  Proposition \ref{lem: first}, and prove that the  notions are stable under small perturbations.

  In Section \ref{sec:strmaps} we show that $(k,\delta)$-strainer maps with small $\delta$ are almost  submersions,
  Lemma \ref{lem:openstr},  Corollary \ref{cor:openstr}.

  In Section \ref{sec:fibers}  we prove  Theorem \ref{thm:contractible} and discuss an application.

  In Section \ref{sec:strata} we show that no  tiny ball  contains arbitrary large subsets such that
  no point of this subset is a strainer of some  other point of this subset, Proposition \ref{prop: bad}. Based on this result
  we prove generalized versions of Theorems \ref{thmfirst}, \ref{thm:technical}.

   In Section \ref{sec:dim} we prove generalized versions of Theorems \ref{thm0}, \ref{thm:locdim} and \ref{thm:finitecan}.

 Section \ref{sec:stablecan} is devoted to the proof of a generalized version of  Theorem \ref{thm:stablecan}.

  In Section \ref{sec:stable} we prove a generalized version of Theorem \ref{thmsph}.

  In Section \ref{sec:DC} we follow \cite{P3} to prove that  a $(k,\delta)$-strainer map
  on a subset of $X^k$ is a local $\DC$-isomorphism.  From this we deduce Theorem \ref{thm:regular}
  and show that any $\DC$-function is twice differentiable almost everywhere, Proposition \ref{prop:hessian}.
  Of special interest,  in particular, for volume rigidity, \cite{Li},  might be  general  stability of length under convergence of
  $\DC$-curves, Proposition \ref{prop:contin}.

\subsection*{Acknowledgments}  Most results  of this paper  have been obtained and presented in talks more than 10 years ago. The authors would like to express their gratitude to people, who have shown interest  in our results and whose interest was  responsible for the finalization of the paper. In particular, we would like to thank Werner Ballmann,
J\'{e}r\^{o}me Bertrand, Pierre-Emmanuel Caprace, Karsten Grove, Vitali Kapovitch, Beno\^it Kloeckner, Urs Lang,   Takao Yamaguchi.   We are  grateful to Anton Petrunin and Stephan Stadler for very helpful comments.


\section{Preliminaries} \label{sec:prel}

\subsection{Spaces and maps}
\cite{BH}, \cite{BBI01} and \cite{ballmann} are
general references for this section.
By $d$ we  denote distances in metric spaces.
For a subset $A$ of a metric space $X$ and $r>0$, we denote by $B_r (A)$  the \emph{open  neighborhood of radius $r$ around $A$},
hence the set of all points with distance less than $r$ from $A$.
By $ r \cdot  X$ we
denote the set $X$ with the metric rescaled by $r$.
A space is \emph{proper} if its closed bounded subsets are compact.

A subset of a metric space is called \emph{$r$-separated}
if its  elements have pairwise distances at least  $r$.
A  metric space $X$  is \emph{doubling}
(more precisely, $L$-doubling)
if no   ball of radius $r$ in $X$ has an $(r/2)$-separated
subset with  more than $L$ elements.
Equivalently, any $r$-ball is covered by a uniform number of balls of radius $r/2$.

The \emph{length of a curve} $\gamma $ in a metric space is denoted by
$\ell (\gamma )$.
A \emph{geodesic}
is an isometric embedding of an interval.
A \emph{triangle} is a union of three geodesics connecting three points.
A \emph{local geodesic} is a curve $\gamma :I\to X$ in a metric space $X$ defined on an  interval
$I$, such that the restriction of $\gamma$ to a small neighborhood of any  $t\in I$ is a geodesic.
$X$  is \emph{a  geodesic metric space} if
any pair of   points of $X$
is connected by a geodesic.

A map $F:X\to Y$ between metric spaces is called \emph{$L$-Lipschitz} if
$d(F(x),F(\bar x))\leq L \cdot d(x,\bar x)$,  for all $x,\bar x \in X$.
 A map $F:X\to Y$ is called an \emph{ $L$-biLipschitz embedding} if, for all
$x,\bar x \in X$, one has
$\frac 1 L \cdot  d(x,\bar x) \leq d(F(x),F(\bar x))\leq L \cdot d(x,\bar x)$.

Let $Z$ be a   metric space and $C>0$. A continuous  map $F:Z\to Y$ is called
\emph{$C$-open} if the following condition holds.
For any   $z\in Z$  and any $r>0$ such  that the closed ball  $\bar B_{Cr} (z)$ is complete,
 we have the inclusion
$B_{r} (F(z)) \subset F(B_{Cr} (z))$.

A function $f:X\to \R$ on a metric space $X$ is \emph{convex} if its restriction $f\circ \gamma$
to any geodesic $\gamma :I\to X$ is a convex function on the interval $I$.

\subsection{Convergence} \label{subsec:conv}
On the set of isometry classes of compact metric spaces we will use the Gromov--Hausdorff distance.
By an abuse of definition we will identify spaces and their isometry classes.
Whenever spaces $X,Y$ at  Gromov--Hausdorff distance smaller $\delta$ appear, we will  implicitly assume that isometric embeddings  $f:X\to Z$ and $g:Y\to Z$ into some
  metric space $Z$ are fixed such that the Hausdorff distance between $f(X)$ and $g(Y)$ is smaller than $2\delta$.

On the set of isometry classes of pointed proper metric spaces
we will consider
the  pointed Gromov--Hausdorff topology
(abbreviated as GH-topology),
and denote by $(X_l,x_l) \to (X,x)$
a convergent sequence.
Each sequence of doubling spaces with a uniform doubling constant
has a subsequence converging in
the GH-topology.
The limit space is doubling with the same doubling
constant.

It is often simpler to work with \emph{ultralimits} instead of GH-limits. There are several advantages: the ultralimits are always defined,
the limit object is a space and not just an "isometry class" and there is no need to consider subsequences, see
  \cite{AKP} for details.
We fix a  non-principal  ultrafilter  $\omega$  on $\N$  and  denote by $\lim_{\omega} (X_i,x_i)$ the $\omega$-ultralimit of pointed metric spaces $(X_i,x_i)$.
For $C$-Lipschitz maps $f_l:(X_l,x_l)\to (Y_l,y_l)$ we will denote by  $\lim _{\omega} f_l$ the  ultralimit  map
$f:(X,x)\to (Y,y)$.

Whenever proper spaces $(X_l,x_l)$ converge in the GH-topology to $(X,x)$, the ultralimit $\lim _{\omega} (X_l,x_l)$ is (in the isometry class of)
$(X,x)$.  For the needs of the present paper it is sufficient to work with GH-limits, thus readers not familiar with ultralimits may always choose an appropriate subsequence and consider the corresponding GH-limit.

Let $f_l:(X_l,x_l)\to (Y_l,y_l)$ be $C$-Lipschitz and $C$-open maps and assume that the spaces $X_l$ are complete.
Then the ultralimit    $f=\lim _{\omega} f_l :(X,x)\to (Y,y)$ is $C$-Lipschitz and $C$-open. Moreover, for $\Pi_l:=f_l^{-1} (y_l)$
the ultralimit  $\lim_{\omega } \Pi _l  \subset (X,x)$ coincides with the fiber $\Pi := f^{-1} (y)$.


\section{Spaces with an upper curvature bound} \label{sec:prelcurv}
\subsection{Definitions and notations}
For $\kappa \in \R $, let $R_{\kappa}  \in (0,\infty] $ be the diameter of the  complete, simply connected  surface $M^2_{\kappa}$
of constant curvature $\kappa$. A complete  metric space is called \emph{$\CAT(\kappa)$}
if any pair of its points with distance $<R_{\kappa}$ is connected by a geodesic and if
 all triangles
with perimeter $< 2R_{\kappa}$
are not thicker than
 the \emph{comparison triangle} in $M_{\kappa} ^2$.
A  metric space
is called a \emph{space with  curvature bounded above by  $\kappa$}
if   any point has a  $\CAT(\kappa)$ neighborhood.
We refer to \cite{BH}, \cite{BBI01}, \cite{ballmann} for basic facts about such spaces.

Any $\CAT(\kappa )$ space is $\CAT(\kappa ')$ for $\kappa ' \geq \kappa$. By rescaling
we may always assume that the curvature bound $\kappa $ equals $1$. Then $R_{\kappa} =\pi$.

For any $\CAT(\kappa )$ space  $X$,
the angle between each pair of geodesics starting at the same point
is well defined. The \emph{space of directions}
 $\Sigma _x = \Sigma _xX$ at each point $x$, which is the completion of the set of geodesic directions equipped with the angle metric,
is a $\CAT(1)$ space.
The Euclidean cone
over $\Sigma _x$ is a $\CAT(0)$ space.
It is denoted by $T_x=T_xX$ and called
the \emph{tangent space} at $x$ of $X$.
The element $w$ in $T_x$
will be written
as $w = tv = (t,v) \in T_x = [0,\infty) \times \Sigma _x / \{0\} \times \Sigma _x$,
and its \emph{norm} is defined as
$|w| = |tv| := t$.

 Let $x,y,z$ be three points at pairwise distance $< R_{\kappa}$ in a $CAT(\kappa )$ space $X$.
Whenever $x\neq y$, the geodesic between $x$ and $y$ is unique and will be denoted
by $xy$.  Its starting direction in $\Sigma _x$ will be denoted by
$(xy)'$ if no confusion is possible. If $y,z \neq x$ the angle at $x$ between $xy$ and $xz$, hence the distance in $\Sigma _x$
between  $(xy)'$  and $(xz)'$,
will be denoted by $\angle yxz$.

For $r < R <  R_{\kappa} /2$
we consider the \emph{contraction map}
$c_{R,r} \colon B_R(x) \to B_r(x)$ centered at $x$,
that sends the  point $y$
to the point $\gamma (\frac r R \cdot d(x,y))$,
where
$\gamma$ is the unique geodesic from $x$ to $y$.
Due to  the $\CAT(\kappa )$ property,
 the map $c_{R,r}$ is $(2\cdot \frac r R)$-Lipschitz,
  compare  \cite[Section 2.1]{N2}, for the optimal estimate.

We define
the \emph{logarithmic map}
$\log_x \colon B_{\frac 1 2 R_ \kappa}(x) \to T_x$  by  $\log_x (x)=0$  and by sending any $y\neq x$
to $tv \in T_x$, with $t=d(x,y)$ and $v=(xy)'$. The $\CAT(\kappa )$ property implies that  $\log_x$ is $2$-Lipschitz.


\subsection{Basic topological properties}
On spaces with an upper curvature bound,
there is a notion of \emph{geometric dimension} invented by Kleiner \cite{Kleiner}.
The geometric dimension satisfies $\dim X = 1 + \sup_{x \in X} \dim  \Sigma _x$.
The geometric dimension
$\dim X$
is equal to the topological dimension if  $X$ is separable \cite{Kleiner}.

Convexity of all small balls in spaces with upper curvature bounds imply that
any space $X$ with an upper curvature bound is an \emph{absolute neighborhood retract},
see \cite{Ontaneda}, \cite{Linus}.
In particular, each open subset of $X$ is homotopy
equivalent to a simplicial complex.

For any $\CAT(\kappa )$ space $X$ the  map  $\log_x \colon (B_{\frac 1 2 R_{\kappa}}(x) \setminus \{ x \}) \to T_x \setminus \{ 0 \}$ is a homotopy equivalence,
\cite{Linus}.

\subsection{Convergence and semicontinuity} \label{subsec:semi}
Let  $(X_i,x_i)$ be a sequence of  pointed  $\CAT(\kappa_i)$ spaces with $\lim _{i\to \infty} \kappa _i =\kappa$.
Then $(X,x)=\lim _{\omega} (X_i,x_i)$ is $\CAT(\kappa)$, \cite{BH}.
 Moreover,
$\lim _{\omega} \dim (X_i,x_i) \geq \dim (X,x)$,   \cite[Lemma 11.1]{Lbuild}, thus, the geometric dimension does not increase under convergence.

Let  $y_i,z_i \in X_i$ be points such that  $\epsilon \leq d(x_i,y_i),d(x_i,z_i) \leq \frac {R_{\kappa}} 2$,
 for some $\epsilon >0$.
The points $y=\lim _{\omega} (y_i) \, ,z=\lim _{\omega} (z_i)$  in $X$ and  the angles
 $\angle y_ix_iz_i$ and $\angle yxz$ are well-defined. In this situation, we have
the following semicontinuity of angles $ \lim_{\omega} \angle y_ix_iz_i  \leq \angle yxz$,    see  \cite{BH}, \cite[p.748]{Lbuild}.

\section{Geodesic extension property}  \label{sec:extend}
\subsection{Definition} \label{subsec:def}
Let $X$ be a space with curvature   bounded above by $\kappa$.  We  call $X$   \emph{locally geodesically complete}
 if  any local
geodesic $\gamma :[a,b]\to X$, for any $a<b$, extends as a local geodesic to a  larger interval $[a-\epsilon, b+\epsilon]$.
 If any local geodesic in $X$ can be extended  as a local geodesic to $\R$  then $X$ is called
\emph{geodesically complete}.

 In \cite{BH} local geodesic completeness is called the \emph{geodesic extension property}.

 For any local  geodesic $\gamma :[a,b] \to X$  in a space $X$ with curvature $\leq \kappa$, we can use Zorn's lemma to find a maximal extension of $\gamma$ to a local geodesic $\gamma :I\to X$
defined on an interval $I\subset \R$.  If $X$ is locally geodesically complete, then such a maximal interval $I$ is open in $\R$.
Assume that $t=\sup (I )$ is finite. For any  $t_i\in I$ converging to  $t$, the sequence $\gamma (t_i)$ is a Cauchy sequence in $X$.
If $\gamma (t_i)$ converge to a point $x$ in $X$ then the final part  $\gamma :[t-\epsilon, t)  \to X $ is contained in a $\CAT(\kappa )$ neighborhood $U$ of $x$. Since local geodesics of length $\leq R_{\kappa} $ in $U$ are geodesics,  the unique extension of $\gamma$ by $\gamma  (t)=x$ is a geodesic
on $\gamma :[t-\epsilon ,t] \to X$. But then, contrary to our assumption, $I$ is not a maximal interval of definition of $\gamma$. Thus we have shown
that $\gamma (t_i)$ cannot converge in $X$. From this we conclude:

\begin{lem} \label{sizeext}
Let $X$ be a locally geodesically complete space with an upper curvature bound. Let the closed ball $\bar B_r (x)$  be complete. Then any local geodesic $\gamma$ in $X$  with $\gamma (0)=x$ can be extended to a local geodesic $\gamma :(-t^-,t^+) \to X$ with $t^{\pm} >r$. %
\end{lem}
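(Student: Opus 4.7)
The plan is to follow the argument sketched in the paragraph preceding the statement, with completeness of $\bar B_r(x)$ supplying the limit point that was only hypothesized there. First, I would apply Zorn's lemma to extend $\gamma$ to a maximal local geodesic $\gamma : I \to X$ defined on some interval $I \ni 0$. Local geodesic completeness of $X$ forces $I$ to be open, so one may write $I = (-t^-, t^+)$ with $t^{\pm} \in (0, \infty]$. After reversing the parametrization, it suffices to prove $t^+ > r$.

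Suppose for contradiction that $t^+ \leq r$. Since $\gamma$ is a unit-speed local geodesic with $\gamma(0) = x$, one has $d(\gamma(t), x) \leq t \leq r$ for every $t \in [0, t^+)$, so the tail of $\gamma$ lies in $\bar B_r(x)$. For any $\tau_k, \tau_l \in [0, t^+)$ the local geodesic inequality yields $d(\gamma(\tau_k), \gamma(\tau_l)) \leq |\tau_k - \tau_l|$, so every sequence $\tau_k \to t^+$ produces a Cauchy sequence in the complete set $\bar B_r(x)$. Interleaving two such sequences shows that all limits coincide, hence $y := \lim_{t \nearrow t^+} \gamma(t)$ exists and belongs to $\bar B_r(x) \subset X$.

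Next I would extend $\gamma$ across $t^+$ using a $\CAT(\kappa)$ chart at $y$. Choose a $\CAT(\kappa)$ neighborhood $U$ of $y$ and $\epsilon \in (0, R_\kappa)$ such that $\gamma([t^+ - \epsilon, t^+)) \subset U$; this is possible by the continuity of $\gamma$ and the limit just established. The restriction of $\gamma$ to $[t^+ - \epsilon, t^+)$ is a local geodesic of length $\epsilon < R_\kappa$ inside $U$, and hence a genuine geodesic by the standard fact already invoked in the paragraph above the lemma. Setting $\gamma(t^+) := y$ extends it to a geodesic on $[t^+ - \epsilon, t^+]$; pasted with the original curve this yields a local geodesic on $(-t^-, t^+]$. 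Finally, local geodesic completeness of $X$ at the point $y$ extends the curve strictly beyond $t^+$, contradicting the maximality of $I$. Therefore $t^+ > r$, and symmetrically $t^- > r$.

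The main obstacle is essentially a bookkeeping one: one must check that the limit $y$ is independent of the approximating sequence, that the final portion of $\gamma$ is contained in a single $\CAT(\kappa)$ ball around $y$, and that the extended curve on $[t^+ - \epsilon, t^+]$ is a genuine geodesic rather than merely a $1$-Lipschitz curve. All three points are resolved by the standard equivalence, in $\CAT(\kappa)$ spaces, between local geodesics of length less than $R_\kappa$ and unique geodesics between their endpoints, exactly as used in the discussion preceding the lemma.
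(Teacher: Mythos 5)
Your proof is correct and follows the same route as the paper: extend to a maximal open interval by Zorn's lemma, suppose $t^+ \le r$, use $1$-Lipschitzness of a local geodesic to obtain a Cauchy tail inside the complete ball $\bar B_r(x)$, and then extend past the resulting limit point via a $\CAT(\kappa)$ chart plus local geodesic completeness, contradicting maximality. The only cosmetic difference is that you re-derive the auxiliary observation (that $\gamma(t_i)$ cannot converge when $I$ is maximal) inline, whereas the paper proves it once in the preceding paragraph and then simply cites it.
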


\begin{proof}
Extend $\gamma$ to a maximal interval of definition $I=(-t^-,t^+)$.  If $t^+ \leq r$ then $\gamma ([0,t^+)) \subset \bar B_r (x)$. Thus $\lim _{t_i \to t^+} \gamma (t_i)$ exists in $\bar B_r (x)$ in contradiction to the observation preceding the lemma. Thus, $t^+ > r$. Similarly,
$t^- >r$.
\end{proof}
 In particular,  a complete metric space with an upper curvature bound is geodesically complete if
 it is  locally geodesically complete.

Let the space $X$ with curvature at most $\kappa$ be locally geodesically complete.
Let $x\in X$ be arbitrary. Then
some closed ball $K= \bar B_{2r} (x)$ with $4r<R_{\kappa}$ is  $\CAT(\kappa )$.  Note that any geodesic in $K$ is uniquely determined by its endpoints and any local geodesic in $K$ is a geodesic.  Due to Lemma \ref{sizeext}, for any $y\in B_r(x)$  any geodesic $\gamma$ starting in $y$ can be extended  inside $K$ to a geodesic $\gamma :[-r,r] \to X$.

\subsection{Examples} \label{subsec:ex}
The following example shows that (local) geodesic completeness  without further compactness assumption  is not of much use.
\begin{exmp}
Starting with any $\CAT(\kappa )$ space $X$ we  glue to all points $x\in X$ a line $\R =\R _x$.   The arising "hairy" space $\hat X$ is still $\CAT(\kappa )$,  geodesically complete and contains $X$ as a convex subset.
\end{exmp}

Let $X$ be a Euclidean simplicial complex with a finite number of isometry classes of simplices
 and curvature at most $0$. Then $X$ is locally geodesically complete if and only if
any face of  any maximal simplex is a face of at least one other simplex.

 For any $\CAT(1)$ space $\Sigma$, the Euclidean cone $C\Sigma$ over $\Sigma$ is  geodesically complete
 if and only if $\Sigma $ is   geodesically complete and not a singleton.
 The direct product of two $\CAT(0)$ spaces  is geodesically complete
 if and only if  both factors are geodesically complete.

  There is a simple  topological condition implying local geodesic completeness, cf. \cite[Theorem 1.5]{LS}.
  Namely, if $X$ is a space with an upper curvature bound
  and if at all points $x\in X$ the local homology $H_{\ast} (X,X\setminus \{ x \})$  does not vanish then $X$ is locally geodesically complete.
  In particular, any space with an upper curvature bound which is a (homology) manifold  is locally geodesically complete.

Geodesic completeness  is preserved under gluings: Let $X_1,X_2$ be  two spaces of curvature $\leq \kappa$ and let $A_i\subset X_i$ be  locally convex and  are isometric to each other.  The space $X$ which arises from gluing of $X_1$ and $X_2$ along $A_i$ has curvature $\leq \kappa$, by a theorem of Reshetnyak. It is a direct consequence of the structure of  geodesics in $X$, that if $X_1$ and $X_2$ are (locally) geodesically complete  then so is $X$.

Finally, geodesic completeness is preserved under ultralimits:

\begin{exmp} \label{ex:converge}
Let $(X_i,x_i)$ be locally geodesically complete spaces with curvature $\leq \kappa _i$. Assume that the balls $\bar B_{r_i} (x_i) \subset X_i$ are $\CAT (\kappa _i)$, with $2r_i \leq  R_{\kappa_i}$. Assume, finally, that $\lim_{i\to \infty} (\kappa _i)=\kappa$ and $\lim_{i\to \infty } r_i>r>0$.  Consider the ultralimit $(X,x)=\lim _{\omega} (X_i,x_i)$.  Then the closed ball $\bar B_r(x) \subset X$ is $\CAT(\kappa )$ as an ultralimit of $\CAT (\kappa _i)$ spaces.  We claim, that  the open ball $B_r(x)$ is locally geodesically complete. Indeed, any geodesic $\gamma$ in $B_r (x)$ is an ultralimit of the corresponding geodesics in $B_{r} (x_i)$. Since the latter admit extensions  of a uniform size to longer geodesics,  we obtain an extension of $\gamma$ as the corresponding ultralimit.
\end{exmp}


\section{GCBA} \label{sec:setting} \label{sec:gcba}

\subsection{GCBA spaces  and their tiny balls} \label{subsec:size}
Now we turn to the main subject of this paper, the structure of locally compact, locally geodesically complete, separable spaces with upper curvature bounds. As in the introduction, we will denote such spaces as GCBA. Then any open subset of a GCBA space  is GCBA  as well.

   We call  an open  ball $U=B_{r_0} (x_0)$ in a GCBA
 space  $X$  of curvature $\leq \kappa$
  a \emph{tiny ball} if
  the following holds true. The radius $r_0$ of  $U$ is at most  $\min \{1 , \frac 1 {100} \cdot R_{\kappa} \}$ and the closed ball  $\tilde U=\bar B_{10\cdot r_0} (x_0)$ with the same center and radius $10\cdot r_0$ is compact.

 As  seen at the end of Subsection \ref{subsec:def},  any geodesic $\gamma$ with $\gamma (0) \in U$ can be extended  to  a geodesic $\gamma :[-9 \cdot r_0, 9\cdot r_0] \to \tilde U\subset X$. For any ball $B_r(x)$ contained in $\tilde U$ and any $r'<r$, the contraction map
 $c_{r,r'} :B_r(x)\to B_{r'} (x)$ is surjective.

Any point in $X$ is contained in a tiny ball.
  Since $X$ is separable, we can write it as a countable union of  tiny balls.
   Any relatively compact subset of $X$ is covered by  finitely many  tiny balls.  All theorems from the introduction will follow once we prove them for all tiny balls in  $X$.

\subsection{Doubling property} \label{subsec:doubling} Tiny balls turn out to be doubling.

\begin{prop} \label{prop:doubl}
Let  $U=B_{r_0} (x_0) $ be a tiny ball of radius $r_0$ in a  GCBA space.  Let $N$ denote the maximal
  number of $r_0$-separated points in  the compact ball $\tilde U=\bar B_{10\cdot r_0} (x_0)$.
 Then  $\bar B_{5\cdot r_0} (x_0)$   is   $N$-doubling.
\end{prop}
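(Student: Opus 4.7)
The plan is to reduce the doubling estimate on $\bar B_{5 r_0}(x_0)$ to the $r_0$-separation bound already available inside the compact tiny-ball closure $\tilde U = \bar B_{10 r_0}(x_0)$, by blowing up any small configuration in a ball $B_r(y)$ along geodesics emanating from $y$ to an $r_0$-separated configuration that sits inside $\tilde U$. The tools that make this work are the $(2 r/R)$-Lipschitz estimate for the contraction maps $c_{R,r}$ from Section \ref{sec:prelcurv}, together with the extendibility of geodesics from $y$ guaranteed by Lemma \ref{sizeext}.

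Fix $y \in \bar B_{5 r_0}(x_0)$ and $r > 0$, and let $\{y_1, \dots, y_k\}$ be an $(r/2)$-separated subset of $B_r(y) \cap \bar B_{5 r_0}(x_0)$. If $r \geq 2 r_0$, then the $y_i$ are already $r_0$-separated and lie in $\bar B_{5 r_0}(x_0) \subset \tilde U$, and $k \leq N$ by the very definition of $N$. The essential regime is $r < 2 r_0$. For each $i$, I would extend the geodesic $y y_i$ to the point $z_i := \gamma_i(\tfrac{4 r_0}{r} \, d(y, y_i))$ of distance $\tfrac{4 r_0}{r}\, d(y, y_i) < 4 r_0$ from $y$ (so $z_i = y$ if $y_i = y$). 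The extension is available because $\bar B_{4 r_0}(y) \subset \tilde U$ is a closed subset of the compact $\tilde U$, hence complete, so Lemma \ref{sizeext} yields such an extension as a local geodesic; the fact that $4 r_0 < R_\kappa$ and that $\tilde U$ is \CAT(\kappa) promotes it to an honest geodesic and guarantees $z_i \in \bar B_{4 r_0}(y) \subset \tilde U$.

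By construction $c_{4 r_0, r}: B_{4 r_0}(y) \to B_r(y)$ sends $z_i$ to $y_i$, and it is $\tfrac{r}{2 r_0}$-Lipschitz. Hence for any pair with $y_i, y_j \neq y$,
\[
\tfrac{r}{2} \leq d(y_i, y_j) \leq \tfrac{r}{2 r_0}\, d(z_i, z_j),
\]
which yields $d(z_i, z_j) \geq r_0$. If some index $i_0$ has $y_{i_0} = y$, then for any $j \neq i_0$ the separation forces $d(y, y_j) \geq r/2$, so $d(z_{i_0}, z_j) = d(y, z_j) = \tfrac{4 r_0}{r}\, d(y, y_j) \geq 2 r_0$. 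Thus $\{z_1, \dots, z_k\}$ is an $r_0$-separated subset of $\tilde U$, and the definition of $N$ forces $k \leq N$.

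The only delicate point is calibrating the blow-up ratio $4 r_0/r$. A $(2 r/R)$-Lipschitz contraction converts an $(r/2)$-separation of the $y_i$ into only an $(R/4)$-separation of the $z_i$, so $R = 4 r_0$ is the minimal scale that yields $r_0$-separation; the standing assumption $r_0 \leq R_\kappa/100$ then ensures $4 r_0 < R_\kappa/2$ so that $c_{4 r_0, r}$ is defined, and $r < 2 r_0$ ensures that the lifted points stay inside $\bar B_{4 r_0}(y) \subset \tilde U$.
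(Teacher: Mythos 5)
Your proof is correct and takes essentially the same route as the paper. Both reduce to the two cases $r \geq 2r_0$ and $r < 2r_0$, and in the second case both use the $\tfrac{r}{2r_0}$-Lipschitz contraction map $c_{4r_0, r}$ to lift an $(r/2)$-separated set in $B_r(y)$ to an $r_0$-separated set inside $\tilde U$; the only cosmetic difference is that the paper simply invokes surjectivity of $c_{4r_0, r}$ (established in Subsection \ref{subsec:size}) and takes arbitrary preimages, whereas you construct the preimages explicitly by extending the geodesics $yy_i$ and separately handle the degenerate case $y_i = y$.
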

\begin{proof}
 It suffices to prove that for any $t>0$ and any $y\in  \bar B_{5\cdot r_0} (x_0)$, any $\frac t 2$-separated subset
$S$ of $\bar B_t(y)$ has at most $N$ elements.

The statement is clear  for $t\geq 2r_0$, by the definition of $N$.

For $t<2r_0$,  consider the $\frac {t} {2\cdot r_0} $-Lipschitz  map $c_{4\cdot r_0 ,t} \colon B_{4\cdot r_0} (y) \to B_t(y)$.
The map is surjective, since $B_{4\cdot r_0} (y) $ is contained in $\tilde U$.
Hence, taking arbitrary preimages of points in $S$ under this contraction map, we obtain an $ r_0$-separated subset of $B_{4\cdot r_0} (y) $ with as many elements as  in $S$. Since $B_{8\cdot r_0} (y) \subset \tilde U$, we deduce that $S$ has at most  $N$ elements.
\end{proof}

\begin{defn} \label{defn: doubl}
 For a tiny ball $U=B_{r_0} (x_0)$ of a GCBA space, we say that $U$ has   \emph{capacity bounded by $N$ if  $\tilde U:=\bar B_{5\cdot r_0} (x_0)$} is $N$-doubling.
\end{defn}

Let $X$ be GCBA. Let $U\subset X$  be a tiny ball of radius $r_0$ and capacity bounded by $N$.  Then  any open ball contained  in  $U$ is a tiny ball in $X$ of  capacity bounded by $N$.
 Moreover,  for any point $x\in U$,
the ball $B_s (x)$ is a  tiny ball of capacity bounded by $N$, for any $s \leq \frac {r_0} 2$.   Finally, for every $s \leq \frac 1 {r_0}$ the rescaled space $s\cdot U$ is a tiny ball in
 the GCBA space  $s\cdot X$ with capacity bounded by the same $N$.

\subsection{Distance  maps and a biLipschitz embedding} \label{subsec:bilip}
Let  $U = B_{r_0} (x_0) \subset \tilde U :=\bar B_{5\cdot r_0} (x_0)$ be a tiny ball of radius $r_0$ and capacity bounded by $N$ as above.

For  $p \in \tilde U$
we denote by $d_p :\tilde U\to \R$
the \emph{distance function} $d_p(x) = d(p,x)$.
The function $d_p$ is $1$-Lipschitz and convex on $\tilde U$.
For any $m$-tuple of points $(p_1,...,p_m)$ in $\tilde U$ the \emph{distance map} defined by the $m$-tuple is the map $F:\tilde U\to \R^m$
with coordinates $f_i (x)=d_{p_i} (x)$.  	
Since any distance function $d_{p_i}$ is $1$-Lipschitz, any distance map $F:\tilde U\to \R^m$ is $\sqrt m$-Lipschitz. Moreover, if we equip $\R^m$ with the \emph{sup-norm},
then $F$ becomes a $1$-Lipschitz map $F:\tilde U\to \R^m _{\infty}$.


 Let  $\gamma :[a,b] \to  \tilde U$  be a geodesic starting at $x=\gamma (a)$. Let $p\neq x$ and $f=d_p$.
The derivative of $f \circ \gamma $  at  time  $a$  is
 computed by the \emph{first variation formula} $(f\circ \gamma )' (a)=-\cos (\alpha)$, where $\alpha \in [0,\pi] $ denotes the angle between $\gamma$ and the geodesic $xp$. In particular,  $|(f\circ \gamma )' (a)| < \delta$ if $|\alpha -\frac \pi 2 | <\delta$. Moreover, $(f\circ \gamma )' (a) >1-\delta$ if
 $\alpha >\pi-\delta$ and $(f\circ \gamma )' (a) <-1+\delta$ if $\alpha <\delta$.

Denote by $\mathcal A \subset \tilde U$ the compact subset of all points $p \in \tilde U$ with $r_0 = d(p, U)$, thus a distance sphere with radius $2r_0$ around the center of $U$.
Due to the assumptions on $r_0$ and the curvature bound, for all $\delta >0$ the following holds true.
For every pair of points $p,q \in \mathcal A$ with $d(p,q) \leq \delta  \cdot r_0$ and any $x \in U$ we have $\angle pxq <\delta$.

For all $\delta >0$, we choose a maximal $\delta \cdot r_0$-separated subset $\mathcal A_{\delta} $ in $\mathcal A$. Due to the
doubling property, the number of elements in $\mathcal A_{\delta}$ is bounded by some $m=m(N,\delta)$. Now we obtain:

\begin{prop}  \label{lem:embed}
For every $\delta >0$ there exists some natural $m =m(N,\delta)$ and  $m$ points  $p_1,...,p_m \in \tilde U$ such that the corresponding distance map $F: U\to \R^m _{\infty}$
is a $(1+\delta)$-biLipschitz embedding. Here $\R^m_{\infty}$ denotes $\R^m$ with the sup-norm.
\end{prop}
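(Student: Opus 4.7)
The plan is to let $p_1,\dots,p_m$ be the points of the maximal $\delta'r_0$-separated net $\mathcal{A}_{\delta'}\subset\mathcal{A}$, where $\delta':=\delta/(1+\delta)$, so that $m=m(N,\delta')$. The upper Lipschitz bound $\|F(x)-F(y)\|_\infty\leq d(x,y)$ is already noted in the paragraph introducing distance maps, so the entire content of the proposition is the lower bound $\|F(x)-F(y)\|_\infty\geq (1-\delta')\,d(x,y)$ on $U$, which via $1-\delta'=1/(1+\delta)$ yields the $(1+\delta)$-biLipschitz conclusion.

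To produce this lower bound, fix distinct $x,y\in U$ and consider the geodesic from $y$ to $x$. The tiny-ball property of Subsection \ref{subsec:size} allows us to extend this geodesic as a geodesic past $x$ of length up to $9r_0$ inside $\tilde U$. Along this extension the distance to $x_0$ starts strictly below $r_0$ (since $x\in U$), and by the reverse triangle inequality exceeds $2r_0$ at parameter at most $3r_0$; by continuity some point $p$ on the extension satisfies $d(p,x_0)=2r_0$, so $p\in\mathcal{A}$. By the construction of $p$, the point $x$ lies in the interior of the geodesic $yp$, hence $\angle yxp=\pi$.

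Now pick $p_i\in\mathcal{A}_{\delta'}$ with $d(p,p_i)\leq\delta'r_0$, available by maximality of $\mathcal{A}_{\delta'}$. The angle estimate recorded immediately before the proposition gives $\angle pxp_i<\delta'$, so $\angle yxp_i>\pi-\delta'$. The first variation formula in the form stated in the paper then yields $(f_i\circ\gamma)'(0)>1-\delta'$, where $f_i=d_{p_i}$ and $\gamma$ is the unit-speed geodesic from $x$ to $y$. Since $f_i$ is convex on $\tilde U$ and a convex function lies above its tangent,
\[
f_i(y)-f_i(x)\;\geq\;d(x,y)\cdot (f_i\circ\gamma)'(0)\;\geq\;(1-\delta')\,d(x,y),
\]
so $\|F(x)-F(y)\|_\infty\geq(1-\delta')\,d(x,y)=d(x,y)/(1+\delta)$, as required.

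The one subtle ingredient is the existence, for each pair $(x,y)$, of an \emph{antipodal} point to $y$ at $x$ that lies in the single compact set $\mathcal{A}$: this is exactly what local geodesic completeness inside a tiny ball provides. Without it, no finite collection of points in $\tilde U$ could universally approximate, up to an angle $\delta'$ at every $x\in U$, the backward direction of every geodesic in $U$, and the uniform lower bound on $\|F(x)-F(y)\|_\infty$ would fail.
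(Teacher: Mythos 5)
Your proof is correct and follows essentially the same approach as the paper: extend the geodesic through $x$ and $y$ to the distance sphere $\mathcal A$, pick a nearby net point in $\mathcal A_{\delta'}$, and use the first variation formula together with convexity of the distance function to obtain the lower Lipschitz bound. The only cosmetic differences are that the paper extends $xy$ beyond $y$ and differentiates at $y$, while you extend beyond $x$ and differentiate at $x$, and that your choice $\delta' = \delta/(1+\delta)$ gives the stated $(1+\delta)$-biLipschitz constant exactly, whereas the paper's use of $\delta$ itself yields the marginally weaker lower bound $1-\delta < 1/(1+\delta)$.
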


\begin{proof}
Consider as above the  maximal $\delta \cdot r_0$-separated subset $\mathcal A_{\delta} =\{p_1,...,p_m \}$ in  the distance sphere $\mathcal A$
and note, that $m$ is bounded in terms of $N$ and $\delta$.

 Consider the   corresponding  distance map $F:  U\to \R^m _{\infty}$.
As all distance maps,  $F:  U\to \R^m _{\infty}$ is $1$-Lipschitz.

Given arbitrary $x,y\in   U$, we extend $xy$ beyond $y$ to a point $q \in \mathcal A$.   We find some  $p_j\in \mathcal A _{\delta}$  such that $d(p_j,q)  \leq \delta \cdot r_0$
hence    $\angle p_jyq  < \delta$.   Then $\angle p_jyx > \pi -\delta$.
From the first variation formula the derivative of  the distance function $d_{p_j}$ on the geodesic $yx$ at $y$ is   at least $(1-\delta)$.

Then $d(p_j,x)-d(p_j,y) \geq (1- \delta ) \cdot d(x,y )$, due to the convexity of $d_{p_j}$.
Hence,  $|F(x)-F(y)| _{\infty}  \geq (1 -\delta)\cdot d(x,y)$. This finishes the proof.
\end{proof}

We let   $\delta =1$ in  Lemma  \ref{lem:embed} and obtain  a refinement of Proposition \ref{prop:doubl},
previously proved in \cite[Theorem 1.1]{Langplaut}.  The analogous statement in spaces with curvature bounded below is true but much deeper,
see \cite{AKP}.

\begin{cor} \label{lem:doubling}
For some $n_0=n_0(N)$,  there exists a biLipschitz embedding $F:U\to \R^{n_0}$.
The Hausdorff and  the topological dimensions of $U$ are at most $n_0$.
\end{cor}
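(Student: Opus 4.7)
The plan is to apply Proposition \ref{lem:embed} with the specific choice $\delta = 1$ and then pass from the sup-norm on $\R^m$ to the Euclidean norm, which only changes biLipschitz constants by a factor depending on $m$.

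More concretely, first I would set $n_0 := m(N,1)$, the integer furnished by Proposition \ref{lem:embed} for $\delta = 1$. That proposition then provides $n_0$ points $p_1,\dots,p_{n_0} \in \tilde U$ such that the associated distance map $F : U \to \R^{n_0}_\infty$ is a $2$-biLipschitz embedding. Since the sup-norm and the Euclidean norm on $\R^{n_0}$ are $\sqrt{n_0}$-equivalent, viewing $F$ as a map into the Euclidean $\R^{n_0}$ yields a $(2\sqrt{n_0})$-biLipschitz embedding $F : U \to \R^{n_0}$, establishing the first assertion.

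For the dimension bounds, I would use the fact that biLipschitz embeddings preserve Hausdorff dimension and that topological dimension is monotone under topological embeddings into Euclidean space. Since $F$ is a biLipschitz embedding into $\R^{n_0}$, it is in particular a topological embedding, so $\dim U \leq n_0$ for both the Hausdorff and the topological dimension (the latter by the classical fact that a subset of $\R^{n_0}$ has topological dimension at most $n_0$).

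There is essentially no hard step here: the corollary is a routine consequence of Proposition \ref{lem:embed} together with the equivalence of norms on finite-dimensional vector spaces, and standard facts about how Hausdorff and topological dimensions behave under biLipschitz/topological embeddings. The only point requiring any care is to make $n_0$ depend only on $N$, which is guaranteed by Proposition \ref{lem:embed} since the input $\delta = 1$ is fixed.
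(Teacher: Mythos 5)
Your proof is correct and matches the paper's approach exactly: the paper likewise sets $\delta=1$ in Proposition \ref{lem:embed}, obtains the $2$-biLipschitz embedding into $\R^{m(N,1)}_{\infty}$, and the dimension bounds then follow from standard facts about biLipschitz and topological embeddings into Euclidean space.
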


\subsection{Almost Euclidean triangles}
The diameter of $\tilde U$ is  smaller than $\frac 1 4 R_{\kappa}$. Hence  $\angle xyz + \angle yxz  \leq \pi$ for any
 triangle $xyz$ in $\tilde U$. If $d(x,z)=d(y,z)$ then $\angle xyz <\frac \pi 2$.


 The following lemma shows that  triangles in $U$,  with one side fixed
and the other side sufficiently small, have almost Euclidean angles.

\begin{lem}
Let $x\in U$ and $p\in \tilde U$ be arbitrary.
For any $\epsilon >0$ there is some $\delta>0 $  such that
for any  $y\in B_{\delta} (x)$ we have $\angle pxy + \angle pyx > \pi -\epsilon$.
\end{lem}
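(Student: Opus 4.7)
The plan is to argue by contradiction and transport the problem to the tangent cone $T_xX$. Assume $\epsilon>0$ and a sequence $y_n\to x$, $y_n\ne x$, with $\angle pxy_n+\angle py_nx\le\pi-\epsilon$. Set $c_n:=d(x,y_n)$, $\alpha_n:=\angle pxy_n$, $\beta_n:=\angle py_nx$; by compactness of $\Sigma _x X$ I may pass to a subsequence along which $(xy_n)'\to v$ in $\Sigma _x X$, $\alpha_n\to\alpha^*:=d_{\Sigma _x}((xp)',v)$, and $\beta_n\to\beta^*$, so $\alpha^*+\beta^*\le\pi-\epsilon$. Rescaling, $X_n:=(1/c_n)X$ pointed at $x$ are tiny balls of uniformly bounded capacity, so by Example \ref{ex:converge} (after a subsequence) $X_n\to T_xX$ in the pointed Gromov--Hausdorff topology, with $y_n\to y_\infty$ at distance $1$ from $x$ in direction $v$; the point $p$, at distance $a/c_n\to\infty$, becomes ``at infinity in direction'' $w:=(xp)'$. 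I will work inside the two-dimensional flat Euclidean wedge obtained as the cone over the geodesic from $v$ to $w$ in $\Sigma _x X$, with coordinates $x=(0,0)$, $y_\infty=(1,0)$, and $w=(\cos\alpha^*,\sin\alpha^*)$.

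For the upper bound $\beta^*\le\pi-\alpha^*$, let $p_n^{\#}$ be the point on the geodesic $y_np$ with $d(y_n,p_n^{\#})=c_n$; then $\angle p_n^{\#}y_nx=\beta_n$ because $(y_np_n^{\#})'=(y_np)'$. In $X_n$ the point $p_n^{\#}$ remains at distance $1$ from $y_n$, hence uniformly bounded, and converges in $T_xX$ to the point $p_\infty^{\#}=(1+\cos\alpha^*,\sin\alpha^*)$ reached by walking unit speed from $y_\infty$ along the asymptotic direction $w$. Plane geometry in the wedge gives $\angle p_\infty^{\#}y_\infty x=\pi-\alpha^*$, and the semicontinuity of angles under $X_n\to T_xX$ from Section \ref{subsec:semi} yields $\limsup_n\beta_n\le\pi-\alpha^*$.

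The matching lower bound $\beta^*\ge\pi-\alpha^*$ is where local geodesic completeness enters. By Lemma \ref{sizeext}, extend $py_n$ past $y_n$ inside the tiny ball to a point $p_n^+$ with $d(y_n,p_n^+)=c_n$. Because the extended curve is a local geodesic through $y_n$, the directions $(y_np)'$ and $(y_np_n^+)'$ are antipodal at distance $\pi$ in $\Sigma_{y_n}X$, and the triangle inequality there forces $\angle p_n^+y_nx\ge\pi-\beta_n$. In $X_n$, $p_n^+$ is again bounded and converges to $p_\infty^+=(1-\cos\alpha^*,-\sin\alpha^*)$, the antipodal partner of $p_\infty^{\#}$ across $y_\infty$; a direct computation using the cone structure of $T_xX$ yields $\angle p_\infty^+y_\infty x=\alpha^*$. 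Semicontinuity gives $\limsup_n\angle p_n^+y_nx\le\alpha^*$, and combining with the antipodality bound produces $\liminf_n\beta_n\ge\pi-\alpha^*$.

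The two inequalities force $\beta^*=\pi-\alpha^*$, so $\alpha_n+\beta_n\to\pi$, contradicting $\alpha_n+\beta_n\le\pi-\epsilon$. The main technical obstacle is the tangent-cone bookkeeping: verifying that the rescaled sequences $p_n^{\#}$ and $p_n^+$ converge to the claimed points inside the flat wedge of $T_xX$, which requires identifying the limits of the directions $(y_np)'$ and their antipodes under rescaling and realising the wedge as an isometrically embedded flat subspace of $T_xX$. Once that Euclidean model is in hand both inequalities reduce to plane geometry combined with semicontinuity, and the role of the geodesic extension property is precisely to upgrade the upper comparison into the sharp equality $\beta^*=\pi-\alpha^*$.
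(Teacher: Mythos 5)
Your strategy is sound but takes a genuinely different and much heavier route than the paper's. The paper never leaves the fixed space $U$: it extends $xy_n$ (rather than $py_n$) past $y_n$ to a point $z_n$ on the fixed sphere $\{d(\cdot,x)=r_0\}$, passes to a subsequence with $z_n\to z$, and combines three one-line facts — colinearity $\angle pxy_n=\angle pxz_n$, antipodality at $y_n$ giving $\angle py_nx\geq\pi-\angle py_nz_n$, and semicontinuity of angles together with continuity of $\angle pxz_n$ — to arrive at $\angle pxz+\epsilon\leq\angle pxz$. Extending toward the fixed distance sphere rather than toward $p$ supplies the auxiliary angle $\angle py_nz_n$ with a stationary anchor $\angle pxz$ in the limit, so semicontinuity closes the argument immediately; no rescaling is ever needed. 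Your version instead blows up to $T_xX$ and pins down the exact limit $\beta^*=\pi-\alpha^*$, which is strictly more information than the lemma requires.

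That extra precision is exactly what forces the unfinished ``tangent-cone bookkeeping,'' and those steps are real gaps rather than routine omissions. Concretely: (i) you need the rescaled geodesics $y_np$ to converge to the ray from $y_\infty$ asymptotic to the ray in direction $w$, and you need that asymptotic ray to lie in the flat sector over the geodesic $vw$ — this requires the sector to be convex, hence $\alpha^*<\pi$, so the case $\alpha^*=\pi$ must be split off (there $\alpha_n\to\pi$ and the desired inequality is immediate); (ii) you need the concatenations $p_n^+y_np_n^{\#}$ to converge to an honest geodesic of $T_xX$ so that $(y_\infty p_\infty^+)'$ and $(y_\infty p_\infty^{\#})'$ really are antipodal in $\Sigma_{y_\infty}T_xX$, which uses that a local geodesic of length $2$ in $(1/c_n)\tilde U$ is a genuine geodesic once $\kappa c_n^2$ is small; and (iii) the identity $\angle p_\infty^+y_\infty x=\alpha^*$ is not plane geometry, since $p_\infty^+$ generally lies outside the sector — it rests on the spherical-suspension structure $\Sigma_{y_\infty}(C\Sigma_x)=\Sph^0\ast\Sigma_v\Sigma_x$, under which antipodes of a direction at colatitude $\pi-\alpha^*$ from the pole $(y_\infty x)'$ sit at colatitude $\alpha^*$. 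Each of (i)--(iii) is true and fillable, and your final semicontinuity step is applied correctly, but until they are spelled out the proof is incomplete; the paper's one-step extension of $xy_n$ sidesteps all of this.
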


 \begin{proof}
Assume the contrary and take a sequence $y_i$ converging to $x$  and satisfying
 $\angle pxy_i + \angle py _ix \leq  \pi -\epsilon$.  Extend
the geodesic $xy_i$ beyond $y_i$  up to a point $z_i$ with
$d(x,z_i) = r_0$. Choosing a subsequence we may
assume that $z_i$ converges to a point $z$. 
The semicontinuity
of angles gives us
$$\lim \angle  p x z_i = \angle pxz
\geq \limsup \angle  p y_iz_i \,.$$
This contradicts
$\angle py_ix \geq \pi - \angle py_iz_i$ and finishes the proof.
\end{proof}

\subsection{Tangent spaces and spaces of directions}
 We fix an arbitrary   $x\in U$  and claim that every  $v\in \Sigma _x$ is the starting direction of a geodesic of length $5r_0$.  Thus,
 for any   $r\leq 5r_0$, the  map
$\log_x :B_{r} (x) \to T_x$ has the  ball $B_r (0) \subset T_x$ as its image.

 Indeed,
write $v$  as a limit of starting directions $(xy_i)'$ of geodesics.
We extend $xy_i$ to geodesics $xz_i$ of length $5r_0 $ and find a subsequence converging to a geodesic $xz$ with starting direction $v$.

The restriction of $\log _x$ to small balls is an almost isometry:
\begin{lem} \label{lem:tangent}
For any $\epsilon >0$ there is some  $\delta >0$ (depending on the point $x$), such that for all
$r  <\delta$ and all $y_1,y_2 \in B_r (x)$ we have
\begin{equation} \label{above}
|d(y_1,y_2) - d(\log_x(y_1), \log_x (y_2))| \leq \epsilon \cdot r \, .
\end{equation}
\end{lem}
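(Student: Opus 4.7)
The plan is to argue by contradiction via rescaling. Suppose the conclusion fails at $x$: there exist $\epsilon>0$ and sequences $r_n\to 0$ together with points $y_1^n,y_2^n\in B_{r_n}(x)$ satisfying
$$|d(y_1^n,y_2^n)-d(\log_x y_1^n,\log_x y_2^n)|>\epsilon\cdot r_n.$$

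First, I would rescale. Set $X_n:=\frac{1}{r_n}\cdot X$; then $X_n$ is GCBA with curvature bound $r_n^2\kappa$, and a small ball around $x$ in $X_n$ inherits the capacity bound $N$ of a fixed tiny ball of $X$ containing $x$, by the scale-invariance noted after Definition \ref{defn: doubl}. Since the tangent cone is invariant under rescaling of the base, I canonically identify $T_xX_n=T_xX$, and $\log_{x,X_n}=\frac{1}{r_n}\log_{x,X}$ under this identification. The hypothetical violation becomes
$$|d_{X_n}(y_1^n,y_2^n)-d_{T_x}(\log_{x,X_n}y_1^n,\log_{x,X_n}y_2^n)|>\epsilon,$$
with $d_{X_n}(x,y_j^n)\leq 1$.

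Second, I would pass to pointed Gromov--Hausdorff limits. By Proposition \ref{prop:doubl} and the scale-invariance of $N$, the balls of any fixed radius around $x$ in $X_n$ are uniformly doubling, so along a subsequence $(X_n,x)$ converges in the pointed GH-topology to a proper space $(Y,y^*)$, which by Example \ref{ex:converge} is locally geodesically complete and $\CAT(0)$. The critical identification $(Y,y^*)\cong(T_x,0)$ would follow from combining the surjectivity of $\log_{x,X_n}$ onto small balls in $T_x$, ensured by local geodesic completeness as recorded just before the lemma, with the $2$-Lipschitz bound on $\log_{x,X_n}$ and the semicontinuity of angles from Subsection \ref{subsec:semi}. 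After a further extraction, $y_j^n\to z_j\in T_x$ with $|z_j|\leq 1$, and thus $d_{X_n}(y_1^n,y_2^n)\to d_{T_x}(z_1,z_2)$.

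The final step is to show that $\log_{x,X_n}y_j^n\to z_j$ in $T_x$, which will contradict the displayed inequality since both distances then tend to $d_{T_x}(z_1,z_2)$. The radial part $|\log_{x,X_n}y_j^n|=d_{X_n}(x,y_j^n)$ converges to $|z_j|$ automatically. For the angular part, the starting directions $(xy_j^n)'\in\Sigma_x$ are independent of the rescaling; extending the geodesics $xy_j^n$ in $X_n$ to a fixed length via local geodesic completeness and passing to the GH-limit, I would identify the limiting geodesic with the unique geodesic ray in $T_x$ in the direction of $z_j$, whence $(xy_j^n)'$ converges in $\Sigma_x=\Sigma_0 T_x$ to $z_j/|z_j|$ whenever $z_j\neq 0$ (the case $z_j=0$ is trivial from the radial estimate). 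The hard part will be the identification of $(Y,y^*)$ with $(T_x,0)$: this is where local geodesic completeness is used essentially, since without it, blowup limits at $x$ need not coincide with the tangent cone.
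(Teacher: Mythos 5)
Your blowup approach via pointed Gromov--Hausdorff convergence is genuinely different from the paper's argument, which is a direct finite-net compactness argument: fix a finite $\epsilon r_0$-dense set $\{p_1,\dots,p_m\}$ in $U=B_{r_0}(x)$; by the contraction maps $c_{r_0,r}$, the geodesics $xp_i$ are $2\epsilon r$-dense in $B_r(x)$; the inequality \eqref{above} holds at small scales for pairs on these finitely many geodesics by the very definition of the Alexandrov angle, with uniformity coming from finiteness; and one then propagates to arbitrary $y_1,y_2\in B_r(x)$ via the $2$-Lipschitz bound on $\log_x$. No limit spaces are invoked at all.

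The proposal, however, contains a genuine gap, and you have correctly located it: the identification $(Y,y^*)\cong(T_x,0)$. This identification is precisely Corollary \ref{cor:converge} of the paper, and there it is a \emph{consequence} of Lemma \ref{lem:tangent}, not an independent fact. Using it as an input therefore makes the argument circular unless you prove Corollary \ref{cor:converge} from scratch, and the three ingredients you cite do not accomplish this, because they all produce estimates in the same direction. The surjectivity of $\log_{x,X_n}$ onto small balls together with the Lipschitz bound yields, in the ultralimit, a Lipschitz surjection $\Phi\colon\bar B_1(y^*)\subset Y\to\bar B_1(0)\subset T_x$, hence $d_{T_x}(\Phi p,\Phi q)\le C\, d_Y(p,q)$. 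Semicontinuity of angles gives $\angle_{y^*}(p,q)\ge\angle_0(\Phi p,\Phi q)$ while the radial parts $|\Phi p|=d_Y(y^*,p)$ and $|\Phi q|=d_Y(y^*,q)$ are preserved; combined with the $\CAT(0)$ inequality in $Y$ that the Alexandrov angle is bounded above by the comparison angle, this again yields $d_Y(p,q)\ge d_{T_x}(\Phi p,\Phi q)$ --- still the same one-sided estimate. What is missing is the reverse inequality $d_Y(p,q)\le d_{T_x}(\Phi p,\Phi q)$, that is, that $\Phi$ is an isometry rather than merely a $1$-Lipschitz surjection. That reverse inequality is exactly the nontrivial half of Lemma \ref{lem:tangent}: it amounts to the claim that points of $B_{r_n}(x)$ whose $\log_x$-images are $o(r_n)$-close are themselves $o(r_n)$-close. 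Geodesic completeness is indeed where this is decided, but one must still produce the quantitative control, which is what the paper's finite-net argument supplies and your sketch does not.
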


\begin{proof}
We find some finite $ \epsilon \cdot r_0 $-dense subset  $\{ p_1,.....,p_m \}$ in   $U=B_{r_0} (x)$.
Then the union of geodesics $xp_i$ is $2\epsilon  \cdot  r$ dense in $B_r(x)$,  for any $r<r_0$.

By the definition of angles, we find a sufficiently small
$\delta >0$ such that \eqref{above} holds true for all $y_1,y_2$ which lie on the union of the finitely many geodesics $xp_i$.
Since the logarithmic map is $2$-Lipschitz, we conclude \eqref{above} with $\epsilon$ replaced by $9\epsilon$, for arbitrary
$y_1,y_2\in B_r(x)$.
 \end{proof}

Thus, the logarithmic map provides an almost isometry between rescaled  small balls in $X$ and corresponding balls in the tangent space.
From the definition of GH-convergence this  implies:
\begin{cor} \label{cor:converge}
For any sequence $t_i\to 0$ the rescaled spaces $(\frac 1 {t_i} \bar U,x)$ converge in the pointed GH-topology to the tangent space $(T_x,0)$.
\end{cor}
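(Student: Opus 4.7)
The plan is to use Lemma \ref{lem:tangent} together with the surjectivity of $\log_x$ onto balls of $T_x$ (observed just before that lemma) to exhibit, for each scale, an explicit almost-isometric surjection from a ball in the rescaled space onto the corresponding ball of $T_x$, which is the standard way to verify pointed Gromov--Hausdorff convergence.

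Fix $R>0$ and $\epsilon>0$; I want to show that for all sufficiently large $i$, the ball of radius $R$ around $x$ in $(\tfrac 1 {t_i}\bar U,x)$ is $\epsilon$-close to $B_R(0)\subset T_x$. First I would choose $i$ so large that $R t_i<\min\{5r_0,\delta\}$, where $\delta=\delta(\epsilon/R)$ is the threshold from Lemma \ref{lem:tangent}. Then the ball $B_R^{(1/t_i)\bar U}(x)$ coincides set-theoretically with $B_{Rt_i}(x)\subset U$, and its metric is obtained from that of $X$ by multiplying distances by $1/t_i$.

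Next I would consider the restriction $\log_x\colon B_{R t_i}(x)\to T_x$. By the observation preceding Lemma \ref{lem:tangent}, this map is onto $B_{R t_i}(0)\subset T_x$, since any $v\in \Sigma_x$ is the starting direction of a geodesic of length $5r_0\geq R t_i$. Rescaling the target by $1/t_i$ and using that $T_x$ is a Euclidean cone (so $(\tfrac 1{t_i}T_x,0)$ is canonically isometric to $(T_x,0)$ via the cone dilation), I obtain a surjective map
\[
\Phi_i\colon B_R^{(1/t_i)\bar U}(x)\longrightarrow B_R(0)\subset T_x.
\]
Lemma \ref{lem:tangent}, applied with tolerance $\epsilon/R$ at scale $r=R t_i<\delta$, gives
\[
\bigl|d_X(y_1,y_2)-d_{T_x}(\log_x y_1,\log_x y_2)\bigr|\leq \tfrac{\epsilon}{R}\cdot R t_i=\epsilon\cdot t_i
\]
for all $y_1,y_2\in B_{R t_i}(x)$. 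Dividing both sides by $t_i$, the map $\Phi_i$ satisfies $|d(\Phi_i(y_1),\Phi_i(y_2))-d(y_1,y_2)|\leq\epsilon$ in the rescaled metrics, i.e.\ $\Phi_i$ is an $\epsilon$-isometry onto all of $B_R(0)$.

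Since $R>0$ and $\epsilon>0$ were arbitrary, this is exactly the definition of pointed Gromov--Hausdorff convergence $(\tfrac 1{t_i}\bar U,x)\to(T_x,0)$. The only mild subtlety, and the one I would check carefully, is the identification of $(\tfrac 1{t_i}T_x,0)$ with $(T_x,0)$ via the cone structure and the bookkeeping that $B_{R t_i}(x)$ sits inside $\bar U$ for large $i$; beyond that the argument is a direct translation of Lemma \ref{lem:tangent} into the language of convergence.
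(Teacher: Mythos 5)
Your proof is correct and is exactly the argument the paper has in mind: the paper derives the corollary directly from Lemma \ref{lem:tangent}, noting only that the logarithmic map provides an almost isometry between rescaled small balls and the corresponding balls in the tangent cone. You have simply spelled out the bookkeeping (scale $r=Rt_i$, tolerance $\epsilon/R$, the cone dilation identifying $\frac 1{t_i}T_x$ with $T_x$, and the surjectivity of $\log_x$ onto $B_{Rt_i}(0)$) that the paper leaves implicit when it says ``From the definition of GH-convergence this implies.''
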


From the stability of the geodesic extension property discussed in  Example  \ref{ex:converge}  and the doubling property of  $U$ we see:
\begin{cor} \label{compcone}
For any $x\in U$ the tangent space $T_x$ is an $N$-doubling,  geodesically complete $\CAT(0)$ space.
\end{cor}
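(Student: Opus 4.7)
The plan is to realize $(T_x, 0)$ as the pointed GH-limit of rescalings of a neighborhood of $x$ via Corollary \ref{cor:converge}, and then transport each of the three claimed properties across this limit. Fix $t_i \to 0$; the rescaled balls $(\frac{1}{t_i} \bar U, x)$ live inside the ambient spaces $\frac{1}{t_i} X$, which have curvature bound $\kappa_i := t_i^2 \kappa \to 0$, and each such ball is contained in the $\CAT(\kappa_i)$ ball $\bar B_{5 r_0 / t_i}(x) \subset \frac{1}{t_i} X$, whose radius tends to infinity.

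Two of the three properties are essentially automatic. The $\CAT(0)$ property of $T_x$ follows from the semicontinuity recalled in Subsection \ref{subsec:semi}: an ultralimit of $\CAT(\kappa_i)$ spaces with $\kappa_i \to 0$ is $\CAT(0)$. The $N$-doubling property follows because, by the scaling remark just after Definition \ref{defn: doubl}, the rescaled tiny balls around $x$ retain capacity bounded by $N$, and a uniform doubling constant passes to the pointed GH-limit by Subsection \ref{subsec:conv}.

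The substantive step is the geodesic extension property, and here the plan is to invoke Example \ref{ex:converge}. Applied to the sequence $(\frac{1}{t_i} X, x)$ with curvature bounds $\kappa_i \to 0$ and $\CAT(\kappa_i)$ balls $\bar B_{5 r_0 / t_i}(x)$ of radius tending to infinity, Example \ref{ex:converge} shows that for each fixed $R > 0$ the open ball $B_R(0) \subset T_x$ is locally geodesically complete. Letting $R \to \infty$ yields that all of $T_x$ is locally geodesically complete; since $T_x$ is complete as a Euclidean cone over the complete space $\Sigma_x$, the remark following Lemma \ref{sizeext} upgrades this to global geodesic completeness. The only minor obstacle is that Example \ref{ex:converge} is stated with fixed ambient radii, so one technically applies it separately for each $R$; but since the hypotheses hold for all sufficiently large $i$ (depending on $R$), this causes no trouble.
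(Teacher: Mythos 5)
Your proposal follows the same route the paper implicitly takes: apply Corollary \ref{cor:converge} to realize $T_x$ as the limit of rescalings, then pass the $\CAT(\kappa_i)$, doubling, and geodesic extension properties to the limit using Subsections \ref{subsec:conv}, \ref{subsec:semi}, and Example \ref{ex:converge}. The details you fill in (scale invariance of the doubling constant, exhausting $T_x$ by balls of radius $R\to\infty$ to handle the variable ambient radii, completeness of $T_x$ via the cone construction plus the remark after Lemma \ref{sizeext}) are exactly the intended ones, so the argument is correct.
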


We derive:
\begin{cor}  \label{complink}
For any  $x\in U$ the space of directions $\Sigma_x$ is a compact, geodesically complete $\CAT(1) $ space.
$\Sigma _x$  is $N_1$-doubling with $N_1$
depending on $N$.
If $U$ is not a singleton then $\Sigma _x$ has diameter $\pi$.
\end{cor}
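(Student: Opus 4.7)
The strategy is to deduce every claimed property of $\Sigma_x$ from the corresponding property of the tangent cone $T_x$, which has already been established in Corollary \ref{compcone}, together with the standard cone--suspension correspondence recalled in Subsection \ref{subsec:ex}.

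The fact that $\Sigma_x$ is $\CAT(1)$ is recalled at the beginning of Section \ref{sec:prelcurv} and needs no further argument. For the doubling and compactness claims, I would identify $\Sigma_x$ with the unit sphere $\{(1,v) : v \in \Sigma_x\} \subset T_x$. The chord distance inherited from $T_x$ and the angle distance defining $\Sigma_x$ are related by $d_{T_x}((1,v),(1,w)) = 2\sin(\min\{\angle(v,w),\pi\}/2)$, and are therefore biLipschitz equivalent with universal constants. Since the unit sphere is contained in $\bar B_1(0) \subset T_x$ and $T_x$ is $N$-doubling by Corollary \ref{compcone}, this yields $N_1$-doubling of $\Sigma_x$ with some constant $N_1=N_1(N)$. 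Combined with completeness (built into the $\CAT(1)$ definition) and the a priori bound $\diam \Sigma_x \leq \pi$, the doubling property upgrades to compactness.

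Geodesic completeness of $\Sigma_x$ would then follow directly from the cone principle recorded in Subsection \ref{subsec:ex}: the Euclidean cone $C\Sigma$ is geodesically complete precisely when $\Sigma$ is geodesically complete and not a singleton. Corollary \ref{compcone} gives geodesic completeness of $T_x = C(\Sigma_x)$, and the nondegeneracy of $U$ forces the existence of a nontrivial geodesic emanating from $x$, so $\Sigma_x$ is nonempty; the equivalence then delivers geodesic completeness.

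For the diameter statement, suppose $U$ is not a singleton and pick any $v \in \Sigma_x$. Consider the radial segment $t \mapsto tv$ on $[0,1]$ in the $\CAT(0)$ space $T_x$. By geodesic completeness of $T_x$, it extends past $t=0$ as a local geodesic, which in $\CAT(0)$ is automatically a genuine geodesic. But any geodesic of positive length passing through the apex of a Euclidean cone must be the union of two antipodal radial rays, forcing the extension to be of the form $t \mapsto -t w$ for some $w \in \Sigma_x$ with $\angle(v,w) = \pi$. This produces a pair of points of $\Sigma_x$ at distance $\pi$. The main delicate point I expect is precisely this last step: invoking the rigidity that a geodesic through the cone vertex admits a radial antipodal extension, which is what converts geodesic completeness of $T_x$ into the existence of honest antipodes in $\Sigma_x$.
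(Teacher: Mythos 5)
Your proposal is correct and follows essentially the same route as the paper, namely deducing each property of $\Sigma_x$ from Corollary \ref{compcone} via the cone structure $T_x = C\Sigma_x$ and the biLipschitz embedding of $\Sigma_x$ into $T_x$. The one place you diverge is the diameter statement: you extend a radial segment through the apex of $T_x$ and invoke the rigidity that cone geodesics through the vertex are unions of antipodal rays, whereas the paper argues more directly in $U$ itself, extending a geodesic through $x$ (available by geodesic completeness of $U$) and reading off a pair of directions at angle $\pi$. Both arguments are valid; the paper's avoids the cone rigidity fact and is slightly more elementary, but the gain is small since the cone rigidity step you flagged as delicate is in fact a routine consequence of the $\CAT(0)$ comparison for Euclidean cones.
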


\begin{proof}
If $U$ is a singleton then $\Sigma _x$ is empty. Otherwise,  there exists at least one geodesic passing through $x$, hence $\Sigma _x$ is not empty and has diameter at least $\pi$. By the definition of the angle metric, the diameter of $\Sigma _x$ cannot be larger than  $\pi$.  The doubling property  follows from Corollary \ref{compcone}, since $T_x$ is the Euclidean cone
over $\Sigma _x$  and the embedding of $\Sigma _x$ into $T_x$ is $2$-biLipschitz.
\end{proof}

\subsection{Precompactness and setting for convergence}		A bound on the  amount and capacities of tiny balls in a covering is equivalent to precompactness in the GH-topology, once the bounds on the curvature and injectivity radius are fixed:

\begin{prop} \label{prop:precomp}
Let  $\kappa, t  >0$ be fixed.
Let $X_l$ be GCBA  and let $K_l \subset X_l$ be  compact and connected.
Assume  that for any $x_l \in K_l$ the ball $\bar B_t (x_l)$ in $X_l$ is a compact $\CAT(\kappa )$  space.
Then the following are equivalent:
\begin{enumerate}
\item There exists  $r>0$ such that closed tubular neighborhoods $\bar B_r (K_l)$ are  uniformly compact, i.e., each one is compact and they constitute a precompact set in the Gromov--Hausdorff topology.

\item There are  $r,N>0$, such that the closed tubular neighborhoods $\bar B _r(K_l)$ are compact, have diameter   at most  $ N$  and are  $N$-doubling.

\item There are some $r,N>0$ and  a covering of   $\bar B_r (K_l)$ by at most $N$ tiny balls of capacity  bounded by $N$.
\end{enumerate}
\end{prop}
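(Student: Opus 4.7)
The plan is to prove the cyclic implications $(2) \Rightarrow (1) \Rightarrow (3) \Rightarrow (2)$. The implication $(2) \Rightarrow (1)$ is immediate from Gromov's precompactness theorem: a family of compact metric spaces with uniformly bounded diameter and uniform doubling constant is precompact in the Gromov--Hausdorff topology.

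For $(1) \Rightarrow (3)$, denote the radius from (1) by $r_1$. I would use the standard fact that a uniformly GH-precompact family of compact spaces is uniformly totally bounded, yielding a function $M \colon (0,\infty) \to \N$ such that each $\bar B_{r_1}(K_l)$ admits an $\epsilon$-net of cardinality at most $M(\epsilon)$. Set $r := r_1/2$ and $r_0 := \min\{1, R_\kappa/100, t/11, r_1/22\}$. A maximal $r_0$-separated subset of $\bar B_r(K_l)$ yields a covering by at most $N_0 := M(r_0)$ balls $B_{r_0}(x_{l,j})$ with $x_{l,j} \in \bar B_r(K_l)$; these are tiny balls, since $\bar B_{10 r_0}(x_{l,j}) \subset \bar B_{r_1}(K_l)$ is compact and lies inside the $\CAT(\kappa)$ ball $\bar B_t(x_{l,j})$. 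Any $r_0$-separated subset of $\bar B_{10 r_0}(x_{l,j}) \subset \bar B_{r_1}(K_l)$ has at most $M(r_0)$ elements, so the capacity is uniformly bounded by $N_0$.

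For $(3) \Rightarrow (2)$, I would keep the radius $r$ from (3). Since $K_l$ is connected, so is $\bar B_r(K_l)$, and the intersection graph of the at most $N$ covering tiny balls $B_{r_{0,j}}(x_{l,j})$ is therefore a connected graph on at most $N$ vertices. Chaining through this graph (each tiny ball has radius $\leq 1$) bounds $\diam(\bar B_r(K_l))$ by $2N$. For the doubling property, consider an arbitrary ball $B_s(y)$ in $\bar B_r(K_l)$ and an $(s/2)$-separated subset $S$; decompose $S = \bigsqcup_j S_j$ according to which tiny ball contains each point. For each nonempty $S_j$ and any chosen $z \in S_j$, the set $S_j$ is a $(s/2)$-separated subset of the intrinsic $2s$-ball around $z$ inside the $N$-doubling space $\bar B_{5 r_{0,j}}(x_{l,j})$ supplied by Proposition \ref{prop:doubl}. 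Applying the doubling definition twice, at scales $2s$ and $s$, gives $|S_j| \leq N^2$, and summing over the at most $N$ tiny balls yields $|S| \leq N^3$.

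The main obstacle is the doubling estimate in the last implication: naive iteration of the local doubling inside a tiny ball down to scale $s$ would produce a constant growing with $\log(r_{0,j}/s)$, which is not uniform as $s \to 0$. The key is to avoid iteration toward small scales by applying the $N$-doubling of the enlargement $\bar B_{5 r_{0,j}}(x_{l,j})$ at the single scale $s$ only, using that the subspace $2s$-ball around any chosen $z \in S_j$ already contains $S_j$ and sits inside this $N$-doubling space.
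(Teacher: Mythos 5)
Your proposal is correct and follows essentially the same cyclic scheme as the paper's proof: $(2)\Rightarrow(1)$ by Gromov precompactness, $(1)\Rightarrow(3)$ by taking a maximal separated net of small radius (whose size is uniformly bounded by precompactness) and checking that the corresponding balls are tiny with uniformly bounded capacity, and $(3)\Rightarrow(2)$ by a diameter bound of $2N$ via chaining plus an $N^3$-doubling bound. The only place where you add genuine content beyond what the paper records is the last step, where the paper simply asserts that ``$\bar B_r(K_l)$ is $N^3$-doubling by the definition of the bound on the capacity,'' and your decomposition of an $(s/2)$-separated set into at most $N$ pieces, each contained in a $2s$-ball inside one of the $N$-doubling enlargements $\bar B_{5r_{0,j}}(x_{l,j})$ and thus of size at most $N^2$, is a correct and clean way to supply that missing argument; the paper leaves this to the reader.
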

	
\begin{proof}
The implication (2) to (1) is clear.

Under the assumptions of (3), $\bar B_r (K_l)$ is $N^3$-doubling by the definition of the bound on the capacity. Moreover,
the diameter of $\bar B_r (K)$ can be at most $2\cdot N$, since the diameter of any tiny ball is at most $2$ and $\bar B_r(K)$ is connected, at least for all $r\leq t$.  Thus (3) implies (2).

Assume (1). We find  some $s<\frac {r} {40}$ such that for any $x\in K_l$ the open ball $B_{2s} (x)$ is tiny in $X_l$.
 By the assumption of uniform compactness, there is some $N>0$ such that  the maximal $s $-separated subset in $\bar B_r (K_l)$ has at most $N$ elements.  Hence, we can cover $\bar B_r(K_l)$ by at most $N$ open balls of radius $2s$ and each of these tiny  balls has capacity at most $N$, due to Proposition \ref{prop:doubl}.  This implies (3).
		\end{proof}
		
As a consequence of Example \ref{ex:converge}, we see:

\begin{cor} \label{cor:precomp}
Under the equivalent conditions of Proposition \ref{prop:precomp},  the compact subsets $K_l\subset X_l$ converge, upon choosing a subsequence, in the GH-topology to a compact subset $K$ of a GCBA space  $X$. There is some $s>0$ such that the compact  neighborhoods  $\bar B_{10\cdot s}(K_l) \subset X_l$ converge in the GH-topology to the compact  neighborhood
$\bar B_{10 \cdot s} (K) \subset X$.
 \end{cor}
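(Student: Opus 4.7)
The plan is to extract a convergent subsequence from the precompactness in Proposition \ref{prop:precomp}(1), verify the limit has the GCBA structure, and then identify the limit of the small tubular neighborhoods with tubular neighborhoods of the limit set $K$.

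First I would invoke condition (1) of Proposition \ref{prop:precomp}: after passing to a subsequence, the uniformly compact spaces $\bar B_r(K_l)$ converge in the Gromov--Hausdorff topology to a compact metric space $Y$. Fixing isometric embeddings of all $\bar B_r(K_l)$ and $Y$ into a common space so that the Hausdorff distances tend to zero, the compact subsets $K_l \subset \bar B_r(K_l)$ subconverge (by Blaschke's selection theorem) to a compact subset $K \subset Y$. I would then pick $s > 0$ small enough that $20 s < \min\{r,t\}$, which ensures $\bar B_{10 s}(K_l) \subset \bar B_r(K_l)$, and that for every $x_l \in \bar B_{10 s}(K_l)$ the ball $\bar B_{9 s}(x_l)$ is compact and $\CAT(\kappa)$ inside $\bar B_t(x_l')$ for a nearby $x_l' \in K_l$.

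Next I would check the GCBA structure of $Y$ near $K$. For each $x \in K$, chose $x_l \in K_l$ with $x_l \to x$. The ball $\bar B_t(x_l)$ is $\CAT(\kappa)$, and the class of $\CAT(\kappa)$ spaces is closed under GH-limits (Subsection \ref{subsec:semi}), so the limit ball $\bar B_t(x) \subset Y$ is $\CAT(\kappa)$. Local geodesic completeness of $Y$ in a neighborhood of $K$ follows by Example \ref{ex:converge}: the geodesics in the $\CAT(\kappa)$ balls $\bar B_t(x_l)$ admit extensions of a uniform size (Lemma \ref{sizeext}), and these extensions pass to the limit. Local compactness and separability are automatic. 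Declaring $X$ to be the open subset of $Y$ on which these properties hold in an open neighborhood (for instance the interior of $\bar B_r(K)$ in $Y$), one obtains a GCBA space with $K \subset X$ compact.

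For the second assertion I would identify the GH-limit of $\bar B_{10 s}(K_l)$ with $\bar B_{10 s}(K) \subset X$. Since $K_l \to K$ in Hausdorff distance inside the common ambient, the $1$-Lipschitz functions $d(\cdot, K_l)$ converge uniformly to $d(\cdot, K)$ on $Y$; hence the sublevel set $\{y \in Y : d(y, K) \leq 10 s\}$ is the Hausdorff limit of $\bar B_{10 s}(K_l)$. The main obstacle is to verify that this sublevel set equals the intrinsic $10 s$-neighborhood of $K$ in $X$, namely that distances from points of $\bar B_{10 s}(K)$ to $K$ are not shortened by passing to the limit. This is handled by noting that for $x_l$ close to $K_l$ with $d(x_l, K_l) \leq 10 s$, the geodesic realizing the distance to $K_l$ has length well below $t$ and lies in a uniform $\CAT(\kappa)$ ball; by Lemma \ref{sizeext} and the uniform extension argument of Example \ref{ex:converge}, such geodesics converge to geodesics in $X$ realizing the corresponding distance to $K$. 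Thus the intrinsic and ambient $10 s$-neighborhoods agree in the limit, completing the proof.
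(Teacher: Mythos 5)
Your argument unwinds exactly what the paper leaves implicit --- the paper's entire proof is the phrase ``As a consequence of Example \ref{ex:converge}, we see:'' --- and you have reconstructed that: pass to a GH-limit via precompactness, observe that the $\CAT(\kappa)$ condition and local geodesic completeness (Example \ref{ex:converge}) survive the limit, and identify the limiting tubular neighborhoods. Two small cautions on the wording. First, uniform convergence of $d(\cdot,K_l)\to d(\cdot,K)$ does \emph{not} by itself yield Hausdorff convergence of the closed sublevel sets $\{d\le 10s\}$ (consider $f_l=f+1/l$); what closes this is the geodesic-approximation argument you add at the end, which effectively shows $\{d(\cdot,K)<10s\}$ is dense in $\{d(\cdot,K)\le 10s\}$ and hence that the Hausdorff limit fills out the whole closed neighborhood --- so that step should be presented as essential, not as an afterthought about ``intrinsic vs.\ ambient.'' Second, $X$ should be chosen as a strictly smaller open neighborhood of $K$ than the full interior of $\bar B_r(K)$ (say $B_{r/2}(K)$ with $20s<\min\{r,t\}$), so that the $\CAT(\kappa)$ balls of radius $\sim t$ around points of $X$ stay inside the region covered by the convergence; near the boundary of $\bar B_r(K)$ the $\CAT(\kappa)$ and extension properties need not pass to the limit.
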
	

We can choose $s$ in  Corollary \ref{cor:precomp}  to be much smaller than $1$ and  the injectivity radius $t$.  Then all balls with radius
$s$ centered in $K_l$ or in $K$ are tiny balls in $X_l$ and $X$ respectively.  Therefore,
	in all local questions concerning convergence, we can restrict ourselves to a convergence   of tiny balls in some GCBA spaces to a tiny ball in some other GCBA space, as described in the following.

\begin{defn} \label{defn:standard}
As the \emph{standard setting for convergence} we will denote the following situation.  The sequence $U_l\subset \tilde U_l$ of tiny balls in GCBA spaces
$X_l$ have the same radius $r_0$ and the same bound on the capacity $N$.   The sequence $\tilde U_l$ converges in the GH-topology to a compact ball
$\tilde U$ of radius $10\cdot r_0$ in a GCBA space $X$.  The closures $\bar U_l$ converge to the closure $\bar U$ of a tiny ball $U\subset \tilde U$ of radius $r_0$ in  $X$.
\end{defn}

\subsection{Semicontinuity of tangent spaces}
For GCBA spaces, semicontinuity of angles discussed in Subsection \ref{subsec:semi} has the following nice formulation.

\begin{lem} \label{lem: semiproj}
Under the standard setting of the convergence as in Definition \ref{defn:standard}, let $x_l\in U_l \subset \tilde U_l$ converge to $x\in U \subset \tilde U$.
Then the sequence of the spaces of directions $\Sigma _{x_l} U_l$ is precompact in the GH-topology. For every limit space $\Sigma '$ of this sequence there exists a surjective $1$-Lipschitz map $P:\Sigma _xU \to \Sigma '$.
\end{lem}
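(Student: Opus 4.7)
The plan is as follows. First, precompactness of the sequence $\{\Sigma_{x_l}\}$ in the GH-topology is immediate from Corollary \ref{complink}: each $\Sigma_{x_l}$ is $N_1$-doubling with a uniform constant $N_1 = N_1(N)$ and has diameter at most $\pi$. Fixing a subsequence with $\Sigma_{x_l} \to \Sigma'$ in the GH-topology (equivalently, choosing a non-principal ultrafilter $\omega$ and setting $\Sigma' = \lim_\omega \Sigma_{x_l}$), it remains to construct the claimed map $P$.

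Next I would build $P$ on geodesic directions by the standard approximation procedure. Given $v \in \Sigma_x$, use the observation preceding Lemma \ref{lem:tangent} to realize $v$ as the starting direction of some geodesic $\gamma : [0,r_0] \to \tilde U$ with $\gamma(0) = x$, and set $y = \gamma(r_0)$. Choose an approximating sequence $y_l \in \tilde U_l$ with $y_l \to y$, available since $\tilde U_l \to \tilde U$. For each $l$, the $\CAT(\kappa)$ ball $\tilde U_l$ contains a unique geodesic $x_l y_l$; let $v_l := (x_l y_l)' \in \Sigma_{x_l}$ and define
$$P(v) := \lim_\omega v_l \in \Sigma'.$$

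To check that $P$ is well-defined and $1$-Lipschitz, I would invoke the semicontinuity of angles from Subsection \ref{subsec:semi}: for $v_1,v_2 \in \Sigma_x$ realized via points $y_1,y_2$ and approximants $y_{i,l} \to y_i$ with $d(x_l,y_{i,l}) \to r_0 > 0$,
$$d_{\Sigma'}\bigl(P(v_1), P(v_2)\bigr) \;=\; \lim_\omega \angle y_{1,l}\, x_l\, y_{2,l} \;\leq\; \angle y_1\, x\, y_2 \;=\; d_{\Sigma_x}(v_1,v_2).$$
Specializing to $v_1 = v_2 = v$ with either two geodesic realizations (which then satisfy $\angle y_1 x y_2 = 0$) or one $y$ with two different approximants, the right-hand side vanishes, establishing independence of all choices. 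For surjectivity, given $w \in \Sigma'$ represented by $v_l \in \Sigma_{x_l}$ with $w = \lim_\omega v_l$, each $v_l$ is a geodesic direction in the tiny ball $U_l$, so there is a geodesic $\gamma_l : [0, r_0] \to \tilde U_l$ starting at $x_l$ in direction $v_l$. The endpoints $y_l := \gamma_l(r_0)$ admit an ultralimit $y \in \tilde U$ with $d(x,y) = r_0$, and $\gamma_l$ ultralimit to a geodesic $\gamma : [0,r_0] \to \tilde U$ from $x$ to $y$; by construction $P(\gamma'(0)) = w$.

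The main obstacle is the well-definedness step, since a priori two approximants $y_l, y'_l \to y$ could produce different ultralimit directions in $\Sigma'$. This is resolved by precisely the same angle-semicontinuity inequality applied to $\angle y_l\, x_l\, y'_l$, whose validity requires the distances $d(x_l, y_l)$, $d(x_l, y'_l)$ to stay uniformly bounded away from $0$; this is exactly why we fix the realizing point at distance $r_0$ rather than along an arbitrarily short initial segment of the geodesic.
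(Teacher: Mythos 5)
Your proof is correct and follows essentially the same route as the paper: precompactness from Corollary \ref{complink}, then constructing $P$ by realizing $v$ as the starting direction of a geodesic of length $r_0$, approximating the endpoint from $\tilde U_l$, and appealing to the semicontinuity of angles for the $1$-Lipschitz bound and to uniform geodesic length for surjectivity. You spell out the well-definedness check a bit more explicitly than the paper does, but the underlying argument is the same.
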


\begin{proof}
 Corollary \ref{complink} implies that the sequence $\Sigma _{x_l} U_l$ is uniformly doubling, hence precompact.

 In order to prove the second statement, we may replace our sequence $U_l$ by a subsequence and assume that $\Sigma _{x_l} $ converge to $\Sigma'$.

 For any direction $v\in \Sigma _x U$ we take a point $y\in \tilde U$ with $(xy)'=v$ and $d(x,y)=r_0$.
  Consider a sequence $y_l\in \tilde U_l$ converging to $y$ and put $v_l:= (x_ly_l)' \in \Sigma _{x_l} U_l$. Then we choose the limit point $w =\lim _{\omega} (v_l) \in \Sigma'$ of the sequence $(v_l)$  and set $P(v):=w$.

 The semicontinuity of angles discussed in Subsection \ref{subsec:semi}  is exactly the statement that the map $P$ is $1$-Lipschitz.
 The surjectivity  of $P$ follows from the construction and the  fact that any direction $w\in \Sigma'$ is a limit direction of some directions $v_l \in \Sigma _{x_l} U_l$, which  are starting directions of  geodesics of
 uniform length $r_0$ in $U_l$.
\end{proof}

\section{Almost suspensions} \label{sec:spherical}

\subsection{Spherical  and almost spherical points}
In this section let $\Sigma$ be  a compact, geodesically complete  $\CAT(1)$ space  with diameter $\pi$.
Note, that any space of directions $\Sigma_x$ of any GCBA space $X$ satisfies this assumption by  Corollary \ref{complink}.

\begin{defn}
Let $\Sigma$ be  a compact $\CAT(1)$ space which is GCBA and has   diameter $\pi$.   For $v\in \Sigma$,  an \emph{antipode}
of $v$ is a point $\bar v$ with $d(v,\bar v)=\pi$.
A point $v\in \Sigma $ is called \emph{spherical} if it has only one antipode.
\end{defn}

Consider the subset $\Sigma ^0$ of all spherical points  $v\in \Sigma$. Then
$\Sigma ^0$ is a convex subset isometric to some  unit sphere $\Sph ^k$ and $\Sigma $ is a spherical join
 $\Sigma =\Sigma ^0 \ast \Sigma '$, see, for instance,  \cite[Corollary 4.4]{Lbuild}.
The Euclidean cone $C\Sigma$ has an $\R^{k}$-factor if and only if $\Sigma $ is decomposable as a spherical
join of $\Sph ^{k-1}$ and another space. Moreover, the maximal Euclidean factor is   $C\Sigma ^0 \subset C\Sigma$.

\begin{defn} \label{def:deltasph}
Let $\Sigma $ be as above    and let  $\delta >0$ be arbitrary. We call a point $v\in \Sigma$ a \emph{$\delta$-spherical point}, if there exists some $\bar v\in \Sigma$ such that
for any $w\in \Sigma$
\begin{equation} \label{eq:opp}
d(v,w)+d(w,\bar v) <\pi +\delta\, .
\end{equation}
Moreover, we say that  $v$ and $\bar v$ are  \emph{opposite} $\delta$-spherical points.
\end{defn}

The triangle inequality and extendability  of geodesics to length $\pi$   directly imply:

\begin{lem}\label{lem: antisph}
Let $\Sigma$ be as above.  The points  $v,\bar v \in \Sigma$ are opposite $\delta$-spherical  points if and only if
$d(\bar v,w)<\delta$ for any antipode $w$ of $v$. In particular, in this case $d(v,\bar v)>\pi-\delta$ and the set of all antipodes of $v$
 has diameter less than $2\delta$.  Finally, for every antipode $v'$ of $v$, the pair $(v,v')$ are opposite $2\delta$-spherical points.
\end{lem}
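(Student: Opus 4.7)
The plan is to unpack both directions of the equivalence directly from Definition \ref{def:deltasph} and then extract the three numerical consequences by short triangle-inequality arguments. The sole nontrivial geometric input is the combination of geodesic completeness of $\Sigma$ with the CAT$(1)$ property: starting in any direction at $v$ one can extend a local geodesic to length exactly $\pi$, and such a local geodesic of length $\pi$ is automatically a geodesic, so its other endpoint is an antipode of $v$. In particular every point of $\Sigma$ has at least one antipode, and for any $w \in \Sigma$ with $d(v,w) \leq \pi$ the geodesic $vw$ can be prolonged past $w$ to reach an antipode $w'$ of $v$ lying on a geodesic from $v$ through $w$, so that $d(v,w) + d(w,w') = \pi$.

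For the forward direction, I would assume $(v,\bar v)$ are opposite $\delta$-spherical and let $w$ be any antipode of $v$; substituting $w$ into \eqref{eq:opp} and using $d(v,w) = \pi$ yields $d(w,\bar v) < \delta$. For the reverse direction, I would assume $d(w,\bar v) < \delta$ for every antipode $w$ of $v$, and then for arbitrary $w \in \Sigma$ prolong the geodesic $vw$ (or take $w' = w$ if $w$ is already an antipode) to obtain an antipode $w'$ of $v$ with $d(v,w) + d(w,w') = \pi$. The triangle inequality combined with the hypothesis applied to $w'$ then gives
\[
d(v,w) + d(w,\bar v) \leq d(v,w) + d(w,w') + d(w',\bar v) < \pi + \delta,
\]
which is \eqref{eq:opp}.

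For the ``in particular'' part, picking any one antipode $w_0$ of $v$ and applying the reverse triangle inequality yields $d(v,\bar v) \geq d(v,w_0) - d(w_0,\bar v) > \pi - \delta$. For two antipodes $w_1, w_2$ of $v$, the triangle inequality through $\bar v$ gives $d(w_1, w_2) \leq d(w_1,\bar v) + d(\bar v, w_2) < 2\delta$. Finally, if $v'$ is any antipode of $v$, then for arbitrary $w \in \Sigma$ I would use $d(w, v') \leq d(w,\bar v) + d(\bar v, v') < d(w,\bar v) + \delta$ together with the $\delta$-spherical inequality for $(v,\bar v)$ to get $d(v,w) + d(w,v') < (\pi + \delta) + \delta$, so that $(v,v')$ are opposite $2\delta$-spherical. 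I do not anticipate any serious obstacle: the construction of the antipode $w'$ via extension is the only geometric step, and it is a standard consequence of geodesic completeness in a CAT$(1)$ space of diameter $\pi$; everything else is a direct unfolding of definitions and the triangle inequality.
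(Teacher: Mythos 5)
Your proof is correct and takes exactly the approach the paper indicates: the paper gives no explicit argument, stating only that "the triangle inequality and extendability of geodesics to length $\pi$ directly imply" the lemma, and your writeup fills in precisely those details (extending $vw$ past $w$ to an antipode $w'$ of $v$ with $d(v,w)+d(w,w')=\pi$, then applying the triangle inequality for the reverse implication and the three consequences). The one degenerate case you leave implicit, $w=v$, is trivially covered since $d(v,\bar v)\le\pi<\pi+\delta$.
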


\subsection{Tuples of  $\delta$-spherical points}
We define special positions of pairs of almost spherical points:

\begin{defn} \label{def:deltaort}
Let $\Sigma $ be as above. Let $(v_1,...,v_k)$ be a  $k$-tuple of
points in $\Sigma$. We say that $(v_i)$ is a \emph{$\delta$-spherical $k$-tuple}
if there exists another $k$-tuple $(\bar v_i)$ in $\Sigma$ with the following two properties.

(1) For $1\leq i \leq k   \;  $  :   $ \;   v_i$ and $\bar v_i$  are opposite $\delta$-spherical points.

(2) For  $1\leq i\neq j \leq k  \; $   : $\; d(v_i,\bar v_j)   <\frac  \pi 2 + \delta \; ; \;  d(v_i, v_j)< \frac \pi 2 + \delta \;  ; \;   d(\bar v_i,\bar v_j)< \frac \pi 2 + \delta \,.$

Moreover,  $(\bar v_i)$ and $(v_i)$ are called \emph{opposite $\delta$-spherical $k$-tuples}.
\end{defn}

From    Lemma \ref{lem: antisph} and the triangle inequality  we deduce:
\begin{cor} \label{cor:complement}
Let $\Sigma$ be as above. Let $v_1,...,v_k \in \Sigma$ be $\delta$-spherical points.  If $(v_1,...,v_k)$ is a $\delta$-spherical $k$-tuple then, for all $i\neq j$,
$$\frac \pi 2 -2\delta < d(v_i,v_j) <  \frac \pi 2 + \delta \,.$$
 On the other hand,  assume that, for all $i\neq j$,
$$\frac \pi 2 -\delta < d(v_i,v_j) <  \frac \pi 2 + \delta \, .$$
Then,  for arbitrary antipodes $\bar v_i$ of  $v_i$,  the tuples $(v_i)$ and $(\bar v_i)$ are opposite
  $2\delta$-spherical $k$-tuples.
\end{cor}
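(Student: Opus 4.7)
The plan is to handle each direction separately via the triangle inequality, using Lemma \ref{lem: antisph} as the workhorse.

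For the forward implication, the upper bound $d(v_i,v_j)<\pi/2+\delta$ is literally the middle clause of Definition \ref{def:deltaort}(2). For the lower bound I would chain the triangle inequality
\[
d(v_j,\bar v_j)\leq d(v_j,v_i)+d(v_i,\bar v_j)
\]
with the estimate $d(v_j,\bar v_j)>\pi-\delta$ coming from Lemma \ref{lem: antisph} and the estimate $d(v_i,\bar v_j)<\pi/2+\delta$ from Definition \ref{def:deltaort}(2). Rearranging yields $d(v_i,v_j)>\pi/2-2\delta$.

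For the converse, I would verify the two clauses of Definition \ref{def:deltaort} for the tuples $(v_i)$ and $(\bar v_i)$ with $\delta$ replaced by the target constant. Clause (1), that $(v_i, \bar v_i)$ are opposite $2\delta$-spherical, is handed to us directly by the last sentence of Lemma \ref{lem: antisph}: each $v_i$ is $\delta$-spherical by the standing hypothesis of the corollary, and $\bar v_i$ is an exact antipode. For clause (2), the bound on $d(v_i, v_j)$ is given. I would deduce the bound on $d(v_i, \bar v_j)$ by feeding $w=v_i$ into the $2\delta$-spherical inequality for $(v_j, \bar v_j)$ just established, so that $d(v_j,v_i)+d(v_i,\bar v_j)<\pi+2\delta$ and hence $d(v_i,\bar v_j)<\pi+2\delta-d(v_i,v_j)$; combining with the hypothesis $d(v_i,v_j)>\pi/2-\delta$ completes the estimate. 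The bound on $d(\bar v_i,\bar v_j)$ is obtained symmetrically, by feeding $w=\bar v_j$ into the $2\delta$-spherical inequality for $(v_i,\bar v_i)$ and invoking the lower bound $d(v_i,\bar v_j)\geq\pi-d(v_i,v_j)>\pi/2-\delta$ (triangle inequality applied to the chain $v_j,v_i,\bar v_j$, using $d(v_j,\bar v_j)=\pi$).

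The only step I expect to require real care is the bookkeeping of constants in clause (2): a naive concatenation of one $2\delta$-spherical inequality with one lower-bound triangle estimate shaves only a single $\delta$ off the worst case, so one must verify that the slack in the hypothesis $d(v_i,v_j)>\pi/2-\delta$ is exactly sized to produce the claimed $\pi/2+2\delta$ bound and not a larger multiple. Everything else is a one-line invocation of the triangle inequality or of Lemma \ref{lem: antisph}.
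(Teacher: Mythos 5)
Your forward direction is correct and matches the paper's intent exactly: the upper bound is read off Definition \ref{def:deltaort}(2), and the lower bound follows from the chain $\pi-\delta < d(v_j,\bar v_j) \leq d(v_j,v_i)+d(v_i,\bar v_j) < d(v_i,v_j)+\pi/2+\delta$. Likewise clause (1) of the converse is handled correctly via the last sentence of Lemma \ref{lem: antisph}, and the identification of the triangle inequality plus Lemma \ref{lem: antisph} as the workhorses is precisely the route the paper indicates.

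However, your arithmetic in clause (2) of the converse does not produce the constant you claim, and your own warning about ``bookkeeping'' turns out to be well-founded rather than dismissible. Feeding $w=v_i$ into the $2\delta$-spherical inequality for $(v_j,\bar v_j)$ gives $d(v_i,\bar v_j)<\pi+2\delta-d(v_i,v_j)$, and combining with $d(v_i,v_j)>\pi/2-\delta$ yields
$d(v_i,\bar v_j) < \pi+2\delta-(\pi/2-\delta) = \pi/2+3\delta$,
not $\pi/2+2\delta$; the symmetric estimate for $d(\bar v_i,\bar v_j)$ gives $\pi/2+3\delta$ as well. The slack is \emph{not} ``exactly sized'': the $\delta$ of slack in the hypothesis offsets only the $\delta$-to-$2\delta$ degradation in Lemma \ref{lem: antisph}, and the remaining $\delta$ from the hypothesis's lower bound is an unavoidable additional loss, so the concatenation lands at $3\delta$. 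One would obtain $\pi/2+2\delta$ if one used the original $\delta$-spherical opposite $\bar v_j'$ (for which the defining inequality has the tighter $\pi+\delta$ bound), but the Corollary insists on an \emph{arbitrary} antipode $\bar v_j$, which can sit up to $\delta$ away from $\bar v_j'$. So either the argument must be supplemented by an idea not present in your sketch, or — more likely, since the constants here are not tracked sharply downstream — the paper's claimed $2\delta$ is itself slightly loose and should read $3\delta$; in any case, your assertion that the computation ``completes the estimate'' is a genuine gap as written.
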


It is important to notice that all definitions above only use upper bounds on distances. Thus, due to the semicontinuity of angles,
they are suitable to provide open conditions on spaces of directions.

\subsection{Connection with GH-topology}  The existence of almost spherical $k$-tuples is equivalent to a small distance from   a $k$-fold suspension:
\begin{prop} \label{lem:equiv}
Let $\mathcal C$ be a compact set in the GH-topology  of (isometry classes of)
compact, geodesically complete  $\CAT(1)$ spaces with diameter $\pi$.  Let $k$ be a natural number. The following  are equivalent for any sequence $\Sigma _l$  in $\mathcal C$.
\begin{enumerate}
\item Any accumulation point $\Sigma \in \mathcal C $ of the  sequence   $\Sigma _l$ is isometric to a $k$-fold suspension  $\Sph^{k-1} \ast \Sigma '$, with possibly empty $\Sigma'$.

\item For any $\delta >0$ and all sufficiently large $l$, the space $\Sigma _l$ admits a $\delta$-spherical $k$-tuple.
\end{enumerate}
\end{prop}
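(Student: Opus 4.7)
The plan is to prove each implication by extracting convergent subsequences in $\mathcal C$ and using the compactness of the spaces $\Sigma _l$ to pass limiting configurations of points back and forth. Throughout, after extracting subsequences, I may assume $\Sigma _l\to \Sigma$ in $\mathcal C$ and that all relevant finite tuples of points converge as well.

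For $(2)\Rightarrow (1)$, pick $\delta _l\to 0$ and $\delta _l$-spherical $k$-tuples $(v_i^l)$ with opposite tuples $(\bar v_i^l)$ in $\Sigma _l$. After a subsequence, $v_i^l\to v_i$ and $\bar v_i^l\to \bar v_i$ in $\Sigma$. By Lemma \ref{lem: antisph}, $d(v_i^l,\bar v_i^l)>\pi -\delta _l$, so $d(v_i,\bar v_i)=\pi$. For any $w\in \Sigma$, choose $w_l\in \Sigma _l$ with $w_l\to w$; the defining inequality of Definition \ref{def:deltasph} passes to the limit to give $d(v_i,w)+d(w,\bar v_i)\le \pi$. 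Combined with the triangle inequality, this forces equality, so every antipode of $v_i$ coincides with $\bar v_i$, i.e., $v_i$ is spherical with unique antipode $\bar v_i$. Corollary \ref{cor:complement} and Definition \ref{def:deltaort}(2) similarly yield $d(v_i,v_j)=d(v_i,\bar v_j)=d(\bar v_i,\bar v_j)=\pi /2$ for $i\ne j$. Thus the $2k$ spherical points $v_1,\dots ,v_k,\bar v_1,\dots ,\bar v_k$ sit in the convex set $\Sigma ^0\cong \Sph ^m$ of spherical points of $\Sigma$ with pairs $(v_i,\bar v_i)$ antipodal and each pair $(v_i,v_j)$ for $i\ne j$ orthogonal, which is possible only if $m\ge k-1$. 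Using $\Sigma =\Sigma ^0\ast \Sigma '$ and $\Sph ^m =\Sph ^{k-1}\ast \Sph ^{m-k}$ (interpreting $\Sph ^{-1}=\emptyset $ when $m=k-1$) yields $\Sigma =\Sph ^{k-1}\ast \Sigma ''$, as required.

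For $(1)\Rightarrow (2)$, I argue by contradiction. If (2) fails, some $\delta >0$ admits a subsequence $\Sigma _{l_j}$ containing no $\delta $-spherical $k$-tuple; pass to a further subsequence so that $\Sigma _{l_j}\to \Sigma$. By (1) and compactness of $\mathcal C$, $\Sigma =\Sph ^{k-1}\ast \Sigma '$ for some $\Sigma '$. Fix in the $\Sph ^{k-1}$-factor a standard configuration $v_i,\bar v_i=-v_i$, so each $v_i$ is spherical in $\Sigma$ with unique antipode $\bar v_i$ and the mutual distances satisfy the equalities of the previous paragraph. Approximate by $v_i^{l_j},\bar v_i^{l_j}\in \Sigma _{l_j}$; by continuity, the bounds in Definition \ref{def:deltaort}(2) hold for large $j$. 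To verify that $v_i^{l_j}$ and $\bar v_i^{l_j}$ are opposite $\delta $-spherical, suppose not: then along a further subsequence there exist $w_j\in \Sigma _{l_j}$ with $d(v_i^{l_j},w_j)+d(w_j,\bar v_i^{l_j})\ge \pi +\delta$. Extract $w_j\to w\in \Sigma$; sphericity of $v_i$ in $\Sigma$ gives $d(v_i,w)+d(w,\bar v_i)=\pi$, contradicting the lower bound $\pi +\delta$.

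The main obstacle is the implication $(1)\Rightarrow (2)$, and specifically the uniform verification of \eqref{eq:opp} for the approximating tuples. The crucial input is that genuine sphericity of $v_i$ in the limit is equivalent to the equality $d(v_i,w)+d(w,\bar v_i)=\pi$ for \emph{every} $w\in \Sigma$, which upgrades the pointwise convergence of the tuples to the uniform estimate demanded by Definition \ref{def:deltasph}.
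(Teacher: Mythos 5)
Your argument is correct and follows essentially the same route as the paper's proof: pass to a convergent subsequence, let $\delta_l$-spherical tuples converge to genuine spherical tuples at mutual distance $\pi/2$ (yielding the suspension splitting), and conversely lift the coordinate directions of the $\Sph^{k-1}$-factor of the limit to nearby tuples in the $\Sigma_l$. The paper's version is much terser and in particular leaves implicit the contradiction mechanism for $(1)\Rightarrow(2)$ that you spell out, but the underlying idea is identical.
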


\begin{proof}
Choosing a subsequence we may restrict ourselves to the case that $\Sigma _l$ converges to a space $\Sigma$.

A sequence of $\delta _l$-spherical $k$-tuples in $\Sigma _l$ with $\delta_l \to 0$ converges to a  $k$-tuple of spherical points  in $\Sigma$
with pairwise distance $\frac \pi 2$. This spherical $k$-tuple determines a splitting $\Sigma =\Sph^{k-1} \ast \Sigma '$, hence (2) implies (1).

On the other hand, if $\Sigma =\Sph^{k-1} \ast \Sigma '$, we choose the standard coordinate directions $e_1,...,e_k\in \Sph ^{k-1} \subset \Sigma$ and consider in $\Sigma _l$
tuples of points converging to the $k$-tuple $(e_i)$.   These $k$-tuples satisfy the condition of (2), finishing the proof.
\end{proof}



\section{Strainers} \label{sec:strainer}
\subsection{Strained points}
The following definition, translated from \cite{BGP} to our setting, is central for all subsequent considerations.

\begin{defn} \label{def:deltastr}
Let $X$ be GCBA, $k$ an integer and  $ \delta >0$. A  point $x\in X$ is   \emph{$(k,\delta)$-strained} if  the space of directions $\Sigma_x$ contains some $\delta$-spherical $k$-tuple.
\end{defn}

As  in the introduction, we denote by $X_{k,\delta}$ the set of $(k,\delta)$-strained points in $X$.  We have
$ X_{k,\delta}\subset X_{k-1,\delta} ....\subset X_{1,\delta} \subset X_{0,\delta} =X$.
Due to Proposition \ref{lem:equiv},    $X_{k,0}=\bigcap _{\delta >0}  X_{k,\delta}$ is
the set  of all points $x\in X$, for which the tangent space $T_xX$ splits off the Euclidean space $\R^{k}$ as a direct factor.

\subsection{Strainers}
As in Section \ref{sec:setting} we  fix a tiny ball $U\subset \tilde U \subset X$.
\begin{defn}\label{defn: strainer}
 Let $x \in U$ be a point and let $\delta >0$ be arbitrary.
A  $k$-tuple  of points $p_i \in \tilde U \setminus \{x\}$ is  a \emph{$(k,\delta)$-strainer
at $x$}
if the $k$-tuple  of the starting directions $((xp_i)')$ is
$\delta$-spherical in $\Sigma_x$.

Two $(k,\delta)$-strainers  $(p_i)$ and $(q_i)$  at $x$
are  \emph{opposite} if the $\delta$-spherical  $k$-tuples $((xp_i)')$ and $((xq_i)')$ are opposite in $\Sigma _x$.

For a set $V\subset U$,  a  $k$-tuple  $(p_i)$ of points in $\tilde U$ is a \emph{$(k,\delta)$-strainer in $V$} if $(p_i)$ is a $(k,\delta)$-strainer at all $x\in V$. If  $(k,\delta)$-strainers $(p_i)$ and $(q_i)$ are opposite at all points  $x\in V$, we say that $(p_i)$ and $(q_i)$ are
\emph{opposite $(k,\delta)$-strainers} in $V$.
\end{defn}

A point $p$ is a $(1,\delta)$-strainer at $x$ if and only if  there is some $v\in \Sigma _x$ such that
any continuation of $px$ beyond $x$ as a geodesic encloses an angle smaller than $\delta$ with $v$.
%
The  following observation is the most fundamental source of strainers.

\begin{prop}\label{lem: first}
For any $\delta > 0$ and $p \in U$,
there is a neighborhood $O$ of $p$ such that
the point $p$ is a $(1,\delta)$-strainer in $O\setminus \{ p \}$.
\end{prop}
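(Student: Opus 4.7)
The plan is to argue by contradiction using the rescaling convergence $(r_n^{-1}\tilde U,p)\to (T_pX,0)$ from Corollary \ref{cor:converge}. The geometric essence is that in the Euclidean cone $T_pX=C(\Sigma_p)$, a ray issued from the tip extends beyond any of its points only in the (unique) radial direction, and this rigidity contradicts a robust splitting of antipodes in the spaces of directions at points $x_n\to p$.

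Suppose the statement fails: there exist $\delta>0$ and a sequence $x_n\in U\setminus\{p\}$ with $r_n:=d(p,x_n)\to 0$ such that $(x_np)'\in \Sigma_{x_n}$ is not $\delta$-spherical. By Lemma \ref{lem: antisph}, the antipode set of $(x_np)'$ has diameter at least $\delta$, so I can pick two antipodes $w_n^-,w_n^+\in\Sigma_{x_n}$ with $d(w_n^-,w_n^+)\ge\delta$. Using that every element of $\Sigma_{x_n}$ is the starting direction of an actual geodesic of length up to $5r_0$ in the tiny ball (Section \ref{sec:gcba}), I realize $w_n^\pm$ as starting directions of geodesics $x_nq_n$ and $x_ny_n$ with $d(x_n,q_n)=d(x_n,y_n)=r_n$. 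Since $\angle px_nq_n=\angle px_ny_n=\pi$, the concatenations $px_nq_n$ and $px_ny_n$ are local geodesics of length $2r_n<R_\kappa$ inside the ambient $\CAT(\kappa)$ ball, hence actual geodesics; consequently
\[
d(p,q_n)=d(p,y_n)=2r_n.
\]
From $\angle q_nx_ny_n\ge\delta$ and the two legs of length $r_n$, $\CAT(\kappa)$ angle comparison gives $d(q_n,y_n)\ge 2r_n\sin(\delta/2)(1-o(1))$ as $n\to\infty$.

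Rescaling by $r_n^{-1}$ around $p$ and passing to a subsequence, Corollary \ref{cor:converge} yields convergence of $p,x_n,q_n,y_n$ to points $0,x^*,q^*,y^*\in T_pX$ with $d(0,x^*)=1$, $d(0,q^*)=d(0,y^*)=2$, $d(x^*,q^*)=d(x^*,y^*)=1$, and $d(q^*,y^*)\ge 2\sin(\delta/2)>0$. Writing $x^*=v\in\Sigma_p\subset T_pX$ and $y^*=2w$ with $w\in\Sigma_p$, the Euclidean cone distance formula gives $1=d(x^*,y^*)^2=5-4\cos d(v,w)$, forcing $w=v$ and hence $y^*=2v$; the same argument produces $q^*=2v$. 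Therefore $d(q^*,y^*)=0$, contradicting the above lower bound.

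The only delicate point is the passage from an abstract antipode $w_n\in\Sigma_{x_n}$ to a geodesic extension actually realized in $X$, which is handled by the extendability of geodesics in tiny balls of GCBA spaces. Everything else reduces to standard $\CAT(\kappa)$ angle comparison at small scales and a one-line cosine computation in the Euclidean cone.
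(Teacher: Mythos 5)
Your proof is correct and is essentially the paper's argument in different clothing: you extract two geodesic extensions of $px_n$ beyond $x_n$ whose starting directions at $x_n$ are antipodes of $(x_np)'$ at distance $\ge\delta$, so that the two endpoints are a definite multiple of $r_n$ apart while both lie at distance $2r_n$ from $p$ along the same direction in $\Sigma_p$. The paper derives the contradiction directly from Lemma~\ref{lem:tangent} (the two endpoints have the same image under $\log_p$ yet nonzero distance), whereas you pass explicitly to the blowup limit via Corollary~\ref{cor:converge} and run a cosine computation in the Euclidean cone; the two are interchangeable, with the paper's version being slightly more economical.
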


\begin{proof}
Otherwise we find points $x_i \neq  p$ arbitrary close to $p$ such that $(x_ip)'$ is not $\delta$-spherical. Set $s_i=d(x_i,p)$ and extend
$px_i$ by different geodesics to points $y_i,z_i$ with $d(y_i,x_i)=d(z_i,x_i)=s_i$ and $\angle y_ix_iz_i \geq \delta$.

By construction, $d(y_i,p)=d(z_i,p)=2\cdot  s_i$ and $\log _p (y_i)=\log _p (z_i)$. On the other hand $d(y_i,z_i) \geq \rho \cdot  s_i$, where $\rho >0$ depends only   on $\delta$ and the curvature bound $\kappa $.  For $s_i \to 0$, this contradicts Lemma \ref{lem:tangent}.
\end{proof}

\begin{rem}
An observation similar to Proposition \ref{lem: first}
can be found  in \cite{Otsu}.
\end{rem}

 For any $\delta >\pi$ and  any $x\in U$, any $k$-tuple $(p_i)$ of points in $\tilde U\setminus \{x  \}$ is
 a $(k,\delta )$-strainer at $x$.  On the other hand, we have:

  \begin{lem} \label{lem:uniformbound}
  There exists a number $k_0(N)$  with the following property. For   any tiny ball $U$ of capacity
  bounded by $N$ and any  $1\geq \delta >0$, there do not exist $(k,\delta)$-strained points in $U$ with $k>k_0$.
   \end{lem}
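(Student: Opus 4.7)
My plan for Lemma~\ref{lem:uniformbound} is to reduce the bound on $k$ to the uniform doubling of the spaces of directions at points of $U$. By Corollary~\ref{complink}, for every $x\in U$ the space $\Sigma_x$ is a compact, geodesically complete $\CAT(1)$ space of diameter $\pi$ that is $N_1$-doubling with $N_1=N_1(N)$. Thus it suffices to prove the following abstract statement: in any $N_1$-doubling $\CAT(1)$ space $\Sigma$ of diameter $\pi$ which is geodesically complete, the length of any $\delta$-spherical $k$-tuple with $0<\delta\leq 1$ is bounded by some $k_0=k_0(N_1)$.

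Starting from such a tuple $(v_i)$ with opposite tuple $(\bar v_i)$ (Definition~\ref{def:deltaort}), I would work with the resulting $2k$ points in $\Sigma$, which by the hypothesis $\delta\leq 1$ satisfy $d(v_i,\bar v_i)>\pi-1$ while all remaining pairwise distances are $<\pi/2+1$. Iterating the $N_1$-doubling property from the diameter scale $\pi$ down to radius $\rho=(\pi-1)/3$ yields a covering of $\Sigma$ by $M=M(N_1)$ balls of radius $\rho$. Since $2\rho<\pi-1$, no such ball can contain both members of an antipodal pair, so each pair $(v_i,\bar v_i)$ is split between two distinct balls. This gives an assignment of the $k$ pairs to the at most $\binom{M}{2}$ unordered pairs of balls in the covering.

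The main obstacle, and the technical heart of the argument, is bounding the fibers of this assignment: several pairs $(v_{i_j},\bar v_{i_j})$ could share the same pair of balls because the $v_i$'s need not be mutually separated (for $\delta=1$ the estimate $d(v_i,v_j)>\pi/2-2\delta$ from Corollary~\ref{cor:complement} is vacuous). To handle this I expect to use either a recursive rescaling on the common ball, where the localized configuration inherits an analogous $\delta$-spherical structure at a smaller scale and doubling applies again; or a compactness argument: if the conclusion failed one would produce $N_1$-doubling $\CAT(1)$ geodesically complete spaces $\Sigma_l$ of diameter $\pi$ carrying $\delta_l$-spherical $k_l$-tuples with $k_l\to\infty$ and $\delta_l\in (0,1]$, pass to a Gromov--Hausdorff subsequential limit $(\Sigma_\infty,\delta_\infty)$ still $N_1$-doubling, and then use the semicontinuity of angles (Subsection~\ref{subsec:semi}) together with Proposition~\ref{lem:equiv} to extract in $\Sigma_\infty$ a genuinely spherical join factor $\Sph^{k_\infty}$ of unbounded dimension, contradicting that $\Sph^{k_\infty}$ can only be $N_1$-doubling for $k_\infty$ bounded in terms of $N_1$.

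In either route the resulting constant $k_0$ depends only on $N_1$, hence only on $N$, and bounding the single abstract statement for $\Sigma$ gives Lemma~\ref{lem:uniformbound} by applying it at each $x\in U$ whose $\Sigma_x$ supports a $(k,\delta)$-strained structure.
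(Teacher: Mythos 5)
The paper's proof is much shorter than what you propose: by Corollary~\ref{cor:complement}, the $\delta$-spherical $k$-tuple $(v_i)$ in $\Sigma_x$ is pairwise separated by a definite distance (the paper states $\frac{\pi}{2}-\delta$, though the honest bound that corollary delivers is $\frac{\pi}{2}-2\delta$), and the $N_1(N)$-doubling of $\Sigma_x$ from Corollary~\ref{complink} then bounds $k$ at once. The opposite tuple $(\bar v_i)$, the covering by balls, and the assignment of pairs never enter. You are right that the separation $\frac{\pi}{2}-2\delta$ degenerates as $\delta \to \pi/4$; the stated threshold $\delta\le 1$ should plausibly be something like $\delta\le 1/2$, and the paper is only ever invoked with $\delta\le \pi/12$ (Theorem~\ref{lem: +1strainer}), so this is a harmless constant imprecision rather than a genuine obstacle requiring a new argument.

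However, the workarounds you propose would not close the gap you identified. The ball-pair pigeonhole, as you concede, leaves the fiber-bounding problem unresolved --- and that is precisely the difficulty, since for large $\delta$ the $v_i$'s can cluster. The compactness route also fails: a GH-subsequential limit $\Sigma_\infty$ does not force the limiting $\delta_\infty$ to be $0$. Proposition~\ref{lem:equiv} requires $\delta_l\to 0$ in order to extract a genuine $\Sph^{k-1}$-join factor; if $\delta_\infty>0$ you only obtain, in $\Sigma_\infty$, $\delta'$-spherical $m$-tuples for every $\delta'>\delta_\infty$ and every $m$, which is exactly the unbounded-tuple problem you started with in a single space. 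The recursive rescaling is not spelled out and I don't see how to make it work either. The correct fix for your (valid) concern about the constant is simply to run the paper's direct argument --- Corollary~\ref{cor:complement} plus doubling --- under a mildly smaller threshold on $\delta$.
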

 \begin{proof}
 Let $x$ be  a $(k,\delta )$-strained point in a tiny ball $U$ of capacity bounded by $N$.
By definition, we  find in $\Sigma _x$ a $(\frac \pi 2 -\delta) $-separated subset with $k$ points.
From the bound on the doubling constant (Corollary
\ref{complink}) and the assumption $\frac \pi 2 -\delta >\frac 1 2 >0$,
we deduce  that $k$ is bounded from above in terms of $N$.
 \end{proof}

\subsection{Almost Euclidean triangles}  The existence of strainers implies the existence of many almost Euclidean triangles. We will only  use the following:

\begin{lem}\label{lem: almperp}
Let $p,q \in \tilde U$ be opposite $(1,\delta)$-strainers at points $x\neq y$ in a tiny ball $U$.
Then the following hold true.
\begin{enumerate}
\item  $\pi -2\cdot \delta < \angle pxy +\angle pyx <\pi  \,.$
\item If $d(p,x) = d(p,y)$ then $\frac \pi 2 - 2\cdot \delta  < \angle pxy < \frac  \pi 2$.
\end{enumerate}
\end{lem}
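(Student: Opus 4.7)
The plan is to combine the hypothesis that $(p,q)$ are opposite $(1,\delta)$-strainers, which via Lemma~\ref{lem: antisph} gives near-antipodality of the corresponding starting directions in $\Sigma_x$ and $\Sigma_y$, with the CAT upper bound on the sum of two angles of any triangle in the tiny ball $\tilde U$ recorded earlier in this section.

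First I would translate the opposite-strainer hypothesis at $x$ into an angular inequality. By definition, the directions $(xp)'$ and $(xq)'$ are opposite $\delta$-spherical points in $\Sigma_x$, so Lemma~\ref{lem: antisph} yields $d((xp)',(xq)')>\pi-\delta$. Since $x\neq y$ and both lie in $U\subset \tilde U$, the direction $(xy)'\in\Sigma_x$ is defined, and the triangle inequality in $\Sigma_x$ gives
\[
\pi-\delta \;<\; d((xp)',(xq)') \;\le\; \angle pxy+\angle qxy.
\]
The parallel argument at $y$ yields $\angle pyx+\angle qyx>\pi-\delta$. Adding the two inequalities and invoking the CAT bound $\angle qxy+\angle qyx\le\pi$, which applies to the triangle $qxy$ in $\tilde U$, produces
\[
\angle pxy+\angle pyx \;>\; (2\pi-2\delta)-\pi \;=\; \pi-2\delta,
\]
the lower bound in (1). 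The upper bound is just the same CAT angle-sum inequality applied to the triangle $pxy$ itself.

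For (2), the hypothesis $d(p,x)=d(p,y)$ puts $pxy$ into the isosceles situation of the second observation of the \emph{Almost Euclidean triangles} subsection, so $\angle pxy<\pi/2$ and symmetrically $\angle pyx<\pi/2$. Substituting the latter bound into the lower estimate of (1) gives $\angle pxy>\pi-2\delta-\pi/2=\pi/2-2\delta$, which together with $\angle pxy<\pi/2$ is exactly the claim.

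I do not anticipate any real obstacle: the proof is pure bookkeeping once the $\delta$-spherical estimate of Lemma~\ref{lem: antisph} and the CAT angle-sum bound are in hand. The only substantive point is that one must deploy the opposite strainer $q$ at \emph{both} endpoints $x$ and $y$ to secure the quantitative loss $2\delta$; using $q$ at only one endpoint would have produced merely $\pi-\delta$ in (1), with a correspondingly weaker consequence in (2).
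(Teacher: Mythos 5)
Your proof is correct and follows essentially the same route as the paper: both establish $\angle pxy+\angle qxy>\pi-\delta$ and $\angle pyx+\angle qyx>\pi-\delta$, subtract the $\CAT$ bound $\angle qxy+\angle qyx\le\pi$ for the triangle $qxy$ to get the lower bound in (1), take the upper bound from the same angle-sum estimate applied to $pxy$, and deduce (2) from (1) together with the isosceles observation $\angle pyx<\pi/2$. The paper states the first pair of inequalities directly as a consequence of "the definition of opposite strainers"; your explicit reduction through Lemma~\ref{lem: antisph} and the triangle inequality in $\Sigma_x$, $\Sigma_y$ is exactly what underlies that shorthand.
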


\begin{proof}
From  the  assumption on the upper curvature bound and diameter   of $\tilde U$
we deduce the right hand side inequalities in (1)  and in (2). By the same reason
\begin{equation} \label{eq:first}
\angle qxy + \angle qyx <\pi.
\end{equation}

On the other hand,
by the definition of opposite strainers
we have
$$\angle pxy + \angle qxy  > \pi - \delta \; \;  \text{and} \; \; \angle pyx + \angle qyx  > \pi - \delta \,.$$
Hence the sum of these four  angles is at least $2\pi -2\delta$.
Combining with  \eqref{eq:first}  we deduce the left hand   side inequalities.
\end{proof}

\subsection{Stability of strainers}
If $(p_i)$ is a $(k,\delta)$-strainer  at  $x \in U$ and  $\hat p_i \in \tilde U\setminus \{ x \} $ is any point 
 on an extension of $xp_i$ beyond $p_i$
then $(\hat p_i)$ is still   a  $(k,\delta)$-strainer at $x$.

From   Corollary \ref{cor:complement} we obtain:

\begin{lem} \label{lem:qi}
Let $p_1,...,p_k \in \tilde U $ be  $(1,\delta)$-strainers at $x\in U$.  Let $q_i \in \tilde U$ be arbitrary points  lying on an extension of the geodesic $p_ix$ beyond $x$. If
 $|\angle p_ixp_j -\frac \pi 2 |< \delta$,  for all $i\neq j$,  then
$(p_i)$ and $(q_i)$ are opposite  $(k,2\delta)$-strainers  at $x$.
\end{lem}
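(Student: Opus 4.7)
The plan is to pass to the space of directions $\Sigma_x$ and reduce the assertion to a direct invocation of Corollary \ref{cor:complement}. Set $v_i := (xp_i)'$ and $w_i := (xq_i)'$ in $\Sigma_x$. The hypothesis that each $p_i$ is a $(1,\delta)$-strainer at $x$ says, by Definition \ref{defn: strainer}, precisely that each $v_i$ is a $\delta$-spherical point of $\Sigma_x$. The angle hypothesis $|\angle p_i x p_j - \tfrac{\pi}{2}| < \delta$ for $i \neq j$ translates verbatim to $|d(v_i, v_j) - \tfrac{\pi}{2}| < \delta$ in $\Sigma_x$.

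Next, I would observe that the extension hypothesis---namely that $q_i$ lies on an extension of the geodesic $p_ix$ beyond $x$---means that $p_i \to x \to q_i$ is a local geodesic at $x$, so the angle at $x$ between the two emanating geodesics equals $\pi$. In the language of directions this reads $d(v_i, w_i) = \pi$; that is, $w_i$ is an antipode of $v_i$ for each $i$. With this in hand, the second assertion of Corollary \ref{cor:complement} applies directly to the tuple $(v_i)$ together with the antipodes $(w_i)$, yielding that $(v_i)$ and $(w_i)$ form opposite $2\delta$-spherical $k$-tuples in $\Sigma_x$. Unwinding Definition \ref{defn: strainer}, this is exactly the statement that $(p_i)$ and $(q_i)$ are opposite $(k, 2\delta)$-strainers at $x$.

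There is no real obstacle here, since Corollary \ref{cor:complement}---itself a consequence of Lemma \ref{lem: antisph} and the triangle inequality in $\Sigma_x$---has already absorbed the geometric content. The only step worth pausing on is the identification of the extension of $p_ix$ beyond $x$ with an antipodal direction in $\Sigma_x$; this is immediate from local geodesic completeness inside a tiny ball and the definition of the angle metric on $\Sigma_x$.
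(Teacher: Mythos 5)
Your proof is correct and follows exactly the route the paper takes: the paper derives Lemma \ref{lem:qi} as an immediate consequence of Corollary \ref{cor:complement}, and your write-up simply fills in the straightforward translations between $\tilde U$ and $\Sigma_x$ (starting directions, angles, and the antipodality of $(xq_i)'$ to $(xp_i)'$ via the extension hypothesis). Nothing to add.
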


The  definition of  strainers is designed  to satisfy the following openness condition:

\begin{lem} \label{lem:open}
Let  $U_l\subset \tilde U_l \subset X_l$ converge to  $U\subset \tilde U\subset X$   as in our  standard  setting
in Definition \ref{defn:standard}.
Let $(p_i)$ and $(q_i)$ be opposite $(k,\delta)$-strainers at $x\in U$.  Let, for  $i=1,..,k$, the sequences $p_i^l, q_i^l, x^l\in \tilde U_l$ converge to $p_i$, $q_i$ and $x$, respectively.

Then the $k$-tuples $(p_i^l)$ and $(q_i^l)$ in $\tilde U_l$ are opposite $(k,\delta )$-strainers at the point  $x^l$,  for all $l$ large enough.
\end{lem}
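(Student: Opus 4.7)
The plan is to verify the two clauses of Definition \ref{def:deltaort} separately for the $k$-tuples $(p_i^l)$ and $(q_i^l)$ at $x^l$. Clause (2), which consists of finitely many strict upper bounds of the form $\angle p_i^l x^l p_j^l < \frac{\pi}{2}+\delta$ and the analogous bounds involving the $q$'s and the mixed angles, is immediate from the semicontinuity of angles recalled in Subsection \ref{subsec:semi}: the corresponding angles at $x$ in $\Sigma_x$ are strictly below $\frac{\pi}{2}+\delta$, and the inequality $\limsup_l \angle p_i^l x^l p_j^l \leq \angle p_i x p_j$ together with strictness yields the same strict bound along the sequence for all sufficiently large $l$. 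Only finitely many pairs $(i,j)$ are involved, so a single index $l_0$ handles all of them at once.

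The main step is clause (1): for each fixed $i$, one must show that $((x^l p_i^l)',(x^l q_i^l)')$ is an opposite $\delta$-spherical pair in $\Sigma_{x^l}$, which by Definition \ref{def:deltasph} is the universally quantified condition $d((x^l p_i^l)',w) + d(w,(x^l q_i^l)') < \pi+\delta$ for every $w \in \Sigma_{x^l}$. This is the hard part, because the quantifier runs over a moving space $\Sigma_{x^l}$ rather than over $\Sigma_x$. I argue by contradiction: after passing to a subsequence (and fixing $i$), suppose there exist witnesses $w_l \in \Sigma_{x^l}$ satisfying $d((x^l p_i^l)',w_l) + d(w_l,(x^l q_i^l)') \geq \pi+\delta$.

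Since $U_l$ is a tiny ball, I realize each $w_l$ as the starting direction $(x^l y^l)'$ of a geodesic of length $r_0$ inside $\tilde U_l$, using the geodesic extension available in tiny balls (Subsection \ref{subsec:size}). The points $y^l$ lie in the compact ball $\tilde U_l$, which GH-converges to $\tilde U$, so after a further subsequence $y^l \to y$ for some $y \in \tilde U$ with $d(x,y) = r_0 > 0$. Applying the semicontinuity of angles (Subsection \ref{subsec:semi}) to the two sequences of triangles gives
\[
\limsup_l \bigl( \angle p_i^l x^l y^l + \angle q_i^l x^l y^l \bigr) \;\leq\; \angle p_i x y + \angle q_i x y,
\]
while the hypothesis that $((xp_i)',(xq_i)')$ is an opposite $\delta$-spherical pair in $\Sigma_x$ forces the right-hand side to be strictly below $\pi+\delta$. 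This contradicts the assumed lower bound $\pi+\delta$ along the subsequence and finishes the argument. The conceptual point is that geodesic completeness inside a tiny ball lets one represent an arbitrary direction by a geodesic of \emph{fixed} length $r_0$, so that precompactness of $\tilde U_l$ and the ordinary semicontinuity of angles suffice to handle the universal quantifier.
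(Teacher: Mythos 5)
Your proof is correct and takes essentially the same route as the paper: the paper's two-sentence proof cites semicontinuity of angles together with Lemma \ref{lem: semiproj}, and your argument for clause (1) — realizing a putative bad direction $w_l\in\Sigma_{x^l}$ as the starting direction of a geodesic of length $r_0$, passing to a limit point $y$, and invoking semicontinuity — is precisely the mechanism by which the surjective $1$-Lipschitz map in Lemma \ref{lem: semiproj} is constructed. You have simply unpacked the pointer; the extra care you give to the universally quantified clause (1), versus the finitely many strict bounds in clause (2), is exactly what the phrase ``only involves strict upper bounds on distances'' glosses over.
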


\begin{proof}
The claim is a  consequence of the semicontinuity of angles  under convergence, Subsection \ref{subsec:semi} (see also Lemma \ref{lem: semiproj}), and the definition of $\delta$-spherical $k$-tuples, which only  involves  strict  upper bounds on distances.
\end{proof}

Restricting to the case $U_l=U$, for all $l$, we see from Lemma \ref{lem:open}
\begin{cor}  \label{cor: openopp}
For  $k$-tuples   $(p_i)$ and $(q_i)$  in $\tilde U$, the set of points $x\in U$ at which $(p_i)$ and $(q_i)$ are opposite   $(k,\delta)$-strainers  is  open.

 The set of points $x\in U$ at which $(p_i)$ is a $(k,\delta)$-strainer is open.
 \end{cor}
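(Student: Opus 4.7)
The plan is to derive both statements as direct consequences of Lemma \ref{lem:open}, applied to the trivial ``convergent'' situation in which the ambient spaces and all data except the center point are held constant.

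For the first assertion, I would argue by contradiction. Suppose the set $W \subset U$ of points at which the fixed tuples $(p_i)$ and $(q_i)$ are opposite $(k,\delta)$-strainers is not open. Then one can choose a point $x \in W$ and a sequence $x^l \in U \setminus W$ converging to $x$. Now set $X_l := X$, $U_l := U$, $\tilde U_l := \tilde U$, $p_i^l := p_i$ and $q_i^l := q_i$ for every $l$; these constant sequences trivially fall under the standard convergence setting of Definition \ref{defn:standard}, and $p_i^l \to p_i$, $q_i^l \to q_i$, $x^l \to x$. Lemma \ref{lem:open} then asserts that $(p_i^l) = (p_i)$ and $(q_i^l) = (q_i)$ are opposite $(k,\delta)$-strainers at $x^l$ for all large $l$, contradicting $x^l \notin W$.

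For the second assertion, it suffices to reduce to the first. Let $x \in U$ be a point at which $(p_i)$ is a $(k,\delta)$-strainer, so that $((xp_i)')$ is a $\delta$-spherical $k$-tuple in $\Sigma_x$ with some witnessing opposite tuple $(\bar v_i) \subset \Sigma_x$. Since every element of $\Sigma_x$ is the starting direction of a geodesic of length $5r_0$ in $\tilde U$ (as established in the discussion preceding Lemma \ref{lem:tangent}), I can choose points $q_i \in \tilde U$ with $(xq_i)' = \bar v_i$. By construction $(p_i)$ and $(q_i)$ are opposite $(k,\delta)$-strainers at $x$. Applying the first assertion to the fixed tuples $(p_i), (q_i)$ produces an open neighborhood $O \subset U$ of $x$ on which they remain opposite $(k,\delta)$-strainers; in particular $(p_i)$ is a $(k,\delta)$-strainer throughout $O$.

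No real obstacle arises: the only point needing care is that the second part requires producing the witnessing opposite tuple $(q_i)$ inside $\tilde U$, which relies on the geodesic completeness of $\Sigma_x$ together with the fact that $U$ is a tiny ball (so geodesics of length $5r_0$ starting at any point of $U$ remain inside $\tilde U$). Everything else is a direct invocation of the openness built into Lemma \ref{lem:open} and, through it, into the strict inequalities of Definitions \ref{def:deltasph} and \ref{def:deltaort}.
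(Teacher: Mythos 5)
Your proof is correct and follows the same route as the paper: the first assertion is a direct specialization of Lemma \ref{lem:open} to the constant sequence $U_l=U$, $p_i^l=p_i$, $q_i^l=q_i$, and the second assertion reduces to the first by realizing the witnessing opposite tuple $(\bar v_i)\subset\Sigma_x$ as starting directions of geodesics inside $\tilde U$. The paper's own proof is a one-liner that leaves both these steps implicit; you have simply spelled them out.
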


\subsection{Straining radius} \label{subsec:special}
We will need some uniformity in the choice of opposite strainers and the diameters of strained neighborhoods.
As before, we denote by $r_0$  the radius of the tiny ball $U$.

\begin{lem} \label{lem:opp}
Let  $(p_i)$ be a $(k,\delta)$-strainer  at  $x\in U$. Then  there  is  some number
 $0<\epsilon _x  <\frac 1 2 \cdot d(x,\partial U)$  with the following  property.
 If $y\in B_{\epsilon _x} (x)$  is arbitrary and
  $q_i \in \tilde U$,  with   $d(q_i, y)=r_0$, lies on an arbitrary continuation of $p_iy$ beyond $y$,
  then  the $k$-tuples  $(q_i)$ and $(p_i)$ are opposite $(k,2\delta)$-strainers  in
   the  ball $B_{\epsilon _x} (y)$.
\end{lem}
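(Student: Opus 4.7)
The approach is by contradiction, combining the compactness of $\tilde U$ with the openness of the strainer condition (Corollary \ref{cor: openopp} and Lemma \ref{lem:open}).

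First, I would realize the opposite $\delta$-spherical $k$-tuple at $x$ as actual points of $\tilde U$. Since $(p_i)$ is a $(k,\delta)$-strainer at $x$, Definition \ref{defn: strainer} provides an opposite tuple $(\bar v_i)$ in $\Sigma_x$; extending geodesics in these directions to length $r_0$ inside $\tilde U$ (which is possible by the geodesic extension property of the tiny ball) yields points $\bar p_i\in\tilde U$ with $(x\bar p_i)'=\bar v_i$, so that $(p_i)$ and $(\bar p_i)$ are opposite $(k,\delta)$-strainers at $x$. By Corollary \ref{cor: openopp}, the set of points at which this opposite condition holds is open, so it contains a neighborhood $W$ of $x$, which we may take inside $B_{d(x,\partial U)/4}(x)$.

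Now suppose no such $\epsilon_x$ exists. Then one finds sequences $y_n\to x$, $z_n\in U$ with $d(z_n,y_n)\to 0$, and $q_i^n\in\tilde U$ on continuations of $p_iy_n$ beyond $y_n$ with $d(q_i^n,y_n)=r_0$, such that $(p_i),(q_i^n)$ fail to be opposite $(k,2\delta)$-strainers at $z_n$. Compactness of $\tilde U$ together with uniqueness of geodesics in the tiny ball forces, along a subsequence, $q_i^n\to q_i^\infty$ lying on a continuation of $p_ix$ beyond $x$ at distance $r_0$; in particular $\hat v_i:=(xq_i^\infty)'$ is an antipode of $(xp_i)'$ in $\Sigma_x$. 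The core step would then be to verify that $(p_i),(q_i^\infty)$ form opposite $(k,2\delta)$-strainers at $x$ with strict inequalities: Lemma \ref{lem: antisph} gives that each pair $((xp_i)',\hat v_i)$ is opposite $2\delta$-spherical and places $\hat v_i$ within distance $\delta$ of $\bar v_i$, so the cross-distance bounds of Definition \ref{def:deltaort}(2) reduce, by triangle inequality in $\Sigma_x$, to the bounds already known for $(\bar v_i)$. Once this is established, Lemma \ref{lem:open} applied to the constant sequence $U_l=U$ with data $(q_i^n,p_i,z_n)$ yields a contradiction.

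The delicate point is the cross-distance $d(\hat v_i,\hat v_j)$ for $i\neq j$: the naive triangle estimate $d(\hat v_i,\bar v_i)+d(\bar v_i,\bar v_j)+d(\bar v_j,\hat v_j)<\pi/2+3\delta$ falls slightly short of $\pi/2+2\delta$. Tightening this requires exploiting the strict inequalities inherent in $\delta$-sphericity together with the $\CAT(1)$ geometry of $\Sigma_x$ --- for instance, leveraging the near-isometric behaviour of the antipode map at $\delta$-spherical points, or by replacing $\delta$ with a slightly smaller constant in an internal step to absorb the loss, as is customary in similar constructions in Alexandrov geometry. This calibration of constants, rather than the structure of the argument, is the only real obstacle.
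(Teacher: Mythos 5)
Your overall strategy matches the paper's: argue by contradiction, extract limit points $q_i$ so that $x$ is an interior point of each geodesic $p_iq_i$, observe via Lemma~\ref{lem:qi} (or, as you do, by hand in $\Sigma_x$) that $(p_i)$ and $(q_i)$ are then opposite $(k,2\delta)$-strainers at $x$, and contradict the assumed failure along the sequence using the openness/stability statement of Lemma~\ref{lem:open} (including the fact that openness holds simultaneously in $y_l$ and $z_l$, which the paper builds into a single limit-point argument). Your opening step — realizing the opposite $\delta$-spherical tuple as points $\bar p_i\in\tilde U$ and invoking Corollary~\ref{cor: openopp} to get a neighborhood $W$ — is superfluous: it plays no role in the contradiction and the paper omits it.

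On the constant-calibration worry: it is real, but it is not a defect of your route versus the paper's; it is slack already present in the paper's auxiliary lemmas. The paper dispatches the verification step by citing Lemma~\ref{lem:qi}, whose hypothesis is $|\angle p_ixp_j-\tfrac\pi2|<\delta$. From $(p_i)$ being a $(k,\delta)$-strainer one only gets $\tfrac\pi2-2\delta<\angle p_ixp_j<\tfrac\pi2+\delta$ (the first part of Corollary~\ref{cor:complement}), so Lemma~\ref{lem:qi} does not apply with the literal constant $\delta$; and, unwinding Corollary~\ref{cor:complement}'s second part via Lemma~\ref{lem: antisph}, the cross-distances $d(v_i,\bar v_j)$ and $d(\bar v_i,\bar v_j)$ only come out below $\tfrac\pi2+3\delta$, exactly the $\tfrac\pi2+3\delta$ vs.\ $\tfrac\pi2+2\delta$ mismatch you observe. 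The authors simply absorb this into a universal-constant convention (the lemma remains true and equally useful with $2\delta$ replaced by, say, $4\delta$). So your unease is well-founded as a reading of the stated constants, but the structure of your argument is the paper's argument, and nothing in it would fail.
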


\begin{proof}
In order to prove the  statement, we assume the contrary and  find  contradicting sequences  $y_l, z_l\to x$ and  $k$-tuples  $(q_i^l)  $. Thus, $d(y_l,q_i^l) =r_0$, the point
 $y_l$ is on the geodesic $p_i q_i^l$, and  $(p_i^l)$ and
$(q_i^l)$  are not  opposite $(k,2\delta)$-strainers at $z_l$. Taking limit points we find a $k$-tuple $(q_i) \in \tilde  U$ such that $x$ is an inner point of  the geodesic $p_iq_i$ for any $i$.

 Due to stability of strainers,  Lemma \ref{lem:open},  $(q_i)$ and $(p_i)$ cannot be  opposite $(k,2\delta)$-strainers at $x$.
But this contradicts Lemma \ref{lem:qi}.
\end{proof}

We will call the maximal number $\epsilon _x$ as in Lemma \ref{lem:opp} above the
 \emph{straining radius at $x$}, of the $(k,\delta)$-strainer
 $(p_i)$.
By definition, $\epsilon_y \ge \epsilon_x - d(x,y)$, for all $x,y$ strained by $(p_i)$.  In particular, the  map
$x\to \epsilon _x$ is continuous.

Note finally, that the proof above literally transfers to the convergence setting from Lemma \ref{lem:open}.
Thus, the proof shows:

\begin{lem} \label{lem:strradius}
Under the  assumptions of Lemma \ref{lem:open}, let $\epsilon _x$  and $\epsilon _{x_l}$ be the straining radius of $(p_i)$ and $(p_i^l)$ at $x$ and $x_l$, respectively.
Then $\liminf _{l\to \infty}  \epsilon _{x_l} \geq  \epsilon _{x}$.
\end{lem}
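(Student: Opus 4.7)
The plan is to reproduce, in the convergence setting, the proof-by-contradiction used for Lemma~\ref{lem:opp}. Suppose for contradiction that $\liminf_{l\to\infty}\epsilon_{x_l}<\epsilon_x$. Fix a number $\epsilon$ strictly between $\liminf \epsilon_{x_l}$ and $\epsilon_x$, and pass to a subsequence so that $\epsilon_{x_l}<\epsilon$ for every $l$. By definition of the straining radius, this strict inequality means $\epsilon$ itself fails to have the defining property, so for each $l$ we can find a point $y_l\in B_{\epsilon}(x_l)$, a $k$-tuple $(q_i^l)$ in $\tilde U_l$ with $d(q_i^l,y_l)=r_0$ and $y_l\in p_i^lq_i^l$, and a witness point $z_l\in B_{\epsilon}(y_l)$ at which $(p_i^l)$ and $(q_i^l)$ are \emph{not} opposite $(k,2\delta)$-strainers.

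Next, pass to a further subsequence and use the uniform compactness of $\tilde U_l$ (from the standard convergence setting) to extract limits $y_l\to y$, $z_l\to z$ and $q_i^l\to q_i$ in $\tilde U$. Since $\epsilon<\epsilon_x$, the limit point $y$ lies in $\bar B_\epsilon(x)\subset B_{\epsilon_x}(x)$ and $z$ lies in $\bar B_\epsilon(y)\subset B_{\epsilon_x}(y)$. Using uniqueness of geodesics in $\CAT(\kappa)$ balls and the convergence of the geodesics $p_i^lq_i^l\supset y_l$ to $p_iq_i$, we see that $y$ lies on $p_iq_i$ with $d(y,q_i)=r_0$, so $q_i$ is an admissible continuation of $p_iy$ beyond $y$.

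Now invoke the definition of the straining radius $\epsilon_x$ at the limit: since $y\in B_{\epsilon_x}(x)$ and $z\in B_{\epsilon_x}(y)$, the $k$-tuples $(p_i)$ and $(q_i)$ must be opposite $(k,2\delta)$-strainers at $z$. But then Lemma~\ref{lem:open}, applied with the parameter $2\delta$ to the converging data $p_i^l\to p_i$, $q_i^l\to q_i$, $z_l\to z$, yields that $(p_i^l)$ and $(q_i^l)$ are opposite $(k,2\delta)$-strainers at $z_l$ for all sufficiently large $l$, contradicting the choice of $z_l$.

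The only delicate point is arranging strict inclusions in open balls: working directly with $\epsilon_{x_l}$ and $\epsilon_x$ would only yield $y\in \bar B_{\epsilon_x}(x)$, which is not enough to trigger the defining property of the straining radius. Inserting the auxiliary threshold $\epsilon<\epsilon_x$ (and correspondingly choosing witnesses in the slightly smaller ball $B_\epsilon$) is the standard way to circumvent this; everything else is a verbatim translation of the argument of Lemma~\ref{lem:opp} through the ultralimit/GH-limit and the semicontinuity packaged in Lemma~\ref{lem:open}.
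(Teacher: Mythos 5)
Your proof is correct and follows exactly the route the paper intends: the paper states only that ``the proof above literally transfers to the convergence setting from Lemma~\ref{lem:open},'' and you carry out precisely that translation of the contradiction argument from Lemma~\ref{lem:opp}, correctly inserting the auxiliary threshold $\epsilon<\epsilon_x$ so that the limit points $y,z$ land in the \emph{open} balls $B_{\epsilon_x}(x)$ and $B_{\epsilon_x}(y)$ where the defining property of $\epsilon_x$ applies.
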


\section{Strainer maps} \label{sec:strmaps}

\subsection{Differentials of distance maps and a criterion for openness}
 Let $U\subset \tilde U \subset X$ be a tiny ball as in Definition \ref{defn: doubl}.
As before, denote by $d_p : \tilde U\to \R$ the distance function to the point $p \in \tilde U$.

For any point $x \in \tilde U$ we collect the directional derivatives of $f=d_p$ to a differential $D_xf :T_x\to \R$. If $x\neq p$, $v\in \Sigma _x$ and $t\geq 0$ then  the differential $D_xf (tv)$ is given by the first variation formula as
$D_xf(tv)=-t  \cdot \cos (\alpha) $, where $\alpha$ is  the distance in $\Sigma _x$ between $v$ and the starting direction  of the geodesic $xp$.

If $F=(d_{p_1},...,d_{p_k}):\tilde U\to \R^k$ is a distance map, we denote as its differential $D_xF:T_x \to \R^k$ the map whose coordinates
are the differentials of $d_{p_j}$ at $x$.

The following criterion  is essentially taken from \cite[Section 11.5]{BGP}.

\begin{lem} \label{lem:genfunction}
Set  $\rho=\frac 1 {4k}$. Let $(p_i)$ be a $k$-tuple in $\tilde U$ and   let  $f_i=d_{p_i}$ be the corresponding distance functions.
 Assume that for every $x$ in an open subset  $V$ of $U$ and
 any $1\leq i \leq k$ there are directions $v^{\pm}  _i\in \Sigma _x$ with
\begin{equation} \label{eq:opendif}
 \pm D_xf_i (v^{\pm } _i) > 1-\rho \;\;  \text{and} \;\; |D_xf_j (v^{\pm}_i) |< 2\cdot  \rho, \;  \text{for} \;  i\neq j \,.
\end{equation}
 Then the distance map $F=(f_1,...,f_k) :V\to \R^k _1$  is   $2$-open if we equip $\R^k$ with the $L^1$-norm $|(t_i) |_1=\sum_{i=1}^k |t_i|$.
\end{lem}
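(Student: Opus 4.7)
The idea is the standard Alexandrov-type minimization argument from \cite[\S 11.5]{BGP}: given a target $\mathfrak{t}$ in $\R^k_1$, minimize a Lyapunov functional combining the $\ell^1$-error with the distance traveled from $z$, and then use the straining hypothesis \eqref{eq:opendif} to produce a descent direction at any interior minimizer where the target is not yet attained.

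Fix $z \in V$ and $r > 0$ such that $\bar B_{2r}(z) \subset V$ is complete; by the doubling property of the tiny ball $U$ (Proposition \ref{prop:doubl}) this closed ball is compact. Fix $\mathfrak{t} = (t_1, \ldots, t_k)$ with $|F(z) - \mathfrak{t}|_1 < r$; the goal is to produce $y \in B_{2r}(z)$ with $F(y) = \mathfrak{t}$. I would set
\[
\phi(x) := \sum_{i=1}^k |f_i(x) - t_i|, \qquad \psi(x) := 2\phi(x) + d(z,x),
\]
and take $y$ to be a minimizer of $\psi$ on the compact set $\bar B_{2r}(z)$. Since $\psi(z) = 2\phi(z) < 2r$, while $\psi \geq 2r$ at every point of the sphere $\partial B_{2r}(z)$, one has $y \in B_{2r}(z)$.

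Assume for contradiction that $\phi(y) > 0$, and set $\sigma_i := \operatorname{sign}(f_i(y) - t_i) \in \{-1, 0, +1\}$. Some index $i_0$ has $\sigma_{i_0} \neq 0$. Let $v := v_{i_0}^{-\sigma_{i_0}} \in \Sigma_y$ be the direction supplied by \eqref{eq:opendif}, and let $\gamma \colon [0,\varepsilon) \to V$ be a geodesic starting at $y$ in direction $v$, which exists by local geodesic completeness inside the tiny ball (Lemma \ref{sizeext}). By the first variation formula the right derivative at $0$ of $|f_i \circ \gamma - t_i|$ equals $\sigma_i D_y f_i(v)$ when $\sigma_i \neq 0$ and equals $|D_y f_i(v)|$ when $\sigma_i = 0$. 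By choice of $v$, the $i_0$-term is bounded above by $-(1-\rho)$, while every remaining term is bounded in absolute value by $2\rho$ by \eqref{eq:opendif}. Summing,
\[
\phi'(0^+) < -(1 - \rho) + 2(k-1)\rho = -1 + (2k-1)\rho,
\]
and using the trivial bound $|D_y d_z(v)| \leq 1$ yields
\[
\psi'(0^+) < 2\bigl(-1 + (2k-1)\rho\bigr) + 1 = -1 + 2(2k-1)\rho \leq -\tfrac{1}{2k},
\]
strictly negative. This contradicts the minimality of $y$, so $\phi(y) = 0$, i.e.\ $F(y) = \mathfrak{t}$ with $y \in B_{2r}(z)$, proving $2$-openness.

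\textbf{Main obstacle.} The only delicate bookkeeping concerns indices $i \neq i_0$ where $f_i(y) = t_i$: there the right derivative of $|f_i \circ \gamma - t_i|$ is the nonnegative quantity $|D_y f_i(v)|$ rather than a signed term, and these stray nonnegative contributions must be dominated by the single dominant term coming from $i_0$. This is exactly what the quantitative threshold $\rho = 1/(4k)$ in the hypothesis is designed to guarantee; the same choice leaves enough slack to absorb the $+1$ introduced by the distance-to-$z$ penalty in the Lyapunov functional $\psi$, which is what converts ``right derivative negative at interior minimizer'' into the desired contradiction via geodesic completeness.
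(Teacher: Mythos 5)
Your proof is correct, and it takes a genuinely different route from the paper's. The paper defines $h(z) = s - |\mathfrak{t}-F(z)|_1$, shows that the lower gradient $|\nabla_z h|$ is at least $1/2$ off the target fiber, and then invokes the abstract openness criterion \cite[Lemma 4.1]{Ly-open} to conclude; compactness is used only at the very end to pass to the limit $s' \to s$. You instead run the classical Alexandrov--Ekeland minimization: build a Lyapunov functional $\psi = 2\phi + d(z,\cdot)$, obtain an interior minimizer $y$ on the compact ball, and exhibit a strict descent direction at $y$ if $F(y) \neq \mathfrak{t}$. The derivative bookkeeping is exactly parallel to the paper's gradient estimate --- both reduce to $-(1-\rho) + (k-1)\cdot 2\rho < -1/2$, and the extra $+1$ from the distance penalty is absorbed by the factor $2$ in front of $\phi$, which is why $\rho = 1/(4k)$ still leaves slack. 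The net effect is that your argument is self-contained (it does not import \cite[Lemma 4.1]{Ly-open}), at the cost of being slightly more ad hoc than the paper's, which packages the underlying principle into a reusable lemma. One small imprecision: the existence of a geodesic issuing from $y$ in the direction $v$ supplied by \eqref{eq:opendif} is not literally Lemma \ref{sizeext} but the observation in Section \ref{sec:gcba} that in a tiny ball every direction in $\Sigma_y$ is the initial direction of a geodesic of definite length; the content is what you need, only the citation is off.
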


\begin{proof}
 Let  $x\in V$ be arbitrary and $r>0$ such that $\bar B_{2r} (x)$ is complete.  Let
 $\mathfrak t  =(t_1,...,t_k) \in \R^k$ with $s:=|\mathfrak t - F(x)|_1 < r$ be fixed.
 In order to find $y\in B_{2r} (x) \cap F^{-1} (\mathfrak t)$ we consider the function
 $h:V\to \R$ given by
 $$h(z):= |\mathfrak t - F(x)|_1 -|\mathfrak t - F(z)|_1   = s-   |\mathfrak t - F(z)|_1\, .$$
 Then $h(x)=0$ and we are looking for $y\in B_{2r} (x)$ with $h(y)=s$.

 For every $z\in V$ with $h(z)<s$, there is some $i=1,...,k$ such that
 $t_i \neq f_i (z)$.   On the geodesic $\gamma$ starting at such  $z$ in the direction $v _i ^{\pm}$ (depending on the sign
 of $t_i-f_i (z)$), the value of $|t_i-f_i (z)|$ decreases  (infinitesimally) with velocity larger than $1-\rho$, while
 the values of $|t_j-f_j (z)|$ for $j\neq i$ increase  with velocity less than $2\rho$.
 Therefore the \emph{norm of the gradient of $h$} at any  $z \in V\setminus F^{-1} (\mathfrak t)$ satisfies
 $$|\nabla _z h| := \limsup _{y\to z} \frac {h(y)-h(z)} {d(y,z)}  \geq (1-\rho) -(k-1) \cdot (2\cdot \rho) >
 1-2\cdot k  \cdot \rho  \geq \frac 1 2 \;.$$
 Due to \cite[Lemma 4.1]{Ly-open}, for any $s'<s$ we find some $z \in B_{2s} (x)$ with $h(z)=s'$. We let $s'$ go to $s$ and use compactness
 of $\bar B_{2r} (x)$  to find the desired point $y$.

 This shows $B_r (F(x))\subset F(B_{2r} (x) )$ and finishes the proof.
\end{proof}

\subsection{Strainer  maps} Let $V\subset U$ be a subset, let  $(p_i)$ be a $k$-tuple in $\tilde U$,  and let  $F=(d_{p_1},...,d_{p_k}) :V\to \R^k$  be the corresponding distance map. We  say that  $F$ is a \emph{$(k,\delta)$-strainer map} in  $V$ if $(p_i)$ is a $(k,\delta)$-strainer in $V$. In this case, we  define the   \emph{straining radius of $F$ at $x\in V$ to be} the straining radius
of the
$(k,\delta)$-strainer  $(p_i)$ at $x$.
We say that distance maps $F,G$ are \emph{opposite} \emph{$(k,\delta)$-strainer maps} in $V$ if their defining   $k$-tuples are opposite $(k,\delta)$-strainers in $V$.

Let   $F:V\to \R^k$ be a $(k,\delta)$-strainer map with coordinates $f_i=d_{p_i}$ defined on an open subset $V$ of $U$. For any  $x\in V$, we find some distance map $G=(d_{q_i})$ such that $F$ and $G$ are opposite $(k,\delta)$-strainer maps at $x$.  Choose $v_i^{\pm} \in \Sigma _x$ to be the starting directions of $xp_i$  and $xq_i$, respectively.  By the definition of strainers and the first
  variation formula, we see that
\eqref{eq:opendif} hold true at the point $x$ with $\rho$ replaced by $\delta$.  Replacing the $L^1$-norm by the Euclidean norm we get:

\begin{lem}  \label{lem:openstr}
If $\delta \leq \frac 1{4\cdot k}$ then any $(k,\delta)$-strainer map $F:V\to \R^k$ on any open non-empty  set $V\subset U$ is
$L$-Lipschitz and  $L$-open with $L=2 \sqrt k$.   In particular, the Hausdorff dimension of $V$ is at least $k$.
\end{lem}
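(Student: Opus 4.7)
The plan is to deduce this as an application of Lemma~\ref{lem:genfunction}, after converting between the $L^1$-norm and the Euclidean norm on $\R^k$. The Lipschitz bound is immediate: each coordinate $f_i=d_{p_i}$ is $1$-Lipschitz, so $F$ is $\sqrt k$-Lipschitz (hence $2\sqrt k$-Lipschitz) into $(\R^k,|\cdot|_2)$. The core of the argument is openness, and the essential task is to exhibit, at every point of $V$, directions witnessing the derivative conditions \eqref{eq:opendif} with $\rho=\frac{1}{4k}$.

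Fix $x\in V$. Since $(p_i)$ is a $(k,\delta)$-strainer at $x$, the $k$-tuple $v_i:=(xp_i)'\in\Sigma_x$ is $\delta$-spherical, with some opposite tuple $(\bar v_i)$. Using the geodesic extension property in the tiny ball $U$, extend each $xp_i$ beyond $x$ to a point $q_i\in\tilde U$ and set $w_i:=(xq_i)'$; then $w_i$ is an antipode of $v_i$, so $d(v_i,w_i)=\pi$. I then take $v_i^- := v_i$ and $v_i^+ := w_i$. By the first variation formula,
\[
D_xf_i(v_i^-) = -\cos 0 = -1,\qquad D_xf_i(v_i^+) = -\cos\pi = 1,
\]
so the first half of \eqref{eq:opendif} is trivially satisfied for any $\rho$. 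For the cross terms with $j\ne i$, Corollary~\ref{cor:complement} gives $d(v_i,v_j)\in(\tfrac\pi2-2\delta,\tfrac\pi2+\delta)$, and since $w_i$ is an antipode of $v_i$ while $\bar v_i$ is a $\delta$-close antipode (Lemma~\ref{lem: antisph}) satisfying $d(v_j,\bar v_i)<\tfrac\pi2+\delta$, the triangle inequality in $\Sigma_x$ yields $d(v_j,w_i)\in(\tfrac\pi2-\delta,\tfrac\pi2+2\delta)$. In both cases, the deviation from $\tfrac\pi2$ is at most $2\delta$, and
\[
|D_xf_j(v_i^{\pm})| \;=\; |\cos d(v_j,v_i^{\pm})| \;\le\; \sin(2\delta) \;<\; 2\delta \;\le\; \tfrac{1}{2k} \;=\; 2\rho.
\]

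Thus the hypotheses of Lemma~\ref{lem:genfunction} hold throughout $V$, so $F\colon V\to(\R^k,|\cdot|_1)$ is $2$-open. Using $|\cdot|_1\le\sqrt k\,|\cdot|_2$, any Euclidean ball $B^2_r(F(x))$ is contained in the $L^1$-ball $B^1_{\sqrt k\,r}(F(x))\subset F(B_{2\sqrt k\,r}(x))$, which gives the $2\sqrt k$-openness w.r.t.\ the Euclidean norm on completeness balls inside $V$. Finally, openness implies that for any $x\in V$ and small enough $r>0$ the set $F(V)$ contains a Euclidean $r$-ball, so $\Haus^k(F(V))>0$; since $F$ is Lipschitz, $\Haus^k(V)>0$, giving $\dim V\ge k$.

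The only step that requires care is the matching between $\delta$ and $\rho$: the strictness of $\sin(2\delta)<2\delta$ is exactly what allows $\delta=\frac{1}{4k}$ to be admissible rather than strictly less, so I do not expect a serious obstacle beyond bookkeeping the angle estimates and the norm comparison.
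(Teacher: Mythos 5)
Your proof is correct and follows essentially the same route as the paper: verify the derivative conditions of Lemma~\ref{lem:genfunction} at each point of $V$ to obtain $2$-openness with respect to the $L^1$-norm, then pass to the Euclidean norm via $|\cdot|_1\le\sqrt k\,|\cdot|_2$. The only cosmetic difference is that you take $v_i^+$ to be an exact antipode of $(xp_i)'$ obtained by geodesic extension (so $D_xf_i(v_i^+)=1$ exactly), while the paper takes the starting direction toward an opposite strainer point $q_i$ (so $D_xf_i(v_i^+)\ge\cos\delta>1-\delta$); both yield \eqref{eq:opendif} with $\rho$ replaced by $\delta\le\frac1{4k}$, so this is the same argument in substance.
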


\begin{proof}
The Lipschitz property is true for any distance map. The openness constant follows from Lemma \ref{lem:genfunction}.
The bound  on the Hausdorff dimension follows, since the image $F(V)$ is open in $\R^k$.
\end{proof}

   As in \cite[Section 11.1]{BGP},  one can  derive from  Lemma \ref{lem:openstr} that the intrinsic metric on the fibers of $F$ is locally biLipschitz equivalent to the induced metric.  Since it is not used in the sequel, we do not provide the proof.

\subsection{Convergence of maps and an improvement of constants}
We are going to prove that for small $\delta$, the constant $L$ in Lemma \ref{lem:openstr} can be chosen arbitrary close to $1$.
These results will only be used in  Subsection \ref{subsec:convregular},
 where one could rely  on results from \cite{N2} instead.
 For this reason, the argument in this subsection will be sketchy. Readers not familiar with ultralimits may restrict to
  the case of tiny balls with uniformly bounded capacities and replace ultralimits by GH-limits of a subsequence, using Corollary \ref{compcone}.

Let $F:V\to \R^k$ be a $(k,\delta)$-strainer map on an open subset $V$ of some tiny ball $U$. For $\delta \leq \frac 1 {4\cdot k}$,  the map $F$ is $L$-open and $L$-Lipschitz on $V$ with $L=2\sqrt k$. Therefore, the differential $D_xF:T_x \to \R^k$ which is a limit of  rescalings of $F$ is $L$-Lipschitz and $L$-open.

Let $F_l :V_l\to \R^k$ be  a sequence of $(k,\delta _l)$-strainer maps with $\lim _{l\to \infty} \delta _l =0$.  Let $x_l\in V_l$ be arbitrary
and consider   the sequence of differentials $D_{x_l} F_l :T_{x_l} \to \R^k$.

 As in the proof of Proposition \ref{lem:equiv}, we see that the ultralimit $T=\lim_{\omega} T_{x_l}$  is a Euclidean cone which splits as $T=\R^k \times T'$.  Moreover, the ultralimit  $P=\lim _{\omega} D_{x_l} F_l :T\to \R^k$  is just the projection of $T$ onto the direct factor $\R^k$.

 Since the maps $D_{x_l} F_l$ are all $L$-open, any fiber $P^{-1} (w)$ with $w\in \R^k$ coincides with the ultralimit of fibers $\lim _{\omega} (D_{x_l} F_l ) ^{-1} (w_l)$ for any sequence $w_l$ converging to $w$.  For any unit vector $w\in \mathbb S^{k-1}$,  the fiber  $P^{-1} (w)$ has distance $1$ to the origin of the cone $T$. Therefore,  for any $\epsilon >0$,
 the distances of infinitely many of the fibers $(D_{x_l} F_l ) ^{-1} (w_l)$ to the origin must be between $1-\epsilon$ and $1+\epsilon$.

Arguing by contradiction we conclude:

\begin{lem}
For every  $k\in \N$  and  $L>1$ there exists some $\delta=\delta (L,k) >0$ such that the following holds true.
For any $(k,\delta)$-strainer map $F$ at a point $x$ in  a
tiny ball $U$  of a  GCBA space $X$
 the differential $D_xF:T_x\to \R^k$ satisfies:
\begin{enumerate}
\item $|D_xF(v)| < L$, for any $v\in \Sigma _x \subset T_x$.
\item For any  $u\in \mathbb S^{k-1} \subset \R ^k$, there exists  $v\in T_x$ with  $D_xF(v)=u$ and $|v|<L$.
\end{enumerate}
\end{lem}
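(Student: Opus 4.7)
The plan is to argue by contradiction, exactly in the ultralimit framework that the paragraphs immediately preceding the statement have set up. Suppose, for fixed $k$ and $L>1$, the conclusion fails. Then for each $l\in\N$ one may find a $(k,\tfrac{1}{l})$-strainer map $F_l:V_l\to\R^k$ in some tiny ball $U_l$ of a GCBA space $X_l$ and a point $x_l\in V_l$ at which either (1) or (2) is violated. Passing to an $\omega$-ultralimit, the discussion preceding the lemma gives $T:=\lim_\omega T_{x_l}=\R^k\times T'$ and $P:=\lim_\omega D_{x_l}F_l:T\to\R^k$ equal to the projection onto the $\R^k$-factor.

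If (1) fails, there exist unit directions $v_l\in\Sigma_{x_l}\subset T_{x_l}$ with $|D_{x_l}F_l(v_l)|\geq L$. The ultralimit $v:=\lim_\omega v_l$ lies in the unit sphere of $T$, and by construction $P(v)=\lim_\omega D_{x_l}F_l(v_l)$ has norm at least $L$. But $P$ is an orthogonal projection, so $|P(v)|\leq|v|=1<L$, a contradiction. If (2) fails, there exist unit vectors $u_l\in\mathbb{S}^{k-1}$ such that every $v\in T_{x_l}$ with $D_{x_l}F_l(v)=u_l$ satisfies $|v|\geq L$. Let $u:=\lim_\omega u_l\in\mathbb{S}^{k-1}$. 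As noted in the paragraph preceding the statement, the ultralimit of fibers $\lim_\omega(D_{x_l}F_l)^{-1}(u_l)$ coincides with $P^{-1}(u)$, which in $T=\R^k\times T'$ contains the point $(u,0)$ of norm exactly $1$. Hence one may pick $v_l\in(D_{x_l}F_l)^{-1}(u_l)$ with $|v_l|\to 1$, and in particular $|v_l|<L$ for $\omega$-a.e.\ $l$, a contradiction.

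The only point that requires any care is the justification that the ultralimit of differentials really is the projection $P$ and that fibers commute with the ultralimit. Both are supplied by the discussion immediately above the statement: the first via the observation that each $D_{x_l}F_l$ is $L_0$-Lipschitz and $L_0$-open with $L_0=2\sqrt k$ (so ultralimits exist and behave well), combined with the fact that for $\delta_l\to 0$ the $k$-tuple of strainers forces an $\R^k$-splitting of $T$ on which the limit coordinates are precisely the signed distance functions to the factor directions; the second is the standard compatibility of ultralimits with fibers of uniformly Lipschitz and uniformly open maps between complete spaces recorded in Subsection~\ref{subsec:conv}. With those two facts in hand the argument reduces to the elementary norm and distance bounds for the projection $\R^k\times T'\to\R^k$ recorded above.
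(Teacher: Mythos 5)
Your proof is correct and follows essentially the same route as the paper: the paper sets up the ultralimit framework (splitting $T=\R^k\times T'$, identifying $\lim_\omega D_{x_l}F_l$ with the projection $P$, and matching fibers with ultralimit fibers via uniform Lipschitz/openness) and then says ``arguing by contradiction we conclude,'' which is exactly the two-case contradiction you have written out explicitly.
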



The infinitesimal characterization of $L$-Lipschitz and $L$-open  maps,  \cite[Theorem 1.2]{Ly-open},  now directly implies:
\begin{cor} \label{cor:openstr}
For any $L >1$ there is some $\delta  =\delta (L,k) >0$ such that the following holds true.
Any $(k,\delta )$-strainer map $F:V\to \R^k$  is  $L$-open and  $L$-Lipschitz,
whenever $V$ is an open  convex subset of a tiny ball of capacity bounded by $N$.
\end{cor}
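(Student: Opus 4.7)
The plan is to derive the corollary directly from the preceding lemma on differentials by invoking the infinitesimal characterization of $L$-Lipschitz and $L$-open maps of \cite{Ly-open}. Given $L>1$ and $k\in \N$, I first apply the preceding lemma with some auxiliary $L_0\in (1,L)$ to obtain $\delta=\delta(L_0,k)>0$ with the following property: for any $(k,\delta)$-strainer map $F:V\to \R^k$ and any $x\in V$, the differential $D_xF:T_x\to \R^k$ is $L_0$-Lipschitz on $T_x$ (since $|D_xF(v)|<L_0$ on $\Sigma_x$ and $D_xF$ is positively homogeneous) and $L_0$-open (since every $u\in \mathbb S^{k-1}\subset \R^k$ has a preimage of norm less than $L_0$).

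Next I upgrade these pointwise infinitesimal bounds to global ones on $V$. For the Lipschitz part, since $V$ is a convex subset of a tiny ball, any two points of $V$ are joined by a geodesic in $V$, and the first variation formula together with the pointwise bound $|D_xF(v)|<L_0$ on $\Sigma_x$ shows that each coordinate distance function $d_{p_i}$ has speed at most $L_0$ along this geodesic; integration yields $|F(x)-F(y)|\leq L_0\cdot d(x,y)\leq L\cdot d(x,y)$. For the openness part, I verify the hypotheses of \cite[Theorem 1.2]{Ly-open}: the tiny ball containing $V$ has capacity bounded by $N$, hence its closed sub-balls are compact, so closed balls appearing in the definition of $C$-openness are complete; combined with the pointwise infinitesimal openness of $D_xF$ at every $x\in V$, the cited criterion gives $L_0$-openness of $F$ on $V$, and hence $L$-openness.

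The main obstacle is ensuring the constants pass cleanly through the infinitesimal-to-global step, in particular that any worsening in the openness constant when passing from the differential to the map itself can be absorbed into $L-L_0$. This is precisely what the quantitative formulation in \cite[Theorem 1.2]{Ly-open} is designed to handle, and the slight loss in constants is the reason I select $L_0<L$ rather than $L_0=L$ at the outset. The role of the capacity bound $N$ is only to guarantee the local completeness properties needed to invoke the infinitesimal criterion; the value of $\delta$ itself, obtained from the preceding lemma, depends only on $L$ and $k$.
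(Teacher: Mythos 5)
Your proof follows essentially the same route as the paper's: apply the preceding lemma on differentials and then invoke the infinitesimal characterization of Lipschitz and open maps from \cite[Theorem 1.2]{Ly-open}. The safety margin $L_0<L$ and the separate integration argument for the Lipschitz bound are harmless but unnecessary --- the lemma already delivers strict bounds with $L$ itself, and the cited criterion handles both conclusions directly with no loss of constants.
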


\subsection{Differentials of strainer maps}
  From Lemma \ref{lem: almperp}  we  easily derive:
\begin{prop} \label{prop: differential}
Let $F: V\to \R^k$ be a $(k,\delta)$-strainer map with $\delta \leq \frac 1 {4\cdot k}$.
Let $\epsilon$ be the  straining radius of $F$  at  a point $p\in V$.   Let $\gamma :I\to B_{\epsilon} (p)$ be a geodesic
defined on an interval $I$.  Then for all $t,s \in I$ we have
$$||(F\circ \gamma) '(t) -(F\circ \gamma) '(s)|| \leq  4\cdot \delta \cdot  \sqrt k  \,.$$
If $\gamma$ contains at least two points on one fiber of $F$ then for all $t\in I$
$$||(F\circ \gamma) '(t)||  \leq  6\cdot \delta  \cdot \sqrt k  \, .$$
\end{prop}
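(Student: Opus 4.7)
The plan is to argue coordinate-by-coordinate on the convex functions $h_i := f_i \circ \gamma = d_{p_i}\circ\gamma$, each of which is $1$-Lipschitz on $I$ and hence admits non-decreasing one-sided derivatives $h_i'^{\pm}$ everywhere. By the first variation formula, for any $t<s$ in $I$ one has $h_i'^{+}(t)=-\cos\alpha_i$ and $h_i'^{-}(s)=\cos\beta_i$, where $\alpha_i := \angle p_i\gamma(t)\gamma(s)$ and $\beta_i := \angle p_i\gamma(s)\gamma(t)$.

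For claim~(1), I would invoke Lemma~\ref{lem:opp} at the point $y=p$ to fix, once and for all, a $k$-tuple $(q_i)$ of points on extensions of $p_ip$ beyond $p$ with $d(q_i,p)=r_0$, so that $(p_i)$ and $(q_i)$ become opposite $(k,2\delta)$-strainers throughout $B_\epsilon(p)\supset\gamma(I)$; in particular each pair $(p_i,q_i)$ is an opposite $(1,2\delta)$-strainer simultaneously at $\gamma(t)$ and $\gamma(s)$. Lemma~\ref{lem: almperp}(1), with parameter $2\delta$, applied to the triangle $\gamma(t)\gamma(s)p_i$ then gives $\pi-4\delta<\alpha_i+\beta_i<\pi$. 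Setting $\eta_i:=\pi-\alpha_i-\beta_i\in(0,4\delta)$ and using the elementary identity
\[
\cos\alpha_i+\cos\beta_i = 2\sin(\eta_i/2)\,\cos\bigl((\alpha_i-\beta_i)/2\bigr),
\]
together with $|\alpha_i-\beta_i|<\pi$, yields the per-coordinate estimate $0\leq h_i'^{-}(s)-h_i'^{+}(t)\leq\eta_i<4\delta$. Summing in $L^2$ over the $k$ coordinates (and passing to left/right derivatives at the extremes via the standard limit arguments for convex functions) concludes claim~(1).

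For claim~(2), the hypothesis $F(\gamma(t_1))=F(\gamma(t_2))$ with $t_1<t_2$ gives $h_i(t_1)=h_i(t_2)$ for every $i$. Convexity forces $h_i'^{+}(t_1)\leq 0\leq h_i'^{-}(t_2)$, which combined with the already established bound $h_i'^{-}(t_2)-h_i'^{+}(t_1)\leq 4\delta$ produces $|h_i'^{+}(t_1)|,|h_i'^{-}(t_2)|\leq 4\delta$. Monotonicity of $h_i'^{\pm}$ then pins $|h_i'(t)|\leq 4\delta$ for $t\in[t_1,t_2]$, and for $t\notin[t_1,t_2]$ one further application of claim~(1) relative to $t_1$ or $t_2$ increments the bound by at most another $4\delta$, comfortably within the stated $6\delta\sqrt k$ after summing over the $k$ coordinates. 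The principal obstacle is the careful bookkeeping of one-sided derivatives of the convex coordinate functions at jump and boundary points of $I$; the essential geometric input is precisely the uniformity supplied by the straining radius, which permits a single pair $(p_i,q_i)$ of opposite strainers to be used simultaneously at any two points of $\gamma$.
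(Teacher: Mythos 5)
Your treatment of the first inequality is sound and close to the paper's: you apply Lemma \ref{lem: almperp}(1) using a single pair $(q_i)$ furnished by Lemma \ref{lem:opp} at $y=p$, which is exactly the uniformity the straining radius is designed to give. The only cosmetic difference is that the paper compares the angles $\angle p_i\gamma(s)\gamma(r)$ and $\angle p_i\gamma(t)\gamma(r)$ at a common reference $\gamma(r)$ and uses that $\cos$ is $1$-Lipschitz, while you compare $h_i'^{+}(t)$ with $h_i'^{-}(s)$ directly via the identity $\cos\alpha+\cos\beta=2\sin(\eta/2)\cos((\alpha-\beta)/2)$; both routes give the $4\delta$ per-coordinate bound, and the left-/right-derivative bookkeeping you mention is a genuine but routine point. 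For the second inequality you take a genuinely different and in fact cleaner route than the paper: the paper invokes Lemma \ref{lem: almperp}(2) (the isoceles estimate) to bound the derivative at a single fiber point and then propagates via the first inequality, whereas you extract the sign information $h_i'^{+}(t_1)\leq 0\leq h_i'^{-}(t_2)$ directly from convexity of $h_i$ together with $h_i(t_1)=h_i(t_2)$, with no extra geometric input.

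However, the final arithmetic step of your second claim is wrong as written. You establish $|h_i'(t)|\leq 4\delta$ on $[t_1,t_2]$, and then say that for $t\notin[t_1,t_2]$ "one further application of claim (1) relative to $t_1$ or $t_2$ increments the bound by at most another $4\delta$, comfortably within the stated $6\delta\sqrt k$." But $4\delta+4\delta=8\delta>6\delta$, so this yields $8\delta\sqrt k$, which does not prove the stated inequality. The fix is to apply claim (1) to the \emph{far} fiber endpoint and use the sign constraint rather than merely adding errors: for $t<t_1$, monotonicity gives $h_i'^{+}(t)\leq h_i'^{+}(t_1)\leq 0$, while claim (1) between $t$ and $t_2$ gives $h_i'^{+}(t)\geq h_i'^{+}(t_2)-4\delta\geq h_i'^{-}(t_2)-4\delta\geq -4\delta$; hence $|h_i'^{+}(t)|\leq 4\delta$ (and symmetrically for $t>t_2$). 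With that change your argument actually delivers the stronger bound $4\delta\sqrt k\leq 6\delta\sqrt k$, so the plan is correct and the gap is only in the stated bookkeeping.
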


\begin{proof}
Let $p_i$ be the points such that $d_{p_i}$ is the $i$-th coordinate of $F$.
For  arbitrary $r>s \in I$, we see that  the $i$-th coordinate of
the differential $(F\circ \gamma) '(s)$,  is given  as minus the cosine of the angle $\angle p_i \gamma (s) \gamma (r)$.

For $r>s,t \in I$ we apply Lemma \ref{lem: almperp}, (1)  twice and deduce that the $i$-th coordinates of  $(F\circ \gamma )'(s)$
and of  $(F\circ \gamma) '(t)$ differ by at most $4\cdot \delta$. This implies the first statement.

From Lemma \ref{lem: almperp}, (2),  we see that $F(\gamma (s))=F(\gamma (r))$, for some $s<r \in I$,
 implies that
all coordinates of $(F\circ \gamma  )'(s)$ have absolute value at most $2\cdot \delta$.  Together with the first inequality, this implies the second one.
\end{proof}

\section{Fibers  of strainer maps}  \label{sec:fibers}
 \subsection{Local contractibility}


 \begin{thm}\label{prop: fibertop}
Let $F:\tilde  U \to \R^k$ be a distance map and $0<\delta \leq  \frac 1 {20 k}$. Assume that $F$ is a
$(k,\delta)$-strainer map at   $x\in U$  with straining radius  $\epsilon _x$.
  Set  $\Pi := F^{-1} (F(x))$.
Then,  for $W=B_{\epsilon _x} (x)$ there exists a homotopy  $\Phi :W\times [0,1] \to W$ retracting $W$ onto $W\cap \Pi$
with the following properties.
	\begin{enumerate}
 \item For all $y\in W$, the curve $\gamma  _y (t)  :=\Phi (y,t)$ starts in $y$ and  ends on
 $\Pi$.
\item The diameter of the curve $\gamma _y$ is bounded from above  by  $8\cdot k\cdot  d(F(y),F(x))$.
\item For all $t\in [0,1]$, we have  $d(\gamma _y(t) ,x) \leq d(x,y)$.
\end{enumerate}
\end{thm}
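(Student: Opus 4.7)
The plan is to construct $\Phi$ by an iterative coordinate-by-coordinate correction along the strainer directions. Since $W=B_{\epsilon_x}(x)$ lies inside the straining radius, Lemma \ref{lem:opp} guarantees, for each $y\in W$, a continuation of the geodesic $p_iy$ beyond $y$ to a point $q_i$ with the property that $(p_i)$ and $(q_i)$ are opposite $(k,2\delta)$-strainers in $B_{\epsilon_x}(y)$. Through $y$, parametrize the local geodesic joining $p_i$ to $q_i$ by signed arc length $s\mapsto R_i(y,s)$ so that $f_i=d_{p_i}$ increases with $s$. By the first variation formula combined with Proposition \ref{prop: differential}, the derivative $(f_i\circ R_i(y,\cdot))'$ stays within $4\delta\sqrt{k}$ of $1$ along the relevant interval, while for $j\ne i$ the initial value $-\cos\angle p_j\, y\,(yp_i)'$ has absolute value at most $4\delta$ by Corollary \ref{cor:complement}, hence $|(f_j\circ R_i(y,\cdot))'|\le 8\delta\sqrt{k}$ throughout.

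A single correction round is defined inductively by $y^{(0)}=y$ and $y^{(i)}=R_i(y^{(i-1)},\tau_i)$ with $\tau_i=f_i(x)-f_i(y^{(i-1)})$. The total arc-length of this piecewise geodesic is close to $\|F(y)-F(x)\|_1$, while summing the drift estimates yields $\|F(y^{(k)})-F(x)\|_\infty\le C k\sqrt{k}\,\delta\cdot\|F(y)-F(x)\|_\infty$; with $\delta\le\frac{1}{20k}$ this contraction factor is smaller than $\tfrac12$. Iterating the round yields a sequence $y^{(nk)}\to y^\ast\in W\cap\Pi$, and the total piecewise-geodesic length is bounded by a geometric series summing to at most $8k\cdot d(F(y),F(x))$. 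Reparametrizing this path by fractional arc-length and setting $\Phi(y,t)=\gamma_y(t)$ gives claim (2). Continuity in $y$ — and hence (1), that $\Phi$ is a retraction fixing $W\cap\Pi$ — follows because the $\tau_i$ are continuous in $y$ and $R_i(y,s)$ depends continuously on $(y,s)$ for $s$ in a two-sided interval, so no sign ambiguity arises.

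Condition (3), that $d(\gamma_y(t),x)\le d(x,y)$ throughout, is the main obstacle, since individual correction steps $R_i(y,\tau_i)$ can a priori increase the distance to $x$. My plan is to first establish that the endpoint $y^\ast$ itself satisfies $d(y^\ast,x)\le d(x,y)$: the strainer condition forces the starting directions $(xp_i)'$ to form a nearly orthonormal $k$-tuple in $\Sigma_x$, so near $x$ the coordinates $f_i$ behave like an orthogonal projection and a Pythagorean-type bound $d(y,x)^2\ge d(y,y^\ast)^2+d(y^\ast,x)^2 - C\delta\cdot d(y,x)^2$ holds, with the error absorbed by the slack $\delta\le 1/(20k)$. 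Once $d(y^\ast,x)\le d(x,y)$ is known, to confine the entire piecewise geodesic $\gamma_y$ to $\bar B_{d(x,y)}(x)$ I would interleave each correction step with a short contraction along the geodesic $xy$ toward $x$ using the contraction map $c_{r,r'}$ from Section \ref{sec:prelcurv}, tuned to cancel any radial excursion without worsening the length or continuity estimates. Verifying this trapping rigorously is the most delicate point of the argument; all other ingredients follow from the strainer calculus of Section \ref{sec:strmaps}.
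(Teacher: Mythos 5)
Your iterative correction scheme is the right skeleton, and the length bound in (2) and the contraction computation are morally what the paper does. But there is a decisive mis-choice at the very start that turns claim (3) from automatic into an apparent obstacle, and your proposed workaround for (3) does not close the gap.

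You take $q_i$ to lie on the extension of $p_iy$ beyond the \emph{moving} point $y$ (so $q_i$ depends on $y$, and $y$ is an interior point of $p_iq_i$). The paper instead takes the $q_i$ \emph{once and for all}, via Lemma~\ref{lem:opp} applied at the \emph{center} $x$: the point $x$ (not $y$) lies on the geodesic $p_iq_i$, and $(p_i),(q_i)$ are opposite $(k,2\delta)$-strainers on all of $W=B_{\epsilon_x}(x)$. With this choice the flow $\phi_i$ moves $y$ along $yp_i$ if $f_i(y)\ge a_i$ or along $yq_i$ if $f_i(y)\le a_i$, and the $\CAT(\kappa)$ convexity of $d_x$ together with the fact that $x$ is an interior point of $p_iq_i$ gives the non-increase of $d(\cdot,x)$ \emph{for each individual step}. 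That is the whole content of (3); no trapping argument is needed. Your choice destroys this: since $y$, not $x$, sits on $p_iq_i$, the relevant isoceles comparison triangle is centered at the wrong vertex, and an individual step $R_i(y,\tau_i)$ can indeed increase the distance to $x$, exactly as you fear.

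Your proposed repair is not sound. The Pythagorean estimate $d(y,x)^2\ge d(y,y^\ast)^2+d(y^\ast,x)^2-C\delta\,d(y,x)^2$ only yields $d(y^\ast,x)\le d(x,y)$ when $d(y,y^\ast)^2\ge C\delta\,d(y,x)^2$; near $\Pi$ the step $d(y,y^\ast)$ is small relative to $d(y,x)$ and the inequality flips. The subsequent interleaving with the contraction map $c_{r,r'}$ is not described precisely enough to check: each contraction step changes all coordinates $f_i$, so the convergence to $\Pi$, the continuity in $y$, and the length bound all have to be re-proved. Two further issues are hidden in your setup: because $q_i$ depends on $y$ and geodesic extensions beyond a point in a GCBA space need not be unique, $q_i(y)$ and hence $R_i(y,\cdot)$ may not depend continuously on $y$; and Lemma~\ref{lem:opp} only guarantees the opposite-strainer property on $B_{\epsilon_x}(y)$, which need not cover the flow trajectory, whereas the paper's fixed $q_i$ are opposite strainers on all of $W$. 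Finally, your per-coordinate drift rate $4\delta\sqrt{k}$ (taken from the norm estimate in Proposition~\ref{prop: differential}) is looser by a factor $\sqrt{k}$ than the per-coordinate bound $4\delta$ actually established in that proof; with the looser constant the claimed contraction factor $Ck\sqrt{k}\,\delta<\tfrac12$ fails already for moderate $k$ under $\delta\le\frac1{20k}$, so you should use the coordinate-wise version.

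The fix is to adopt the paper's choice: apply Lemma~\ref{lem:opp} at $x$ to get fixed $q_i$ with $x\in p_iq_i$ and $(p_i),(q_i)$ opposite $(k,2\delta)$-strainers on $W$; then (3) follows for each flow step by convexity, and your contraction and length estimates (with the coordinate-wise drift bound $4\delta$) go through essentially as the paper's.
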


\begin{proof}
%
%
Let $f_i =d_{p_i}$ be the coordinates of $F$.
By definition of $\epsilon _x$ we find
 $q_i \in \tilde  U$ for $i=1,...,k$,  such that
$x$ lies on the geodesic $p_i q_i$ and
such that $(p_i), (q_i)$ are opposite $(k,2\delta)$-strainers
in $W$.  Set $a_i=f_i (x)$ and $\Pi_i := f_i^{-1} (a_i) \subset U$.


First,
we define  flows $\phi _i : W\times [0,1] \to U$.
For a point $y\in W$ with $f_i(y) \ge a_i$,
the flow  $\phi _i$ moves $y$ with velocity $1$
along the geodesic $yp_i$
until it reaches $\Pi _i$ and then the flow stops for all times.
For a point $y$ with $f_i(y) \le a_i$,
the flow moves $y$ along the
geodesic $yq_i$ until it reaches $\Pi _i$ and stops there.

Since $x$ is on the geodesic $p_iq_i$,
the $\CAT(\kappa)$ condition
 implies  that
the flow $\phi _i$ does not increase the distance to  $x$.

By the first  variation formula,
the value of $f_i$ changes along the flow lines of $\phi _i$
with velocity at least $1 - 2\delta$
until the point reaches the  set $\Pi _i$.
Moreover, for $j \neq i$, the value
of $f_j$ changes along the flow lines of $\phi _i$
with velocity at most $4\delta$.

Consider the function $M_i, M \colon U \to \R$
defined by
$$M_i (y) :=  |f_i (y)-a_i|   \; \; \text{and} \; \; M(y) := \max_{1 \le i \le k} M_i(y)\,.$$
Note that $M(y)=0$ if and only if $y\in  \Pi$  and $M(y) \leq d(F(y),F(x))$, for all $y\in U$.

The above observation shows that the flow line $\phi_i (y,t)$ reaches $\Pi_i$ at latest
at $t= (1-2\delta )^{-1} \cdot M_i (y)$.  Due to the first variation formula  and $\delta \leq \frac 1 {20 }$, we have
for all $j \neq i$ and all $1\geq t\geq  0$:
\begin{equation} \label{eq:mj}
M_j(\phi_i(y,t)) \le M_j(y) + \frac {4\delta}  { 1-2\delta } \cdot M_i(y) \leq M_j (y)+ 5 \delta\cdot M(y) \,.
\end{equation}

Consider the concatenation
$\Psi$  of the flows $\phi_1,...,\phi_k$.  Thus $\Psi :W\times [0,k] \to W$ is a homotopy which moves on the time interval
$[j-1,j]$ the point $\Psi (y,j-1)$ along the flow lines of $\phi _j$ to  $\Pi_j$.
 We apply $k$ times the inequality  \eqref{eq:mj}   and conclude
$$M (\Psi(y,t)) \leq (1+5\delta ) ^k\cdot M (y) \, ,$$
for all $(y,t) \in W\times[0,k]$.
By construction $M_j (\Psi (y,j))=0$. Applying \eqref{eq:mj} again, for all $j$, we improve the last inequality to
$$M(\Psi (y,k))\leq k\cdot 5\cdot \delta \cdot (1+5\cdot \delta )^k   \cdot M(y) \leq \frac 1 4 \cdot (1+\frac 1 {4k}) ^k \cdot M(y) \,.$$
Since $(1+ \frac 1 x) ^x$ is increasing and converges to the Euler number $e$, we   see
$$M(\Psi (y,k) ) \leq  \frac 1 4 \cdot e^{\frac 1 4}  \cdot M(y) < \frac 1 4 \cdot 2 \cdot M(y) = \frac 1 2 \cdot  M(y) \, .$$
Moreover, the flow line of the homotopy $\Psi$ of a point $y$ has length at most
$$ k\cdot \frac 1 {1-2\delta} \cdot (1+5\cdot \delta) ^k \cdot M(y) \leq 4\cdot k\cdot M(y) \,.$$

Putting the last two observations together, we  inductively arrive at the following conclusion about the
$m$-fold concatenation  $\Psi _m:W\times [0,k\cdot m] \to W$ of the homotopy $\Psi$.
For any $y\in W$, we have  $M(\Psi _m (y,k\cdot m)) \leq 2^{-m}  \cdot M(y)$. Moreover, the $\Psi_m$-flow line of $y$ has length at most $8k\cdot M(y)$.
Therefore, reparametrizing $\Psi_m$ we obtain a limit homotopy  $\Phi =\Psi ^{\infty}$ with the required properties.
\end{proof}
 As a  consequence we obtain:
\begin{proof}[Proof of Theorem \ref{thm:contractible}]
Due to Lemma \ref{lem:openstr}, the strainer map $F$ is open.

Under the assumptions of Theorem \ref{prop: fibertop}, let $V'\subset V$ be compact. Since the straining radius depends continuously on the point, we find some $\epsilon >0$ smaller than the straining radius at any $x\in V'$ and smaller than $d(V', \partial V)$. By Theorem \ref{prop: fibertop}, for   $x\in V'$ and  $r<\epsilon$
  the set $B_r(x)\cap F^{-1} (F(x))$  is a homotopy retract  of the contractible ball $B_r(x) \subset X$. Thus $B_r(x) \cap F^{-1} (F(x))$ is contractible.
\end{proof}


\subsection{Dichotomy}
The openness of strainer maps and local connectedness of their fibers implies a  dichotomy in the behavior of strainer maps. First a local result:

\begin{lem} \label{lem:dichot}
 Let $F$ be a $(k,\delta)$-strainer map at $x\in U$ with  $\delta \leq \frac 1 {20 \cdot k}$.  Let $3r$ be not larger than the
 the straining radius of $F$ at $x$.
Then either
\begin{itemize}
\item $F:B_{r} (x) \to \R^k$ is injective, or
\item  For all $y\in B_{r} (x)$ the fiber $\Pi:= F^{-1} (F(y) ) \cap B_{r} (y) $ is a connected set of   diameter
at least $r$.
\end{itemize}
\end{lem}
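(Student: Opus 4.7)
The first alternative is self-evident. Otherwise, fix $y_1 \neq y_2 \in B_r(x)$ with $F(y_1) = F(y_2)$ and set $\ell := d(y_1, y_2) < 2r$; my task is to show that for every $y \in B_r(x)$ the local fiber $\Pi_y := F^{-1}(F(y)) \cap B_r(y)$ is connected with diameter at least $r$. The connectedness is a direct consequence of Theorem \ref{prop: fibertop}: since the straining radius at any $y \in B_r(x)$ satisfies $\epsilon_y \geq \epsilon_x - d(x,y) > 2r$, the retraction $\Phi_y$ supplied by that theorem preserves every ball around $y$ by property (3), so it restricts to a retraction of the contractible ball $B_r(y)$ onto $\Pi_y$.

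For the diameter bound I will first argue at $y = y_1$. Using local geodesic completeness inside the straining neighborhood, extend the geodesic $\gamma \colon [0, \ell] \to X$ from $y_1$ to $y_2$ to a local geodesic $\tilde\gamma \colon (-3r, \ell + 3r) \to B_{3r}(y_1)$. Proposition \ref{prop: differential} (its second clause on $[0,\ell]$, combined with its first clause on the extensions) bounds $\|(F\circ\tilde\gamma)'\|_\infty \leq 10\delta$ on the whole extended interval; moreover each coordinate $d_{p_i}\circ\tilde\gamma$ is convex, forcing $F\circ\tilde\gamma \geq F(y_1)$ componentwise outside $[0,\ell]$. Projecting $\tilde\gamma$ via $\Phi_{y_1}$ yields a continuous curve $\hat\gamma$ in the connected fiber $F^{-1}(F(y_1))\cap B_{3r}(y_1)$ with $\hat\gamma(0) = y_1$, $\hat\gamma(\ell) = y_2$, and $d(\hat\gamma(t), y_1) \leq |t|$ by property (3). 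When $\ell \geq r$, continuity of $t \mapsto d(\hat\gamma(t), y_1)$ together with the intermediate value theorem produces fiber points in $B_r(y_1)$ at distance arbitrarily close to $r$ from $y_1$, giving $\diam(\Pi_{y_1}) \geq r$. When $\ell < r$, the extension is needed: the retraction estimate $d(\Phi_{y_1}(w,1), w) \leq 8k\|F(w) - F(y_1)\|_\infty$ combined with $\|F(\tilde\gamma(t)) - F(y_1)\|_\infty \leq 10\delta|t|$ and the hypothesis $\delta \leq 1/(20k)$ controls $\hat\gamma$ on the extensions; together with the coordinate-wise convexity above (which prevents the retraction flows from collapsing radially back to $y_1$) this forces $\hat\gamma$ to reach fiber points at distance close to $r$ from $y_1$ lying inside $B_r(y_1)$.

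To propagate the diameter bound to every $y \in B_r(x)$, I will show that $A := \{y \in B_r(x) : \diam(\Pi_y) \geq r\}$ is nonempty, open, and closed in $B_r(x)$, hence equal to $B_r(x)$ by connectedness. Openness follows from the $L$-openness of $F$ (Lemma \ref{lem:openstr}): given witnesses $z_1, z_2 \in \Pi_y$ with $d(z_1, z_2) \geq r$, for $y'$ sufficiently close to $y$ one finds preimages $z_i' \in B_r(y')$ of $F(y')$ close to $z_i$ with $d(z_1', z_2')$ close to $d(z_1, z_2) \geq r$; the fact that each $z_i$ lies strictly inside $B_r(y)$ provides the room needed. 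Closedness follows by extracting convergent subsequences of such witness pairs inside the precompact closure of $B_r(\cdot)$ in the tiny ball and transferring the limit to the open ball via the connectedness of fibers established in step one.

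The main obstacle is the diameter estimate at $y_1$ in the regime $\ell < r$. Since the retraction $\Phi_{y_1}$ only decreases distance to $y_1$ by property (3) of Theorem \ref{prop: fibertop}, it could a priori collapse the geodesic extension of $\tilde\gamma$ back to $y_1$, producing no new fiber points at large distance from $y_1$. Ruling this out requires combining the quantitative retraction estimate (tight thanks to $\delta \leq 1/(20k)$) with the coordinate-wise convexity of $F\circ\tilde\gamma$ on the extensions; together these force the retraction flows to move points transversally rather than radially, so the projected endpoints remain at distance comparable to the corresponding points on $\tilde\gamma$ from $y_1$.
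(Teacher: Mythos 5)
Your proof has a genuine gap, which you identify yourself in the final paragraph: in the regime $\ell = d(y_1,y_2) < r$, you have no lower bound on the distance of the retracted points $\hat\gamma(t)$ from $y_1$. Property (3) of Theorem \ref{prop: fibertop} gives only the upper bound $d(\hat\gamma(t),y_1) \leq |t|$, and the retraction-diameter estimate (property (2)) bounds the displacement by $8k\cdot\|F(\tilde\gamma(t))-F(y_1)\|$, which with the differential bound $6\delta\sqrt k$ from Proposition \ref{prop: differential} and $\delta \le \tfrac1{20k}$ works out to roughly $\tfrac35|t|$. So the retracted point could sit anywhere between distance $\tfrac25|t|$ and $|t|$ from $y_1$, and requiring $\hat\gamma(t) \in B_r(y_1)$ (i.e.\ $|t|<r$) only yields $\diam(\Pi_{y_1}) \geq \tfrac{2r}{5}$, not $\geq r$. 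Your appeal to coordinate-wise convexity of $F\circ\tilde\gamma$ on the extensions to force the flows to move transversally is a hope, not a proof, and I do not see how to make it rigorous; the retraction flows of Theorem \ref{prop: fibertop} are along geodesics toward the points $p_i,q_i$, and nothing prevents the endpoint from landing near $y_1$.

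The missing idea is the maximality argument used in the paper, which avoids long extensions altogether. If $\Pi_y$ is not a singleton and $\diam(\Pi_y) = s < r$, then $\Pi_y$ is a compact subset of $B_r(y)$, so there is a point $z\in\Pi_y$ realizing the maximal distance $s$ from $y$. Extend the geodesic $yz$ past $z$ by a \emph{small} amount $l$ to $z'$. Since $yz$ contains two fiber points, the second clause of Proposition \ref{prop: differential} gives $\|(F\circ\gamma)'\|\le 6\delta\sqrt k$ on the whole extended segment, hence $\|F(z')-F(z)\|\le 6\delta\sqrt k\cdot l$. By $2\sqrt k$-openness of $F$ at $z'$, there is $z_0$ with $F(z_0)=F(z)=F(y)$ and $d(z_0,z')\le 12k\delta\cdot l < l$. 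Then $d(y,z_0) \ge d(y,z') - d(z',z_0) > (s+l) - l = s$, and for $l$ small enough $z_0\in B_r(y)$, contradicting the maximality of $s$. This gives the full dichotomy (singleton or diameter $\geq r$) for every $y$ directly, and then a short open-and-closed argument on the set of singleton-fiber points (using openness of $F$ together with connectedness of the fibers, which you correctly obtain from Theorem \ref{prop: fibertop}) finishes the proof. Your connectedness step and your case $\ell\ge r$ (via the intermediate value theorem along $\hat\gamma$) are fine; the case $\ell<r$ and the propagation step are where the maximality argument is needed and your sketch does not close.
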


\begin{proof}
Fix $y\in B_{r} (x)$ and the fiber $\Pi:= F^{-1} (F(y) ) \cap B_{r} (y) $. Due to Theorem \ref{prop: fibertop}, $\Pi$ is connected. Assume that $\Pi $ is not a singleton.

If the diameter of $\Pi $ is smaller than $r$ we find a point $z\in \Pi$ which has in $\Pi$  maximal distance $s<r$ from $y$. Consider a point $z'$, such that
$z$ is on the geodesic   $\gamma=yz'$ with sufficiently small  $l:=d(z,z')$.

  Applying Proposition \ref{prop: differential}    we deduce that
  $||(F\circ \gamma )'(t)|| \leq 6 \cdot \delta \cdot \sqrt k $, for all $t$ in the interval of definition of
  $\gamma$.   Therefore, $||F(z') -F(z)|| \leq  6 \cdot \delta \cdot \sqrt k \cdot l$.

Since the map $F$ is  $2\cdot \sqrt k$-open, we find a point $z_0$ with
$$F(z_0)=F(z) \; \; \text{and} \; \;  d(z_0,z) \leq  2\cdot \sqrt k \cdot 6\cdot \delta \cdot    \sqrt  k\cdot l <l\, . $$
Therefore,
$d(y,z_0) >d(y,z)=s$.  If $l$ has been small enough, then $z_0$ is contained in $B_r(y)$ in contradiction to the choice of $z$.

Hence,  for any $y\in B_r(x)$, the fiber $\Pi _y =  F^{-1} (F(y) ) \cap B_{r} (y) $
is a connected set that is either a point or has diameter at least $r$.

Since the map $F$ is open, we deduce that the set of points $y$ at which  fiber $\Pi_y$  is a singleton is an open and closed subset of $B_r(x)$.  Therefore,  this set is  either empty or the whole ball $B_r(x)$. This finishes the proof.
\end{proof}

As a direct consequence of this local statement, the openness of strainer maps and a standard  connectedness argument  we get the following global statement:

\begin{prop} \label{prop:dichot}
For any
$(k,\delta)$-strainer map $F:V\to \R^k$ with $\delta \leq \frac 1 {20 \cdot k}$ and   connected, open  $V$ the following dichotomy holds true.
Either no fiber of $F$ in $V$ contains an isolated point, or all fibers of $F$  in $V$ are discrete.
\end{prop}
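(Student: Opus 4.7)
The plan is to deduce the global dichotomy from the local one (Lemma \ref{lem:dichot}) by writing $V$ as a disjoint union of two open sets and invoking connectedness. Let $A \subset V$ denote the set of points $y$ that are isolated in their full fiber $F^{-1}(F(y))$, and set $B := V \setminus A$. Then $V = A \sqcup B$ tautologically, and the conclusion of the proposition is exactly the assertion that one of $A$, $B$ is empty, so it suffices to show both are open in $V$.

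To show openness, fix $x \in V$ and, using continuity of the straining radius (as noted after Lemma \ref{lem:opp}) together with $V$ being open, choose $r > 0$ so small that $\overline{B_{3r}(x)} \subset V$ and $3r$ is less than the straining radius of $F$ at $x$. Lemma \ref{lem:dichot} then yields precisely two mutually exclusive alternatives on $B_r(x)$: either $F$ is injective on $B_r(x)$, or for every $y \in B_r(x)$ the set $\Pi_y := F^{-1}(F(y)) \cap B_r(y)$ is connected of diameter at least $r$. If $x \in A$, then the second alternative is impossible, because a connected metric space containing more than one point has no isolated points, and $x \in \Pi_x$ would then fail to be isolated in $F^{-1}(F(x))$, contradicting $x \in A$; hence the first alternative holds and every $y \in B_r(x)$ has $F^{-1}(F(y)) \cap B_r(x) = \{y\}$, making $y$ isolated in its full fiber, so $B_r(x) \subset A$. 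Symmetrically, if $x \in B$, the first alternative would force $x$ to be isolated in its fiber, contradicting $x \in B$; the second alternative thus holds, and for every $y \in B_r(x)$ the connected, non-singleton set $\Pi_y$ witnesses that $y$ is not isolated in $F^{-1}(F(y))$, so $B_r(x) \subset B$.

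Once both $A$ and $B$ are open, connectedness of $V$ gives $V = A$ or $V = B$, which is the stated dichotomy. I expect no genuine obstacle beyond the bookkeeping already present in Lemma \ref{lem:dichot}; the one point that deserves care is the passage from a point being isolated in the local piece $F^{-1}(F(y)) \cap B_r(x)$ to being isolated in the full fiber, but this is immediate because $B_r(x)$ is an open neighborhood of $y$. The only substantive input beyond Lemma \ref{lem:dichot} is that the straining radius varies continuously, which guarantees that the local dichotomy can be applied uniformly with a ball that still lies inside $V$.
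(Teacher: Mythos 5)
Your proof is correct and is exactly the "standard connectedness argument" the paper alludes to but does not spell out: one partitions $V$ into the set $A$ of points isolated in their fiber and its complement $B$, applies the local dichotomy of Lemma~\ref{lem:dichot} on a small ball around each point to show both $A$ and $B$ are open, and then invokes connectedness of $V$. One small remark: you invoke continuity of the straining radius, but in fact only its positivity is needed to choose the radius $r$ at a given $x$, since both constraints ($3r$ below the straining radius and $\overline{B_{3r}(x)}\subset V$) are one-sided; this is a harmless overkill, not an error.
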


\subsection{Extendability of strainer maps}
 If a fiber of a strainer map is not discrete, the strainer map admits an extension to any small punctured ball in the fiber:
\begin{prop}
Let $F \colon V \to \R^k$
be a  $(k,\delta)$-strainer  map, let $p\in V$ be a point and let $\Pi$ be the fiber of $F$ through $p$.
Then there exists  some $r>0$ and a neighborhood
$W$ of  $(B_r(p) \setminus \{p \} \cap \Pi)$ in $V$ such that the map $\hat F=(F,d_p) :W\to \R^{k+1}$
is a $(k+1, 4\delta)$-strainer map.
\end{prop}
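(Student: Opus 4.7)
My plan is to verify, at every point $x\in (B_r(p)\setminus\{p\})\cap\Pi$, that augmenting the existing strainer tuple $(p_1,\ldots,p_k)$ by the point $p$ itself already yields a $(k+1,4\delta)$-strainer at $x$, and then to invoke the openness of the strainer condition from Corollary \ref{cor: openopp} to obtain the open neighborhood $W$. For this, I would first choose $r>0$ small enough that: (i) $\bar B_{3r}(p)\subset V$; (ii) by Proposition \ref{lem: first} the point $p$ is a $(1,\delta)$-strainer at every $x\in B_r(p)\setminus\{p\}$; and (iii) picking $(q_1,\ldots,q_k)$ to be an opposite $(k,\delta)$-strainer of $(p_1,\ldots,p_k)$ at $p$, Corollary \ref{cor: openopp} guarantees that $(p_i)$ and $(q_i)$ remain opposite $(k,\delta)$-strainers throughout $B_r(p)$. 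For $x\in (B_r(p)\setminus\{p\})\cap\Pi$, Lemma \ref{sizeext} lets me extend the geodesic $px$ beyond $x$ to some $\hat q\in\tilde U$. I then set $v_i:=(xp_i)'$, $\bar v_i:=(xq_i)'$ for $i\le k$, and $v_{k+1}:=(xp)'$, $\bar v_{k+1}:=(x\hat q)'$. By (iii) the pairs $(v_i,\bar v_i)$ for $i\le k$ are opposite $\delta$-spherical, and by (ii) combined with Lemma \ref{lem: antisph} the pair $(v_{k+1},\bar v_{k+1})$ is opposite $2\delta$-spherical.

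The heart of the argument is to check the upper bounds on the cross-distances in Definition \ref{def:deltaort}(2) involving the new index $k+1$. The crucial observation is that $x\in\Pi$ means $d(p_i,x)=d(p_i,p)$ for every $i$, so Lemma \ref{lem: almperp}(2) applied to the opposite $(1,\delta)$-strainer pair $(p_i,q_i)$ at the two points $x$ and $p$ gives $\pi/2-2\delta<\angle p_ixp<\pi/2$, i.e.\ $d(v_i,v_{k+1})<\pi/2$. The tight $\delta$-spherical relation $\pi\le d(v_i,w)+d(\bar v_i,w)<\pi+\delta$ (the lower bound coming from $d(v_i,\bar v_i)=\pi$ and the triangle inequality in the $\CAT(1)$ space $\Sigma_x$) applied with $w=v_{k+1}$ then yields $\pi/2<d(\bar v_i,v_{k+1})<\pi/2+3\delta$. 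Finally, the corresponding relation $\pi\le d(v_{k+1},w)+d(\bar v_{k+1},w)<\pi+2\delta$ for the opposite $2\delta$-spherical pair $(v_{k+1},\bar v_{k+1})$, applied to $w=v_i$ and then to $w=\bar v_i$, gives $d(v_i,\bar v_{k+1})<\pi/2+4\delta$ and $d(\bar v_i,\bar v_{k+1})<\pi/2+2\delta$ respectively. All the remaining cross-distances (those with both indices $\le k$) are bounded by $\pi/2+\delta$ by hypothesis, so the $(k+1)$-tuples $(v_i)$ and $(\bar v_i)$ are opposite $4\delta$-spherical and $(p_1,\ldots,p_k,p)$ is a $(k+1,4\delta)$-strainer at $x$.

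To conclude, Corollary \ref{cor: openopp} shows that the set $W$ of points in $V\setminus\{p\}$ at which $(p_1,\ldots,p_k,p)$ is a $(k+1,4\delta)$-strainer is open; by what we just proved, $W$ contains $(B_r(p)\setminus\{p\})\cap\Pi$, and on $W$ the map $\hat F=(F,d_p)$ is a $(k+1,4\delta)$-strainer map, as required. The main subtlety is the identification of the fiber condition $x\in\Pi$ as exactly the equal-distance hypothesis $d(p_i,x)=d(p_i,p)$ needed to invoke Lemma \ref{lem: almperp}(2); without it, the generic bound in Lemma \ref{lem: almperp}(1) only delivers $\angle p_ixp<\pi$, which is nowhere near strong enough to recover a constant of order $\delta$ in the new angles. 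Everything else is routine bookkeeping in the $\CAT(1)$ space $\Sigma_x$.
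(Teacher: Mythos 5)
Your argument is correct and follows essentially the same strategy as the paper: identify that $x\in\Pi$ forces $d(p_i,x)=d(p_i,p)$, apply Lemma \ref{lem: almperp}(2) to obtain $|\angle p_ixp - \tfrac{\pi}{2}|<2\delta$, and conclude that $(p_1,\dots,p_k,p)$ is a $(k+1,4\delta)$-strainer at $x$ before invoking openness. The paper compresses the last step into a single citation of Lemma \ref{lem:qi} (which rests on the second half of Corollary \ref{cor:complement}, applied with tolerance $2\delta$), whereas you unpack the verification of Definition \ref{def:deltaort} by hand; the content is the same.

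One small inaccuracy is worth flagging. You assert $d(v_i,\bar v_i)=\pi$ and use it to get the lower bound $\pi\le d(v_i,w)+d(\bar v_i,w)$. This need not hold: the $(q_i)$ were chosen as an opposite $(k,\delta)$-strainer at $p$ and then extended by openness to a neighborhood of $p$, so at a point $x\ne p$ the pair $(v_i,\bar v_i)=((xp_i)',(xq_i)')$ is merely an opposite $\delta$-spherical pair, which by Lemma \ref{lem: antisph} gives $d(v_i,\bar v_i)>\pi-\delta$, not equality; $x$ need not lie on the geodesic $p_iq_i$. The correct triangle-inequality lower bound is therefore $\pi-\delta\le d(v_i,w)+d(\bar v_i,w)$, giving $d(v_{k+1},\bar v_i)>\tfrac{\pi}{2}-\delta$ rather than $>\tfrac{\pi}{2}$. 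This does not affect your conclusion: Definition \ref{def:deltaort}(2) imposes only upper bounds on the cross-distances, and with the corrected lower bound one still gets $d(\bar v_i,\bar v_{k+1})<\tfrac{\pi}{2}+3\delta<\tfrac{\pi}{2}+4\delta$, so the $(k+1,4\delta)$-strainer estimate holds.
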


\begin{proof}
Let $r$ be smaller than the straining radius of $F$ at $p$ and such that the distance function $d_p$ is a
$(1,\delta)$-strainer in the punctured ball $B_r(p)\setminus \{p \}$.

For every $q\in (B_r(p)\setminus \{p \} )\cap \Pi$ and any of the $k$ points $p_i$ defining the map $F$,
we deduce from  Lemma \ref{lem: almperp}
$$ \frac \pi 2 - 2\delta<\angle pqp_i < \frac \pi 2 \, .$$
Thus, by Lemma \ref{lem:qi},   the tuple $(p_1,....,p_k,p)$ is a $(k+1,4\delta )$-strainer at the point $q$.
This proves the claim.
\end{proof}


\section{Finiteness results} \label{sec:strata}
\subsection{Notations}  As before, let $U\subset \tilde U \subset X$ be a tiny ball of radius $r_0\leq 1$
and capacity bounded by $N$. Let  $\delta >0 $ be arbitrary.

As in Subsection \ref{subsec:bilip}, we denote by $\mathcal A$ the distance sphere of radius $r_0$ around $U$
and by $\mathcal A _{ \delta  }$  a fixed maximal $ \delta   \cdot r_0$-separated subset of $\mathcal A$.
Let $m=m(N,\delta )$ be  an upper bound on the number of elements in $\mathcal A_{ \delta }$.

Let $k$ be a natural number. Denote by $\mathcal F _{ \delta }$ the set of distance maps $F :\tilde U \to \R^k$,  whose coordinates are
distance functions to  points $p_j\in \mathcal A_{ \delta }$. The number of elements in
$\mathcal F_{ \delta }$ is bounded from above by the constant $m^k$ depending  on $N$, $\delta$ and $k$.

\subsection{Bounding straining sequences}
For the investigations of $(k,\delta)$-strained points we may restrict the attention to
the finitely many maps from $\mathcal F_{\delta }$ :

\begin{lem}\label{lem: finite}
 Let $F:\tilde U \to \R^k$ be a distance map which is a $(k,\delta)$-strainer map at $x \in U$.
Then there exist maps $F_1,F_2 \in \mathcal F_{ \delta }$ such that the pairs $(F,F_2)$ and $(F_1,F_2)$ are opposite $(k,3\cdot \delta)$-strainer maps at $x$.
\end{lem}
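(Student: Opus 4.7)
The plan is to produce $F_1$ and $F_2$ by independently approximating the tuple $v_i := (xp_i)'$ and an opposite tuple $(\bar v_i)$ in $\Sigma_x$ by distance functions to points in $\mathcal A_\delta$. The crucial enabling fact, available because $X$ is GCBA, is that every direction in $\Sigma_x$ is the starting direction of a geodesic of length $5 r_0$ (see Section \ref{sec:gcba}), so any prescribed direction can be realized as $(x\tilde q)'$ for some $\tilde q\in \mathcal A$, since $\mathcal A$ lies at distance between $r_0$ and $3r_0$ from $x$.

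First, I would extract the required data from the hypothesis. Since $(v_i)$ is a $\delta$-spherical $k$-tuple in $\Sigma_x$ by Definition \ref{def:deltastr}, Definition \ref{def:deltaort} supplies $(\bar v_i)\subset \Sigma_x$ such that $v_i,\bar v_i$ are opposite $\delta$-spherical points and $d(v_i,\bar v_j), d(v_i,v_j), d(\bar v_i,\bar v_j) < \pi/2 + \delta$ for $i\ne j$. For each $i$, extend the geodesic from $x$ in direction $v_i$ (respectively $\bar v_i$) inside $\tilde U$ until it first meets $\mathcal A$, producing $\tilde p_i, \tilde q_i \in \mathcal A$ with $(x\tilde p_i)' = v_i$ and $(x\tilde q_i)' = \bar v_i$ exactly.

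Next, replace $\tilde p_i, \tilde q_i$ by nearest elements $p'_i,q_i\in\mathcal A_\delta$, satisfying $d(p'_i,\tilde p_i), d(q_i,\tilde q_i) \le \delta r_0$. By the angle estimate recorded before Proposition \ref{lem:embed}, the directions $u_i:=(xp'_i)'$ and $\bar u_i:=(xq_i)'$ then satisfy $d(u_i,v_i) < \delta$ and $d(\bar u_i,\bar v_i) < \delta$. Define $F_1,F_2\in\mathcal F_\delta$ to have coordinates $d_{p'_i}$ and $d_{q_i}$ respectively.

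Finally, the verification that $(F,F_2)$ and $(F_1,F_2)$ are opposite $(k,3\delta)$-strainer maps at $x$ reduces to a handful of triangle-inequality estimates using $v_i$ and $\bar v_i$ as intermediaries. The pairwise-distance condition is typified by
\[
d(\bar u_i,\bar u_j)\le d(\bar u_i,\bar v_i)+d(\bar v_i,\bar v_j)+d(\bar v_j,\bar u_j) < \delta + (\pi/2+\delta) + \delta = \pi/2+3\delta,
\]
and the opposite-spherical-point condition by
\[
d(u_i,w)+d(w,\bar u_i) \le d(u_i,v_i)+\bigl(d(v_i,w)+d(w,\bar v_i)\bigr)+d(\bar v_i,\bar u_i) < \delta + (\pi+\delta) + \delta = \pi+3\delta
\]
for every $w\in\Sigma_x$; the remaining mixed distance bounds follow the same pattern. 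I expect the only conceptual step — and the reason the constant is $3\delta$ rather than $5\delta$ — to be the decision to extend geodesics in the direction $\bar v_i$ itself rather than simply extending $xp_i$ beyond $x$; the latter naive choice would produce an antipode of $v_i$, which by Lemma \ref{lem: antisph} lies only within $\delta$ of $\bar v_i$, thereby doubling the final angular error at every occurrence of $\bar v_i$ in the above chains.
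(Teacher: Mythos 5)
Your proposal is correct and follows essentially the same route as the paper: pick the opposite $\delta$-spherical tuple $(\bar v_i)$ in $\Sigma_x$ (equivalently, an opposite $(k,\delta)$-strainer $(q_i)$), realize both $(v_i)$ and $(\bar v_i)$ by points on the distance sphere $\mathcal A$ via geodesic extension, pass to nearest elements of $\mathcal A_\delta$ so that the new directions are within $\delta$ of the old, and close by the triangle inequality. The paper compresses the $\mathcal A$-realization step into the phrase ``by the definition of $\mathcal A_\delta$'' and leaves the triangle-inequality chains to the reader; you have usefully spelled both out, and your closing remark about why the constant is $3\delta$ (using the actual tuple $(\bar v_i)$ rather than an arbitrary antipode of $v_i$, which would only be $\delta$-close to $\bar v_i$ by Lemma \ref{lem: antisph}) correctly identifies the point the definition of opposite strainers is designed to encode.
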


\begin{proof}
Let $F$ be given by the $k$-tuple $(p_i)$.   Find an opposite $(k,\delta)$-strainer $(q_i)$ at the point $x$.
   By the  definition of $\mathcal A_{\delta}$, we find $k$-tuples  $(p_i')$ and $(q_i')$ in $\mathcal A_{ \delta }$
such that $$\angle p_i' x p_i  < \delta   \; \;  \text{and} \; \; \angle q_i' xq_i  <  \delta \,.  $$
Due to the triangle inequality and the definition of strainers,  the distance maps $F_1 ,F_2 \in \mathcal F_{ \delta}$ given by the
$k$-tuples $(p_i')$ and $(q_i')$
 have the required properties.
\end{proof}

\subsection{Bounding bad sequences}   First, a simple   lemma
(\cite[Lemma 10.3]{BGP}):

\begin{lem}  \label{lem:BGP}
For all  $N,L\geq 1$ and  natural number $M$ there exists  $K(N,M,L)>0$ with the following property.
 Let $Y$ be an $N$-doubling metric space. Then every subset $T$ of $Y$ with
 at least $K$ elements contains an $M$-tuple $(x_1,...,x_M)$ such that
$d(x_i,x_{i+1}) \geq L \cdot d(x_i,x_k)$, for all $1 \leq k \leq i \leq  M-1$.
\end{lem}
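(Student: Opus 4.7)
The plan is to proceed by induction on $M$, extracting a tight cluster of many points via a pigeonhole argument at a single well-chosen scale, and then picking the terminal point of the tuple by invoking the diameter of $T$.  The base case $M=1$ is trivial with $K(N,1,L) = 1$.

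For the inductive step, suppose $K(N,M-1,L)$ has been constructed.  Given $T \subset Y$ with $|T| \geq K$, set $D := \diam(T)$ and iterate the $N$-doubling property roughly $\log_2(4L)$ times to see that any ball of radius $D$ in $Y$ is covered by at most $C = C(N,L)$ balls of radius $D/(4L)$.  Pigeonhole then produces one such ball $B$ containing at least $|T|/C$ points of $T$, so it suffices to set $K(N,M,L) := C \cdot K(N, M-1, L)$.  Applying the inductive hypothesis to $T' := T \cap B$, whose diameter is at most $D/(2L)$, yields a tuple $(x_1, \ldots, x_{M-1}) \in T'$ satisfying the required inequalities for all $1 \leq k \leq i \leq M-2$.

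To extend to an $M$-tuple, I pick $x_M \in T$ with $d(x_{M-1}, x_M) \geq D/2$; such a point exists because for any $p, q \in T$ realizing the diameter the triangle inequality gives $\max\{d(x_{M-1}, p), d(x_{M-1}, q)\} \geq D/2$, so whichever of $p, q$ is farther from $x_{M-1}$ works.  Since each $x_k$ with $k \leq M-1$ lies in $B$, one has $d(x_{M-1}, x_k) \leq D/(2L)$, and $L$ times this upper bound is precisely $D/2$, which is matched by the distance to $x_M$.  This verifies the remaining inequalities for $i = M-1$ and completes the induction.

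The only mild obstacle is the numerical calibration of the covering scale against the required distance for $x_M$: the radius $D/(4L)$ is chosen so that the inductive cluster has diameter at most $D/(2L)$, the factor $L$ scales this up to exactly $D/2$, and the diameter $D$ of $T$ supplies a point at that distance from any prescribed $x_{M-1}$.  No deeper structural difficulty appears, and the recursion $K(N, M, L) \leq C(N, L) \cdot K(N, M-1, L)$ gives an explicit bound of the shape $K(N, M, L) \leq C(N, L)^{M-1}$.
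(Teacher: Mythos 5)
Your proof is correct and follows essentially the same strategy as the paper's: induct on $M$ with $K=C^{M-1}$, use the doubling property to cover $T$ by $C(N,L)$ pieces of diameter at most $D/(2L)$, pigeonhole to find a dense piece on which to apply the inductive hypothesis, then append a point at distance at least $D/2$ from $x_{M-1}$. The only differences are cosmetic (you cover a containing ball and intersect with $T$, and you spell out why the far point $x_M$ exists).
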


\begin{proof}
Fix $N,L \geq 1$. We find $C=C(N,L)$ such that any  set of   diameter  $D>0$  in any $N$-doubling  space is covered by at most $C$ subsets with diameter at most $\frac D {2\cdot L}$.

We are going to prove by induction on $M$ that $K(N, M, L)= C^{M-1}$ satisfies the claim of the lemma. The case $M=1$ is clear.

Assume the claim is true for $M-1$ and consider a subset $T$ of $Y$ with at
least $C^{M-1}$ elements.  Replacing $T$ by a finite  subset,  we can assume that the diameter $D>0$ of $T$ is finite.  Cover $T$ by at most $C$ subsets with diameter at most $\frac D {2\cdot L}$.  At least one of this subsets, say $T_1$, has  at least $ C^{M-2}$ elements. By the inductive assumption, we find a tuple
$(x_1,...,x_{M-1})$ in $T_1$ as in the statement of the lemma.

Take an arbitrary point $x_M$ in $T$ such that $d(x_M,x_{M-1}) \geq \frac D 2$.  By construction, the extended $M$-tuple $(x_1,...,x_M)$ satisfies the statement of the lemma.
\end{proof}

The following defines
 a counterpart of straining sequences:

\begin{defn}\label{defn: badseq}
A subset $T$  of   a tiny ball  $U$
is called \emph{$\delta$-bad}
if  no point $x\in T$ is a $(1,\delta)$-strainer of another point $y\in T$.
\end{defn}

We  derive the following uniform bound:

\begin{prop}\label{prop: bad}
There is a number $C_0=C_0(N,\delta)$ such that each $\delta$-bad subset of
 $U$ has at most $C_0$ elements.
\end{prop}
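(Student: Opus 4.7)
The plan is to argue by contradiction via Lemma \ref{lem:BGP}. Fix parameters $L = L(\delta)$ large and $M = M(N,\delta)$ large, to be calibrated in the course of the argument, and set $C_0 := K(N,M,L)$. Assume a $\delta$-bad set $T\subset U$ has more than $C_0$ elements. Since $U$ is $N$-doubling by Proposition \ref{prop:doubl}, Lemma \ref{lem:BGP} yields an $M$-tuple $(x_1,\dots,x_M)\subset T$ with
$$ d(x_i,x_{i+1})\;\geq\; L\cdot d(x_i,x_k) \qquad\text{for all } 1\leq k\leq i\leq M-1.$$
Writing $d_i:=d(x_i,x_{i+1})$, the tuple has a strong \emph{backtracking} structure: at each $x_i$ the earlier points $x_1,\dots,x_{i-1}$ all lie in $B_{d_i/L}(x_i)$, while $x_{i+1}$ sits on the distance sphere of radius $d_i$.

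The next step is to extract from this tuple a $(1,\delta)$-strainer relation between two points of $T$, contradicting $\delta$-badness. The geometric heuristic is: by geodesic completeness of $U$ (Lemma \ref{sizeext}) we can continue each $x_{i+1}x_i$ past $x_i$ inside $\tilde U$, which produces a natural candidate antipode in $\Sigma_{x_i}$ for $(x_ix_{i+1})'$. Since the previous points $x_k$ are concentrated in $B_{d_i/L}(x_i)$, rescaling $B_{2d_i}(x_i)$ by $1/d_i$ brings the configuration close to its image under $\log_{x_i}$ in the $\CAT(0)$ tangent cone $T_{x_i}$, via the almost-isometry of Lemma \ref{lem:tangent}. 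A failure of $\delta$-sphericality of $(x_ix_{i+1})'$ would then yield two extensions of the geodesic past $x_i$ whose endpoints are at directional distance $\geq\delta$ apart but, after rescaling, are quantitatively close in the metric — contradicting Lemma \ref{lem:tangent} once $L$ exceeds a threshold depending only on $\delta$.

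The main obstacle is the lack of pointwise uniformity in Lemma \ref{lem:tangent}: the scale at which the logarithmic map becomes $\delta$-almost-isometric depends on the point. To bypass this I plan to promote the pointwise statement to a uniform one by a compactness argument. If no universal $C_0(N,\delta)$ existed, one could take a sequence of tiny balls $U_n$ of capacity bounded by $N$ with $\delta$-bad subsets of sizes $\to\infty$, hence backtracking tuples of length $M_n\to\infty$. Rescaling each $U_n$ by $1/d^n_{M_n-1}$ around $x^n_{M_n-1}$, Proposition \ref{prop:precomp} and Corollary \ref{cor:precomp} produce a subsequential pointed Gromov--Hausdorff limit which is again a tiny ball in some GCBA space. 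The backtracking tuples converge to a configuration in the limit in which, by stability of strainers (Lemma \ref{lem:open}), the $\delta$-bad property persists and collides with Proposition \ref{lem: first} applied at the limit base-point — contradiction. Thus $C_0=C_0(N,\delta)=K(N,M(N,\delta),L(\delta))$ works, as claimed.
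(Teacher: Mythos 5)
Your proof starts from the right place---the $\delta$-bad definition, Lemma~\ref{lem:BGP}, and the correct diagnosis that Lemma~\ref{lem:tangent} (equivalently Proposition~\ref{lem: first}) lacks a pointwise-uniform threshold. But the argument you then build on top of that diagnosis does not close the gap, and it departs entirely from what the paper does.

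The central problem is the claimed contradiction in the compactness step. After you rescale $U_n$ by $1/d^n_{M_n-1}$ around $x^n_{M_n-1}$, the remaining points $x^n_1,\ldots,x^n_{M_n-2}$ of the backtracking tuple cluster inside a fixed ball of radius $1/L$ around the base, but they do so across a whole cascade of scales $\lesssim 1/L, 1/L^2, 1/L^3,\ldots$\,. In the limit most of these points coincide (or nearly coincide) with one another and possibly with the base-point, and for coinciding limit points the ``not a $(1,\delta)$-strainer'' relation carries no information: Lemma~\ref{lem:open} applies only when the approximating data converges to \emph{distinct} points and directions. Even for those pairs that do remain separated in the limit, you merely obtain two points $x,p$ at some positive distance $\leq 1/L$ that are not $(1,\delta)$-strainers of each other. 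That is not a contradiction: Proposition~\ref{lem: first} produces a punctured neighborhood of $x$ in which $x$ is a strainer, but the radius of that neighborhood depends on the limit point and the limit space, which in turn depend on $L$---so you cannot choose $L$ in advance large enough to force $p$ inside it. Pushing $L\to\infty$ only makes the clustering worse and destroys the ability to pass the non-strainer condition to the limit. There is also no contradiction hiding in the tangent-cone picture: branching of geodesics at an interior point (the configuration your two extensions produce) is perfectly compatible with a $\CAT(0)$ or GCBA limit, as the tripod example already shows, so Lemma~\ref{lem:tangent} applied at the limit point does not rule it out either. In short, ``$\delta$-badness persists and collides with Proposition~\ref{lem: first}'' is asserted but never actually established, and I do not see how to establish it.

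The paper's proof follows a genuinely different route that sidesteps uniformity altogether. After extracting the backtracking $M$-tuple, it uses the non-strainer hypothesis at each $x_i$ with respect to $x_1$ to produce \emph{two} far-apart antipodes $w_i^{\pm}$ of $(x_ix_1)'$ in $\Sigma_{x_i}$, then runs a dyadic selection so that each chosen antipode $w_i$ stays $\delta/2$ away from all the later directions $v_{i,j}$. Extending from each $x_i$ in direction $w_i$ to points $y_i$ at distance $1$, the backtracking gives $\angle x_1x_jx_i\leq\delta/4$, hence $\angle y_jx_jx_i\geq\pi-\delta/4$, while the antipode choice gives $\angle y_ix_ix_j\geq\delta/2$; a quadrangle comparison then makes the $y_i$ pairwise $r_1(\delta)$-separated, and the doubling constant bounds their number. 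Everything is quantitative and happens at a single scale $r_0=1$; no limits, no tangent cones, no appeal to a pointwise-nonuniform lemma. If you want to salvage your idea, the thing to internalize is that the hard work is not the clustering itself but the \emph{angle bookkeeping} that turns ``$x_1$ is not a strainer at $x_i$'' into a separated family of auxiliary points, and that piece is missing from your proposal.
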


\begin{proof}
%

 The claim is scale invariant. Rescaling $U$, we may  assume that $r_0=1$. Hence  the curvature bound $\kappa$ is at most $\frac 1 {10}$.
 Moreover, we may assume $\delta \leq \pi$.

Using comparison of quadrangles, we find some $r_1>0$ depending only on  $\delta$ such that the following holds true for all
$y_1,x_1,x_2,y_2 \in  \tilde U$.   If $d(x_1,y_1)=d(x_2,y_2)=1$ and the angles satisfy $\angle y_1x_1x_2 \geq \delta /2$ and $\angle y_2x_2x_1  \geq \pi- \delta /4$ then the distance between $ y_1$ and $y_2$ is at least $r_1$.

We fix some number $L_0$ depending only on $\delta$ (and the curvature bound $\frac 1 {10}$),   such that for all
 triangles  $xyz$ in $U$, the inequality $d(x,z) \geq L_0\cdot d(x,y)$ implies
$\angle xzy \leq \frac \delta 4$.

Assume now that the Proposition does not hold. Then  there are arbitrary large   $\delta$-bad subsets, possibly in different tiny balls  $U$ (in different GCBA spaces),  but of the same bound on the  capacity $N$.

 By Lemma \ref{lem:BGP}, we then find
 $\delta$-bad sets $\{x_1,...,x_M\}$ with  arbitrary large   $M$, such that $d(x_i,x_{i+1}) \ge L_0 \cdot d(x_i,x_{k})$
for all $1 \leq k \leq i \leq  M-1$.

We fix this $M$-tuple $x_1,...,x_M$.
Denote by $v_{i,j} \in \Sigma _{x_i}$ the starting direction
of the geodesic $x_ix_j$.
For each $i \ge 2$, we use that $x_1$ is not a $(1,\delta)$-strainer at $x_i$ to find
 antipodes
$w_i^+, w_i^- \in \Sigma _{x_i}$ of $v_{i,1}$ such that
$d(w_i^+,w_i^-) \ge \delta$.

We proceed as follows.
For each $i \geq  3$ the distance in $\Sigma _{x_2}$
between $v _{2,i}$ and either $w_2^+$ or
$w_2^-$ is at least $\delta/2$.
Hence we can find a subsequence
$x_1, x_2, x_{l_3}, x_{l_4}, \dots, x_{l_k}$ of
the tuple $(x_i)$ with at least $M/2$ elements
such that
for one of the directions $w_2^{\pm}$, say $w_2^+$,
and for each $i \geq 3$ we have
$d(w_2^+ , v_{2,l_i}) \ge \delta/2$.
Denote this direction $w_2^+$ by $w_2$
and replace our original tuple $x_1,...,x_M$
by this subsequence.

We repeat  the procedure at $x_3$ and
continue inductively. In this way
we obtain a $\delta$-bad sequence  $x_1, \dots, x_s$
with $s \ge \log_2 M$ and, for each $i \geq 2$, a  direction $w_i\in \Sigma _{x_i}$,  such that
the following two conditions hold:

\begin{enumerate}
\item  $d(x_i,x_{i+1}) \ge L_0 \cdot  d(x_i,x_{k})$, for all $1 \leq k\leq  i < s$;
\item The  direction $w_i$  is antipodal to
$v_{i,1}$. For all $j > i$,
we have $d(v_{i,j},w_i) \ge \delta/2$.

\end{enumerate}

For $2 \le i \leq s$ choose a geodesic $\gamma _i$  in $\tilde U$
of length $1$ starting at $x_i$ in the direction $w_i$ and set $y_i=\gamma _i (1)$. Thus,
 $d(y_i,x_i)=1$.

Let $2\leq i<j \leq s$ be arbitrary.  By construction,
$\angle y_ix_ix_j \geq \delta/ 2$. On the other hand, by the choice of $L_0$,
we have $\angle x_1x_jx_i \le \delta/4$  and therefore, $\angle y_jx_jx_i \geq \pi -\delta /4$.
Due to the first statement in the proof, we have  $d(y_j,y_i) \geq r_1$.

Therefore, the doubling constant of $\bar B_1 (U)$ (and hence the capacity bound of $U$)  bounds the number $s$
in our sequence, providing a contradiction.
\end{proof}

\subsection{Extension of strainer maps}
We now prove the following central  result:

\begin{thm}\label{lem: +1strainer}
There exists $C_1=C_1(N,\delta) >0$  with the following properties.

Let $F \colon V \to \R^k$
be a $(k,\delta)$-strainer  map on an open subset $V$ of a tiny ball $U$ of capacity bounded by $N$.  Let $E$ denote the set of points in $V$ at which $F$ cannot be extended
to a  $(k+1, 12\cdot \delta)$-strainer map $\hat F=(F,f)$  using some distance function $f=d_{p_{k+1}}$ as  last coordinate.

Then $E$ intersects each fiber $\Pi$ of $F$ in $V$ in at most $C_1$ points.
$E$ is a countable union of compact subsets $E_j$, such that  the restriction $F:E_j \to F(E_j)$ is $C_1$-biLipschitz. Moreover,
\begin{equation} \label{eq:measurebound}
\mathcal H^k(E)\leq C_1 ^{k+1} \cdot \mathcal H^k (F(E)) \leq C_1^{2k+1} \cdot  10\cdot r_0 ^k \, .
\end{equation}
\end{thm}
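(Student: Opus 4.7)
The plan is to prove the theorem in three stages: first the fiber-wise bound $|E\cap\Pi|\leq C_0$, then a decomposition of $E$ into countably many compact biLipschitz pieces, and finally the volume estimates which combine the first two. The main technical hurdle will be establishing a uniform version of Proposition~\ref{lem: first} on relatively compact subsets of $V$, which is needed for the biLipschitz step.

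For the fiber-wise bound, I would show that $E\cap\Pi$ is a $\delta$-bad subset in the sense of Definition~\ref{defn: badseq} for every fiber $\Pi$ of $F$ in $V$; then Proposition~\ref{prop: bad} gives $|E\cap\Pi|\leq C_0(N,\delta)$. Suppose $x,y\in E\cap\Pi$ are distinct and $y$ is a $(1,\delta)$-strainer at $x$. Since $F(x)=F(y)$, we have $d(p_i,x)=d(p_i,y)$ for each coordinate point $p_i$ of $F$, and Lemma~\ref{lem: almperp}(2) yields $|\angle p_ixy-\pi/2|\leq 2\delta$. The $(k,\delta)$-strainer property of $F$ already gives $|\angle p_ixp_j-\pi/2|\leq 2\delta$ by Corollary~\ref{cor:complement}, so the second half of Corollary~\ref{cor:complement} provides a $(k+1,4\delta)$-spherical tuple in $\Sigma_x$. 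Hence $(p_1,\dots,p_k,y)$ is a $(k+1,4\delta)$-strainer at $x$, and since $4\delta\leq 12\delta$, this contradicts $x\in E$.

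For the biLipschitz decomposition, $E$ is closed in $V$ by Corollary~\ref{cor: openopp}, since it is the intersection over $p\in\tilde U$ of the closed complements of the (open) sets where $(F,d_p)$ is a $(k+1,12\delta)$-strainer. Next I would establish that for any relatively compact open $W\subset V$ there exists some $\epsilon>0$ such that every $y\in\bar W$ is a $(1,6\delta)$-strainer at every point of $B_\epsilon(y)\setminus\{y\}$; this promotes the pointwise statement of Proposition~\ref{lem: first} to a uniform bound via lower semicontinuity of the strainer radius (analogous to Lemma~\ref{lem:strradius}), and is the main technical step. Now take $x,y\in E\cap\bar W$ with $0<d(x,y)<\epsilon$. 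Since $(F,d_y)$ is not a $(k+1,12\delta)$-strainer at $x$ but $y$ is a $(1,6\delta)$-strainer at $x$, Corollary~\ref{cor:complement} forces some $|\angle p_ixy-\pi/2|\geq 6\delta$. By the first variation formula, $|(f_i\circ\gamma)'(0)|\geq \sin(6\delta)$ along the geodesic $\gamma=xy$, and Proposition~\ref{prop: differential} shows this derivative varies by at most $4\delta$ along $\gamma$, so its magnitude stays at least $c(\delta):=\sin(6\delta)-4\delta$, which is positive for $\delta\leq\tfrac{1}{20k}$. Integrating yields $\|F(y)-F(x)\|\geq c(\delta)\cdot d(x,y)$. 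Covering $V$ by countably many relatively compact open sets of diameter less than the associated $\epsilon$ and intersecting $E$ with their closures gives $E=\bigcup E_j$ with each $F|_{E_j}$ being $C_1$-biLipschitz, where $C_1$ depends on $c(\delta)$ and $C_0$.

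For the measure estimate, the $C_1$-biLipschitz property gives $\mathcal H^k(E_j)\leq C_1^k\mathcal H^k(F(E_j))$. The fiber bound allows one to arrange the $E_j$ so that each point of $F(E)$ lies in at most $C_1$ of the sets $F(E_j)$; summing then produces $\mathcal H^k(E)\leq C_1^{k+1}\mathcal H^k(F(E))$. Since $V\subset U$ has diameter $\leq 2r_0$ and $F$ is $1$-Lipschitz into $\R^k_\infty$, the set $F(V)$ fits in a box of side $2r_0$, and absorbing the dimensional constants into $C_1$ yields the final bound $\mathcal H^k(F(E))\leq C_1^k\cdot 10\cdot r_0^k$.
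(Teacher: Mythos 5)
Your overall architecture mirrors the paper's (fiber-wise bound via $\delta$-bad sets, then a local lower bound on $\|F(x)-F(y)\|/d(x,y)$, then the volume estimate), but there are several places where your argument has gaps that the paper's proof is specifically structured to avoid.

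\textbf{Fiber-wise bound.} Your application of Lemma~\ref{lem: almperp}(2) needs $p_i$ to be part of an \emph{opposite} pair of $(1,\delta)$-strainers at $x$ and $y$, and the mere fact that $F$ is a $(k,\delta)$-strainer map does not supply such an opposite strainer at both points simultaneously. The paper handles this by first invoking Lemma~\ref{lem: finite} to pass to a subset $V_G$ on which a fixed opposite $(k,3\delta)$-strainer map $G$ exists; this is why the paper works with $6\delta$-bad subsets rather than your claimed $2\delta$ bound. Your argument as written does not justify the $2\delta$ and the bookkeeping needs the extra step.

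\textbf{BiLipschitz decomposition.} This is the genuine gap. You correctly identify that you need a \emph{uniform} version of Proposition~\ref{lem: first} — a single $\epsilon>0$ working for all $y$ in a compact set — and label it ``the main technical hurdle,'' but you do not prove it, and it does not follow from Lemma~\ref{lem:strradius} (which is about the straining radius of a fixed strainer, not about how small a ball must be for a point to become a $(1,\delta)$-strainer on it). The paper's proof deliberately avoids this: it establishes only the \emph{pointwise infinitesimal} estimate
\[
\liminf_{l\to\infty}\frac{\|F(x)-F(x_l)\|}{d(x,x_l)}\geq \delta \quad\text{for any sequence }x_l\to x\text{ in }E,
\]
whose proof needs Proposition~\ref{lem: first} only at the fixed limit point $x$, and then invokes the decomposition lemma~[Ly-open, Lemma~3.1] which converts such a pointwise condition on a closed set into a countable union of compact biLipschitz pieces. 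That route requires no uniformity at all. You could salvage your approach by proving the uniform strainer-radius claim (a compactness-plus-rescaling argument similar to the proof of Proposition~\ref{lem: first}), but as written the step is missing. Relatedly, your quantitative constant $c(\delta)=\sin(6\delta)-4\delta$ is only positive for small $\delta$, while the theorem is stated (and proved in the paper) for all $\delta$; the paper's $\liminf\geq\delta$ bound does not degenerate.

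\textbf{Volume estimate.} Your claim that ``the fiber bound allows one to arrange the $E_j$ so that each point of $F(E)$ lies in at most $C_1$ of the sets $F(E_j)$'' is the crux and is not justified: the $E_j$ you constructed are produced by a geometric covering of $V$, and there is no a~priori bound on how many of them a single fiber of $F$ can meet. What you are reaching for is precisely the content of the co-area inequality for the $k$-rectifiable set $E$, which is how the paper derives $\mathcal H^k(E)\leq C_1^{k+1}\mathcal H^k(F(E))$ directly from \eqref{eq:differ} together with the cardinality bound on fibers. You should either cite the co-area formula as the paper does or prove the bounded-multiplicity claim.
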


\begin{proof}
   If $\delta >\frac \pi {12}$  then $E$ is empty, and  the statement is clear. Thus, we may assume $\delta \leq \frac \pi {12}$.
Due to Lemma \ref{lem:uniformbound}, there is a number $k_0=k_0 (N)$ such that $k\leq k_0$.

Let $F$ be defined by a $k$-tuple $(p_1,...,p_k)$.
By Lemma \ref{lem: finite}, there is a finite set $\mathcal F_{\delta}$ of distance maps $G:U\to \R^k$  with at most $C=C(N,\delta)$ elements  and the following property.
If $V_G$  denotes the  set of points in $V$ at which $F$ and $G$ are opposite $(k,3\cdot \delta)$-strainer maps, then  the open set
 $\bigcup \{ \, V_G \mid G \in \mathcal{F}_{\delta} \, \}$   contains $V$.
Since $\mathcal F_{\delta}$ has at most $C$ elements, we may replace $V$ by one of the sets $V_G$ and assume that
on the whole set  $V$ there exists   an opposite $(k,3\cdot \delta)$-strainer map $G$ to $F$.

Let $\Pi$  be a fiber of the map $F$ on $V$.
For any pair of points $x,y \in V\cap \Pi$ we deduce from Lemma \ref{lem: almperp}  that
 $|\angle p_i x y - \frac {\pi} 2|  < 6 \delta$.  Therefore, if $x$ were a $(1,6 \cdot \delta )$-strainer at $y$ then
 the $(k+1) $-tuple $(p_1,\dots,p_k,x)$ is a $(k+1,12 \cdot \delta )$-strainer at $x$, as follows from Corollary \ref{cor:complement}.

 Hence, the subset  $ E\cap \Pi$    must be $6\delta$-bad.
 Due to Proposition \ref{prop: bad},  $E\cap \Pi$ can have at most $C_0(N,6\cdot \delta)$ elements.
  This proves the first statement of the theorem.

 We claim,  for any sequence $x_l \in E$ converging to any  $x\in  E$, the inequality
\begin{equation} \label{eq:differ}
\liminf _{l\to \infty}  \frac {|| F(x)-F(x_l)||} {d(x,x_l)} \geq  \delta    \, .
\end{equation}
 Assume that \eqref{eq:differ} is violated.
Replacing $x_l$ by a subsequence and applying the first variation formula we deduce,  for any $i=1,...,k$ and all  large $l$, $|\angle p_ix x_l -\frac \pi 2| < 2\delta$.
   Fix an opposite $(k,\delta)$-strainer $(q_i)$ to $(p_i)$ at $x$. Then $(q_i)$ and $(p_i)$ are opposite $(k,\delta)$-strainers at $x_l$, for all
  $l$ large enough, Corollary \ref{cor: openopp}.  Applying     Lemma \ref{lem: almperp}, we deduce that  $|\angle p_i x_l x -\frac \pi 2| <4\delta$, for all sufficiently large $l$ and all $1\leq i\leq k$.
 But, due to Proposition  \ref{lem: first}, the point $x$ is a $(1,4\cdot \delta )$-strainer at $x_l$, for all $l$ large enough.
 Hence, $(p_1,\dots,p_k,x)$ is a $(k+1,8 \delta )$-strainer at $x_l$  (Corollary \ref{cor:complement}) in contradiction to the assumption $x_l \in E$.
 This finishes the proof of \eqref{eq:differ}.

The remaining claims are consequences of this infinitesimal property.
 We set $C_1:=\max \{\frac 4 {\delta}, 2\sqrt k_0, C_0 \}$, where  $k_0$ is a bound on $k$ and $C_0$ is a bound
on the number of elements of $E$ in fibers of $F$.
	The restriction of $F$ to $E$ is $2\sqrt k$-Lipschitz,  as any distance map.
	The set $E$ is closed in $V $, hence locally complete.
 The implication that $E$ is a union of compact subsets $E_j$ to which $F$ restricts as a $C_1$-biLipschitz map is shown in \cite[Lemma 3.1]{Ly-open},  as a consequence of \eqref{eq:differ}.

 The set $E$ is a union of a countable number of Lipschitz images  of compact subsets of $\R^k$, hence $E$ is countably
 $k$-rectifiable,  \cite{Kirchheim}.
 An application of  the co-area formula,  \cite{Kirchheim}, together with \eqref{eq:differ}
 proves the first inequality in \eqref{eq:measurebound}.  The second inequality in \eqref{eq:measurebound} follows from the fact that $F(E)$ is contained in  a Euclidean $k$-dimensional ball of radius
 $C_1\cdot r_0$, and the fact that the volumes of Euclidean unit balls in any dimension are smaller than $10$.

 This finishes the proof.
 \end{proof}

\subsection{Conclusions}
Note that Theorem \ref{lem: +1strainer} is a quantitative version of Theorem \ref{thm:technical}. Thus the proof
of  Theorem \ref{thm:technical} is finished as well.

In order to derive Theorem
\ref{thmfirst}, we prove the following localized more precise version of it.
Let again $U$ be a tiny ball of radius $r_0$ and capacity bounded by $N$ as above.
As  before, $U_{k,\delta}$  denotes the set of all $(k,\delta)$-strained points.

\begin{prop}\label{prop: rect}
 There exists a number $C_2=C_2(N,\delta ) >0$  with the following properties.  The set $U\setminus U_{k,\delta}$ is a union of countably many images of biLipschitz maps
 $G_j: A_j \to U$, with $A_j$  compact in $\R^{k-1}$. Moreover,
 $\mathcal H^{k-1}  (U\setminus U_{k,\delta} )< C_2 \cdot r_0 ^{k-1}$.
\end{prop}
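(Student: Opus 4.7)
The plan is an induction on $k \geq 1$ using Theorem \ref{lem: +1strainer} as the engine and Lemma \ref{lem: finite} as the covering device. The statement $S(k)$ I want to carry is: for every $\delta > 0$, the set $U \setminus U_{k,\delta}$ is a countable union of biLipschitz images of compact subsets of $\R^{k-1}$, and $\mathcal H^{k-1}(U \setminus U_{k,\delta}) \leq C_2(N,\delta,k) \cdot r_0^{k-1}$.

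For the base case $k = 1$ I would observe that the constant map $F \colon U \to \R^0$ is a trivial $(0,\delta/12)$-strainer map whose unique fiber is $U$, and that its exceptional set in the sense of Theorem \ref{lem: +1strainer} is exactly $U \setminus U_{1,\delta}$ (since extending $F$ by a distance function $d_p$ to a $(1,\delta)$-strainer at $x$ is the same as $x$ being $(1,\delta)$-strained). The theorem then gives $|U \setminus U_{1,\delta}| \leq C_1(N,\delta/12)$, establishing both the measure bound and the zero-dimensional rectifiability.

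For the inductive step $S(k) \Rightarrow S(k+1)$ I would decompose
\[
U \setminus U_{k+1,\delta} \;=\; \bigl(U \setminus U_{k,\delta/36}\bigr) \,\cup\, \bigl(U_{k,\delta/36} \setminus U_{k+1,\delta}\bigr).
\]
The first piece is controlled by $S(k)$ applied with parameter $\delta/36$; since it is countably $(k-1)$-rectifiable it has $\mathcal H^k$-measure zero, and the inclusion $\R^{k-1} \hookrightarrow \R^k$ reinterprets its biLipschitz pieces in the form required at level $k+1$. For the second piece, Lemma \ref{lem: finite} applied with $\delta/36$ ensures that each point of $U_{k,\delta/36}$ lies in some open set $V_F$ on which a fixed $F \in \mathcal F_{\delta/36}$ is a $(k,\delta/12)$-strainer map; this yields a finite cover of $U_{k,\delta/36}$ by at most $m(N,\delta/36)^k$ such sets. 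On each $V_F$ I apply Theorem \ref{lem: +1strainer} with strainer parameter $\delta/12$: the associated exceptional set $E_F$ contains $V_F \setminus U_{k+1,\delta}$ (since at a point outside $U_{k+1,\delta}$ no distance function $d_q$ can extend $F$ to a $(k+1,\delta)$-strainer), is a countable union of biLipschitz images of compact subsets of $\R^k$, and satisfies $\mathcal H^k(E_F) \leq C_1(N,\delta/12)^{2k+1} \cdot 10 \cdot r_0^k$. Summing over $F \in \mathcal F_{\delta/36}$ and combining with the first piece closes the induction.

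The main obstacle is a routine but delicate bookkeeping of the $\delta$-parameters: each inductive step loses a factor $36 = 12 \cdot 3$ (the factor $12$ from Theorem \ref{lem: +1strainer} and the factor $3$ from Lemma \ref{lem: finite}), so the strainer thresholds compound to $\delta/36^k$; since the constants are permitted to depend on $\delta$, $N$ and $k$, this is a bookkeeping nuisance rather than a genuine difficulty. The only other point that requires care is verifying $V_F \setminus U_{k+1,\delta} \subset E_F$, which reduces to the observation that at any $x \notin U_{k+1,\delta}$ no $(k+1,\delta)$-strainer exists at $x$ at all, so \emph{a fortiori} none of the form $(F, d_q)$.
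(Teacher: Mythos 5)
Your proof is correct and follows essentially the same route as the paper: induction on $k$, using Proposition \ref{prop: bad} to control the base case (you reach it by applying Theorem \ref{lem: +1strainer} to the trivial $(0,\delta/12)$-strainer map, which unwinds to the same thing), then covering $U_{k,\delta'}$ by finitely many open sets where a map from $\mathcal F_{\delta'}$ is a strainer map via Lemma \ref{lem: finite}, and invoking Theorem \ref{lem: +1strainer} on each. The only cosmetic difference is your parameter $\delta/36$ versus the paper's more generous $\delta/50$; both work.
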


\begin{proof}
If $\delta$ decreases,  the sets $U_{k,\delta}$ increase, thus in all subsequent considerations we may assume
that $\delta $ is sufficiently small.

We proceed by induction on $k$. The set   $U\setminus U_{1,\delta}$ has at most $C_0 (\delta, N)$ elements, due to Proposition \ref{prop: bad}. This  proves the statement for $k=1$.

Assuming the result is true for  $k$, we are going to prove it for $k+1$.
By the inductive assumption, the set $U\setminus U_{k, \delta /50 }$ is a  countable union of images of biLipschitz maps defined on compact subsets
of $\R^{k-1}$.

Thus it suffices to represent $K:=U_{k, \delta /50 } \setminus U_{k+1,\delta}$ as a union of biLipschitz images and to estimate its $k$-dimensional Hausdorff measure.

Any point $x\in K$ admits a $(k,\delta /12)$-strainer  map $F\in \mathcal F_{\delta /12}$,  due to Lemma \ref{lem: finite}.  Thus,
we have  a  finite number  of $(k,\delta /12)$-strainer  maps  $F_j:V_j\to \R^k$ defined on open subsets $V_j\subset U$
such that the  union of $V_j$ covers $K$ and such that the number of $V_j$ is bounded by some $C_3(N,\delta)$.

Applying now Theorem \ref{lem: +1strainer} to the maps $F_j:V_j \to \R^k$ and observing that $K_j:=K\cap V_j$
is contained in the set $E$ from the formulation of Theorem \ref{lem: +1strainer} we deduce the following.

Each $K_j$ is a countable union of biLipschitz images of compact subsets of $\R^{k}$ and $\mathcal H^{k} (K_j)$
is bounded by $C_4\cdot r_0 ^k$ for some $C_4=C_4(N,\delta)$.
 Summing up, we deduce the required bound on the volume $\mathcal H^k (K)$
and the fact that $K$ is a union of countably many images of biLipschitz maps defined on compact subsets of
$\R^k$.
\end{proof}

Now we obtain:
\begin{proof}[Proof of Theorem \ref{thmfirst}]
 We cover $X$ by a countable number of tiny balls $U$, using the separability of $X$.
  The set  $U\setminus X_{k,0} $ of not $(k,0)$-strained points  in  $U$ is the union of the complements $U\setminus   U_{k,\delta}$
 where $\delta$ runs over all sufficiently small  rational numbers.  Applying Proposition \ref{prop: rect}, we deduce that $X\setminus X_{k,0}$ is a countable union of compact subsets  biLipschitz  equivalent to subsets of $\R^{k-1}$.
\end{proof}

\begin{rem}\label{rem:remthmfirst}
Theorem \ref{thmfirst}  and Theorem \ref{thm:regular} strengthen  \cite[Main Theorem 1(2)]{Otsu},
stating that  $\mathcal{H}^n(X^n  \setminus X_{n,0}) = 0$ and that there exists a continuous Riemannian structure of
$X^n \cap  X_{n,0}$.
\end{rem}

\section{Dimension} \label{sec:dim}

\subsection{Topological and Hausdorff dimension}
We can now prove a quantitative version of the first part of Theorem \ref{thm0}.

\begin{prop} \label{lem:tophaus}
There is some $C (N)>0$  such that the following holds true for
any tiny ball $U$ of radius $r_0$ and capacity bounded by $N$.

If $n$ is the topological dimension of $U$ then $0<\mathcal H^n (U) < C\cdot r_0 ^n$.
In  particular, the Hausdorff dimension  of $U$ equals $n$.
Moreover, $n$ is the largest number such that some tangent cone $T_xU$ is isometric to $\R^n$.
Finally, $n$ is the largest number, such that there are $(n,\frac 1 {4\cdot n})$-strained points in $U$.
\end{prop}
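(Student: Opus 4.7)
My plan is to define $k$ to be the largest integer such that $U$ contains $(k, 1/(4k))$-strained points. This is well-defined and bounded by $k_0(N)$ by Lemma \ref{lem:uniformbound}, and the strategy is to identify this $k$ with the topological dimension $n$ and with the other quantities in the statement.

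For the Hausdorff side, I would obtain the upper bound $\mathcal H^k(U) < C r_0^k$ by applying Proposition \ref{prop: rect} with index $k+1$: maximality forces $U_{k+1, 1/(4(k+1))} = \varnothing$, so the estimate $\mathcal H^k(U \setminus U_{k+1, 1/(4(k+1))}) < C r_0^k$ covers all of $U$, and in particular the Hausdorff dimension is at most $k$. For positivity, I would pick any $(k, 1/(4k))$-strainer map $F \colon V \to \R^k$ on an open $V \subset U_{k, 1/(4k)}$; Lemma \ref{lem:openstr} makes $F$ simultaneously $\sqrt{k}$-Lipschitz and $2\sqrt{k}$-open, so $F(V)$ is open in $\R^k$ and $\mathcal H^k(V) > 0$.

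The step I expect to be the main obstacle is producing a point with $T_xU$ genuinely isometric to $\R^k$. First I would show that $U_{k, 0} := \bigcap_j U_{k, 1/j}$ has full $\mathcal H^k$-measure in $U$: by Proposition \ref{prop: rect} each $U \setminus U_{k, 1/j}$ has Hausdorff dimension at most $k-1$ and is therefore $\mathcal H^k$-null, and since $\mathcal H^k(U) < \infty$ the continuity of measure on the nested intersection yields $\mathcal H^k(U_{k,0}) = \mathcal H^k(U) > 0$; in particular $U_{k,0} \neq \varnothing$. Pick any $x \in U_{k,0}$: by Proposition \ref{lem:equiv}, $\Sigma_x$ splits as $\Sph^{k-1} \ast \Sigma'$, so $T_xU = \R^k \times T'$ with $T' = C\Sigma'$ a GCBA Euclidean cone. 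Assuming for contradiction that $T'$ is not a single point, geodesic completeness gives a non-apex $p \in T'$ at which the tangent cone of $T'$ splits off a Euclidean line; then at $q = (0, p) \in T_xU$ the tangent cone splits off $\R^{k+1}$. Feeding the tangent-cone convergence $(s^{-1}U, x) \to (T_xU, 0)$ from Corollary \ref{cor:converge} into the stability of strainers (Lemma \ref{lem:open}) would produce $(k+1, 1/(4(k+1)))$-strained points in $U$ arbitrarily close to $x$, contradicting maximality of $k$. Hence $T_xU \cong \R^k$ and $\Sigma_x \cong \Sph^{k-1}$; this has geometric dimension $k-1$, so the geometric dimension of $U$ is at least $k$. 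By Kleiner's identification of topological and geometric dimension in separable spaces with an upper curvature bound, the topological dimension $n$ is at least $k$, while Szpilrajn's inequality $n \leq \dim_{\mathcal H} U = k$ completes the argument and gives $n = k$. The maximality assertions—that no $T_xU$ is isometric to $\R^m$ with $m > k$, and that no $(m, 1/(4m))$-strained points exist for $m > k$—are built into the choice of $k$, since an orthonormal frame in such an $\R^m$ would directly yield an $(m, \delta)$-strainer at $x$ for every $\delta > 0$.
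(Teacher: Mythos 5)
Your proposal is correct in substance but is organized around a genuinely different pivot than the paper's argument, so a comparison is worth recording.

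The paper anchors everything to $n := \dim_{\text{top}}(U)$: it invokes a general dimension-theoretic fact ($n$-dimensional separable metric spaces satisfy $\mathcal H^n>0$), identifies $n$ with the geometric dimension via Kleiner, deduces that no tangent cone contains $\R^{n+1}$ (hence $U\subset X\setminus X_{n+1,0}$), and reads off all the rest — finite $\mathcal H^n$, absence of $(n+1)$-strainers, Hausdorff dimension $\le n$ — from Theorem~\ref{thmfirst}/Proposition~\ref{prop: rect}. You anchor instead to $k$, the maximal strainer index, and only connect to the topological dimension at the very end via Kleiner and Szpilrajn. Your lower bound $\mathcal H^k(U)>0$ via the openness of a $(k,\delta)$-strainer map is more constructive than the paper's citation of general dimension theory (only note that you should carry the Lipschitz constant of $F$ along with the openness to pass from $\mathcal H^k(F(V))>0$ to $\mathcal H^k(V)>0$, which is routine). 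Your derivation of a genuinely Euclidean tangent cone is the place the two arguments diverge most: the paper observes that $T_x\supsetneq\R^n$ would force a tangent cone of geometric dimension $>n$, contradicting $\dim U=n$ directly; you go through the rescaling $(\tfrac1{t_l}\bar U,x)\to(T_xU,0)$ and strainer stability to contradict maximality of $k$. This route works, but as written it needs one more layer of care to fit Lemma~\ref{lem:open}: the rescaled spaces $\tfrac1{t_l}U$ are not tiny balls (their radii blow up), so you must restrict to sub-balls $B_1(q_l)\subset\tfrac1{t_l}U$ of radius $1$, check that these are tiny balls with the same capacity bound $N$ and that they converge to $B_1(q)\subset T_xU$, and only then invoke the stability lemma. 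A shorter path from the same data is available: you already have $\dim_{\mathcal H}U\le k$, hence $\dim_{\text{top}}U\le k$ by Szpilrajn, hence geometric dimension $\le k$ by Kleiner; if $T'$ were not a point then $T_x$ would contain $\R^k\times[0,\infty)$, forcing $\dim\Sigma_x\ge k$ and $\dim U\ge k+1$ by the formula $\dim X=1+\sup_y\dim\Sigma_y$, a contradiction. That shortcut essentially reproduces the paper's step but sits naturally inside your $k$-first framework. In short: correct, organized differently, and slightly heavier than necessary at the Euclidean-tangent step.
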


\begin{proof}
We already know that the topological dimension $n$ of $U$ is finite. Then $\mathcal H^n(U) >0$, by general results in dimension theory, compare \cite{Edgar}.

By \cite{Kleiner}, the geometric dimension of $U$ is $\dim (U)=n$ as well.
Therefore, there are no points in $U$ at which the tangent space $T_xU$ contains an $(n+1)$-dimensional Euclidean space.
 Hence,
 $U$ is contained in $X\setminus X_{n+1,0}$.

Due to Theorem \ref{thmfirst}, $U$ is a countable union of biLipschitz images of subsets of $\R^n$. Therefore, the Hausdorff dimension
of $U$ is at most $n$.

Due to Lemma \ref{lem:openstr},   there are no $(n+1,\frac 1{4\cdot (n+1)})$-strainer maps defined on subsets of $U$. Thus, there are no $(n+1,\frac 1 {4\cdot (n+1)})$-strained points in $U$.
Due to Proposition \ref{prop: rect},  $ \mathcal H^n (U) < C\cdot r_0 ^n$,
for some $C$ depending only on $N$.

Applying  Theorem \ref{thmfirst} again, we find a point $x\in X$ such that the tangent space $T_x$ has $\R ^n$ as a direct factor.
If $T_x$ is not equal to $\R^n$ then it contains $\R^n\times [0,\infty )$. But this is impossible, since the  geometric dimension of $X$  is $n$. 
 Therefore, $T_x =\R^n$.

This finishes the proof.
\end{proof}

From now on, we fix some bound $n_0=n_0 (N)$ on the dimension on $U$ provided by Corollary \ref{lem:doubling} and set $\delta _0 = \delta _0 (N):= \frac 1 {50 \cdot n_0 ^2}$.
We can now relate the dichotomy observed in Proposition \ref{prop:dichot} to the dimension.
\begin{cor} \label{cor:dich}
Let $F:V\to \R^k$ be a $(k,\delta)$-strainer map  on a connected open subset $V$ of a tiny ball $U$.
If $\delta \leq \delta _0 (N)$  then one of the following possibilities occurs:
\begin{enumerate}
\item No fiber of $F$ in $V$ has isolated points. Then $\dim (W) >k$, for every open subset $W\subset  V$.
\item  $V$ is a $k$-dimensional topological manifold.  Then for every $x\in V$ and every $r$, such that $3r$ is smaller than
the straining radius of $F$ at $x$, the map $F:B_r(x)\to F(B_r(x))$ is $L$-biLipschitz, where $L$ goes to $1$ as $\delta$ goes to $0$.
\end{enumerate}
\end{cor}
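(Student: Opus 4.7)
The plan is to invoke Proposition \ref{prop:dichot} for the dichotomy and then handle each case using Lemma \ref{lem:dichot} together with Corollary \ref{cor:openstr}. Since $\delta \leq \delta_0(N) \leq \frac{1}{20k}$, Proposition \ref{prop:dichot} yields two alternatives on the connected set $V$: either no fiber of $F$ has an isolated point (case (1)), or every fiber is discrete; the second option will be identified with case (2) by the arguments below.

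Assume first that every fiber of $F$ is discrete. Fix $x \in V$ and $r > 0$ with $3r$ less than the straining radius at $x$. By Lemma \ref{lem:dichot}, either $F|_{B_r(x)}$ is injective or some fiber of $F$ in $B_r(x)$ has diameter at least $r$; the latter is incompatible with discreteness, so $F$ is injective on $B_r(x)$. Corollary \ref{cor:openstr}, applied with $V$ replaced by the convex ball $B_r(x)$, makes $F$ both $L$-Lipschitz and $L$-open with $L \to 1$ as $\delta \to 0$. Injectivity combined with $L$-openness forces the lower bound $d(F(y), F(z)) \geq L^{-1} d(y,z)$: any preimage of $F(z)$ lying in $B_{L d(y,z)}(y)$ produced by openness must coincide with $z$. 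Hence $F \colon B_r(x) \to F(B_r(x))$ is $L$-biLipschitz, providing around every point a chart to an open subset of $\R^k$, so $V$ is a topological $k$-manifold.

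Now assume case (1) and suppose, for contradiction, that some open $W \subset V$ satisfies $\dim(W) \leq k$. By Lemma \ref{lem:openstr} the Hausdorff dimension of $W$ is at least $k$, and Proposition \ref{lem:tophaus} then forces $\dim(W) = k$ with $\mathcal H^k$ locally finite on $W$. Moreover, the identity $\dim(W)=k$ excludes $(k+1,0)$-strained points from $W$, for the tangent cone at such a point would have geometric dimension at least $k+1$, contradicting Kleiner's identification of topological and geometric dimension. Writing $V \setminus V_{k+1,0} = \bigcup_{j\geq 1} V \setminus V_{k+1,1/j}$ and applying Proposition \ref{prop: rect} to each piece shows that $W$ is countably $k$-rectifiable. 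The area formula for Lipschitz maps on countably $k$-rectifiable sets then yields, for any open $W'$ with compact closure in $W$,
\[
\int_{\R^k} \mathcal H^0\bigl(F^{-1}(t) \cap W'\bigr)\, dt \leq L^k \cdot \mathcal H^k(W') < \infty,
\]
so $F^{-1}(t) \cap W'$ is finite for almost every $t$ in the open set $F(W')$. On the other hand, Lemma \ref{lem:dichot} applied in case (1) produces, at each $y \in W'$, a ball $B_r(y) \subset W'$ on which the fiber $F^{-1}(F(y)) \cap B_r(y)$ is a connected set of diameter at least $r$, hence uncountable. This contradicts the a.e. finiteness of fibers and establishes $\dim(W) > k$.

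The main obstacle is case (1): converting the qualitative statement "fibers contain non-degenerate continua" into the strict dimensional inequality $\dim(W) > k$. A direct Hurewicz-type argument for open maps onto $\R^k$ is delicate, so the proposed route passes through the rectifiability of $V \setminus V_{k+1, 1/j}$ supplied by Proposition \ref{prop: rect} and the area estimate, which together force almost every fiber to be finite, in direct conflict with positive-diameter connected fibers.
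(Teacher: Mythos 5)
Your case (2) argument coincides with the paper's: Lemma~\ref{lem:dichot} gives injectivity on $B_r(x)$, and Corollary~\ref{cor:openstr} supplies $L$-openness and $L$-Lipschitz with $L\to 1$; injectivity plus openness yields the lower bound and hence biLipschitz, so $V$ is a manifold. Your case (1), however, is a genuinely different argument. The paper argues \emph{directly}: since every $x\in V$ is non-isolated in its fiber, Theorem~\ref{lem: +1strainer} (the set $E$ meets each fiber in at most $C_1$ points) produces $(k+1,12\delta)$-strained points arbitrarily close to $x$; near such a point there is a $(k+1,12\delta)$-strainer map, and Lemma~\ref{lem:openstr} then gives Hausdorff (hence topological) dimension at least $k+1$ for every ball around $x$. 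You instead argue by \emph{contradiction}: assuming $\dim(W)=k$, you exclude $(k+1,0)$-strained points, invoke Proposition~\ref{prop: rect} to conclude countable $k$-rectifiability of $W$, apply Kirchheim's area formula to get $\mathcal H^0(F^{-1}(t)\cap W')<\infty$ for a.e.\ $t$, and contradict the positive-diameter (hence uncountable) fibers from Lemma~\ref{lem:dichot}. Both routes ultimately rest on the same pivotal ingredient (Theorem~\ref{lem: +1strainer}, used by you via Proposition~\ref{prop: rect}), but yours passes through rectifiability and measure theory where the paper stays within the strainer machinery. The paper's route is shorter and produces the sharper quantitative fact (the presence of a $(k+1,12\delta)$-strainer map near $x$), which is reused elsewhere; your route is a valid alternative but requires importing the area formula and is a bit heavier for what the corollary needs. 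One small point worth tightening in your write-up: the biLipschitz bound from injectivity plus $L$-openness requires the preimage supplied by openness to land in the domain of injectivity, which one secures by using the slack $3r\le\epsilon_x$ (injectivity actually holds on the larger ball $B_{\epsilon_x/3}(x)$, so the preimage of a point of $F(B_r(x))$ at distance $<L\cdot d(F(y),F(z))\le L^2\cdot 2r$ from $y$ stays where injectivity applies once $r$ is taken small relative to $\epsilon_x$); the paper elides this in the same way, so it is not an error but merits a sentence.
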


\begin{proof}
By Proposition \ref{prop:dichot} either no fiber of $F$  has isolated points or the map $F$ is locally injective.

 In the second case, for any $x \in V$ and $r>0$  as in the statement above, we deduce from  Lemma \ref{lem:dichot} and  Lemma \ref{lem:openstr}  that $F:B_r(x) \to  F(B_r(x))$ is $L$-biLipschitz with $L=2\sqrt k$.
 Due to   Corollary \ref{cor:openstr}, we can choose $L$  close to $1$ if $\delta$ goes to $0$.
 Since $F(V)$ is open in $\R^k$, we see that $V$ is a $k$-dimensional manifold.

In the first case,   any $x\in V$   is a non-isolated point in the fiber $F^{-1} (F(x))$.
We apply Theorem  \ref{lem: +1strainer} and find  $(k+1,12\cdot \delta)$-strained  points arbitrary close to $x$.
 Then, by Proposition \ref{lem:openstr}, the dimension of any ball  around $x$  is at least $k+1$.
\end{proof}

Now we can finish

\begin{proof}[Proof of Theorem \ref{thm0}]
Given any GCBA space $X$, we cover $X$ by a countable number of tiny balls $U$ and reduce all statements to the case of tiny balls.
 For any  tiny ball $U$,
the topological dimension $n$ equals the Hausdorff dimension, by Proposition \ref{lem:tophaus}.
Moreover, by Proposition \ref{lem:tophaus}, there exists an $(n,\delta)$-strainer map $F:V\to \R^n$ for arbitrary small $\delta$
and some $V\subset U$.  Applying Corollary \ref{cor:dich}, we see that $V$ is a topological manifold. Hence, $n$ equals the maximal dimension of a
Euclidean ball which embeds into $U$ as an open set.
\end{proof}

\subsection{Lower bound on the measure}
 The Euclidean spheres are the smallest GCBA spaces with the same dimension  and curvature bound:
\begin{prop} \label{prop:mapsn}
Let $\Sigma $ be a compact GCBA space, which is $\CAT(1)$ and of dimension $n$.
  Then there exists a  $1$-Lipschitz surjection $P:\Sigma \to \mathbb S^n$.
\end{prop}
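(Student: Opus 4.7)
\textbf{Plan}: I would prove the proposition by induction on $n$. The inductive step builds $P$ via spherical polar coordinates at a carefully chosen basepoint, and the base case $n = 0$ is a minor separate point to be handled by geodesic extension in $\Sigma$ to produce two points at distance $\pi$.

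For the inductive step, Kleiner's identity $\dim \Sigma = 1 + \sup_{x} \dim \Sigma_x$ furnishes a point $x \in \Sigma$ with $\dim \Sigma_x = n-1$. By Corollary~\ref{complink}, $\Sigma_x$ is itself compact, geodesically complete, $\CAT(1)$, of dimension $n-1$, so the inductive hypothesis provides a $1$-Lipschitz surjection $Q \colon \Sigma_x \to \Sph^{n-1}$. Viewing $\Sph^n$ as the spherical suspension of $\Sph^{n-1}$ with poles $N$ and $S$ (so that every $w \in \Sph^n$ is of the form $\exp_N(r, v)$ for some $r \in [0, \pi]$ and $v \in \Sph^{n-1}$), I would define $P \colon \Sigma \to \Sph^n$ as follows: set $P(x) := N$; for $0 < d(x, y) < \pi$ take the unique (by $\CAT(1)$) starting direction $v_y := (xy)' \in \Sigma_x$ and set $P(y) := \exp_N\!\bigl(d(x,y),\, Q(v_y)\bigr)$; and for $d(x, y) \geq \pi$ set $P(y) := S$. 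The definition is consistent at the boundary $d(x, y) = \pi$ since $\exp_N(\pi, \cdot) \equiv S$.

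Surjectivity follows immediately from the geodesic extension property of $\Sigma$: any $w = \exp_N(r, v) \in \Sph^n$ with $r \in [0, \pi]$ has $v = Q(v')$ for some $v' \in \Sigma_x$ by surjectivity of $Q$, and the geodesic from $x$ in direction $v'$ extends in $\Sigma$ to length $r$, yielding a preimage. The $1$-Lipschitz property, which is the analytic heart of the argument, splits into two regimes. When $d(x,y) + d(x,z) + d(y,z) < 2\pi$, the $\CAT(1)$ comparison gives $\angle yxz \leq \tilde\angle yxz$ (with $\tilde\angle$ the comparison angle in $\Sph^2$); combining this with the spherical law of cosines in $\Sph^n$ (using that $\sin r \geq 0$ on $[0, \pi]$) and the $1$-Lipschitz property of $Q$ yields $\cos d_{\Sph^n}(P(y), P(z)) \geq \cos d_\Sigma(y, z)$, i.e.\ $d_{\Sph^n}(P(y), P(z)) \leq d_\Sigma(y, z)$. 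In the complementary regime, the triangle inequality through $S$ gives directly
\[
d_{\Sph^n}\bigl(P(y), P(z)\bigr) \;\leq\; \bigl(\pi - d(x, y)\bigr) + \bigl(\pi - d(x, z)\bigr) \;\leq\; d_\Sigma(y, z),
\]
which also covers the case where $y$ or $z$ lies outside $\bar B_\pi(x)$. The main obstacle I expect is the careful trigonometric manipulation in the $\CAT(1)$ regime, together with the (minor) base case and the consistent treatment of the boundary sphere $\partial B_\pi(x)$; all three should go through by standard monotonicity of $\cos$ on $[0, \pi]$ and by the fact that $\exp_N(\pi, \cdot)$ is constant.
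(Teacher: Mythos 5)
Your proof is correct, and it takes a genuinely different route from the paper's. The paper invokes Proposition~\ref{lem:tophaus} to locate a point $x$ with $T_x$ isometric to $\R^n$ (equivalently $\Sigma_x \cong \Sph^{n-1}$), and then observes in a single stroke that the spherical logarithmic map $\Sigma \to \Sph^0 * \Sigma_x = \Sph^n$ is $1$-Lipschitz and surjective, citing an external lemma for these properties. You instead pick a point $x$ with merely $\dim\Sigma_x = n-1$ (available from Kleiner's formula $\dim\Sigma = 1 + \sup_x \dim\Sigma_x$, since the supremum over an integer set bounded by $n-1$ is attained), feed $\Sigma_x$ into the inductive hypothesis to obtain $Q : \Sigma_x \to \Sph^{n-1}$, and then suspend $Q$ across the spherical logarithmic map. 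The trade-off is instructive: the paper's version is shorter but leans on Proposition~\ref{lem:tophaus}, which in turn rests on the full strainer/rectifiability machinery of Sections~8--10, whereas yours uses only Kleiner's dimension formula, Corollary~\ref{complink}, geodesic extension, and the $\CAT(1)$ angle comparison, so it is essentially self-contained and in particular would remain available earlier in the paper. The cost is that you must verify the $1$-Lipschitz bound by hand (your two-regime split --- spherical law of cosines with $\sin r \ge 0$ when the comparison triangle exists, triangle inequality through $S$ otherwise --- is exactly right), and you must handle the induction. Two small points worth spelling out if you write this up: in the inductive step one needs $\Sigma_x$ nonempty with diameter $\pi$, which holds because $\dim\Sigma_x = n-1 \ge 0$ forces $\Sigma_x \ne \emptyset$ and $n\ge 1$ forces $\Sigma$ to be more than a point, so Corollary~\ref{complink} applies; and the base case $n=0$ reduces to observing that a $0$-dimensional compact geodesically complete $\CAT(1)$ space of diameter $\pi$ is a finite set of at least two points all at pairwise distance $\pi$, which trivially surjects $1$-Lipschitzly onto $\Sph^0$.
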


\begin{proof}
By Proposition \ref{lem:tophaus} we find a point $x\in \Sigma$ with $T_x$ isometric to $\R^n$.
Then one can define a surjective $1$-Lipschitz map
$P: \Sigma\to \mathbb  S^0 \ast \Sigma _x= \mathbb S^n$ as the "spherical logarithmic map",
i.e. the composition of the logarithmic map in $\Sigma $ and the exponential map in $\mathbb S^n$,
see \cite[Lemma 2.2]{Lbuild}.
\end{proof}

\begin{rem} \label{rem:volmin}
This observation is related to the \emph{volume minimality} of constant curvature spaces
proved in  \cite[Sections 6, 7]{N2} along with  rigidity statements.
\end{rem}

\subsection{Dimension and convergence} We are going to describe  the  behavior of dimension under convergence.
\begin{lem} \label{lem:dimconverge}
Let $\tilde U_l$ converge to $\tilde U$ as in  the standard setting for convergence. Let $x\in U$ be a limit point of $x_l\in U_l$.
  If $\dim (T_x)=n$ then there exists some $\epsilon >0$ and $l_0 \in \N$ such that for all
  $l\geq l_0$ the ball $B_{\epsilon} (x_l)$ has dimension $n$.

In particular, $\dim (T_{x_l})\leq n$, for all $l$ large enough.
\end{lem}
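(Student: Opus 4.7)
The plan is to prove the two inequalities $\dim B_\epsilon(x_l)\geq n$ and $\dim B_\epsilon(x_l)\leq n$ separately; the "in particular" clause will then follow at once from Kleiner's formula, since for the tiny ball $B_\epsilon(x_l)$ we have $\dim T_{x_l}X_l\leq \dim B_\epsilon(x_l)=n$.

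For the lower bound I would exploit a strainer transported from the tangent cone. Since $T_x$ is GCBA (Corollary \ref{compcone}) of dimension $n$, Proposition \ref{lem:tophaus} applied to a full-dimensional tiny ball of $T_x$ yields a point $v\in T_x$ with $T_vT_x\cong\R^n$, and hence an $(n,\tfrac{1}{8n})$-strainer $(q_i)$ at $v$ in $T_x$. Using the rescaled convergence $\tfrac{1}{r}\bar U\to (T_x,0)$ from Corollary \ref{cor:converge} together with the stability of the strainer condition under GH-convergence (Lemma \ref{lem:open}), for all sufficiently small $r>0$ I obtain a point $y_r\in U$ and points $q_i^r\in\tilde U$ forming an $(n,\tfrac{1}{4n})$-strainer at $y_r$. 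Fix such $r$; applying Lemma \ref{lem:open} a second time, now in the given standard setting $\tilde U_l\to\tilde U$, yields for all large $l$ a point $y^l\in U_l$ close to $y_r$ and a corresponding $(n,\tfrac{1}{4n})$-strainer in $\tilde U_l$ at $y^l$. Choosing $\epsilon>2\,d(y_r,x)$ guarantees $y^l\in B_\epsilon(x_l)$ for all large $l$, and by Lemma \ref{lem:openstr} the corresponding strainer map has an open image in $\R^n$ near $y^l$; Theorem \ref{thm0} then gives $\dim B_\epsilon(x_l)\geq n$.

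For the upper bound I would argue by contradiction: otherwise a diagonal extraction produces $\epsilon_k\to 0$ and $l_k\to\infty$ with $\dim B_{\epsilon_k}(x_{l_k})\geq n+1$. Proposition \ref{lem:tophaus} supplies a point $z_k\in B_{\epsilon_k}(x_{l_k})$ with $T_{z_k}X_{l_k}\cong \R^{n+1}$, so that $\Sigma_{z_k}X_{l_k}$ is isometric to the round sphere $\mathbb S^n$. Since $z_k\to x$, Lemma \ref{lem: semiproj} (along a further subsequence) produces a GH-limit $\Sigma'$ of the $\Sigma_{z_k}$ together with a surjective $1$-Lipschitz map $P\colon \Sigma_x\to \Sigma'$; as each $\Sigma_{z_k}$ is a fixed round $\mathbb S^n$, rigidity under GH-limits of constant sequences forces $\Sigma'\cong \mathbb S^n$. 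On the other hand $\Sigma_x$ is a GCBA space of geometric, hence (by Theorem \ref{thm0}) Hausdorff, dimension $n-1$, so $\mathcal H^n(\Sigma_x)=0$; the $1$-Lipschitz surjection then gives $\mathcal H^n(\mathbb S^n)\leq \mathcal H^n(\Sigma_x)=0$, a contradiction. The main obstacle of the whole argument is precisely this upper bound: the essential point is that Lemma \ref{lem: semiproj} provides a Lipschitz surjection in exactly the direction that permits the comparison of Hausdorff measures against the rigid target $\mathbb S^n$, whereas the more naive strainer transport of the lower bound cannot be reversed because the strainer condition is open only in one direction.
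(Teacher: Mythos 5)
Your proposal is correct in its essential structure but takes a genuinely different route from the paper, most visibly in the lower bound. The paper dispatches the lower bound in two lines by invoking the abstract semicontinuity of the geometric dimension under GH-convergence (from \cite{Lbuild}): if $\dim B_\epsilon(x_l)<n$ along a subsequence, then $\dim \bar B_r(x)<n$ for $r<\epsilon$, whence $\dim T_x<n$, a contradiction. Your strainer-transport argument is sound but considerably heavier machinery (Proposition \ref{lem:tophaus} in $T_x$, two invocations of Lemma \ref{lem:open}, Lemma \ref{lem:openstr}, Theorem \ref{thm0}) for the same conclusion; its advantage is that it is entirely constructive and makes visible the strainers that later sections will use, but it is not the minimal argument here. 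For the upper bound your argument is close to the paper's: both rely on Lemma \ref{lem: semiproj} together with a Hausdorff-measure comparison against a round sphere. The paper chooses $y_l$ with $\dim\Sigma_{y_l}=m$ maximal and then uses Proposition \ref{prop:mapsn} to get a $1$-Lipschitz surjection $\Sigma_{y_l}\to\mathbb S^m$, while you go directly for $z_k$ with Euclidean tangent cone via Proposition \ref{lem:tophaus}, so that $\Sigma_{z_k}$ is literally a round sphere; this is a legitimate shortcut (after all, Proposition \ref{prop:mapsn} is itself proved using \ref{lem:tophaus}). One imprecision you should fix: if $\dim B_{\epsilon_k}(x_{l_k})=m_k$ with $m_k>n+1$, Proposition \ref{lem:tophaus} only guarantees a point with tangent $\R^{m_k}$, not $\R^{n+1}$; you should pass to a subsequence with $m_k\equiv m$ constant (possible by the uniform dimension bound from Proposition \ref{prop:precomp}) and argue with $\mathbb S^{m-1}$, which still gives $\mathcal H^n(\mathbb S^{m-1})>0$ and hence the same contradiction with $\mathcal H^n(\Sigma_x)=0$. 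Also mind the quantifiers: your lower bound produces a single $\epsilon>2d(y_r,x)$, but to combine it with the upper bound's $\epsilon$ you need to observe that taking $r\to 0$ makes $d(y_r,x)\to 0$, so the lower bound in fact holds for every sufficiently small $\epsilon$.
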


\begin{proof}
 First, assume   $\dim (B_{\epsilon } (x_l) ) <n$, for  some $\epsilon >0$ and infinitely many $l$.
Due to the semicontinuity of the geometric dimension under convergence (cf. \cite[Lemma 11.1]{Lbuild}), we conclude
$\dim (\bar B_r(x))<n$, for any $r<\epsilon$. But then
$\dim (T_x)<n$, by the definition of geometric dimension, in contradiction to our assumption.

Assuming that the statement of the lemma is wrong, we can therefore choose a subsequence and assume that  $\dim (B_{\frac 1 l} (x_l) ) =m+1>n$, for some fixed $m$ (since the dimensions in question are bounded,  Proposition \ref{prop:precomp}).  Since the dimension equals the geometric dimension, we find some $y_l\in B_{\frac 1 l} (x_l)$  with $\dim (\Sigma _{y_l} )=m$.

Due to Proposition \ref{prop:mapsn},  any $\Sigma _{y_l}$ and then also any limit space $\Sigma '$   of this sequence, admits a surjective $1$-Lipschitz map onto $\mathbb S^m$. Therefore, the Hausdorff dimension of $\Sigma'$ is at least $m$.  Due to  Lemma \ref{lem: semiproj}, $\Sigma_x$ admits a surjective $1$-Lipschitz map onto $\Sigma'$, since $y_l$ converge to $x$.
Hence the Hausdorff dimension of $\Sigma_x$ is at least $m$ as well. But this contradicts $\dim (T_x)=n \leq m$.

This contradiction finishes the proof.
\end{proof}




Let $X$ again be  GCBA. As in the introduction we consider the \emph{$k$-dimensional part $X^k$} of $X$  as the set of all points $x\in X$ with $\dim (T_x)=k$.
Applying Lemma \ref{lem:dimconverge} to the constant sequence $X_l=X$ we directly see:
\begin{cor} \label{cor:tangloc}
A point $x\in X$ is contained in $X^k$  if and only if there is some $\epsilon >0$, such that for all $r<\epsilon$ we have
$\dim (B_r (x))=k$. The closure of $X^k$ in $X$ does not contain points from $X^m$ with $m<k$.
\end{cor}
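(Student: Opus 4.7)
The plan is to deduce both assertions from Lemma \ref{lem:dimconverge} applied to the constant sequence $X_l = X$, $x_l = x$, which fits the standard setting once we enclose $x$ in a tiny ball.

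For the forward direction of the first assertion, I would take $x \in X^k$, so $\dim (T_x) = k$, and feed this into Lemma \ref{lem:dimconverge} to obtain some $\epsilon > 0$ with $\dim (B_\epsilon (x)) = k$. Monotonicity of the geometric (equivalently topological) dimension under inclusion then yields $\dim (B_r(x)) \leq k$ for all $r < \epsilon$; meanwhile, the first step in the proof of Lemma \ref{lem:dimconverge} (semicontinuity of geometric dimension under convergence combined with $\dim (T_x) = 1 + \sup_{y} \dim \Sigma_y$) gives $\dim (B_r(x)) \geq k$ for every $r > 0$, so $\dim (B_r(x)) = k$ on $(0,\epsilon)$. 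For the reverse direction, assume $\dim (B_r(x)) = k$ on a neighborhood of $0$, and let $n' := \dim (T_x)$. Applying the forward direction to $n'$, small balls around $x$ have dimension $n'$, forcing $n' = k$ and hence $x \in X^k$.

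For the closure statement, I would argue by contradiction. If some $x \in X^m$ with $m < k$ lay in $\overline{X^k}$, the first part produces $\epsilon > 0$ with $\dim (B_\epsilon (x)) = m$, and also yields some $y \in X^k$ with $d(x,y) < \epsilon$. Choosing $r > 0$ small enough that $B_r(y) \subset B_\epsilon(x)$ and that $\dim (B_r(y)) = k$ (again by the first part, applied at $y$), we would reach $k = \dim (B_r(y)) \leq \dim (B_\epsilon(x)) = m$, contradicting $m < k$. Hence $B_\epsilon(x) \cap X^k = \emptyset$ and $x \notin \overline{X^k}$.

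No genuine obstacle is expected, since the work has been done in Lemma \ref{lem:dimconverge}; the only care needed is in invoking the lemma in the standard setting (which is immediate for a constant sequence inside a tiny ball around $x$) and in combining the two one-sided dimension bounds extracted from its proof into the equality $\dim (B_r(x)) = k$ on a full interval $(0,\epsilon)$.
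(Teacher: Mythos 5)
Your proposal is correct and follows exactly the route the paper takes: the paper's proof of Corollary \ref{cor:tangloc} is precisely "apply Lemma \ref{lem:dimconverge} to the constant sequence $X_l=X$," and your argument just makes the details explicit (monotonicity of dimension for the upper bound on small balls, the lower bound $\dim(B_r(x)) \geq \dim T_x = k$ from the geometric-dimension formula, and the straightforward contradiction for the closure statement). No gap and no meaningful deviation from the paper.
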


In the strained case we get more stability:

\begin{lem} \label{lem:stabled}
In the notations of Lemma \ref{lem:dimconverge}  above, assume  that
the point $x\in U$ is $(k,\delta)$-strained.
Then, for all sufficiently large $l$, we have $\dim (T_{x_l}) \geq k$.

 If,  in addition,  $\dim (T_{x_l} ) =k$ for all $l$ large enough,
then $n=k$. Hence $\dim (T_{x_l})= \dim (T_x)$, for all $l$ large enough.
\end{lem}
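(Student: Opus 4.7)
The plan is to deduce both claims from the fact that $x_l$ inherits a $(k,\delta)$-strainer from $x$ whose straining radius is uniformly bounded away from zero as $l\to\infty$.

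For the first claim, fix a $(k,\delta)$-strainer $(p_i)$ at $x$ together with an opposite strainer $(q_i)$, and choose sequences $p_i^l,q_i^l\in\tilde U_l$ with $p_i^l\to p_i$ and $q_i^l\to q_i$. By Lemma \ref{lem:open}, for $l$ large the tuples $(p_i^l)$ and $(q_i^l)$ are opposite $(k,\delta)$-strainers at $x_l$, so the distance map $F_l=(d_{p_i^l})$ is a $(k,\delta)$-strainer map on a neighborhood of $x_l$. Assuming $\delta\leq \frac{1}{4k}$, Lemma \ref{lem:openstr} shows that $F_l$ is $L$-open and $L$-Lipschitz with $L=2\sqrt{k}$. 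Taking the ultralimit of rescalings of $F_l$ centered at $x_l$ produces the differential $D_{x_l}F_l\colon T_{x_l}\to\R^k$, which inherits both properties and is in particular surjective; the Lipschitz bound then forces $\dim(T_{x_l})\geq k$.

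For the second claim, suppose further that $\dim(T_{x_l})=k$ for all $l$ large; combined with Lemma \ref{lem:dimconverge} this gives $k\leq n$, and the task reduces to excluding $n>k$. By Lemma \ref{lem:strradius}, the straining radius of $(p_i^l)$ at $x_l$ is eventually bounded below by the positive straining radius $\epsilon$ of $(p_i)$ at $x$. Fix $r>0$ with $3r<\epsilon$; then for $l$ large, $F_l$ is a $(k,\delta)$-strainer map on the ball $B_r(x_l)$, which is convex (hence connected) because it sits inside a tiny ball. Assuming $\delta\leq\delta_0(N)$ and applying the dichotomy of Corollary \ref{cor:dich}, the first alternative would force $\dim(B_r(x_l))>k$, contradicting $\dim(T_{x_l})=k$ via Corollary \ref{cor:tangloc}; so the second alternative holds, $B_r(x_l)$ is a $k$-dimensional topological manifold, and $\dim(\bar B_r(x_l))=k$. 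The upper semicontinuity of geometric dimension under ultralimits from Subsection \ref{subsec:semi} then yields $\dim(\bar B_r(x))\leq k$, while Corollary \ref{cor:tangloc} combined with monotonicity of geometric dimension under inclusion gives $\dim(\bar B_r(x))\geq n$; hence $n=k$.

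The main obstacle is ensuring that Corollary \ref{cor:dich} applies with a domain whose size is uniform in $l$; without such uniformity, the biLipschitz charts furnished by the second alternative could collapse to the point $x$ in the ultralimit and carry no information about $T_x$. Lemma \ref{lem:strradius} is precisely the tool that provides this uniformity.
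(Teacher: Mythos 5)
Your proof is correct. The first part is essentially the paper's argument: both lift the strainer to $x_l$ via Lemma~\ref{lem:open} and then use the openness of the resulting strainer map (Lemma~\ref{lem:openstr}) to bound the dimension from below, with you passing through the differential $D_{x_l}F_l$ while the paper goes directly through the Hausdorff dimension of small balls and Corollary~\ref{cor:tangloc}. The second part, however, takes a genuinely different route. You invoke the semicontinuity of geometric dimension under ultralimits (from Subsection~\ref{subsec:semi}) to deduce $\dim(\bar B_r(x)) \leq k$ from $\dim(\bar B_r(x_l)) \leq k$; the paper instead passes to the limit at the level of maps -- $F_l$ being $L$-biLipschitz on $B_r(x_l)$ forces the limit $F$ to be $L$-biLipschitz on $B_r(x)$ -- and then applies Corollary~\ref{cor:dich} a second time at $x$ to conclude that $B_r(x)$ is a $k$-manifold. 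The paper's route has the small advantage of producing a concrete biLipschitz chart near $x$, which it exploits shortly afterwards (Subsection~\ref{subsec:convregular}), whereas your route is slightly more economical but relies on external dimension-semicontinuity machinery that the paper otherwise reserves for Lemma~\ref{lem:dimconverge}. One tiny imprecision to patch: after concluding that $B_r(x_l)$ is a $k$-manifold you assert $\dim(\bar B_r(x_l)) = k$ without comment; to justify this cleanly, apply Corollary~\ref{cor:dich} to the larger ball $B_{r'}(x_l)$ with $r < r' < 3r$ (still within the straining radius for large $l$), so that $\bar B_r(x_l)$ sits inside a $k$-manifold and hence has dimension at most $k$.
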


\begin{proof}
 We find some  $(k,\delta)$-strainer map $F$  in a neighborhood of $x$,  defined by a $k$-tuple $(p_i)$. We approximate this tuple by $k$-tuples in $\tilde U_l$ and obtain distance maps $F_l :\tilde U_l\to \R^k$ converging to $F$.  Moreover,  for all $l$ large enough, $F_l$ is a $(k,\delta)$-strainer map  at $x_l$ with a uniform lower bound
$3r$ on the straining radii of $F_l$ at $x_l$, Lemma \ref{lem:open} and Lemma  \ref{lem:strradius}.
Due to Lemma \ref{lem:openstr}, the dimension of any ball around $x_l$ must be at least $k$, hence $\dim (T_{x_l}) \geq k$.

Assume  $\dim (T_{x_l} )=k$, for all $l$ large enough. Due to  Corollary \ref{cor:dich}, the restriction of the strainer maps $F_l$ to the ball $B_r (x_l)$ is $L$-biLipschitz.   Therefore, so is the restriction of $F$ to $B_r(x)$. Applying   Corollary \ref{cor:dich} again, we see that
$B_r(x)$ is a $k$-dimensional manifold, hence  $n=k$.
\end{proof}

\subsection{Regular parts} \label{subsec:regpart}
   We fix now some $ \delta \leq \delta _0$.
   By the \emph{$k$-regular part} of $U$ we denote the set of $(k,\delta)$-strained  points $x\in U$ with $\dim (T_x)=k$.

\begin{cor} \label{cor:kreg}
Let $U$ be a tiny ball of radius $r_0$ and capacity bounded by $N$.  Let $k$ be a natural number.
 The set $Reg _k (U)$  of $k$-regular points is open in $U$, dense in $U^k$ and locally biLipschitz homeomorphic to
 $\R^k$.
The topological boundary $\partial Reg_k( U) :=U\cap  (\bar U^k \setminus Reg _k (U))$ of
$Reg _k (U)$  in   $U$ does not contain  $(k,\delta)$-strained points. Moreover,
$$\mathcal H^{k-1} (\bar U^k\setminus Reg _k (U)) <C\cdot r_0 ^{k-1}   \; \; \text{and} \;\; \mathcal H^k (U^k) < C\cdot r_0 ^k\, ,$$
for  some constant $C$ depending only on $N$ and the choice of $\delta $.
\end{cor}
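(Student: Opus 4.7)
The plan is to deduce all assertions directly from Corollary \ref{cor:dich}, Corollary \ref{cor:tangloc}, Proposition \ref{prop: rect} and Theorem \ref{lem: +1strainer}, with Lemma \ref{lem:stabled} supplying the lower bound on tangent-space dimension at strained points.

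\emph{Openness and local biLipschitz structure.} Fix $x \in Reg_k(U)$ and a $(k,\delta)$-strainer map $F\colon V \to \R^k$ on a connected open neighborhood $V$ of $x$. Using Corollary \ref{cor: openopp} and Corollary \ref{cor:tangloc}, shrink $V$ so that every point of $V$ is $(k,\delta)$-strained and $\dim V = k$. Since $\delta \le \delta_0$, Corollary \ref{cor:dich} rules out alternative (1) and gives that $V$ is a topological $k$-manifold with $F$ locally $L$-biLipschitz onto an open subset of $\R^k$. Every $y \in V$ then has $\dim B_\rho(y) = k$ for small $\rho$, so Corollary \ref{cor:tangloc} yields $y \in X^k$ and hence $y \in Reg_k(U)$. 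Thus $V \subset Reg_k(U)$, and $Reg_k(U)$ is open and locally biLipschitz equivalent to $\R^k$.

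\emph{Density and strained points avoid the boundary.} For $x \in U^k$, Corollary \ref{cor:tangloc} yields $\epsilon > 0$ with $\dim B_\epsilon(x) = k$, and Proposition \ref{prop: rect} ensures $B_\epsilon(x) \cap U_{k,\delta} \neq \emptyset$. For any $y$ in this intersection, Lemma \ref{lem:openstr} applied to a strainer map at $y$ shows that small balls $B_\rho(y)$ have Hausdorff dimension at least $k$ (since $F$ is open with image in $\R^k$), while $B_\rho(y) \subset B_\epsilon(x)$ gives the opposite inequality; Corollary \ref{cor:tangloc} then places $y \in X^k$, so $y \in Reg_k(U)$, proving density. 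Suppose now that $x \in \bar U^k \setminus Reg_k(U)$ were $(k,\delta)$-strained. Picking $y_n \in U^k$ with $y_n \to x$, Corollary \ref{cor: openopp} makes each $y_n$ $(k,\delta)$-strained for large $n$, so $y_n \in Reg_k(U)$ and $\dim T_{y_n} = k$. The second assertion of Lemma \ref{lem:stabled}, applied to the constant sequence $\tilde U_l = \tilde U$, then forces $\dim T_x = k$, placing $x \in Reg_k(U)$ and contradicting the assumption.

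\emph{Measure estimates.} The previous step shows $\bar U^k \setminus Reg_k(U) \subset U \setminus U_{k,\delta}$, so Proposition \ref{prop: rect} immediately gives $\mathcal H^{k-1}(\bar U^k \setminus Reg_k(U)) < C_2 \cdot r_0^{k-1}$. Write $U^k = (U^k \cap U_{k,\delta}) \cup (U^k \setminus U_{k,\delta})$: the second piece has finite $\mathcal H^{k-1}$-measure, hence is $\mathcal H^k$-null. For the first, Lemma \ref{lem:stabled} (constant sequence) shows any $(k+1,12 \cdot \delta)$-strained point satisfies $\dim T \ge k+1$, so $U^k \cap X_{k+1,12 \cdot \delta} = \emptyset$. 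Cover $U^k \cap U_{k,\delta}$, via Lemma \ref{lem: finite}, by the at most $C(N,\delta)$ open sets $V_F$ on which a map from the finite collection $\mathcal F_{\delta'}$ is a $(k,\delta)$-strainer (with $\delta'$ adjusted so $12\delta$ still enforces the emptiness above); on each $V_F$ one has $V_F \cap U^k \subset V_F \setminus X_{k+1,12 \cdot \delta}$, which Theorem \ref{lem: +1strainer} controls by $C_1^{2k+1}\cdot 10 \cdot r_0^k$. Summing gives the desired bound $\mathcal H^k(U^k) < C\cdot r_0^k$. The chief subtlety is the simultaneous shrinking in the openness step, after which Corollary \ref{cor:dich} delivers the structural content in one stroke.
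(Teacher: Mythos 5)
Your proof is correct and follows essentially the same route as the paper: Corollary~\ref{cor:dich} for openness and the local biLipschitz chart, Corollary~\ref{cor:tangloc} plus the existence of strained points for density, Lemma~\ref{lem:stabled} (constant-sequence version) to exclude strained points from the boundary, and Proposition~\ref{prop: rect} together with Theorem~\ref{lem: +1strainer} (via the finite family $\mathcal F_\delta$ from Lemma~\ref{lem: finite}) for the two measure bounds. The one small stylistic difference: for density the paper appeals directly to Proposition~\ref{lem:tophaus}, which produces a point with tangent cone $\R^k$ inside any ball of dimension $k$ (such a point lies in $Reg_k(U)$ immediately), whereas you argue via the $\mathcal H^{k-1}$-nullity of $U\setminus U_{k,\delta}$ combined with positivity of $\mathcal H^k$; both work. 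You also correctly flag, and handle, the need to adjust the auxiliary $\delta'$ in the covering step so that Theorem~\ref{lem: +1strainer} applies with the right extension threshold --- a point the paper leaves implicit in its one-line remark that "the bounds on measures are contained in Theorem~\ref{lem: +1strainer}."
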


\begin{proof}
Any point $x$ in $Reg_k (U)$ admits a $(k,\delta )$-strainer map $F$. Due to  Corollary  \ref{cor:dich}, the restriction of $F$ to a small ball
around $x$ is biLipschitz onto an open subset of $\R^k$.  Hence, this ball is contained in $U^k$ and consists of $(k,\delta )$-strained points.
Therefore, $Reg_k (U)$ is open in $X$ and locally biLipschitz to $\R^k$.

Let $x\in U^k$ be arbitrary.   By Corollary \ref{cor:tangloc},
any sufficiently  small ball $W$ around $x$ has dimension $k$. Hence, $W$  contains $(k,\delta )$-strained points, therefore points from $Reg_k (U)$. Thus, $Reg_k (U)$ is dense in $U^k$.

Assume that  $x\in \bar U^k$ is $(k,\delta )$-strained. Writing $x$ as a limit of points $x_l \in Reg_k (U)$ and applying Lemma \ref{lem:stabled},
we see $\dim (T_x) =k$. Hence $x\in Reg_k (U)$.


 No point  in $U^k$ is $(k+1,  12 \cdot \delta )$-strained, due to Lemma \ref{lem:openstr}. Thus the bounds on measures are
contained in Theorem \ref{lem: +1strainer}.
\end{proof}

\subsection{Conclusions}
We finish the proofs of  two  theorems from the introduction.

\begin{proof} [Proof of Theorem \ref{thm:locdim}]
Thus, let $X$ be GCBA  and $k$ a natural number.
 As we have seen in Corollary \ref{cor:tangloc},
a point $x\in X$  is in the $k$-dimensional part $X^k$ if and only if all sufficiently small balls around $x$ have dimension $k$.

Cover $X$ by a countable collection of tiny balls. For each of these  tiny balls $U$
consider  its $k$-regular part and let $M^k\subset X^k$ denote the union of these $k$-regular parts.  Due to Corollary \ref{cor:kreg}, this subset $M^k$ is open in $X$, dense in $X^k$  and
locally biLipschitz to $\R^k$. Moreover, $\bar X^k\setminus M^k$ is a countable union of  subsets of finite $(k-1)$-dimensional Hausdorff measure.

Every  nonempty $V\subset X^k$ which is open in $X^k$,  contains an open non-empty subset of $M^k$ hence $\mathcal H^k (V) >0$.  From Corollary \ref{cor:kreg}, we deduce that
the measure $\mathcal H^k (X^k\cap U)$ is finite for every tiny ball $U$.

 This finishes the proof.
\end{proof}

\begin{rem}\label{rem:remlocdim}
In  \cite[Main Theorem 1(1)]{Otsu} one finds the statement that $\mathcal H^k$ is locally positive and non-zero on $X^k$.
From \cite[Section 4]{Otsu} one can conclude  that  the set $X^k$ is a
Lipschitz manifold up to a subset of $\mathcal H^k$-measure $0$.
\end{rem}

Recall from the introduction that  the \emph{canonical measure} $\mu _X$ on $X$ is the sum over all $k=0,1,...$ of the restrictions of $\mathcal H^k$ to $X^k$.

\begin{proof} [Proof of Theorem \ref{thm:finitecan}]
Let $X$ again be GCBA. If $x\in X$ satisfies $\dim (T_x)=k$, thus $x\in X^k$,
then the measure  $\mathcal H^k \llcorner X^k$ is positive on any neighborhood $V$ of $x$, due to Theorem \ref{thm:locdim}. Hence $\mu_X(V)>0$.

On the other hand, the dimension of any tiny ball $U$ in $X$ is finite, hence only finitely many
of the measures $\mathcal H^k \llcorner X^k$ can be non-zero on $U$. Due to Corollary \ref{cor:kreg}, the measure  $\mathcal H^k (X^k\cap U)$ is finite, hence so is $\mu_X (U)$.

Therefore, the  measure $\mu _X$ is finite on any relatively compact subset of $X$.
\end{proof}

\section{Stability of the canonical measure} \label{sec:stablecan}
\subsection{Setting and preparations}
We are going to prove here Theorem \ref{thm:stablecan} and its local generalization.
First we recall the notion of measured Gromov--Hausdorff convergence, sufficient for our purposes,
compare \cite{HKST15} for details.

Let $Z_l$ be a sequence of compact spaces GH-converging to a compact set $Z$. Let $\mathcal M_l$ be a Radon measure on $Z_l$ and let $\mathcal M$ be a Radon measure on $Z$. \emph{The measures $\mathcal M_l$ converge to $\mathcal M$}  if for any compact sets $K_l\subset Z_l$ converging to $K\subset Z$
 the following holds true:
 \begin{equation} \label{eq:measure}
\lim _{\epsilon\to 0}  (\liminf _{l\to \infty}  \mathcal M_l (B_{\epsilon} (K_l)) =  \lim _{\epsilon\to 0}  (\limsup _{l\to \infty}  \mathcal M_l (B_{\epsilon} (K_l))   =  \mathcal M (K) \, .
\end{equation}

 By general results, any sequence of Radon measures $\mathcal M_l$ on $Z_l$ contains a converging subsequence if the total measures $\mathcal M_l (Z_l)$ are uniformly bounded.

We  continue working in the standard setting for convergence
as in Definition \ref{defn:standard}. We fix some $k=0,1....$ and restrict our attention to  the $k$-dimensional part $\mu ^k _U =\mathcal H^k \llcorner U^k $ of $\mu$.
The aim of this section is the following:
\begin{thm}  \label{thm:measconv}
Under the GH-convergence $\tilde U_l \to \tilde U$ the $k$-dimensional parts of the canonical measures $\mathcal M_l :=\mu ^k _{U_l}$ converge  to
$\mathcal M := \mu^k _U$ locally on $U$.  Thus, \eqref{eq:measure} holds for all compact $K\subset U$.
\end{thm}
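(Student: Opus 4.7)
The plan is to show that any vague subsequential limit $\mathcal M'$ of the measures $\mathcal M_l$ on $\bar U$ coincides with $\mathcal M$ on $U$. Corollary~\ref{cor:kreg} gives the total bound $\mathcal H^k(U_l^k) \leq C r_0^k$, so the sequence is vaguely precompact, and, applied to arbitrary sub-tiny-balls, also the uniform local bound $\mathcal M_l(B_s(y_l)) \leq C s^k$ for every $y_l \in U_l$ and $s \leq r_0$. The same corollary yields $\mathcal H^{k-1}(U_l^k \setminus \operatorname{Reg}_k(U_l)) < \infty$, hence each $\mathcal M_l$ is concentrated on the open subset $\operatorname{Reg}_k(U_l)$.

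The first step shows that $\mathcal M'$ is supported on $\overline{U^k} \cap U$. For any $y_l \in U_l^k$ converging to $y \in U$, the non-increase of geometric dimension under convergence (Subsection~\ref{subsec:semi}) gives $\dim T_y \leq k$. If we had $\dim T_y = m < k$, then Lemma~\ref{lem:dimconverge} applied with $n = m$ would produce some $\epsilon > 0$ with $\dim B_\epsilon(y_l) = m$ for all large $l$, which together with the monotonicity of dimension on balls contradicts $\dim B_r(y_l) = k$ for small $r$ (Corollary~\ref{cor:tangloc}). Hence $\dim T_y = k$, i.e.\ $y \in U^k$. A standard argument then shows that every $y \in U \setminus \overline{U^k}$ has a neighborhood disjoint from $U_l^k$ for all large $l$, so $\mathcal M'(U \setminus \overline{U^k}) = 0$.

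The second step identifies $\mathcal M'$ with $\mathcal M$ on $\operatorname{Reg}_k(U)$. Fix $\epsilon > 0$, and by Corollary~\ref{cor:openstr} together with Corollary~\ref{cor:dich}(2), choose $\delta \leq \delta_0(N)$ so small that every $(k,\delta)$-strainer map is $(1+\epsilon)$-biLipschitz on any ball of radius at most a third of its straining radius. For $x \in \operatorname{Reg}_k(U)$ take such a ball $B_r(x) \subset \operatorname{Reg}_k(U)$ with strainer map $F$. Approximating the defining $k$-tuple by $k$-tuples in $\tilde U_l$ and invoking Lemmas~\ref{lem:open},~\ref{lem:strradius} and~\ref{lem:dimconverge} together with Corollary~\ref{cor:dich}(2), one obtains, for all large $l$, a $(1+\epsilon)$-biLipschitz strainer map $F_l : B_r(x_l) \to \R^k$ with $x_l \to x$ and $B_r(x_l) \subset U_l^k$. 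Since the open Euclidean images $F_l(B_r(x_l))$ converge in Hausdorff distance to $F(B_r(x))$, biLipschitz comparison of volumes yields $\lim_l \mathcal M_l(B_r(x_l)) = \mathcal M(B_r(x))$ up to a factor $(1+\epsilon)^{2k}$. Letting $\epsilon \to 0$ along a countable cover of $\operatorname{Reg}_k(U)$ by such charts identifies $\mathcal M'$ with $\mathcal M$ on $\operatorname{Reg}_k(U)$.

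The third step disposes of the remaining singular contribution and is the main technical hurdle. By Corollary~\ref{cor:kreg} together with the biLipschitz construction in the proof of Proposition~\ref{prop: rect}, the set $S := (\overline{U^k} \setminus \operatorname{Reg}_k(U)) \cap U$ is locally a countable union of biLipschitz images of compact subsets of $\R^{k-1}$, hence has locally finite upper $(k-1)$-dimensional Minkowski content. For any compact $K \subset S$, covering $K$ by $\lesssim \epsilon^{1-k}$ balls of radius $\epsilon$ and applying the uniform local bound yields $\mathcal M_l(B_\epsilon(K)) \lesssim \epsilon \to 0$, so the portmanteau estimate $\mathcal M'(B_\epsilon(K)) \leq \liminf_l \mathcal M_l(B_\epsilon(K))$ combined with $\epsilon \to 0$ forces $\mathcal M'(K) = 0$. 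Crucially, the mere Hausdorff finiteness of $S$ would not suffice; one needs the explicit biLipschitz decomposition from Proposition~\ref{prop: rect} to convert the $(k-1)$-Hausdorff bound into a Minkowski bound, and combine it with the uniform $k$-volume estimate $\mathcal M_l(B_s) \leq C s^k$. Combined with the previous two steps, $\mathcal M' = \mathcal M$ on $U$.
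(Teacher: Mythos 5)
Your high-level plan---extract a vague subsequential limit, show it matches on the regular part, show it kills the singular part---mirrors the paper's, and your first step (support of the limit measure is contained in $\overline{U^k}$) is correct and a useful clarification. But both of the remaining steps contain genuine gaps.

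In the second step you fix $\epsilon>0$, choose $\delta=\delta(\epsilon)$ small enough that $(k,\delta)$-strainer maps are $(1+\epsilon)$-biLipschitz, and then write ``for $x\in\mathrm{Reg}_k(U)$ take such a ball $B_r(x)$ with strainer map $F$.'' But $\mathrm{Reg}_k(U)$ is defined with a \emph{fixed} $\delta_0$; a point $x\in\mathrm{Reg}_k(U)$ is only guaranteed to be $(k,\delta_0)$-strained, not $(k,\delta)$-strained for your smaller $\delta$. As $\delta\to 0$ the set of points around which $(k,\delta)$-strainer maps exist shrinks, and the intersection over all $\delta$ is exactly the set $\mathcal R_k$ of Euclidean points $T_x\cong\R^k$. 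So your covering argument only applies on $\mathcal R_k$, not on all of $\mathrm{Reg}_k(U)$. This is precisely why the paper invokes Theorem \ref{thmfirst} to pass to the $\mathcal H^k$-conull subset of Euclidean points, verifies density $1$ there using orthogonal directions in $T_x=\R^k$ (giving $(k,\rho)$-strainers for every $\rho>0$), and then closes the argument with the Lebesgue--Radon--Nikodym differentiation theorem, using the a priori bound $\mathcal N(B_r(x))\le Cr^k$ for absolute continuity. You need this additional step; the measures cannot simply be ``identified on $\mathrm{Reg}_k(U)$'' by covering.

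The third step contains a second, independent gap. The implication ``$S$ is locally a countable union of biLipschitz images of compact subsets of $\R^{k-1}$, \emph{hence} has locally finite upper $(k-1)$-dimensional Minkowski content'' is false in general: a countable union of sets with bounded Hausdorff measure can have much larger Minkowski dimension (e.g.\ $\{1/n:n\in\N\}\cup\{0\}$ has Hausdorff dimension $0$ but upper Minkowski dimension $1/2$). The rough-dimension bound you want for $U\setminus U_{k,\delta}$ is in fact \emph{true}, and the paper records it at the end of Section \ref{sec:stablecan}, but explicitly remarks that deriving it requires ``a thorough look into the proof of Proposition \ref{prop: rect} and Theorem \ref{lem: +1strainer}''---one needs the quantitative features of that construction (bounded multiplicity along fibers, uniformly bounded biLipschitz constants), not merely the statement that a biLipschitz decomposition exists. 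The paper's proof of the singular step sidesteps the Minkowski issue entirely: it pushes $\mathcal M_l(B_s(K_l))$ forward along the finite family of strainer maps $G_j^l$, using the estimate $\mathcal H^k(Y)\le C\max_j\mathcal H^k(G_j^l(Y))$ from Theorem \ref{lem: +1strainer}, and then observes that $G_j^l(B_s(K_l))$ lands in a thin tubular neighborhood of the $\mathcal H^k$-null set $G_j(K)\subset\R^k$. That argument is more robust and closer to what is actually available from the cited lemmas; your route could in principle also work, but not with the justification as written.
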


We know that $\mathcal M_l (\bar U_l) $ is uniformly bounded  by a constant $C$, Corollary \ref{cor:kreg}. Thus, by general compactness of measures,
we may choose a subsequence and assume that the measures  $\mathcal M_l$ converge to a finite Radon measure $\mathcal N$ on $\bar U$. We need to verify that $\mathcal M=\mathcal N$ on $U$.

It suffices to prove that $\mathcal N$  coincides with $\mathcal H^k$ on the regular part  $Reg _k (U)$ and that $\mathcal N$ vanishes on the complement
$U\setminus Reg _k (U)$.  We fix $ \delta $ as in Subsection \ref{subsec:regpart}.

\subsection{Regular part} \label{subsec:convregular}
In order to prove that $\mathcal N$ and $\mathcal H^k$ coincide on the regular part  $Reg _k (U)$, we note that $\mathcal N$ satisfies
$\mathcal N (B_r (x)) \leq  C\cdot r^{k}$, whenever $\bar  B_r (x) \subset U$. Indeed,  this inequality is true for all the approximating measures, by Corollary \ref{cor:kreg}.  Thus,   $\mathcal N$ is  absolutely continuous  with respect to $\mathcal H^k$ on  the Lipschitz manifold  $Reg_k (U)$. By the Lebesgue-Radon-Nikodym differentiation theorem (see \cite{HKST15}),  it suffices to prove  that for $\mathcal H^k$-almost every point $x\in Reg_k (U)$ the  density
$$b(x):= \lim _{r\to 0} \frac {\mathcal N(B_r(x))} {\mathcal H^k (B_r (x))},$$
exists and is equal to $1$.

Due to Theorem \ref{thmfirst}, $\mathcal H^k$-almost every point in $Reg_k (U)$ has as tangent space $T_x =\R^k$.
Let $x$ be such a point and let $x_l$ be a sequence of points in $U_l$ converging to $x$.  We take points $p_1,...,p_k \in \tilde U$
such that the directions $(xp_i)'$ are pairwise orthogonal in  $T_x=\R^k$.  Then the distance map $F:U\to \R^k$ defined by the $k$-tuple $(p_i)$ is
a $(k, \rho  )$-strainer map  at $x$,  for any $\rho >0$.  Consider a sequence of  distance maps $F_l :U_l\to \R^k$ converging to $F$.

For any $\rho >0$, we find some $r>0$ and some $l_0>0$ such that $F$ and $F_l$, for $l\geq l_0$, are $(k,\rho )$-strainer maps with straining radius at least $3r$
at $x$ and $x_l$, respectively.
Then the maps $F :B_r(x)\to \R^k$ and $F_l:B_r(x_l)\to \R^k$ are $L$-biLipschitz onto their images and $L$ goes to $1$ as $\rho$ goes to $0$,  due to   Lemma \ref{lem:stabled} and
Corollary \ref{cor:dich}.
Moreover,  by Corollary \ref{cor:openstr}, the images contain
balls with radius $r/2$ around $F(x)$ and $F_l (x_l)$, respectively.

  Thus, for any $s<\frac r {10}$ and all sufficiently large  $l$,  the volumes
  $\mathcal H^k (B_s(x)), \mathcal H^k (B_s (x_l)) $ are bounded between $L^{-2k}  \cdot \omega _k \cdot s^k $ and
   $L^{2k}  \cdot \omega _k \cdot s^k $,
where $\omega _k$ denotes the volume of the $k$-dimensional Euclidean unit ball.

Since $L$ goes to $1$, as $\rho$ goes to $0$, we conclude $b(x)=1$.

\subsection{Singular part}
The support $S$ of $\mathcal N$ in $U$ is contained in the limit set of the supports of $\mu ^k _{U_l}$.   Thus,
$S$ is contained in the set of all
points $x\in U$, which are limits of a sequence of $k$-regular points $x_l \in  U_l$. Due to  Lemma \ref{lem:stabled},
any such point $x$ which is not in $Reg _k (U)$ cannot be $(k,\delta )$-strained.

Therefore, $T:=S\setminus Reg_k (U)$ is a closed subset of $U\setminus U_{k,\delta _1}$ of points which are not $(k,\delta )$-strained.
Note that $\mathcal H^k (T)=0$, by Theorem \ref{thmfirst}.  It is enough to prove that $\mathcal N(K)=0$, for any compact subset $K$ of $T$.

Fix a compact subset $K$ in $T$ and a sequence of  compact $K_l \subset U_l$ converging to $K$. Let finally $t>0$ be arbitrary.
It suffices to find some $s = s (t) >0$ such that
$$\mu ^k _{U_l} (B_{s } (K_l)) =\mathcal H^k (B_{s} (K_l) \cap Reg_k (U_l)) <t \, $$
  for all sufficiently large $l$.

%
As in Subsection \ref{subsec:bilip} denote by $\mathcal A_{\delta} \subset \tilde U$ some maximal $\delta  \cdot r_0$-separated subset in
the distance sphere  $\mathcal A$ of radius $r_0$ around $U$.  Numerate the elements of $\mathcal A_{\delta }$ as $\mathcal A_{\delta } =\{ p_1,...,p_m \}$
and  approximate any $p_i$ by points $p_i^l $ in the distance sphere of radius $r_0$ around $U_l$ in $\tilde U_l$.

For all $l$ large enough, the points $\{p_1^l,...,p_m^l\}$ are $\delta \cdot r_0$-dense in the  distance sphere of radius $r_0$ around $U_l$.
Denote by $\mathcal F_{\delta }$  the set of distance maps $F:\tilde U\to \R^k $ defined by  $k$-tuples in $\mathcal A_{\delta }$.
Denote by $\mathcal F_{\delta } ^l$ the corresponding lifts to distance maps $F_l:\tilde U_l \to \R^k$.
We numerate the elements of $F_{\delta}$ and $\mathcal F_{\delta} ^l$ as $G_1,....,G_j,...$ and $G_1^l,.....,G_j^l,....$, respectively.
These are finite sets (with $m^k$ elements). For any $j$, the distance maps $G_j^l$ converge to $G_j$.

The argument in Lemma \ref{lem: finite} shows, that for all $l$ large enough the following holds true:  If a point $x_l$ in $U_l$ is $(k,\delta _1)$-strained then there exists some  $G_j^l$ which is a $(k,3\cdot \delta)$-strainer map at $x_l$.

From Theorem \ref{lem: +1strainer}  and the finiteness of the elements in $\mathcal F _{\delta}$, we get  a number $C>0$  such that for any measurable subset $Y\subset Reg _k (U_l)$ we have  $$\mathcal H^k (Y) \leq   C\cdot  \max _j \mathcal H^k (G^l_j (Y)) \, .$$

Since all the maps $G_j^l$ are $2\sqrt k$-Lipschitz, the image $G_j^l  (B_s (K_l)  )$  is  contained in the $2\sqrt k\cdot s$-tubular neighborhood around $G^l_j(K_l)$.
Thus, for all $l$ large enough, $G^l_j  (B_s (K_l)  )$ is contained in the $3 \sqrt k\cdot s$-tubular neighborhood around $G_j (K)$. But $\mathcal H^k (K)=0$, hence
$\mathcal H^k (G_j (K))$  is $0$, for all $j$.  Thus, for all sufficiently small $s_0$, the $3\cdot \sqrt k\cdot s_0$-tubular neighborhood around
the compact set $G_j (K)$ has $\mathcal H^k$-measure less than $t$.

By the previous considerations, for such $s_0$ we have $\mu ^k  _{U_l} (B_{s_0}(K_l)) \leq C\cdot t$.  Since $t$ was arbitrary, this proves the claim.

\subsection{Conclusions}
We can now finish the
\begin{proof}[Proofs of Theorem \ref{thm:measconv} and Theorem \ref{thm:stablecan}]
The proof of Theorem \ref{thm:measconv} follows from the combination of the two Subsections above.

In order to prove Theorem \ref{thm:stablecan}, assume that $X_l$ are compact GCBA spaces with uniform bounds on dimension, curvature and injectivity radius. If $X_l$ converge in the GH-topology  to a space $X$ then
 the spaces $X_l$  are covered by a uniform number of uniformly bounded tiny balls, Proposition \ref{prop:precomp}. Thus the total measures $\mu _{X_l}(X_l)$ are uniformly bounded by Corollary \ref{cor:kreg}.
Hence, upon choosing a subsequence, we may assume that $\mu _{X_l}$ converges to a measure $\mathcal M$ on the limit space  $X$.
Applying the local statement of Theorem \ref{thm:measconv}, we see that $\mathcal M$ coincides  with  $\mu _X$.

Therefore, it remains to show that a uniform upper bound on the total measures $\mu _{X_l} (X_l)$ implies  precompactness of  the sequence $X_l$  in the GH-topology.

Assume the contrary. Then, applying Proposition \ref{prop:precomp}, we find tiny balls $U_l$ in $X_l$ of the same radius $r_0$ such that $\mu _{U_l} (U_l)$ converges to $0$.

Due to the uniform upper bound on the dimension, the 2-Lipschitz property of the logarithmic maps and Proposition \ref{prop:mapsn}, we find for any $s>0$ some $\epsilon >0$ such that the following holds true, for any $l$ and any $x_l\in U_l$. If  $T_{x_l} $ is $k$-dimensional and if the ball $B_{s}(x_l)$ is
contained in $U_l$ then $\mathcal H^k (B_s(x_l)) \geq \epsilon$.

 Set $s=\frac {r_0 } {2\cdot n }$, where $n$ is an   upper bound on the dimensions of $X_l$. With $\epsilon$ as above,   our assumption implies
 $\mu _{U_l} (U_l) <\epsilon$, for all $l$ large enough.
  By the above estimate,
   for any $k$ and any point $x \in U_l ^k$ in the $k$-dimensional
 part of $U_l$, the following holds true.  If $B_s(x)\subset U_l$ then $B_s(x)$ contains points from  $U_l^{k'}$
 with $k'>k$.

   Starting in the center of $U_l$, we  now construct by induction points $z_1,....,z_{n+1}$  in
   $U_l$, such that  $d(z_{i+1},z_i) <s$ and $\dim (T_{z_i})< \dim (T_{z_{i+1}})$, for all $1\leq i \leq n$.

   Thus, $\dim (T_{z_{n+1}}) >n$,  in contradiction to our assumption.
\end{proof}

\begin{rem}\label{rem:remstablecan}
Theorem \ref{thm:stablecan}
is a generalization of  \cite[Theorem 1.1]{N2}, where  Theorem \ref{thm:measconv}
is proved for the maximal   $k=\dim (X _l )=\dim (X)$.
\end{rem}

\subsection{Additional comments on the measure-theoretic structure of GCBA spaces}
Let $X$ be  a GCBA space and  $k$ a natural number.

 For any point $x\in X$ we consider a tiny ball $U$ around $x$ and apply
 Theorem  \ref{thm:measconv}   to  the convergence of rescaled spaces $(\frac 1 r \tilde U ,x) \to T_x$.
 We deduce that $\mu ^k$ has a well-defined \emph{$k$-dimensional density  at  the point $x$},  compare \cite[Theorem 1.4]{N2},
$$\lim  _{r\to 0} \frac {\mu ^k _U (B_r(x) ) } {r^k}  = \mu ^k_{T_x} (B_1(0))\, . $$

Let  now $x$ be  a point in  $X^k$  and let $\epsilon >0$ be as in Corollary \ref{cor:tangloc}.
 For every $z\in B_{\epsilon} (x) \cap X^k$
 and every $r<\epsilon  -d(x,z)$,
the measure $\mu ^k (B_r (z))$ is bounded from below by $ C(k) \cdot r^k $, due to the Lipschitz property of the
 logarithmic map and  Proposition \ref{prop:mapsn}. Here $C(k)$ is a positive constant depending only on $k$.
 Together with Corollary \ref{cor:kreg}, we see that  the restriction of $\mu ^k$
to $X^k$ is \emph{locally Ahlfors  $k$-regular}, see   \cite[Section 6]{N2} for related statements.

Finally,  for any open relatively compact subset  $U$ in any GCBA space $X$
 the Hausdorff dimension and the  \emph{rough dimension} coincide, see  \cite{Berest-path} for the definition of the rough dimension  (also known as
 the \emph{Assouad dimension}) and a discussion of this question.
Indeed, a thorough look  into  the proof of Proposition 10.4 and Theorem \ref{lem: +1strainer}  reveals the following claim,
for any tiny ball $U$:  For fixed $\delta >0$ and  for $\epsilon \to 0$,  any $\epsilon$-separated  subset
of $U\setminus U_{k,\delta}$ has  at most $O(\epsilon ^{1-k})$ elements.
 Hence,
  the \emph{rough dimension}
of  $U\setminus U_{k,\delta}$ is at most $k-1$.

\section{Homotopic stability} \label{sec:stable}



\subsection{Homotopic stability of fibers}
Let $U_l\subset \tilde U_l$ and $U\subset \tilde U$ be as in our standard setting for convergence, Definition \ref{defn:standard}.
We have the following general stability result:

\begin{thm} \label{thm:fiberstab}
Under the standard setting for convergence, let the distance maps $F_l:\bar U_l \to \R^k$ converge to the distance map $F:\bar U\to \R^k$.
Assume that the restriction of $F$ to an open set $V\subset U$ is  a $(k,\delta)$-strainer map,  with $\delta \leq \frac 1 {20 \cdot k}$.  Let $\mathfrak t_l \to \mathfrak t$ be a converging sequence in $\R^k$ and assume that the fiber
 $\Pi:=F^{-1} (\mathfrak t) \subset V$ is compact.  Let, finally, $K_l\subset U_l$ be compact sets converging to $\Pi$.

Then there exists $r>0$ such that the following holds true, for all $l$ large enough.  The restriction of $F_l$ to $V_l =B_r (K_l)$ is a $(k,\delta)$-strainer map, the fibers $\Pi_l := F^{-1} _l (\mathfrak t_l)\subset V_l$ are compact and  converge to $\Pi$.

Finally, $\Pi _l$ is homotopy equivalent to $\Pi$, for all $l$ large enough.
\end{thm}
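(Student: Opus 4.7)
The plan is to first distill uniform geometric control in a neighborhood of $\Pi$ from the compactness of $\Pi$, and then feed this into Petersen's homotopy stability theorem from \cite{Petersen}. By Lemma~\ref{lem:opp} the straining radius $x\mapsto \epsilon_x$ of $F$ is continuous, so $\epsilon_0:=\inf_{x\in\Pi}\epsilon_x>0$. I would choose $r>0$ with $B_{4r}(\Pi)\subset V$ and $r<\epsilon_0/10$; since $K_l\to\Pi$, for all large $l$ one has $K_l\subset B_r(\Pi)$ and $\Pi\subset B_r(K_l)$, so $V_l:=B_r(K_l)\subset B_{2r}(\Pi)\subset V$. The $k$-tuple defining $F_l$ converges to the one defining $F$; combining Lemma~\ref{lem:open} and Lemma~\ref{lem:strradius} with the compactness of $\overline{B_{2r}(\Pi)}$ yields some $l_0$ so that for $l\geq l_0$ the map $F_l$ is a $(k,\delta)$-strainer on $V_l$ whose straining radius at every point is at least $\epsilon_0/2$.

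Compactness of $\Pi_l$ is then the easy step: $\Pi_l$ is closed in $V_l$, so it is enough to exclude limit points on $\partial V_l$. If some sequence $y_l\in F_l^{-1}(\mathfrak t_l)$ satisfied $d(y_l,K_l)=r$, a subsequential limit $y\in\bar U$ would satisfy $d(y,\Pi)\le r<3r$ and hence lie in $V$, while continuity of $F$ would force $F(y)=\mathfrak t$, giving $y\in\Pi$ and contradicting $d(y,\Pi)>0$. For $\Pi_l\to\Pi$ in Hausdorff distance: the same argument shows every subsequential limit of points in $\Pi_l$ lies in $\Pi$. Conversely, given $x\in\Pi$, pick $y_l\in U_l$ with $y_l\to x$ and use the $L$-openness of $F_l$ from Lemma~\ref{lem:openstr} (with $L=2\sqrt k$) to produce $x_l$ with $F_l(x_l)=\mathfrak t_l$ and $d(x_l,y_l)\le L\cdot\|\mathfrak t_l-F_l(y_l)\|\to 0$; these $x_l$ lie in $V_l$, hence in $\Pi_l$, for all large $l$.

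For the homotopy equivalence I would invoke the stability theorem of \cite{Petersen}. Its hypotheses are the Gromov--Hausdorff convergence $\Pi_l\to\Pi$, already established, together with a uniform local contractibility function shared by $\Pi$ and all $\Pi_l$. Such a function is supplied by Theorem~\ref{thm:contractible}: since the straining radius of $F_l$ on $V_l$ is uniformly at least $\epsilon_0/2$, for every $x_l\in\Pi_l$ and every $s<\epsilon_0/2$ the intersection $B_s(x_l)\cap\Pi_l$ is contractible, and the analogous statement holds for $\Pi$, so one may take $\varphi(s)=s$ as contractibility function independently of $l$. Petersen's theorem now yields $\Pi_l\simeq\Pi$ for all large $l$. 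The most delicate step is the verification of \emph{uniformity in $l$} of the contractibility data, and this is precisely where the uniform lower bound on straining radii---extracted from Lemmas~\ref{lem:open} and~\ref{lem:strradius} via the compactness of $\Pi$---plays the decisive role.
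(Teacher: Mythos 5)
Your proposal follows the same route as the paper's proof: use compactness of $\Pi$ and continuity of the straining radius to get a uniform lower bound, stabilize the strainer maps $F_l$ near $\Pi$ via Lemmas \ref{lem:open} and \ref{lem:strradius}, deduce compactness and Hausdorff convergence $\Pi_l\to\Pi$ from openness of $F_l$, and feed uniform local contractibility of the fibers (Theorem \ref{prop: fibertop}, which you invoke through its corollary Theorem \ref{thm:contractible}) into Petersen's stability theorem. You spell out the compactness and Hausdorff-convergence steps more explicitly than the paper, but the argument is essentially identical.
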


\begin{proof}
By the compactness of $\Pi$, we find some $r>0$, such that for any $x\in \Pi$ the straining radius of $x$ with respect to $F$ is larger than $2r$.
Due to  Lemma \ref{lem:open},   $F_l$ is a $(k,\delta)$-strainer in $V_l$, for all $l$ large enough.
 Moreover, for all $l$ large enough
and any $x_l\in V_l$,  the straining radius of $F_l$ at $x_l$ is at least $r$, Lemma  \ref{lem:strradius}.

The maps  $F_l$ are $2\sqrt k$-open on $V_l$. This   implies that $\Pi_l$  converges to $\Pi$.

   For all $l$ large enough,  all balls in $\Pi_l$  of all radii $s<r$
are contractible, due to  Theorem \ref{prop: fibertop}.  The homotopy equivalence of $\Pi_l$ and $\Pi$  is now a direct consequence of the general homotopy stability theorem  \cite[Theorem A]{Petersen}.
\end{proof}

 We discuss two special cases. The first one  is an immediate application of the theorem in the case of a constant sequence $X^l=X$.

 \begin{cor}
Let $F:V\to \R^k$ be a $(k,\delta)$-strainer map defined on an open subset $V$ of the tiny ball $U$. Assume that a fiber $\Pi$ of $F$ is compact and  $\delta \leq  \frac 1 {20 \cdot k}$. Then all fibers of $F$, sufficiently close to $\Pi$, are homotopy equivalent to $\Pi$.
 \end{cor}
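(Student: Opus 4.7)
The plan is to obtain this as a direct specialization of Theorem \ref{thm:fiberstab} in the degenerate case where the sequence of spaces is constant. Set $X_l := X$, $U_l := U$, $\tilde U_l := \tilde U$, and $F_l := F$, so the standard convergence setup is trivially satisfied. Put $\mathfrak t := F(\Pi) \in \R^k$. To turn the corollary into a form amenable to Theorem \ref{thm:fiberstab}, I would argue by contradiction: if the conclusion fails, then there is a sequence $\mathfrak t_l \to \mathfrak t$ together with compact components $\Pi_l$ of $F^{-1}(\mathfrak t_l)$ Hausdorff-close to $\Pi$ (hence "close to $\Pi$") which are not homotopy equivalent to $\Pi$. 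Take $K_l := \Pi$ as the constant (hence trivially convergent) sequence of compact sets feeding into Theorem \ref{thm:fiberstab}.

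Applying Theorem \ref{thm:fiberstab} with these data produces an $r>0$ such that, for all $l$ large enough, $F$ is still a $(k,\delta)$-strainer on $V_l := B_r(\Pi)$, the sets $F^{-1}(\mathfrak t_l)\cap V_l$ are compact and converge to $\Pi$, and they are homotopy equivalent to $\Pi$. This contradicts the assumption and finishes the proof. The only thing to justify before invoking the theorem is that "fiber components of $F^{-1}(\mathfrak t_l)$ close to $\Pi$" are exactly captured by $F^{-1}(\mathfrak t_l)\cap B_r(\Pi)$ for all $l$ large; this follows from compactness of $\Pi$ together with the $2\sqrt{k}$-openness of $F$ (Lemma \ref{lem:openstr}), which forces points of $F^{-1}(\mathfrak t_l)$ lying outside $B_r(\Pi)$ to stay at uniformly positive distance from $\Pi$ and therefore not to be "close" in any reasonable sense. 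No step here looks like a genuine obstacle, since all the real work (the existence of the homotopy equivalence) is done inside Theorem \ref{thm:fiberstab}; the corollary is essentially a rewriting of that theorem.
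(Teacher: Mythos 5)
Your proof is correct and follows exactly the paper's approach: the paper disposes of this corollary in a single sentence, noting it is an immediate application of Theorem~\ref{thm:fiberstab} with the constant sequence $X_l = X$, $U_l = U$, $F_l = F$, $K_l = \Pi$. Your extra contradiction wrapper and the discussion of ``fiber components'' are harmless but unnecessary: the theorem already restricts attention to $\Pi_l = F^{-1}(\mathfrak t_l)\cap B_r(\Pi)$, which is the intended meaning of ``fibers close to $\Pi$'' in the corollary, so the final justification paragraph (where the appeal to $2\sqrt k$-openness is a bit of a non sequitur, since points outside $B_r(\Pi)$ are trivially at distance $\ge r$ from $\Pi$) can simply be dropped.
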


As a second application we obtain:
\begin{proof}[Proof of Theorem \ref{thmsph}]
Indeed, for any point $x$ in a GCBA space $X$, we find a tiny ball  $U\subset \tilde U$  containing $x$. Consider an arbitrary sequence of positive  numbers $t_l$ converging $0$ and   corresponding metric  spheres
$\partial B _{t_l} (x)$.

 Consider  the convergence $( \frac 1 {t_l} \tilde U ,x) \to (T_x,0)$, provided by  Corollary \ref{cor:converge}.  In the tangent cone $T_x$ the origin is a $(1,\delta)$-strainer
at any point of $T_x \setminus \{ 0 \}$, for any $\delta >0$.
 Moreover, the fiber $F^{-1} (1)$  of the distance function $F$ to the vertex of the cone is exactly $\Sigma _x \subset T_x$. The sphere $\partial B_{t_l} (x) $ is
 the fiber $d_x ^{-1} (1)$ of
 the distance functions $d_x$ on
$\frac 1 {t_l} \cdot \tilde U$. Thus, by Theorem \ref{thm:fiberstab}, $\partial B_{t_l} (x) $ and $\Sigma _x$ are homotopy equivalent, for all $l$ large enough.
Since the sequence $(t_l)$ was arbitrary, this finishes the proof.
\end{proof}

Using in addition Lemma \ref{lem:tangent},  one could   observe, that a homotopy equivalence in Theorem \ref{thmsph} is provided by the logarithmic map.

\subsection{Homotopy types of spaces of directions}
We are going to discuss a homotopy   property of the spaces of directions. Due to Theorem \ref{thmsph} this also determines the homotopy types of small distance spheres.

In  $\CAT(1)$ spaces all balls of radius less than $\pi$ are contractible. Applying the stability theorem  \cite[Theorem A]{Petersen} and Proposition \ref{prop:precomp}, we see
that on any compact set $\mathcal C$ of isometry classes
 of compact, geodesically complete $\CAT(1)$ spaces the following holds true.  For any $\Sigma \in \mathcal C$ the set of spaces in $\mathcal C$ with the homotopy type of $\Sigma $ is open and closed in $\mathcal C$.
From Lemma \ref{lem:equiv} we immediately derive:
\begin{cor}
For every $N>0$ there exists some $\epsilon (N)>0$ such that for any
tiny ball $U$ of capacity bounded by $N$, for any natural $k$,
  any point  $x\in U_{k,\epsilon }$,  the space of directions $\Sigma _x$
 is homotopy equivalent to a $k$-fold suspension.

 In particular, only for finitely many points $x\in U$ the space of directions $\Sigma _x$   is not
  homotopy equivalent to a suspension of some other space.
\end{cor}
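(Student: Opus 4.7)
The plan is to establish the first assertion by contradiction and then deduce the second from it together with Proposition \ref{prop: bad}. Suppose no such $\epsilon(N)$ exists. Then for each $l\in\N$ there are a tiny ball $U_l$ of capacity bounded by $N$, a natural number $k_l$, and a point $x_l\in (U_l)_{k_l,1/l}$ for which $\Sigma_{x_l}$ is not homotopy equivalent to any $k_l$-fold suspension. By Lemma \ref{lem:uniformbound}, the $k_l$ are bounded by $k_0(N)$, so upon passing to a subsequence we may assume $k_l=k$ is constant.

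By Corollary \ref{complink}, every $\Sigma_{x_l}$ is a compact, geodesically complete $\CAT(1)$ space of diameter $\pi$ which is $N_1$-doubling for some $N_1=N_1(N)$ independent of $l$. Hence the sequence $(\Sigma_{x_l})$ is precompact in the Gromov--Hausdorff topology, and its closure $\mathcal C$ consists of compact, geodesically complete $\CAT(1)$ spaces (cf.\ Example \ref{ex:converge}). After passing to a further subsequence we may assume $\Sigma_{x_l}\to \Sigma$ in $\mathcal C$. Since $\Sigma_{x_l}$ admits a $(1/l)$-spherical $k$-tuple by Definition \ref{def:deltastr}, for every $\delta>0$ the space $\Sigma_{x_l}$ contains a $\delta$-spherical $k$-tuple for all $l$ sufficiently large. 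Thus Proposition \ref{lem:equiv} forces the accumulation point $\Sigma$ to be isometric to a $k$-fold suspension $\Sph^{k-1}\ast \Sigma'$. By the local constancy of the homotopy type on $\mathcal C$, recalled just before the corollary, $\Sigma_{x_l}$ is homotopy equivalent to $\Sigma$, and therefore to a $k$-fold suspension, for all $l$ large enough, contradicting the choice of the $x_l$.

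To deduce the second assertion, set $\epsilon:=\epsilon(N)$. If $x\in U_{1,\epsilon}$, then by the first assertion $\Sigma_x$ is homotopy equivalent to a one-fold suspension $\Sph^{0}\ast\Sigma'$, which is a suspension in the usual sense. Consequently every exceptional point $x\in U$ must lie in $E:=U\setminus U_{1,\epsilon}$. For any such $x$ the space $\Sigma_x$ contains no $\epsilon$-spherical point, so no point of $\tilde U$ is a $(1,\epsilon)$-strainer at $x$; in particular no other point of $E$ is. Hence $E$ is $\epsilon$-bad in the sense of Definition \ref{defn: badseq}, and Proposition \ref{prop: bad} bounds $|E|$ by $C_0(N,\epsilon)<\infty$. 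The main obstacle is essentially bookkeeping: arranging the compact family $\mathcal C$ so that both the suspension-stability statement of Proposition \ref{lem:equiv} and the homotopy-stability statement preceding the corollary apply to the same sequence; once this setup is in place, the contradiction and the finiteness are immediate.
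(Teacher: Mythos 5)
Your proof is correct and follows essentially the same approach as the paper's: the homotopy-stability observation preceding the corollary (local constancy of homotopy type on GH-compact families of compact geodesically complete $\CAT(1)$ spaces), combined with Proposition \ref{lem:equiv}, gives the first assertion by contradiction; you simply spell out the contradiction argument that the paper compresses into "we immediately derive." Your deduction of the finiteness claim from Proposition \ref{prop: bad} via $\epsilon$-bad sets is also the intended mechanism.
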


\section{The differentiable structure} \label{sec:DC}
This section is essentially a rewording of \cite{P3}. The only result without a direct analogue in \cite{P3} is Proposition \ref{prop:contin}.

\subsection{Setting}  We are going to prove Theorem \ref{thm:regular}. The statement is local, so we may restrict ourselves to a tiny ball $U$  and assume that $U$ coincides with its set of $k$-regular points.  Hence we may assume that $U$ is a $k$-dimensional manifold and
 that every point $x\in U$ is $(k,\delta)$-strained, for a sufficiently  small
 $\delta =\delta (k)  \leq \frac 1 {50 \cdot k^2} $.

 \subsection{Euclidean points} \label{subsec: euclid}
 For any point $x\in U$, a neighborhood of $x$ in $U$ is biLipschitz to a $k$-dimensional Euclidean ball. Therefore, $T_xU$ is biLipschitz to $\R^k$.
 If $T_xU$ is a direct product $T_xU =\R^{k-1} \times Y$ for some space $Y$  then $Y$ must be $\R$.  (Indeed, $Y$ must be a $1$-dimensional cone over a finite set. By homological considerations,    it must be the cone over a two-point space).
 Therefore, a point $x\in U$  whose tangent cone $T_x$ has $\mathbb R^{k-1}$ as a direct factor satisfies $T_x=\R^k$.

 We call $x\in U$ with $T_x=\R^k$  a \emph{Euclidean point}. Let $\mathcal R =\mathcal R_k$   denote the set of all Euclidean points in $U$.
  From Theorem \ref{thmfirst} and the previous conclusion,  we deduce that
 $U\setminus \mathcal R$ has Hausdorff dimension at most $k-2$.

\subsection{Charts, differentials, Riemannian metric} \label{subsec:chart}
Let now $V\subset U$ be an open, convex subset and assume that $F$ and $G$ are opposite $(k,\delta )$-strainer maps in $V$.
 Due to Corollary \ref{cor:dich}, the map $F:V\to \R^k$ is locally $L$-biLipschitz with $L\leq 2\sqrt k$. Moreover, $L$ goes to $1$ as $\delta$ goes to $0$. Thus, for any $x\in V$, the differential $D_xF:T_x\to \R^k$ is an $L$-biLipschitz map.

 If $F(x)=F(y)$ for some $x,y \in V$  then, by Proposition \ref{prop: differential},
 $$D_xF(v) \leq  6\cdot \delta \cdot \sqrt k < \frac 1 {3\cdot \sqrt k} \, ,$$
where $v \in \Sigma _x\subset T_x$ is the starting direction of the geodesic $xy$.    This contradicts the fact that $D_xF$ is $L$-biLipschitz.
Therefore, $F:V\to F(V)$ is injective.

The preimage $F^{-1}$ is (directionally) differentiable at
all points of $\hat V:=F(V)$ with differentials being the inverse maps of the differentials of $F$. This differentiability just means, that compositions of any differentiable curve with $F^{-1}$ has well-defined directions in all points,
 compare \cite{Diff}.

For any   Euclidean point $x\in \mathcal R$, the differential $D_xF :T_x \to \R^k$ is a linear map,  again
by  the first variation formula. We have the following continuity property in $\mathcal R$.  Let $v_l \in \Sigma _{x_l}$  be the starting direction of a geodesic $\gamma _l$.
Let $x_l \in \mathcal R$ converge to a point $x\in \mathcal R$ and the geodesics $\gamma _l$  converge to a geodesic $\gamma$ with  starting direction
$v\in T_x$.  Then $D_{x_l} F(v_l)$ converge in $\R^k$ to $D_xF (v)$.  Indeed, this is just the reformulation of the statement that
angles between $x_l p_i$, for $i=1,...,k$, and $\gamma _l$ converge to the angle between $p_i x$ and $\gamma$. The last statement
can be easiest seen as a consequence of Lemma \ref{lem: semiproj} and the fact that the concerned spaces of directions are all unit spheres.

We  call the image $\hat V=F(V)$ together with the map $F : V\to  \hat V$   a \emph{metric chart}. On this metric chart, we have the subset $\hat {\mathcal R} := F(\mathcal R )$ whose complement in $\hat V$ has Hausdorff dimension at most $k-2$.  For any point $y\in \hat {\mathcal R}$ we get  a scalar product on its tangent space $T_y \R^k$, given by the pullback (via the linear map $D_yF^{-1}$)  of the scalar product on $T_{F^{-1} (y)} U$.  Due to the previous considerations, this Riemannian metric  $g_F$ is  continuous on $\hat {\mathcal R}$.

Expressing the length of a Lipschitz curve as an integral of its pointwise velocities, we see that for any Lipschitz curve
$\gamma$ in $\mathcal R$ the length of $\gamma$ coincides with the length of $\bar \gamma := F\circ \gamma$ with respect to the Riemannian metric $g_F$, hence
$$\ell (\gamma )= \int |\bar \gamma ' (t)| _{g_F} \, dt \, .$$

\subsection{$\DC$-maps in Euclidean spaces}
We refer the reader to \cite{P3} and \cite{Ambr} for more details.

A function $f:V\to \R$ on an open subset $V$ of $\R^m$ is called a \emph{$\DC$-function} if in a neighborhood of each point $x\in V$
one can write $f$ as a difference of two convex functions.  The set of  DC-functions contains all functions of class $C^{1,1}$ and
it is closed under addition and  multiplication.

A map $F:V\to \R^l$ is called a DC-map if its coordinates are DC.  The composition of DC-maps is again a DC-map.
In other words, a map $F:V\to \R^l$ is DC if and only if for every  DC-function $g:W\subset \R^l \to \R$, the composition
$g\circ F $ is a DC-function on $F^{-1} (W)$.

\subsection{$\DC$-maps on metric spaces}
The following definition is meaningful only if the metric spaces in question are (locally) geodesic.

\begin{defn}\label{defn: dc}
Let $Y$ be  a metric space.
A function $f:Y\to \R$ is called
a \emph{$\DC$-function}
if it can be locally represented as difference of two Lipschitz continuous  convex
functions.
\end{defn}

Due to the corresponding statements about DC-functions on intervals,  the set
 of $\DC$ functions on $Y$ is closed under addition and
multiplication.

\begin{rem}
We refer to \cite{Petruninsemi}
for the definition  and properties  of semi-convexity.
Assume that in  $Y$ each point $x$ admits  a Lipschitz $1$-convex function in a small neighborhood $V$ of $x$.
Then each semi-convex function on $Y$ is $\DC$.
Such strongly convex functions exist on Alexandrov spaces with lower curvature bound, \cite{Petruninsemi}.
On any  $\CAT(\kappa)$ space $X$ we get such a function as a scalar multiple of  $d_x^2$.
\end{rem}

We use compositions to define DC-maps between metric spaces.

\begin{defn} \label{defn: dcmap}
A locally Lipschitz map $F \colon Z \to Y$ between metric spaces $Z$ and $Y$
is called a \emph{$\DC$-map}
if for each $\DC$-function $f \colon U \to \R$
defined on an open subset $U$ of $Y$
the composition
$f \circ F$ is $\DC$ on $F^{-1}(U)$.
If $F$ is  a biLipschitz  homeomorphism  and its inverse is
$\DC$,
then we call  $F$  a \emph{$\DC$-isomorphism}.
\end{defn}

 A composition of DC-maps is a DC-map.  For a map $F:Z\to \R^l$ we recover the
old definition:  $F$ is DC if and only if the coordinates of $F$ are DC.


\subsection{Crucial observation}
Let now $U\subset \tilde U\subset X$ be again a tiny ball consisting of $k$-regular points as above.
  Since all  distance functions to points in $\tilde U$ are convex,
each $(k,\delta)$-strainer map is a DC-map by definition.  These strainer maps turn out to be DC-isomorphisms, in direct analogy with \cite{P2}, see also \cite{Ambr}. The proof of the following observation is  taken from \cite{P2}.

\begin{prop}\label{prop: dcisom}
Let $F$ and $G$ be opposite $(k,\delta)$-strainer maps in an open subset $V$ of a tiny ball $U$. Then $F:V\to F(V)\subset \R^k$
is  a $\DC$-isomorphism, if $\delta \leq    \frac 1 {50 \cdot k^2}$.
\end{prop}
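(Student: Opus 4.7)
The map $F$ is biLipschitz onto $F(V)\subset\R^k$ by the discussion in Subsection \ref{subsec:chart} (via Corollary \ref{cor:dich}), and each coordinate $f_i = d_{p_i}$ is a Lipschitz convex function on $V$, hence a $\DC$-function. Thus $F$ itself is a $\DC$-map, and the only nontrivial content of the proposition is the statement that the inverse $F^{-1}\colon F(V)\to V$ is $\DC$. By the definition of $\DC$-map together with the local description of $\DC$-functions as differences of convex Lipschitz functions and the closure of the $\DC$ class under sums, this reduces to showing: for every convex Lipschitz function $h\colon V\to\R$, the composition $h\circ F^{-1}$ is $\DC$ on $F(V)\subset\R^k$.

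A Lipschitz function $u$ on an open subset of $\R^k$ is automatically $\DC$ whenever it is semi-convex, since if $u(y) + C\|y\|^2$ is convex on a neighborhood then the trivial identity $u = (u + C\|y\|^2) - C\|y\|^2$ decomposes $u$ as a difference of two convex Lipschitz functions. Hence the task reduces further to the following semi-convexity statement: for every convex Lipschitz $h$ on $V$ and every point of $F(V)$ there is a constant $C$ such that $h\circ F^{-1} + C\|\cdot\|^2$ is convex on a neighborhood of that point.

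To establish this semi-convexity the opposite strainer map $G=(g_i)$ enters in an essential way. Fix $x_0\in V$, set $y_0 = F(x_0)$, fix a unit vector $v\in\R^k$ and a small $t>0$, and put $\tilde\sigma(s) := F^{-1}(y_0 + sv)$ for $s\in[-t,t]$. Let $\tau$ denote the geodesic in $V$ joining $\tilde\sigma(-t)$ to $\tilde\sigma(t)$. The opposite strainer hypothesis implies, via the first-variation formula, that every convex Lipschitz function $f_i + g_i$ has all directional derivatives of size $O(\delta)$ (compare Proposition \ref{prop: differential} and Lemma \ref{lem: almperp}). The trick of \cite{P2} is to combine this first-order information with the convexity of the individual $f_i$ and $g_i$ and with $\CAT(\kappa)$ second variation to produce a quadratic comparison $d(\tilde\sigma(0),\tau(0)) \leq C_1 t^2$, where $C_1$ depends only on $\delta$ and on the straining radius of $F$ at $x_0$. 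Since $h$ is convex along $\tau$ and $\Lambda$-Lipschitz, this comparison yields
\[
 h(\tilde\sigma(t)) + h(\tilde\sigma(-t)) - 2h(\tilde\sigma(0)) \;\geq\; -2\Lambda C_1 \, t^2,
\]
which is exactly the semi-convexity of $h\circ F^{-1}$ at $y_0$ with constant $2\Lambda C_1$.

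The principal obstacle is precisely the quadratic deviation estimate $d(\tilde\sigma(0),\tau(0)) = O(t^2)$. Upgrading the purely first-order opposite-strainer information (that $D(f_i+g_i)$ is small) into a genuine second-order control on the non-geodesic character of the horizontal lift of a straight segment under $F$ requires the delicate interplay between the convexity of $f_i$ and $g_i$ and the $\CAT(\kappa)$ second-variation inequality; this is the technical core of the argument of \cite{P2}, and the smallness hypothesis $\delta \leq \frac{1}{50\cdot k^2}$ is needed exactly to absorb the resulting quadratic constants uniformly over $V$.
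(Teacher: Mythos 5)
Your reduction --- that it suffices to show $h\circ F^{-1}$ is $\DC$ for every convex Lipschitz $h$, and that the opposite strainer map $G$ must enter essentially --- is correct, but the route you then take is genuinely different from the paper's and leaves the key step unproved. The paper's argument is entirely first-order. It introduces \emph{$\alpha$-special} convex Lipschitz functions, meaning $D_xg(v)\le-\alpha$ whenever $D_xf_i(v)\ge0$ for all $i$; it constructs the explicit $\tfrac{1}{4k^2}$-special function $g=\tfrac1k\sum_i g_i$ from the coordinates $g_i$ of $G$ by a first-variation computation (Lemma~\ref{lem:aux1}); it invokes Perelman's lemma (Lemma~\ref{lem:aux2}) that the pullback under $F^{-1}$ of any $0$-special function is convex, explicitly noting that this lemma ``only uses convexity and differentiability and no curvature bounds''; and it then writes an arbitrary convex Lipschitz $h$ as $h_1-h_2$ with $h_1,h_2$ both $0$-special, so that $h\circ F^{-1}=(h_1\circ F^{-1})-(h_2\circ F^{-1})$ is a difference of two convex Lipschitz functions. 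Note that the subtracted term here is a general convex function, not a quadratic: the paper proves $\DC$ without ever claiming, or needing, local semi-convexity.

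You instead aim at the strictly stronger statement that $h\circ F^{-1}$ is locally semi-convex, and the entire weight of the argument falls on the unproved quadratic midpoint estimate $d(\tilde\sigma(0),\tau(0))\le C_1t^2$. You attribute it to the ``$\CAT(\kappa)$ second-variation inequality'' and to \cite{P2}, but the paper's own Lemma~\ref{lem:aux2} is stated to be curvature-free and involves no second-variation argument at all. More concretely, this estimate cannot be extracted from the first-order information you cite: Proposition~\ref{prop: differential} controls $(F\circ\tau)'$ only up to an additive error of order $\delta$, which yields $d(\tilde\sigma(0),\tau(0))=O(\delta\,t)$, a linear rather than quadratic bound. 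A linear bound is useless for semi-convexity, since $-2\Lambda C\delta\,t$ dominates $-C't^2$ as $t\to0$. Upgrading to $O(t^2)$ would require genuine second-order regularity of $F^{-1}$ (something like $C^{1,1}$), which is not available at the points of $V$ whose tangent cone is not Euclidean, and which is established nowhere in your sketch. Finally, since semi-convexity strictly implies $\DC$ but not conversely (e.g.\ $-|x|$ is $\DC$ but not semi-convex), you are committing to prove a sharper fact than the proposition asserts; nothing in the hypotheses suggests that sharper fact is true, and the paper does not claim it. As written, the argument does not close.
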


\begin{proof}
Denote by $f_i=d_{p_i}$ the coordinates of $F$.
We already know that the map $F$ is a locally biLipschitz DC-map.
It remains to prove that the
inverse map $F^{-1} \colon F(V) \to V$
is  $\DC$ too. Thus, given an open subset $O \subset V$
 and a  convex function $g \colon O \to \R$,
we have to show that the function
$\overline{g} = g \circ F^{-1}$ is $\DC$ on $F(O)$.



We introduce the following auxiliary notion.
 We say that a  convex, Lipschitz continuous function $g:O\to \R$  on an open subset $O\subset V$ is \emph{$\alpha$-special} for some $\alpha \geq 0$ if the following holds true.
 For any $x\in O$ and any unit vector $v\in T_x$ such that $D_x f_i (v) \geq 0$ for all $i=1,...,k$  we have $D_xg (v) \leq -\alpha$.

If $g$ is $\alpha$-special then, for any Lipschitz curve $\eta :[a,b] \to O$ parametrized by arclength and such that all $f_i$ are non-decreasing on $\eta$, the composition $g\circ \eta :[a,b] \to \R$ decreases at least with velocity $\alpha$.

The proof of the Proposition will  follow from two auxiliary statements:

\begin{lem} \label{lem:aux1}
There is a $1$-Lipschitz $\alpha$-special function $g$ on $V$ with $\alpha =\frac 1 {4\cdot k^2}$.
\end{lem}

\begin{lem} \label{lem:aux2}
If $g$ is a $0$-special function in $O$ then the composition $\bar  g = g\circ F^{-1}$ is a convex function on $F(O)$.
\end{lem}

Indeed, assuming Lemma \ref{lem:aux1} and Lemma  \ref{lem:aux2} to be true,  we  derive:
\begin{cor} \label{cor:aux}
In the notations above let $h:O\to \R$ be an $L_0$-Lipschitz convex function.
Then $h$ can be represented as $h=h_1 -h_2$ with $h_1$ and $h_2$ being $0$-special
$L_0 \cdot (1+\frac 1 {\alpha} )$-Lipschitz functions.

Moreover, $\bar h := h\circ F^{-1}$ is the difference of two $C \cdot L_0$-Lipschitz convex functions, with some $C$ depending only on $k$.
\end{cor}

\begin{proof}
 Indeed, choosing $g$ as in Lemma \ref{lem:aux1}, we set $h_2(x):= \frac  {L_0} {\alpha} \cdot g(x)$.
 Then $h_2$ is $L_0$-special. Since $h$  is convex and  $L_0$-Lipschitz, we deduce that the function  $h_1 =g +h_2$ is convex and  $0$-special.
 The statement about the Lipschitz constants of $h_1$ and $h_2$ is clear.

  Due to Lemma \ref{lem:aux2}, the compositions $\bar h_{i}:= h_{i} \circ F^{-1}$ are convex on $F(O)$.  The Lipschitz constants of $\bar h_{i}$ are bounded from above by the product of the Lipschitz constants of $h_{i}$ and $F^{-1}$.
\end{proof}

Thus assuming  Lemma \ref{lem:aux1} and Lemma \ref{lem:aux2}  to be true,  we finish the proof of the proposition.
\end{proof}

We turn to  the auxiliary lemmas used in Proposition \ref{prop: dcisom}.

\begin{proof} [Proof of Lemma \ref{lem:aux1}]
Let $f_i$ be the coordinates of $F$ and let $g_i$ be the coordiates of $G$. The functions  $g_i$ are convex  and $1$-Lipschitz, for $i=1,...,k$,
hence so is $g (x) =\frac 1 k \sum _{i=1} ^k g_i (x)$.
We claim that  $g$ is $\frac 1 {4\cdot k^2} $-special.

Indeed, let $x\in V$ be arbitrary and let $v\in \Sigma _x$ be such that $D_xf_i (v) \geq 0$, for all $i=1,...,k$.
Then $D_x g_i (v) < \delta$ for all $i=1,...,k$, as follows directly from the first variation formula and the definition
of opposite strainer maps.

The map $D_xG :T_x\to \R^k$ is $2\sqrt k$-biLipschitz, thus $D_xG (v)$ has norm at least $\frac 1 {2\sqrt k}$.
 Therefore, for at least one $1\leq j \leq k$, we must have
 $|D_xg_j (v) |\geq \frac 1 {2  k}$.

For this $j$ we get, $D_xg_j (v) \leq -\frac 1 {2 k} $.  Summing up, we  obtain
$$D_xg (v) \leq  \frac 1  k \cdot ( -\frac 1 {2 k}  +  (k-1)\cdot \delta)  \leq   \frac 1 k \cdot (- \frac  1 {2 k} + \frac 1 {4\cdot k} )  \leq   - \frac 1 {4\cdot k^2}  \,.$$
This finishes the proof.
\end{proof}

\begin{proof} [Proof of Lemma \ref{lem:aux2}]
 It follows word by word as in \cite{P3}, since the proof in \cite{P3} only uses convexity and differentiability and
 no   curvature bounds.
\end{proof}

\subsection{The Riemannian metric revisited}
As in \cite{P3}, we have:

\begin{lem} \label{lem: bv}
For any metric chart $F:V\to \R^k$ as above, the Riemannian metric $g_F$ defined and continuous on the subset
$\hat {\mathcal R} =F(\mathcal R)$ is locally  of bounded variation.
Moreover, $g_F$ is differentiable almost everywhere in $F(\mathcal R)$.
\end{lem}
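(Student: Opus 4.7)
The plan is to adapt Perelman's argument from \cite{P3}, which establishes the same statement for Alexandrov spaces with lower curvature bounds using semi-concave distance functions; here distance functions are convex instead, but the argument is structurally the same. Since any BV function on a Euclidean domain is approximately differentiable almost everywhere, it suffices to establish the BV property for the components of $g_F$.

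The key is to express $g_F$ through gradients of DC functions on $\hat V$. At a Euclidean point $y = F(x) \in \hat{\mathcal R}$, the differential $D_xF : T_xU = \R^k \to \R^k$ is the linear map $v \mapsto (-\langle u_j, v\rangle)_j$ with $u_j = (xp_j)'$, and the inverse metric is therefore the Gram matrix of the $u_j$; one gets the explicit formula $g_F^{ij}(y) = \cos \angle p_i x p_j$. For any auxiliary point $q \in \tilde U$, Proposition \ref{prop: dcisom} ensures that $\bar f_q := d_q \circ F^{-1}$ is DC on $\hat V$, hence each partial derivative $\partial_l \bar f_q$ is a locally BV function on $\hat V$. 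The first variation formula together with the chain rule yields the pointwise identity
\begin{equation*}
g_F^{lm}(y)\, \partial_l \bar f_p(y)\, \partial_m \bar f_q(y) \;=\; \cos \angle p\, x\, q
\end{equation*}
at every Euclidean point $y = F(x) \in \hat{\mathcal R}$ and every $p, q \in \tilde U$.

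I would then choose a finite family of auxiliary distance functions whose gradients span $\R^k$ uniformly on $\hat V$ (ensured by the strainer condition and the smallness of $\delta$), and invert the resulting algebraic system to express each $g_F^{ij}$ as a rational function, with uniformly non-vanishing denominator, of finitely many BV quantities $\partial_l \bar f_p$; such rational combinations of BV functions are again BV, which establishes the claim. The main obstacle, as I see it, is producing this inversion in a way that yields BV output rather than just an almost-everywhere assertion: the direction vectors $u_q(x)$ on the right-hand side of the identity above are merely continuous on $\mathcal R$, not BV a priori, so one must choose the auxiliary family in a manner that makes these unknown directions cancel from the final algebraic expression for $g_F^{ij}$. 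Perelman's concrete procedure carries out exactly this cancellation and transfers to our setting with only a sign change in the convexity assumption.
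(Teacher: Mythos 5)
Your overall plan — express $g_F$ through gradients of the DC functions $\bar f_q = d_q\circ F^{-1}$ and invoke Perelman's argument from \cite{P3} — is exactly the route the paper sketches. The explicit formula $g_F^{ij}(y)=\cos\angle p_i x p_j$ for the cometric at Euclidean points and the pointwise identity $g_F^{lm}\partial_l\bar f_p\,\partial_m\bar f_q=\cos\angle p x q$ are both correct.

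However, the "main obstacle" you flag at the end — that $\cos\angle p x q$ is merely continuous on $\mathcal R$, so one has to make "the unknown directions cancel" — is left unresolved in your write-up, and it is precisely the one point the paper's sketch spells out. The resolution is simply to use only the diagonal instances $p=q$ of your identity: $g_F^{lm}(y)\,\partial_l\bar h_j(y)\,\partial_m\bar h_j(y)=1$, which says that the gradient of each distance function $h_j$ has $g_F$-norm $1$. The right-hand side is then the constant $1$, so there are no extraneous direction vectors to worry about. Choosing sufficiently many generic auxiliary points $q_j$ (at least $k(k+1)/2$ of them) so that the resulting linear system for the symmetric matrix $(g_F^{lm})$ has coefficient determinant bounded away from zero, one solves for $g_F^{lm}$ by Cramer's rule as a rational expression in the bounded BV quantities $\partial_l\bar h_j$ with uniformly nonvanishing denominator, hence BV; inverting the cometric (again by Cramer, with determinant bounded below by uniform positive definiteness) gives $g_F$ as BV. The almost-everywhere differentiability then follows, as you say, from the general fact about BV functions. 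So your proposal is correct in structure, but you should note that the cancellation you hope Perelman's procedure provides is nothing more elaborate than restricting to the unit-norm equations $|\nabla^{g_F}\bar h_j|^2_{g_F}=1$.
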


The proof  literally follows from \cite{P3} (see also \cite{Ambr}). The  idea is to take a
sufficiently large  set of  generic points $q_j$ in $\bar U$. The distance functions $h_j$ to  these points
have the following property. The compositions $\bar h_j :=h_j\circ F^{-1}$ are DC-functions by Proposition \ref{prop: dcisom}.
On the other hand, since $h_j$ are distance functions, the gradients of $h_j$ at all points  of $\hat {\mathcal  R}$ have  norm $1$ with respect to the Riemannian metric $g_F$.  One obtains an equation for the coordinates of $g_F$ and shows that they can be expressed  through the first derivatives of the DC-functions $\bar h_j$.

\subsection{$\DC$-curves in GCBA spaces}
In order to prove that  the Riemannian structure on the set $\mathcal R$ determines the metric,  we will need a stability statement about variations of DC-curves,
which might be of independent interest.   In the following definition and Proposition \ref{prop:contin}
we work in general GCBA spaces, and  not only in their regular parts as in the rest  of this section.

Let  $U \subset \tilde  U$ be a tiny ball. We say that a curve $\gamma :I\to  U$ on a compact interval $I$  is
 a \emph{$\DC$-curve of norm bounded by $A$} if  $\gamma$ is $A$-Lipschitz and for any $1$-Lipschitz convex function $f: U \to \R$ the
 restriction $f\circ \gamma$ can be (globally) written as a difference of two $A$-Lipschitz convex   functions on $I$.


The following statement  is closely related to the well-known fact \cite{AR}, that the length is continuous under convergence of curves of uniformly bounded turn in the Euclidean space.
\begin{prop} \label{prop:contin}
Let $\gamma_l:I\to  U$ be $\DC$-curves with a uniform bound on the norms. If $\gamma _l$ converges to $\gamma$ pointwise then
 $\lim_{l\to \infty} \ell (\gamma _l) =\ell (\gamma )$.
\end{prop}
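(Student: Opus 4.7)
My plan is to reduce the statement to the classical fact that length is continuous under pointwise convergence of uniformly Lipschitz curves with uniformly bounded derivative-variation, applied inside a finite-dimensional normed space. The key bridge is the almost-isometric embedding provided by Proposition \ref{lem:embed}: for every $\epsilon>0$ there is a distance map $F=(d_{p_1},\dots,d_{p_m}):\tilde U\to \R^m_\infty$ which is $(1+\epsilon)$-biLipschitz. This transports both $\gamma$ and all $\gamma_l$ to uniformly Lipschitz curves in $\R^m_\infty$ with lengths comparable up to a factor $(1+\epsilon)$, so it suffices to establish $\ell(F\circ\gamma_l)\to\ell(F\circ\gamma)$ and then let $\epsilon\to 0$.

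Assume toward contradiction, after passing to a subsequence, that $\ell(\gamma_l)\to L$ with $L\neq \ell(\gamma)$. Fix $\epsilon>0$ and $F$ as above. Each coordinate $f_j\circ\gamma_l=d_{p_j}\circ\gamma_l$ is, by the $\DC$-assumption, a difference of two $A$-Lipschitz convex functions $u_{l,j}-v_{l,j}$ on the compact interval $I$. By Arzel\`a--Ascoli (using that these functions are $A$-Lipschitz) together with a diagonal extraction over the finitely many indices $j=1,\dots,m$, I can pass to a further subsequence so that $u_{l,j}\to u_j$ and $v_{l,j}\to v_j$ uniformly on $I$ for all $j$, with $u_j,v_j$ again $A$-Lipschitz convex. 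Pointwise convergence $\gamma_l\to\gamma$ forces $f_j\circ\gamma=u_j-v_j$. The standard fact that pointwise limits of derivatives of uniformly Lipschitz convex functions converge almost everywhere (at every differentiability point of the limit, which is a full-measure set) then yields $(f_j\circ\gamma_l)'(t)\to (f_j\circ\gamma)'(t)$ for almost every $t\in I$ and every $j$.

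Since $|(f_j\circ\gamma_l)'|\leq 2A$ uniformly, the dominated convergence theorem applied to the lengths in the sup-norm
\[
\ell(F\circ\gamma_l)=\int_I \max_{1\leq j\leq m}|(f_j\circ\gamma_l)'(t)|\,dt
\]
gives $\ell(F\circ\gamma_l)\to \ell(F\circ\gamma)$. Combining this with the bilateral inequalities $\ell(\gamma_l)\leq\ell(F\circ\gamma_l)\leq(1+\epsilon)\ell(\gamma_l)$ and $\ell(\gamma)\leq\ell(F\circ\gamma)\leq(1+\epsilon)\ell(\gamma)$, I obtain
\[
(1+\epsilon)^{-1}\ell(\gamma)\leq L\leq (1+\epsilon)\ell(\gamma).
\]
Since $\epsilon>0$ is arbitrary, $L=\ell(\gamma)$, a contradiction. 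Hence the full sequence $\ell(\gamma_l)$ converges to $\ell(\gamma)$.

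The main obstacle is the non-uniqueness of the $\DC$-decompositions $u_{l,j}-v_{l,j}$: a priori nothing prevents them from oscillating wildly as $l$ grows. The resolution is the Helly-type compactness for uniformly Lipschitz convex functions together with the fact that only the difference $u_j-v_j$ (and hence a.e.\ only the derivative $u_j'-v_j'$) matters for the length integral; the choice of decomposition washes out in the limit. Everything else is dominated convergence and the almost-isometric embedding.
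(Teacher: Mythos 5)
Your proof follows essentially the same route as the paper's: embed almost isometrically into $\R^m_\infty$ via Proposition \ref{lem:embed}, use the $\DC$-norm bound to write each coordinate of the transported curve as a difference of uniformly Lipschitz convex functions, extract convergent subsequences of these summands, invoke a.e.\ convergence of derivatives of convex functions, and integrate. One small step to tighten: Arzel\`a--Ascoli needs uniform boundedness as well as equicontinuity, and the $A$-Lipschitz bound on $u_{l,j},v_{l,j}$ alone does not supply it, since the decomposition $u_{l,j}-v_{l,j}$ is only determined up to adding a common constant to both summands; as the paper does, you should first normalize by subtracting $u_{l,j}(t_0)$ from both $u_{l,j}$ and $v_{l,j}$ at a fixed $t_0\in I$, after which uniform boundedness on the compact interval $I$ follows from the Lipschitz bound and the rest of your argument goes through unchanged.
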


\begin{proof}
Assuming the contrary and choosing a subsequence, we   find $\epsilon >0$ with
$$(1+2\cdot \epsilon) ^2\cdot \ell (\gamma ) < \lim _{l\to \infty} \ell (\gamma _l) \, .$$
   Due to Proposition \ref{lem:embed}, we find a  distance map
     $F_{\epsilon}: U\to \R^m _{\infty}$, which is
 a $(1+\epsilon)$-biLipschitz embedding, if $\R^m$ is equipped with the sup-norm $ | \cdot  |_{\infty} $.    Set $\eta _l=F_{\epsilon} \circ \gamma _l$ and $\eta =F _{\epsilon} \circ \gamma$.  From the biLipschitz property
 we obtain a contradiction, once we show that the lengths of $\eta _l$
converge to the length of $\eta$ in $\R^{m} _{\infty}$.

 The $i$-th coordinate of  $\eta _l $ is the composition of $\gamma_l$ and a convex distance function $d_{p_i}$. Thus,
 this  $i$-th coordinate  is a
 difference of two convex $A$-Lipschitz functions $h_l^+$ and $h_l^-$ on $I$. Adding a constant we may assume that $h_l^+ $ equals $0$ at some fixed point on $I$.

  Going to subsequences, we may assume that $h_l^+$ and $h_l^-$ converge  to $h^+$ and $h^-$ such that $h^+-h^-$ is the corresponding coordinate of $\eta $.
 Due to the standard results about convergence of convex functions, we see that at almost every $t \in I$, the differentials of $h_l^+, h_l^-$ exists at $t$ and converge to the differentials of $h^+,h^-$ at $t$.  Taking again all coordinates together, we see that for almost every $t\in I$, the differentials  $\eta _l' (t) \in \R^m$ exist and  converge to $\eta'(t)$.

 Expressing the length of $\eta$ and $\eta_l$ as integrals of $|\cdot |_{\infty}$-norms of $\eta'$ and $\eta_l'$ over $I$ we finish the proof of the convergence.  This finishes the proof of the Proposition.
\end{proof}

 Coming back to the regular part,  we can use this result to prove:
\begin{cor} \label{cor: compl}
Let $F:V\to \R^k$ be a metric chart as in  Subsection \ref{subsec:chart}, with convex $V\subset U$.
Let $S$ be a   subset of $V$ with $\mathcal H^{k-1} (S)=0$.
Then every pair of points $x,y\in V\setminus S$ is connected in $V\setminus S$ by
curves of lengths arbitrary close to $d(x,y)$.
\end{cor}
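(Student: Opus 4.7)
The plan is to join $x$ to $y$ by a concatenation of two geodesic segments $xz$ and $zy$, with $z$ chosen near the midpoint $m$ of the geodesic $\gamma$ from $x$ to $y$ (which exists and lies in $V$ by convexity of $V$). Since $d(x,z)+d(z,y)\to d(x,y)$ as $z\to m$, it suffices to arrange that $z$ itself and both segments $xz$ and $zy$ avoid $S$.

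The technical core is the claim that the \emph{$V$-geodesic cone from $x$ over $S$},
\[
C^V_x(S) := \{ z \in V : xz \cap S \neq \emptyset \}\, ,
\]
is $\mathcal{H}^k$-null in $V$, and similarly for $C^V_y(S)$. Since the logarithm $\log_x\colon V \to T_xV$ is $2$-Lipschitz and injective on the $\CAT(\kappa)$ ball $U$, we have $\mathcal{H}^{k-1}(\log_x(S)) = 0$. Splitting $\log_x(S)$ into radial shells to control the Lipschitz constant of the radial projection onto $\Sigma_x$, the set $A \subset \Sigma_x$ of directions at $x$ toward points of $S$ satisfies $\mathcal{H}^{k-1}(A) = 0$. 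The Euclidean cone $T_xV = C\Sigma_x$ then yields $\mathcal{H}^k((0,T)\cdot A) = 0$ by a routine covering argument in the product $(0,T)\times \Sigma_x$, and $C^V_x(S)\subset \exp_x((0,T)\cdot A)$ is $\mathcal{H}^k$-null because $\exp_x$ is Lipschitz.

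Setting $B := S \cup C^V_x(S) \cup C^V_y(S)$, which is $\mathcal{H}^k$-null, given any $\epsilon >0$ I would pick a neighborhood $N$ of $m$ in $V$ of diameter less than $\epsilon/2$. By Theorem \ref{thm:locdim} we have $\mathcal{H}^k(N) > 0$, so there exists $z \in N \setminus B$. By the defining properties of $B$, we have $z \in V\setminus S$ and neither geodesic $xz$ nor $zy$ meets $S$, so their concatenation is a curve in $V\setminus S$ from $x$ to $y$ of length $d(x,z) + d(z,y) \le d(x,y) + 2\operatorname{diam}(N) \le d(x,y) + \epsilon$.

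The main subtlety is establishing the $\mathcal{H}^k$-nullity of the cones $C^V_x(S)$ and $C^V_y(S)$ when $x$ or $y$ need not be a Euclidean point, so that $T_xV$ is only a Euclidean cone over a compact $\CAT(1)$ space of dimension at most $k-1$, rather than $\R^k$ itself. The argument above does not require Euclidean tangent spaces: it only relies on the Lipschitz properties of $\log_x$ and $\exp_x$ and on the covering estimate in the product $(0,T)\times \Sigma_x$.
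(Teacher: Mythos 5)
Your approach is genuinely different from the paper's, and it has a real gap at the crucial step.

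You reduce the corollary to showing that the geodesic cone $C^V_x(S)=\{z\in V: xz\cap S\neq\emptyset\}$ is $\mathcal{H}^k$-null, and the last link in your chain is the assertion that $\exp_x$ is Lipschitz, so that it carries the $\mathcal{H}^k$-null set $(0,T)\cdot A\subset T_x$ to an $\mathcal{H}^k$-null set. This assertion is not established and is in general false in the upper-curvature-bound setting: the $\CAT(\kappa)$ comparison gives that $\log_x$ is Lipschitz (the paper uses the constant $2$), which is precisely the statement that $\exp_x$ is \emph{expanding} (biLipschitz from below), with no upper Lipschitz bound. This is the mirror image of the Alexandrov (lower-bound) situation, where Toponogov comparison makes $\exp_x$ Lipschitz and your argument would go through. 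Even the weaker property you actually need, that $\exp_x$ maps $\mathcal{H}^k$-null sets to $\mathcal{H}^k$-null sets, does not follow from $\log_x$ being an injective Lipschitz map between $k$-dimensional sets: a Lipschitz injection can crush a positive-measure set onto a null set (a radial map built from a strictly increasing Lipschitz function with $g'=0$ on a fat Cantor set gives a $k$-dimensional example), and then its inverse fails to be absolutely continuous. Nothing in Lemma \ref{lem:tangent} (which gives only an additive, not multiplicative, error on small balls) or in the chart properties rules this out for $\log_x$. Your closing remark acknowledges this is "the main subtlety" but then dismisses it by appealing exactly to the unproven Lipschitz property.

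The paper's proof is structured precisely to avoid this trap. Rather than wiggling a midpoint and controlling two geodesic cones, it transports the whole geodesic to chart coordinates as $\eta=F\circ\gamma$, uses the near-straightness of $\eta$ (Proposition \ref{prop: differential}) to show that a tube of translates $\eta+x_0$ is a biLipschitz product, applies Fubini to find translates $\eta_l$ missing $F(S)$, and then pulls back by $F^{-1}$. The delicate point, that the pulled-back curves $\gamma_l$ have length converging to $\ell(\gamma)$, is handled not by any Lipschitz property of $\exp$ but by the DC-machinery: the $\gamma_l$ are $\DC$-curves of uniformly bounded norm (via Corollary \ref{cor:aux}), and Proposition \ref{prop:contin} supplies the length continuity. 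That is the ingredient your argument is missing a substitute for. To repair your approach you would need to prove, in the regular part, that $\log_x$ has $\mathcal{H}^k$-a.e.\ nondegenerate metric differential (so that its inverse is absolutely continuous); this is a nontrivial claim not in the paper and not supplied in your write-up.
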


\begin{proof}
The statement is well-known and easy to prove for open convex subsets $\hat V$  in $\R^k$,
connecting $x$ and $y$ by concatenations of two segments.

 Since the  statement is true in $F(V)$ and  the map $F :V\to \hat V=F(V)$ is biLipschitz, it suffices to prove the following claim.
 Let $\gamma:I\to V$ be a geodesic. Then there exist curves $\gamma _l :I\to V\setminus S$ converging to $\gamma$ and such that
 $\ell (\gamma _l)$ converges to $\ell (\gamma)$.
 (Once such $\gamma _l$ are constructed we obtain, the desired curves by connecting the endpoints of $\gamma _l$ with $x$ and $y$ within $V\setminus S$,  using that $F$ is biLipschitz).

 In order to find such $\gamma _l$ we consider the curve $\eta:= F\circ \gamma $ in $\hat V$. Note that the differentials of $\eta$ at different
 points have distance at most  $2\cdot k\cdot \delta$ from each other, as follows from
 Lemma \ref{lem: almperp}.  Take  a small ball $B$ around the origin in the hyperplane  of $\R^k$ orthogonal to the starting direction of $\eta$. Then we observe that the map $Q:B\times  I\to \R^k$ given by $Q(x,t)= x+ \eta (t)$ is a biLipschitz embedding.

 This implies that for almost every $x_0\in B$ the curve $t\to \eta (t) +x_0$ does not meet  the set $F(S)$ with vanishing $\mathcal H^{k-1}$-measure.  Letting $x_0$ going to $0$, we find a sequence of translates  $\eta _l (t) =\eta (t) +x_l$ converging to $\eta$ and disjoint from $F(S)$.

 We set $\gamma _l =F^{-1} \circ \eta _l$. It suffices to prove that  $\ell (\gamma _l)$ converge to $\ell (\gamma)$.

Clearly, the curves $\gamma _l$ are uniformly Lipschitz.  Let  $f$ be a convex $1$-Lipschitz function on $V$.
We have $f\circ \gamma  _l =  f \circ F^{-1} \circ \eta _l$.

Due to Corollary  \ref{cor:aux}, $f\circ F^{-1}$ is the difference of two convex $A$-Lipschitz functions $h_1$ and $h_2$ on $F(V)$, where
$A$ is independent of $f$.  On the other hand, the curve $\eta$ is a DC-curve of bounded norm, since its coordinates are convex $1$-Lipschitz functions.  The curves $\eta_l$ are then also DC-curves with the same bound on the norm.  Together, this implies that $f\circ \gamma _l$ can be written as a difference of two convex $B$-Lipschitz functions, with some $B$ independent of $l$.

Hence $\gamma _l$ are DC-curves of uniformly bounded norm and the claim follows from Proposition \ref{prop:contin}.
\end{proof}


\subsection{Conclusions}
Now we can summarize the results to the
\begin{proof} [Proof of Theorem \ref{thm:regular}]
Define as above $M^k$ to be the set of all $(k,\delta)$-strained points in the $k$-dimensional part $X^k$,
with   $\delta \leq \frac 1 {50\cdot k^2}$. We have seen in Theorem \ref{thm:locdim}, that $M^k$ is a Lipschitz manifold.  By construction, every point in $\mathcal R=\mathcal R_k$ with tangent space isometric to $\R^k$ is contained in $M^k$.

For any open convex set $V$ with opposite $(k,\delta)$-strainer maps $F,G:V\to \R^k$, the map $F:V\to F(V)$
  is a DC-isomorphism onto an open subset of $\R^k$, Proposition \ref{prop: dcisom}. Thus, the set of all such charts provides $M^k$ with a DC-atlas.

On the set of Euclidean points $\mathcal R$ in  $M^k$ we get a Riemannian metric $g_F$ in any chart.
   Moreover, $M^k\setminus \mathcal R$ has Hausdorf dimension at most $k-2$ as shown in Subsection \ref{subsec: euclid}.
 Due to the intrinsic definition, this metric is globally well defined on $\mathcal R$. As shown in Subsection \ref{subsec:chart}, the Riemannian tensor is continuous on $\mathcal R$ and due to Lemma \ref{lem: bv}, it is locally of bounded variation.

The length of all curves contained in $\mathcal R$ is computed via the Riemannian metric. Finally, the length of all curves in $\mathcal R$ locally determines the metric in $M^k$, due to Corollary \ref{cor: compl}.
\end{proof}

\subsection{Second order  differentiability of $\DC$-functions}
  Let $X$ be an arbitrary GCBA space. By Theorem \ref{thm:locdim},  $\mu _X$-almost
all of $X$ is the union of the (different dimensional) regular parts  $M^k$ of $X$.  Due to Theorem \ref{thm:regular},
$\mu_X$-almost every point of $X$ is a point with a Euclidean tangent space.

 Applying the classical theorem of Rademacher in the metric charts of the regular part, we deduce that every  Lipschitz function $f:X\to \R$ has a linear differential $\mu_X$-almost everywhere, (in the sense of Stolz, as
 in Proposition \ref{prop:hessian} below.)

As in the case of Alexandrov spaces described in  \cite{P3}, all DC-functions   are almost everywhere twice differentiable, as stated in the following Proposition.

\begin{prop}  \label{prop:hessian}
 Let $X$ be a GCBA space and
 let $f:X\to \R$ be a $\DC$-function.
 Then, for $\mu _X$-almost all $x$, there exists a  bilinear form $H_x =H_x(f) :T_x\times T_x \to \R$, called the \emph{Hessian} of $f$ at $x$, such that the following holds true for any tiny ball $U$ around $x$.
   The remainder
  $R_x :U\to \R$  in the \emph{Taylor formula}
 \begin{equation} \label{eq:hess}
R_x(y) := f(y)- (f(x)+ D_xf (v) + H_xf (v,v) )\,  ,
\end{equation}
 where  $v:=\log _x (y)$, satisfies
 \begin{equation} \label{eq:hess1}
\lim _{y\to x} \frac {R_x(y)} {d(x,y)^2} =0 \, .
\end{equation}
\end{prop}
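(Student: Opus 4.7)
The plan is to reduce the statement to the classical Alexandrov theorem on twice-differentiability of convex (hence $\DC$) functions on Euclidean space, using the $\DC$-chart structure from Section~\ref{sec:DC}. Since the statement is local, I work inside a tiny ball. By Theorem~\ref{thm:locdim} and Theorem~\ref{thm:regular}, the measure $\mu_X$ is concentrated on the open submanifold $\bigcup_k M^k$, and inside each $M^k$ the complement of the Euclidean part $\mathcal R_k$ has $\mu_X$-measure zero. It therefore suffices to verify the Taylor formula at $\mathcal H^k$-almost every point of $\mathcal R_k\cap M^k$ for each $k$.

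Fix such a $k$ and a $\DC$-chart $F=(d_{p_1},\dots,d_{p_k})\colon V\to\hat V\subset\R^k$; by Proposition~\ref{prop: dcisom} this $F$ is a $\DC$-isomorphism. Since $f$ is $\DC$ on $X$, the pullback $\bar f:=f\circ F^{-1}$ is $\DC$ on the Euclidean open set $\hat V$, so by the classical Alexandrov theorem it is twice differentiable at Lebesgue-almost every point of $\hat V$. The same holds for the pullbacks $\overline{d_q}:=d_q\circ F^{-1}$ of distance functions to $q$ for $q$ in any countable subset of $\tilde U$. Choosing a countable dense subset $\{q_j\}\subset\tilde U$ and intersecting the corresponding full-measure sets with $\mathcal R_k\cap M^k$, I obtain a set $N\subset V$ of full $\mathcal H^k$-measure on which $T_xX=\R^k$ and on which $\bar f$ and every $\overline{d_{q_j}}$ are simultaneously twice differentiable at $F(x)$.

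For $x\in N$, the linear differential $D_xF\colon T_x\to\R^k$ is a well-defined linear isomorphism. The crux is to establish a genuinely intrinsic second-order expansion of $F$ itself,
\begin{equation}\label{eq:Fexp}
F(y)-F(x)\;=\;D_xF(v)+Q(v,v)+o(d(x,y)^2),\qquad v:=\log_x y,
\end{equation}
with some symmetric bilinear $Q\colon T_x\times T_x\to\R^k$. Granting \eqref{eq:Fexp}, substitution into the Euclidean Taylor expansion of $\bar f$ at $F(x)$ (using $|F(y)-F(x)|\asymp d(x,y)$ because $F$ is biLipschitz) and collection of terms of order at most $2$ yields the claimed Taylor formula, with
\[D_xf(v)=\nabla\bar f(F(x))\cdot D_xF(v),\]
\[H_xf(v,v)=\nabla\bar f(F(x))\cdot Q(v,v)+\tfrac12\bigl\langle H\bar f(F(x))\,D_xF(v),\,D_xF(v)\bigr\rangle.\]

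The principal obstacle is the expansion \eqref{eq:Fexp}, which is not formal: $\log_x$ itself only has biLipschitz, not $C^{1,1}$, regularity. The plan here is to follow \cite[Section~4]{P3}. Each coordinate $d_{p_i}$ is convex on $V$, so its restriction to any geodesic emanating from $x$ is a one-variable convex function that admits a well-defined right second derivative at $t=0$; combining the chart-side twice-differentiability of sufficiently many auxiliary pullbacks $\overline{d_{q_j}}$ (density of $\{q_j\}$ providing uniform control in every direction $v\in\Sigma_x$) with the Euclidean structure of $T_x$, one upgrades this pointwise one-dimensional second-order information to a symmetric bilinear $\R^k$-valued form $Q$ uniform in $v$. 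Perelman's computation is phrased for semiconcave functions on Alexandrov spaces, but its adaptation to semiconvex functions on GCBA spaces is essentially formal.
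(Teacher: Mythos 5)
Your reduction — restrict to $M^k$, pass through a $\DC$-chart $F$, invoke Alexandrov's theorem for the pullback $\bar f=f\circ F^{-1}$ on the Euclidean side, and then translate back along $\log_x$ — is exactly the framework the paper uses. You also correctly identify where the difficulty sits: one must control how $F$ differs from $D_xF\circ\log_x$ to second order. But your treatment of that step has a genuine gap.

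Your justification of the expansion $F(y)-F(x)=D_xF(v)+Q(v,v)+o(d(x,y)^2)$ rests on the claim that a convex function restricted to a geodesic emanating from $x$ ``admits a well-defined right second derivative at $t=0$.'' This is false: a one-variable convex function has a right derivative everywhere, but the second-order Taylor ratio $(h(t)-h(0)-h'(0^+)t)/t^2$ need not converge as $t\to 0^+$ (kinks can accumulate at $0$). Alexandrov's theorem guarantees second-order differentiability only almost everywhere on the interval, and one cannot choose that full-measure set uniformly over all geodesics through $x$. Moreover the attempt is somewhat circular: the chart-side twice-differentiability of the auxiliary $\overline{d_{q_j}}$ gives you quadratic expansions in the $F$-coordinate $w=F(y)$, and turning those into a statement about $v=\log_x y$ is precisely the content of the expansion you are trying to prove.

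What the paper actually does is different in a substantive way. It first composes $F$ with a quadratic diffeomorphism of $\R^k$ to pass to ``normal coordinates'' $G$, exploiting that the Riemannian tensor $g_F$ is $\mathrm{BV}$ and hence differentiable at $\mathcal H^k$-a.e.\ point (Lemma~\ref{lem: bv}); in such coordinates the metric tensor satisfies $\|g(G(y))-g(G(x))\|=o(d(x,y))$. The key lemma then shows that the quadratic term of the expansion of $G\circ\log_x^{-1}$ \emph{vanishes}: $\|G(y)-D_xG(v)\|=o(d(x,y)^2)$. Crucially, the proof of that lemma is not just convexity bookkeeping — it relies on the upper curvature bound through a triangle-comparison argument to upgrade the a~priori $o(\sqrt r)$ angle estimate to $o(r)$. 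None of this (the normal-coordinate change, the a.e.\ differentiability of the BV tensor, the use of the curvature bound) appears in your proposal, and without it the step you flag as the ``principal obstacle'' remains unproved.
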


We only sketch the  proof and  refer for  details to  \cite{P3} and \cite[Section 7.2]{Ambr}.

 The claim is local and $\mu_X$-almost all of $X$ consists of regular points. Thus, we may replace $X$
by a tiny ball $U$ which coincides with its set of regular points $U=M^k$. Now we can use the DC-structure
provided by Theorem \ref{thm:regular}.

Using a coordinate change to "normal coordinates"  as in \cite{P3} and  \cite{Ambr},  Proposition \ref{prop:hessian}  follows  directly from the corresponding theorem of Alexandrov in $\R^n$,
\cite[Theorem 6.9]{Evans}, once the following lemma is verified. In the formulation of the lemma  and later on, we denote by $o$ as usual the \emph{Landau symbol}.

\begin{lem}
Let $G:V\to \R^k$ be a DC-isomorphism on an open subset $V\subset U$, given by a composition of a metric chart $F$ and a diffeomorphism of $\R^k$.  Let $x\in \mathcal R$ be a Euclidean point with $G(x)=0$.
  Assume that  the metric tensor $g$  of $V$ expressed on $W=G(V)$ via $G$ satisfies,
for all $y\in V\cap \mathcal R$,
\begin{equation} \label{eq:almeuc}
||g (G(y)) -g (G(x))|| =o(d(x,y))\, .
\end{equation}
   Then,    for  all $y\in V$
and the corresponding direction
$v=\log_x (y) \in T_x$, we have
$$||G(y) -D_xG (v)||   =o(d(x,y) ^2)\, .$$
\end{lem}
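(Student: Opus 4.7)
The plan begins with a reduction via a linear change of coordinates in $\R^k$: we may assume $g(0)$ is the standard Euclidean inner product on $\R^k$ and, further, that $D_xG$ is the identity (identifying $T_x$ with $\R^k$, as $x$ is Euclidean). The claim becomes $|G(y) - v|_{\mathrm{eucl}} = o(\ell^2)$ where $\ell = d(x,y)$ and $v = \log_x y$. The hypothesis then lifts to a uniform estimate $\|g(z) - g(0)\|_{\mathrm{op}} \le \epsilon(\ell)\cdot |z|$ on the Euclidean ball $\{|z| \le C\ell\} \subset W = G(V)$, with $\epsilon(\ell) \to 0$ as $\ell \to 0$; consequently, $g$-length and Euclidean length of any Lipschitz curve in this ball differ by a factor $1 + o(\ell)$.

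Let $\gamma\colon [0,\ell] \to V$ be the unit-speed geodesic from $x$ to $y$ and set $\eta = G \circ \gamma$, $\alpha = \eta'(0)$, so $v = \ell\alpha$. The $g$-arc-length of $\eta$ equals the $V$-arc-length of $\gamma$, namely $\ell$, using that $G$ restricts to a Riemannian isometry on the full-measure set $\mathcal R$ (Subsection \ref{subsec:chart}) together with Corollary \ref{cor: compl} to approximate $\gamma$ by curves in $\mathcal R$ and Proposition \ref{prop:contin} to pass to the limit of lengths. Thus $L_{\mathrm{eucl}}(\eta) = \ell + o(\ell^2)$ and $|G(y)|_{\mathrm{eucl}} \le \ell + o(\ell^2)$. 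Pulling the Euclidean segment $\sigma(t) = t\alpha$ back under $G^{-1}$ yields a curve $\tilde\gamma$ in $V$ from $x$ to $q := G^{-1}(v)$ of length $\ell + o(\ell^2)$, and the matching lower bound (any competing curve maps under $G$ to a curve of Euclidean length at least $|v|$ up to a $1 + o(\ell)$ factor) gives $d(x, q) = \ell + o(\ell^2)$. Since the same length comparison implies $|G(y) - v| \le (1 + o(\ell))\, d(y, q)$, the task reduces to proving $d(y, q) = o(\ell^2)$.

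For this, I would analyze the $\CAT(\kappa)$ triangle $xyq$ in $V$. The first variation of $d_x$ along the geodesic $yq$ yields $\cos(\angle xyq)\cdot d(y, q) = d(x, y) - d(x, q) + o(d(y, q)) = o(\ell^2) + o(d(y, q))$, and symmetrically at $q$. For the angle at $x$: both $\gamma$ and $\tilde\gamma$ emanate from $x$ in the direction $\alpha \in T_x$, and $\tilde\gamma$ realizes $d(x, q)$ up to an additive $o(\ell^2)$; by the almost-isometry of $\log_x$ (Lemma \ref{lem:tangent}) this propagates to the actual geodesic $xq$, whose initial direction converges to $\alpha$, so $\angle yxq \to 0$ as $\ell \to 0$. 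The $\CAT(\kappa)$ angle sum bound $\angle xyq + \angle xqy + \angle yxq \le \pi$ combined with the two first-variation identities and a Pythagorean-type small-triangle comparison then yields the desired $d(y, q) = o(\ell^2)$.

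The main obstacle is the final step: in the near-right-angle regime at $y$ and $q$, both cosines supplied by the first-variation formula are small, and a naive use of the $\CAT$ Pythagorean inequality only gives the weaker bound $d(y, q) = o(\ell^{3/2})$. Closing the gap to $o(\ell^2)$ requires iterating the angle-and-cosine estimates, sharpening the bound on $\angle yxq$ beyond what Lemma \ref{lem:tangent} supplies directly (for instance, by using that $\tilde\gamma$'s excess length over $d(x,q)$ is not merely $o(\ell)$ but $o(\ell^2)$, which constrains how much its endpoint can drift tangentially from $\ell\alpha$ under $\log_x$), and carefully propagating the decay rate $\epsilon(\ell) \to 0$ uniformly across all estimates. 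It is precisely the strength of the hypothesis $\|g(z) - g(0)\| = o(|z|)$ (rather than a mere $O$-bound or Hölder condition) that makes this bookkeeping close up.
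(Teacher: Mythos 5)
Your reduction to $d(y,q) = o(\ell^2)$ for $q := G^{-1}(D_xG(v))$ is a sound reformulation, and your preliminary steps (normalizing $g(0)$ and $D_xG$, the length comparison via Corollary~\ref{cor: compl} and Proposition~\ref{prop:contin}, and the resulting $d(x,q)=\ell+o(\ell^2)$) are consistent with the structure of the paper's argument, which establishes the analogous two--point distance estimate $\bigl|\,d(y,z)-\|G(y)-G(z)\|\,\bigr|=o(r^2)$ for all $y,z\in\bar B_r(x)$. But you have correctly identified that your argument does not close, and the obstruction you name is genuine: the paper itself remarks that from the distance estimate alone one can only extract the weaker $\beta(y)=o(\sqrt r)$ for the angle $\beta(y)$ between $G(y)$ and $D_xG(v)$, which is exactly the $d(y,q)=o(\ell^{3/2})$ you reach. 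The fix you speculate about (iterating first--variation and near--right--angle Pythagorean estimates, sharpening $\angle yxq$) does not go through, because the $\CAT(\kappa)$ triangle comparison is one--sided, and iterating a one--sided angle bound cannot recover a two--sided estimate of the same order.

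The key idea that closes the gap, which your proposal does not contain, is a use of the curvature bound in combination with geodesic extendability that gives a \emph{two--sided} angle control of order $o(r)$. Concretely, the paper first shows that for \emph{sufficiently non-degenerated} triangles $xyz$ in $\bar B_r(x)$ (comparison angles $\ge\pi/100$), the distance estimate forces the comparison angle $\tilde\angle yxz$ to agree with the Euclidean angle between $G(y)$ and $G(z)$ up to $o(r)$. Then, for a \emph{very non-degenerated} triangle $xyz$, geodesic completeness is used to pick a third point $w\in\bar B_r(x)$ so that the three actual angles at $x$ sum to exactly $2\pi$, while the three Euclidean angles between $G(y), G(z), G(w)$ sum to at most $2\pi$ and each actual angle is at most the corresponding comparison angle. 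Subtracting, each actual angle at $x$ must agree with the corresponding Euclidean angle up to $o(r)$ -- this upgrades the one--sided $\CAT$ inequality to a two--sided estimate. Finally the paper applies this to an auxiliary point $z$ with $G(z)$ orthogonal to $G(y)$ and coplanar with $G(y),G(m)$, where $m$ is the midpoint of $xy$; since $\angle zxy=\angle zxm$ by construction, the angle between $G(y)$ and $G(m)$ is forced to be $o(r)$, and the conclusion follows. Your appeal to the triangle angle--sum bound $\angle xyq+\angle xqy+\angle yxq\le\pi$ is a different, weaker statement and does not supply the two--sided control; the paper's argument hinges on the $2\pi$ angle sum at a single vertex $x$ achieved by extending geodesics, which is precisely where geodesic completeness enters.
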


\begin{proof}
 We sketch the  proof, referring for details  to  \cite{P3} and \cite[Section 7.2]{Ambr}.

  From  \eqref{eq:almeuc}, and the fact  that the Riemannian tensor on $\mathcal R$ determines the metric in $V$,
 Corollary \ref{cor: compl},
 we obtain,  for all small $r$, and all $y,z\in \bar B_r(x)$,  the estimate
 \begin{equation} \label{eq:r2}
 | \, d(y,z) -||G(y)-G(z)|| \,| =o(r^2) \, .
 \end{equation}
 Hence, it suffices to prove, that for all $y\in \bar B_r (x)$ the angle $\beta (y)$  between $G(y)$ and $D_x G (v)$,
 (with $v=\log_x (y)$ as in the formulation) satisfies
 the estimate $\beta (y) = o(r)$.

  In order to prove this estimate, it is sufficient to show that for the midpoint $m$ of the geodesic $xy$ the angle
  $\beta _1 (y)$ between $G(y)$ and $G(m)$ satisfies $\beta _1 (y) =o(r)$.
  (Relying only on   \eqref{eq:r2}, one can show $\beta_1 (y)=o(\sqrt r)$ and as a consequence that
$\beta (y)= o(\sqrt r)$, as done in the course of the proof  of \cite[Proposition 7.8 (d)]{Ambr}.)
 In order to prove the required stronger estimate $\beta _1 (y)= o(r)$, we will  rely on the curvature
 bound, similarly to \cite{P3}.

 We say  that the triangle $xyz$ in $\bar B_r (x)$  is \emph{sufficiently  non-degenerated}, respectively \emph{very non-degenerated},  if all of its comparison angles are  at least $\frac \pi {100}$, respectively at least $\frac \pi {10}$.   For any  sufficiently non-degenerated    triangle $xyz$  in $\bar B_r (x)$, we deduce
 from  \eqref{eq:r2},  that  the comparison angle $\tilde \angle yxz$ differs from the angle between
 $G(y)$ and $G(z)$  in $\R^k$  by at most $o(r)$.

 Given a very non-degenerated triangle $xyz$ in $\bar B_r (x)$, we  find a point $w \in \bar  B_r (x)$ such that the triangles  $xyw$ and $xzw$ are
 sufficiently non-degenerated and such that
 $$\angle yxz +\angle yxw +\angle wxz = 2\pi  \, .$$
 Since the corresponding comparison angles  are not smaller and since the three angles between pairs of different vectors in $\{G(y),G(z), G(w) \}$ sum up to at most
 $2\pi$, we arrive at the following conclusion:

 For any very non-degenerated triangle $xyz$ in $\bar B_r (x)$ the
 angle $\angle yxz$ differs from  the angle  in $\R^k$ between $G(y)$ and $G(z)$
 by at most $o(r)$.

 Let now $y\in \bar B_r (x)$ be arbitrary and let $m$ be the midpoint of $xy$. We find a point $z$ with $d(x,y)=d(x,z)$,
  such that  $G(z)$ lies in the same plane as $G(y)$ and $G(m)$ and such that $G(z)$ is orthogonal to $G(y)$.
  Then the difference of the angle between $G(z)$ and $G(y)$ and the angle between  $G(z)$ and $G(m)$ is exactly the angle between
  $G(y)$ and $G(m)$.  On the other hand, due to the previous considerations, the angle between $G(z)$ and $G(y)$ (respectively,
  between $G(z)$ and $G(m)$)
  coincides with $\angle zxy$  (respectively, with $\angle zxm$)  up to $o(r)$. But, by construction, $\angle zxy=\angle zxm$.

  Therefore, we have verified the estimate $\beta _1 (y)=o(r)$, thus
  finishing the proof of  the Lemma and of  Proposition \ref{prop:hessian}.
\end{proof}

\begin{rem} \label{rem:hessian}
The second order differentiability of distance functions, a special case  of Proposition \ref{prop:hessian},
 appears in  \cite[Main Theorem 1(3)]{Otsu} .
\end{rem}


\section{Topological counterexamples} \label{sec:misc}

\begin{exmp} \label{ex:graph}
Let $X_n$ denote a unit circle $S$ with two other unit circles $S^{\pm}_n$
attached to $S$ at points  $p_n^{\pm}$
  at distance $ 1/n$ from each
other.  The sequence $X_n$ converges to the wedge of three unit circles $X$. Thus,  $X_n$ is not homeomorphic to $X$
for no $n$. This shows that there is no topological stability even
in dimension $1$.
\end{exmp}

\begin{exmp} \label{ex:surface}  Proposition \ref{lem: first} implies that $1$-dimensional
GCBA spaces are locally isometric to finite graphs. On the other hand,
Kleiner constructs in  \cite{Kleiner}  a
$2$-dimensional GCBA space $X$
that  does not admit a  triangulation, see also
 \cite[Example 2.7]{N1}.
 This space $X$ contains  a point $x$, such that no neighborhood of it is homeomorphic to a cone.
 Moreover, there are arbitrary  small
$r_1,r_2 >0$ such that the distance spheres $\partial B_{r_1} (x)$ and $\partial B_{r_2} (x)$ are not homeomorphic.
\end{exmp}


\bibliographystyle{alpha}
\bibliography{LGC}

\end{document}